\declaretheoremstyle[headfont=\normalfont]{normalhead}
\newtheorem{lemma}{Lemma}[section]
\newtheorem{theorem}[lemma]{Theorem}
\newtheorem{proposition}[lemma]{Proposition}
\newtheorem{corollary}[lemma]{Corollary}
\newtheorem{definition}[lemma]{Definition}
\newtheorem{remark}[lemma]{Remark}
\newcounter{mt}
\newtheorem{maintheorem}[mt]{Theorem}
\newcommand{\R}{\mathbb{R}}
\newcommand{\C}{\mathbb{C}}
\DeclareMathOperator{\Val}{Val}
\DeclareMathOperator{\VConv}{VConv}
\DeclareMathOperator{\Conv}{Conv}
\DeclareMathOperator{\vol}{vol}
\DeclareMathOperator{\supp}{supp}
\DeclareMathOperator{\diam}{diam}
\DeclareMathOperator{\Hess}{Hess}
\DeclareMathOperator{\GL}{GL}
\DeclareMathOperator{\GW}{\mathrm{GW}}
\DeclareMathOperator{\Gr}{\mathrm{Gr}}
\DeclareMathOperator{\Aff}{\mathrm{Aff}}
\DeclareMathOperator{\Sym}{\mathrm{Sym}}
\DeclareMathOperator{\SO}{\mathrm{SO}}
\DeclareMathOperator{\MAVal}{\mathrm{MAVal}}
\DeclareMathOperator{\F}{\mathbb{F}}
\DeclareMathOperator{\MA}{\mathrm{MA}}
\DeclareMathOperator{\Mat}{\mathrm{Mat}}
\renewcommand{\Re}{\operatorname{Re}}
\renewcommand{\Im}{\operatorname{Im}}
\renewcommand{\O}{\mathcal{O}}
\renewcommand{\P}{\mathcal{P}}
\newcommand{\M}{\mathcal{M}}
\newcommand{\I}{\mathcal{I}}
\newcommand{\diag}{d}
\author{Jonas Knoerr}
\title[Paley--Wiener--Schwartz Theorem for valuations]{A Paley--Wiener--Schwartz Theorem for smooth valuations on convex functions}
\date{}
\newcommand{\Addresses}{{
		\bigskip
		\footnotesize
		
		Jonas Knoerr, \textsc{Institute of Discrete Mathematics and Geometry, TU Wien, Wiedner Hauptstrasse 8-10, 1040 Wien, Austria}\par\nopagebreak
		\textit{E-mail address}: \texttt{jonas.knoerr@tuwien.ac.at}
		
		\medskip
	}}
\def\blfootnote{\xdef\@thefnmark{}\@footnotetext}
\begin{document}
\maketitle
\begin{abstract}
	Continuous dually epi-translation invariant valuations on convex functions are characterized in terms of the Fourier--Laplace transform of the associated Goodey--Weil distributions. This description is used to obtain integral representations of the smooth vectors of the natural representation of the group of translations on the space of these valuations. As an application, a complete classification of all closed and affine invariant subspaces is established, yielding density results for valuations defined in terms of mixed Monge-Amp\`ere operators.
\end{abstract}
\blfootnote{2020 \emph{Mathematics Subject Classification}. 52B45, 26B25, 53C65, 52A39, 13P10.\\
	\emph{Key words and phrases}. Convex function, valuation on functions, Monge-Amp\`ere operator, distribution, Fourier--Laplace transform.\\}

\section{Introduction}

Let $\mathcal{K}(\R^n)$ denote the space of convex bodies in $\R^n$, that is, the set of all nonempty, convex and compact subsets of $\R^n$ equipped with the Hausdorff metric. A functional $\mu:\mathcal{K}(\R^n)\rightarrow \C$ is called a valuation if it is finitely additive in the following sense:
\begin{align*}
	\mu(K\cup L)+\mu(K\cap L)=\mu(K)+\mu(L)
\end{align*}
for all $K,L\in\mathcal{K}(\R^n)$ such that $K\cup L$ is convex. The class of valuations contains many well known geometric functionals, for example Euler characteristic, measures, and mixed volumes, and consequently, valuations play an important role in various areas of mathematics. Over the last 30 years, the theory of valuations has developed rapidly, which has led to a variety of new classification results and applications in convex, integral, discrete, and differential geometry. We refer to \cite{AleskerDescriptiontranslationinvariant2001,AleskerFaifmanConvexvaluationsinvariant2014,BernigHadwigertypetheorem2009,BernigBroeckerValuationsmanifoldsRumin2007,BernigEtAlCurvaturemeasurespseudo2022,BernigEtAlHardLefschetztheorem2024,FaifmanHofstaetterConvexvaluationsWhitney2023,FreyerEtAlUnimodularValuationsEhrhart2024,FuStructureunitaryvaluation2006,KotrbatyWannererharmonicanalysistranslation2023,LudwigReitznerclassification$SLn$invariant2010,Wannerermoduleunitarilyinvariant2014} for some of these advances.
Much of this progress is rooted in Alesker's solution of the following conjecture by McMullen \cite{McMullenContinuoustranslationinvariant1980}. Let $\Val(\R^n)$ denote the space of all continuous and translation invariant valuations $\mu:\mathcal{K}(\R^n)\rightarrow\C$. This space is a complete locally convex vector space (in fact, a Banach space) with respect to the topology of uniform convergence on compact subsets of $\mathcal{K}(\R^n)$.
\begin{theorem}[Alesker \cite{AleskerDescriptiontranslationinvariant2001}]
	\label{theorem:McMullenConjecture}
	Linear combinations of mixed volumes span a dense subspace of $\Val(\R^n)$.
\end{theorem}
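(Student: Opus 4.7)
The plan is to translate the density statement into a representation-theoretic claim and reduce it to an irreducibility theorem for the natural $\GL(n,\R)$-action on $\Val(\R^n)$.

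First, I would set up the representation-theoretic framework using McMullen's decomposition: $\Val(\R^n)$ splits as a topological direct sum $\bigoplus_{k=0}^n \Val_k(\R^n)$ of closed subspaces of $k$-homogeneous valuations, and each $\Val_k(\R^n)$ further decomposes as $\Val_k^+(\R^n) \oplus \Val_k^-(\R^n)$ into even and odd parts under $K \mapsto -K$. The group $\GL(n,\R)$ acts continuously on $\Val(\R^n)$ by $(g\cdot\mu)(K) := \mu(g^{-1}K)$, preserving each summand $\Val_k^\epsilon(\R^n)$ for $\epsilon \in \{+,-\}$.

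The crux of the argument is the \emph{Irreducibility Theorem}: for every $k$ and every $\epsilon$, the $\GL(n,\R)$-module $\Val_k^\epsilon(\R^n)$ admits no proper nontrivial closed invariant subspace. I expect this to be by far the main obstacle, as it lies well beyond classical convex geometry. The strategy is to use the Klain embedding (and its Schneider-type analog for odd valuations) to identify $\Val_k^\epsilon(\R^n)$ with sections of a natural $\GL(n,\R)$-equivariant line bundle over a Grassmannian (resp., a partial flag manifold), thereby translating irreducibility into a statement about the structure of an admissible continuous representation, which one then analyzes via $\mathcal{D}$-modules on flag varieties and Beilinson--Bernstein localization.

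Granting the Irreducibility Theorem, the conclusion is essentially formal. Let $M \subset \Val(\R^n)$ denote the closure of the span of all mixed volumes of the form $\mu_{A_1,\dots,A_{n-k}}(K) := V(K[k], A_1,\ldots,A_{n-k})$, where $0 \le k \le n$ and $A_1,\dots,A_{n-k} \in \mathcal{K}(\R^n)$. From the $\GL$-equivariance $V(gK_1,\ldots,gK_n) = |\det g|\, V(K_1,\ldots,K_n)$, one computes
\begin{align*}
	g \cdot \mu_{A_1,\dots,A_{n-k}} = |\det g|^{-1}\, \mu_{gA_1,\dots,gA_{n-k}},
\end{align*}
so $M$ is $\GL(n,\R)$-invariant, and therefore so is each intersection $M \cap \Val_k^\epsilon(\R^n)$. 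By irreducibility, this intersection is either $\{0\}$ or all of $\Val_k^\epsilon(\R^n)$. To rule out the former in every nontrivial component, it suffices to exhibit a single mixed volume with nonzero $(k,\epsilon)$-component: for the even parts, the intrinsic volumes $K \mapsto V(K[k], B[n-k])$ work, where $B$ is the Euclidean unit ball; for the odd parts (present only when $1 \le k \le n-1$), direct computation shows that mixed volumes involving suitably asymmetric bodies such as simplices have nonvanishing odd components. Summing over the finitely many $(k,\epsilon)$ yields $M = \Val(\R^n)$.
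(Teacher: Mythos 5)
Your proposal follows essentially the same route as the paper: both deduce the statement from Alesker's Irreducibility Theorem (Theorem~\ref{theorem:IrreducibilityTheorem}) by observing that the span of mixed volumes is a $\GL(n,\R)$-invariant subspace meeting each component $\Val_k^\pm(\R^n)$ nontrivially (intrinsic volumes for the even parts, mixed volumes with asymmetric bodies such as simplices for the odd parts), and the paper likewise cites the Irreducibility Theorem rather than reproving it. Your additional sketch of how the Irreducibility Theorem itself is established (Klain/Schneider embeddings and Beilinson--Bernstein localization) is consistent with Alesker's original argument but is not part of the paper's treatment.
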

Alesker obtained Theorem~\ref{theorem:McMullenConjecture} as a consequence of a much stronger result, which is now known as the \emph{Irreducibility Theorem}. Note that there is a natural continuous representation $\pi$ of $\GL(n,\R)$ on $\Val(\R^n)$ given for $g\in \GL(n,\R)$ and $\mu\in\Val(\R^n)$ by
\begin{align*}
	[\pi(g)\mu](K)=\mu(g^{-1}K)\quad\text{for}~K\in\mathcal{K}(\R^n).
\end{align*}
By a result of McMullen \cite{McMullenValuationsEulertype1977}, $\Val(\R^n)$ decomposes into a direct sum of the spaces $\Val_k(\R^n)=\{\mu\in\Val(\R^n):\mu(tK)=t^k\mu(K),t\ge 0,K\in\mathcal{K}(\R^n)\}$:
\begin{align*}
	\Val(\R^n)=\bigoplus_{k=0}^n\Val_k(\R^n).
\end{align*}
Furthermore, these spaces decompose further into even and odd valuations: $\Val_k(\R^n)=\Val_k^+(\R^n)\oplus\Val_k^-(\R^n)$, where $\mu\in\Val^\pm_k(\R^n)$ if and only if it satisfies $\mu(-K)=\pm\mu(K)$ for $K\in\mathcal{K}(\R^n)$. Obviously these spaces are $\GL(n,\R)$-invariant. The Irreducibilty Theorem states that these are the only nontrivial, $\GL(n,\R)$-invariant closed subspaces of $\Val(\R^n)$.
\begin{theorem}[Alesker \cite{AleskerDescriptiontranslationinvariant2001}]
	\label{theorem:IrreducibilityTheorem}
	The natural representation of $\GL(n,\R)$ on $\Val_k^\pm(\R^n)$ is irreducible, that is, every nontrivial $\GL(n,\R)$-invariant subspace is dense.
\end{theorem}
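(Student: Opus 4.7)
The plan is to reduce the irreducibility question to a representation-theoretic problem about function spaces on Grassmannians, via the classical Klain--Schneider embedding theorems, and then to invoke the representation theory of degenerate principal series of $\GL(n,\R)$. I would handle the even and odd cases in parallel, treating the even case first as it is more directly accessible.

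For the even case, I would invoke Klain's theorem: for $\mu\in\Val_k^+(\R^n)$, the restriction of $\mu$ to any $k$-dimensional subspace $E\in\Gr_k(\R^n)$ is a multiple $\Kl_\mu(E)$ of Lebesgue measure on $E$, and the density depends continuously on $E$. The assignment $\mu\mapsto\Kl_\mu$ defines a continuous, $\GL(n,\R)$-equivariant injection $\Val_k^+(\R^n)\hookrightarrow C(\Gr_k(\R^n))$ with closed image. Next, I would identify this image as the image of the $\GL(n,\R)$-intertwining cosine transform $T_k$ acting on $C(\Gr_k(\R^n))$, using the fact that the support functions of zonoids can be expressed as cosine transforms and that mixed volumes with zonoids provide a natural supply of valuations whose Klain functions lie in $\mathrm{im}(T_k)$. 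In this way $\Val_k^+(\R^n)$ becomes isomorphic as a topological $\GL(n,\R)$-module to $\mathrm{im}(T_k)$.

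The decisive step is now representation-theoretic. The space $C(\Gr_k(\R^n))$ carries the structure of a degenerate principal series representation of $\GL(n,\R)$, induced from the maximal parabolic stabilizing a $k$-plane. I would decompose it into $\mathrm{O}(n)$-isotypic components via Peter--Weyl; since $T_k$ commutes with $\mathrm{O}(n)$, Schur's lemma forces $T_k$ to act as a scalar on each isotypic piece, and explicit spectral computations show that these scalars vanish exactly on the odd types. Hence $\mathrm{im}(T_k)$ is the closure of the sum of the even $\mathrm{O}(n)$-isotypic components. Topological irreducibility of this subrepresentation would then follow from the irreducibility of the corresponding composition factor of the degenerate principal series as an $(\mathfrak{gl}_n,\mathrm{O}(n))$-Harish-Chandra module, combined with the standard fact that closed $\GL(n,\R)$-invariant subspaces are determined by their $\mathrm{O}(n)$-finite vectors. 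The odd case proceeds analogously via Schneider's embedding into sections of a line bundle over the partial flag manifold $\{(E,F):E\subset F,\,\dim E=k-1,\,\dim F=k\}$, with a sine-type transform replacing the cosine transform.

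The main obstacle lies in this last, representation-theoretic step: verifying the irreducibility of the specific composition factor of the degenerate principal series arising here. Degenerate principal series of $\GL(n,\R)$ are generally reducible, with intricate composition structure, so one cannot appeal to a blanket irreducibility statement; instead one must isolate the relevant Langlands quotient (or analogous distinguished constituent) and prove its irreducibility, or equivalently determine the kernel and image of every standard intertwining operator. A secondary difficulty is the precise identification of the image of the Klain (respectively Schneider) embedding as $\mathrm{im}(T_k)$, which relies on nontrivial harmonic analysis on the Grassmannian and the spectral analysis of the cosine and sine transforms. Both steps require machinery considerably deeper than the elementary geometric formulation of the theorem might suggest.
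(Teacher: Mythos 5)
You should first note that the paper itself does not prove Theorem~\ref{theorem:IrreducibilityTheorem}; it is quoted from Alesker's 2001 paper as background, so the only meaningful comparison is with Alesker's original argument. Your outline follows the same broad strategy as that proof: Klain and Schneider embeddings realizing $\Val_k^{\pm}(\R^n)$ inside (sections of line bundles over) Grassmannians and flag manifolds, hence inside degenerate principal series representations of $\GL(n,\R)$, followed by representation-theoretic analysis of those induced modules. As a road map this is on target.

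As a proof, however, it has a genuine gap, and in two places it runs against the actual logical order of the literature. The decisive step --- determining the closed submodule structure of the relevant constituents of the degenerate principal series and showing that the image of a nonzero invariant subspace cannot sit inside a proper closed submodule --- is exactly what you leave open. Alesker does not invoke a blanket irreducibility statement either; he combines known structural results on these induced representations (via their $\mathrm{O}(n)$-type decomposition and Casselman--Wallach theory) with explicit geometric input, namely specific valuations (mixed volumes with suitable reference bodies) whose Klain/Schneider images have nonzero components in the distinguishing $\mathrm{O}(n)$-types, which is what forces any nonzero closed invariant subspace to be dense; without this second, geometric ingredient the embeddings alone prove nothing. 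Second, your identification of the Klain image with $\mathrm{im}(T_k)$ is both unnecessary and circular as proposed: the range characterization of the cosine transform on higher Grassmannians is the Alesker--Bernstein theorem, which is deduced \emph{from} the Irreducibility Theorem, and the zonoid argument you sketch only gives the inclusion $\mathrm{im}(T_k)\subset \Kl(\Val_k^+(\R^n))$, not the reverse. Relatedly, the spectral claim is off: $C(\Gr_k(\R^n))$ is indeed multiplicity-free as an $\mathrm{O}(n)$-module (so the Schur argument is fine), but all occurring types already have even highest weights; the eigenvalues of $T_k$ vanish not on ``odd types'' but on all types except those with highest weight $(m_1,m_2,0,\dots,0)$ with the $m_i$ even and $m_2\le 2$. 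So the architecture matches Alesker's, but the two load-bearing steps --- the structure theory of the induced representations and the production of explicit valuations pinning down the relevant constituent --- are missing or misstated.
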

Theorem~\ref{theorem:McMullenConjecture} is simple consequence of this result, since linear combinations of mixed volumes form a $\GL(n,\R)$-invariant subspace that intersects the spaces $\Val^\pm_k(\R^n)$ nontrivially. In fact, it is sufficient to take mixed volumes with simplices as reference bodies (or ellipsoids in the even case), as pointed out in \cite{AleskerDescriptiontranslationinvariant2001}. These results can also be used to show that linear combinations of mixed volumes with a  bounded (but sufficiently large) number of terms are dense \cite{KnoerrSmoothvaluationsconvex2024a}.\\

The valuation property admits a natural extension to spaces $X$ of (extended) real-valued functions. Here, a map $\mu: X\rightarrow F$ into a vector space $F$ is called a valuation if
\begin{align*}
	\mu(f\vee h)+\mu(f\wedge h)=\mu(f)+\mu(h)
\end{align*}
for all $f,g\in X$ such that the pointwise maximum $f\vee h$ and minimum $f\wedge h$ belong to $X$. Note that this recovers the notion of a valuation on convex bodies if $X$ denotes the corresponding set of indicator functions or support functions. There has been a considerable effort to identify many classical functionals as the unique (continuous) valuations satisfying additional invariance properties (see \cite{ColesantiEtAlclassinvariantvaluations2020,ColesantiEtAlContinuousvaluationsspace2021,KnoerrMongeAmpereoperators2024,LiMaLaplacetransformsvaluations2017,LudwigFisherinformationmatrix2011,LudwigValuationsSobolevspaces2012,VillanuevaRadialcontinuousrotation2016}) and to understand the general structure of these functionals on a variety of spaces (see \cite{ColesantiEtAlhomogeneousdecompositiondually2024,TradaceteVillanuevaRadialcontinuousvaluations2017,TradaceteVillanuevaContinuityrepresentationvaluations2018,TradaceteVillanuevaValuationsBanachlattices2020}).\\
Due to their intimate relation to convex bodies, spaces of convex functions have been at the center of this part of valuation theory. This includes a variety of characterization results \cite{AleskerValuationsconvexfunctions2019,ColesantiEtAlMinkowskivaluationsconvex2017,ColesantiEtAlHadwigertheoremconvex2022,ColesantiEtAlHadwigertheoremconvex2023,ColesantiEtAlHadwigertheoremconvex2024,HofstaetterKnoerrEquivariantValuationsConvex2024,Knoerrgeometricdecompositionunitarily2024,MouamineMussnigVectorialHadwigerTheorem2025,MussnigVolumepolarvolume2019,Mussnig$SLn$InvariantValuations2021} as well as functional versions of a number of structural results \cite{ColesantiEtAlhomogeneousdecompositiontheorem2020,Knoerrsupportduallyepi2021,KnoerrUlivelliPolynomialvaluationsconvex2024}.\\

In this article, we consider valuations on the space $\Conv(\R^n,\R)$ of all convex functions $f:\R^n\rightarrow\R$, i.e. all convex functions on $\R^n$ that take finite values. This space is naturally equipped with the metrizable topology induced by locally uniform convergence (which in this case coincides with the topology induced by epi-convergence or pointwise convergence, see Section \ref{section:Preliminaries_convexFunctions}). A valuation $\mu$ on $\Conv(\R^n,\R)$ is called \emph{dually epi-translation invariant} if
\begin{align*}
	\mu(f+\ell)=\mu(f)\quad\text{for all}~f\in\Conv(\R^n,\R),~\ell:\R^n\rightarrow\R~\text{affine}.
\end{align*}
Valuations with this property are intimately related to translation invariant valuations on convex bodies \cite{AleskerValuationsconvexfunctions2019,Knoerrsupportduallyepi2021,KnoerrSmoothvaluationsconvex2024,KnoerrUlivellivaluationsconvexbodies2024} as well as to Monge-Amp\`ere-type operators \cite{AleskerValuationsconvexsets2005,AleskerValuationsconvexfunctions2019,KnoerrMongeAmpereoperators2024}. For example, the real Monge-Amp\`ere operator $\MA$ may be considered as a continuous valuation on $\Conv(\R^n,\R)$ with values in the space $\mathcal{M}(\R^n):=C_c(\R^n)'$ of (complex) Radon measures, considered as the continuous dual space of $C_c(\R^n)$ equipped with the weak-* topology. For $f\in\Conv(\R^n,\R)\cap C^2(\R^n)$, $\MA(f)$ is given by integrating the determinant of the Hessian $D^2f$ over a given Borel set, which extends by continuity to arbitrary convex functions. More generally, one can associate to functions $f_1,\dots,f_n\in\Conv(\R^n,\R)$ their mixed Monge-Amp\`ere measure defined by
\begin{align*}
	\MA(f_1,\dots,f_k):=\frac{1}{n!}\frac{\partial^n}{\partial\lambda_1\dots\partial\lambda_n}\Big|_0\MA\left(\sum_{j=1}^n\lambda_j f_j\right),
\end{align*}
which is well defined since $(\lambda_1,\dots,\lambda_n)\mapsto \MA\left(\sum_{j=1}^n\lambda_j f_j\right)$ is a polynomial in $\lambda_1,\dots,\lambda_n\ge 0$. From these, a variety of scalar valued valuations can be obtained, as first shown by Alesker \cite{AleskerValuationsconvexfunctions2019} and Colesanti, Ludwig, and Mussnig \cite{ColesantiEtAlHessianvaluations2020}. The constructions in \cite{AleskerValuationsconvexfunctions2019} rely on the fact that for $B\in C_c(\R^n)$ and functions $A_1,\dots,A_{n-k}\in C_c(\R^n,\Sym^2(\R^n))$ with values in the space $\Sym^2(\R^n)$ of symmetric $n\times n$ real matrices, the functional defined on $\Conv(\R^n,\R)\cap C^2(\R^n)$ given by
\begin{align}
	\label{equation:ValuationsMAOperators}
	f\mapsto \int_{\R^n} B(x)\det(D^2f(x)[k],A_1(x),\dots, A_{n-k}(x))dx,
\end{align}
where $\det:(\Sym^2(\R^n))^n\rightarrow\R$ denotes the mixed discriminant, extends uniquely by continuity to an element of $\VConv_k(\R^n)$. 
Using the multilinearity of the mixed discriminant, it is not difficult to see that the valuations defined in \eqref{equation:ValuationsMAOperators} can all be expressed in terms of the Monge-Amp\`ere operators $f\mapsto \MA(f[k],Q_1,\dots,Q_{n-k})$, where $Q_1,\dots,Q_{n-k}$ are positive semi-definite quadratic forms and $f$ is used $k$-times as an argument.\\

In this article we will be interested in scalar-valued valuations. Let $\VConv(\R^n)$ denote the space of all continuous valuations $\mu:\Conv(\R^n,\R)\rightarrow\C$ that are dually epi-translation invariant. As shown by Colesanti, Ludwig, and Mussnig \cite{ColesantiEtAlhomogeneousdecompositiontheorem2020}, this space admits a homogeneous decomposition mirroring the homogeneous decomposition of $\Val(\R^n)$: If we denote by $\VConv_k(\R^n)$ the subspace of all valuations $\mu$ that are $k$-homogeneous, i.e. that satisfy $\mu(tf)=t^k\mu(f)$ for all $f\in\Conv(\R^n,\R)$, $t\ge 0$, then 
\begin{align*}
	\VConv(\R^n)=\bigoplus_{k=0}^n\VConv_k(\R^n).
\end{align*}
For $k=0$, $\VConv_0(\R^n)$ consists of constant valuations, while it was shown in \cite{ColesantiEtAlhomogeneousdecompositiontheorem2020} that the valuations in the top component $\VConv_n(\R^n)$ are all of the form \eqref{equation:ValuationsMAOperators}. For the intermediate degrees, no full classification is known, however, there exists a description of a certain dense subspace based on Theorem~\ref{theorem:IrreducibilityTheorem} (see \cite{KnoerrSmoothvaluationsconvex2024}).

\subsection{Main results}
	In this article we establish a version of the Irreducibility Theorem~\ref{theorem:IrreducibilityTheorem} for the spaces $\VConv_k(\R^n)$. Let $\Aff(n,\R)$ denote the group of invertible affine transformations of $\R^n$. We have a natural representation $\pi$ of $\Aff(n,\R)$ on $\VConv(\R^n)$, given for $g\in\Aff(n,\R)$, $\mu\in\VConv(\R^n)$ by
	\begin{align*}
		[\pi(g)\mu](f)=\mu(f\circ g)\quad \text{for}~f\in\Conv(\R^n,\R).
	\end{align*}
	We equip $\VConv(\R^n)$ with the topology of uniform convergence on compact subsets of $\Conv(\R^n,\R)$. Then it is not difficult to see that the representation above is continuous (compare Section \ref{section:Valuations_Support_Topology}).\\
	The homogeneous components $\VConv_k(\R^n)$ are obviously $\Aff(n,\R)$-invariant subspaces, however, they are far from the only ones. In Section \ref{section:affineInvSubspaces} we provide a complete classification of these subspaces as the closure of certain spaces of valuations defined in terms of mixed Monge-Amp\`ere operators. The construction implies the following description of these spaces.
	\begin{maintheorem}
		\label{maintheorem:WeakIrreducibility}
		\begin{enumerate}
			\item Every affine invariant closed subspace of $\VConv_k(\R^n)$ has finite codimension.
			\item The set of affine invariant closed subspaces of $\VConv_k(\R^n)$ is totally ordered by inclusion.
			\item If $W\subset \VConv_k(\R^n)$ is an affine invariant subspace such that there exists a valuation $\mu\in W$ with $\mu(q)\ne 0$ for a positive semi-definite quadratic form $q$ on $\R^n$, then $W$ is dense in $\VConv_k(\R^n)$. 
		\end{enumerate}
	\end{maintheorem}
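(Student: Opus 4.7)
The plan is to reduce the classification to a tractable question via the Fourier--Laplace characterization of $\VConv_k(\R^n)$ announced in the abstract, exploiting the particularly simple structure of the $\GL(n,\R)$--orbits on positive semi-definite symmetric matrices.

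First, I would invoke the Paley--Wiener--Schwartz description to identify each $\mu\in\VConv_k(\R^n)$ with the Fourier--Laplace transform of its Goodey--Weil distribution. Under this identification, translations in $\Aff(n,\R)$ act by multiplication by exponentials and the linear part acts on the variable. From this it should follow that the closure of the $\Aff(n,\R)$-orbit of a single $\mu$ is determined by the $\GL(n,\R)$-orbit of its transform together with its behavior at the origin, so that the classification of closed affine invariant subspaces reduces to a $\GL(n,\R)$-equivariant problem on a space of entire functions.

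Second, using the structural results of \cite{KnoerrSmoothvaluationsconvex2024} that relate $\VConv_k(\R^n)$ to $\Val_k(\R^n)$ via pairing with mixed Monge--Amp\`ere operators, I would construct an explicit filtration
\begin{equation*}
\VConv_k(\R^n)=W_0\supsetneq W_1\supsetneq\cdots\supsetneq W_{n-k}\supsetneq W_{n-k+1}=\{0\},
\end{equation*}
where $W_r$ is the closure of the span of valuations of the form $f\mapsto\int_{\R^n} B(x)\,\MA(f[k],Q_1,\dots,Q_{n-k})$ with $B\in C_c(\R^n)$ and $Q_1,\dots,Q_{n-k}$ positive semi-definite of total rank at most $n-k-r+1$ (or an analogous notion of rank extracted from the Goodey--Weil data). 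The key input here is Sylvester's law of inertia: the $\GL(n,\R)$-orbits on positive semi-definite matrices are a finite chain indexed by rank, which forces the filtration to be finite and linearly ordered. The successive quotients $W_r/W_{r+1}$ should be seen to be finite-dimensional and $\Aff(n,\R)$-irreducible, using a homogeneity argument together with the Irreducibility Theorem~\ref{theorem:IrreducibilityTheorem} applied to the symbol-type valuations on convex bodies that arise via evaluation on strictly convex quadratic forms.

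Parts (1) and (2) of the theorem then follow formally from the finite length and total order of the filtration, once one shows that every closed affine invariant subspace coincides with some $W_r$; the latter reduces to proving that if $W$ is not contained in $W_{r+1}$ then $W\supseteq W_r$, which is precisely the irreducibility of the quotient. For part (3), observe that a positive semi-definite quadratic form $q$ of full rank provides a nondegenerate test element: evaluation $\mu\mapsto\mu(q)$ vanishes identically on every proper $W_r$ with $r\geq 1$ (since the corresponding mixed Monge--Amp\`ere expression degenerates when the $Q_j$ fail to span), so $\mu(q)\ne 0$ forces $\mu\notin W_1$ and hence $\overline{W}=W_0=\VConv_k(\R^n)$. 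The case of a general positive semi-definite $q$ follows by a homogeneity and continuity argument, extending any nonvanishing on the rank-deficient boundary to the full-rank interior via the polynomial nature of $q\mapsto\mu(q)$ on $\Sym^2(\R^n)$.

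The principal obstacle will be the construction of the filtration and the verification that the successive quotients admit no further closed invariant subspaces; this is the step that genuinely requires the Fourier--Laplace machinery, since it is here that one must translate the geometric condition of affine invariance into a rigid algebraic condition on Goodey--Weil symbols, and then invoke an irreducibility statement at the level of symbols rather than valuations.
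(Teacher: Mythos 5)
There is a genuine gap, and it lies at the heart of your argument: the proposed filtration does not exist. You posit a \emph{finite} chain $\VConv_k(\R^n)=W_0\supsetneq W_1\supsetneq\cdots\supsetneq W_{n-k+1}=\{0\}$ with finite-dimensional successive quotients; this would force $\VConv_k(\R^n)$ itself to be finite-dimensional, which is false (already $\phi\mapsto\int_{\R^n}\phi\,d\Hess_k(\cdot)$ embeds $C_c^\infty(\R^n)$ into $\VConv_k(\R^n)$). The rank-based definition of the steps also collapses: the mixed Monge--Amp\`ere operator is multilinear in $Q_1,\dots,Q_{n-k}$ and rank-one positive semi-definite forms span $\Sym^2(\R^n)$, so the closed span of the valuations $\int B\,d\MA(\cdot[k],Q_1,\dots,Q_{n-k})$ is unchanged when one restricts the ranks of the $Q_j$; this is consistent with the fact, used in the paper, that $\MAVal_k(\R^n)$ is an \emph{irreducible} finite-dimensional $\GL(n,\R)$-representation, so Sylvester-type rank strata cannot produce a nontrivial invariant filtration. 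Likewise your key claim for (3), that $\mu(q)=0$ for all $\mu$ in a rank-restricted class and $q$ of full rank, is false: taking $Q_j=v_j\otimes v_j$ with linearly independent $v_j$ and $q=|\cdot|^2$, the mixed discriminant is positive and $\int B\,d\MA(q[k],Q_1,\dots,Q_{n-k})\ne0$ for suitable $B$. Finally, part (1) as you set it up would be deduced from a structure that contradicts it ($\{0\}$ appearing as the last step of your chain has infinite codimension).

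The correct parametrizing invariant is not the rank of the reference forms but the order of vanishing at the origin of $\F(\mu)$, equivalently the vanishing of $\GW(\mu)$ on polynomials of bounded degree. The paper shows that the nontrivial closed affine invariant subspaces form a \emph{countably infinite} descending chain $W_d$, $d\in\mathbb{N}$, where $W_d$ is the closed span of the valuations $f\mapsto\int_{\R^n}\partial^\alpha\phi\,d\Psi(f)$ with $|\alpha|=d$, $\phi\in C_c^\infty(\R^n)$, $\Psi\in\MAVal_k(\R^n)$; each $W_d$ has finite codimension, the chain never reaches $\{0\}$, and this immediately gives (1) and (2). The proof that every closed affine invariant $W$ equals some $W_d$ goes through the module $\I(W)$ of lowest-order terms of $\F(\mu)$, the Gr\"obner-basis division algorithm, the highest-weight analysis of $\P(\C^n)\M^2_k$, and the Paley--Wiener--Schwartz theorem for smooth valuations -- not through Alesker's Irreducibility Theorem for $\Val_k^\pm(\R^n)$, which plays no role here. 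Part (3) is then either immediate from the classification together with the moment criterion characterizing $W_d$ (a valuation in $W_d$ with $d\ge1$ kills all positive semi-definite quadratic forms, by Lemma 6.12), or is proved directly by mollifying, averaging over $\SO(n)$, invoking a Hadwiger-type theorem to write the resulting valuation as $\int\phi_0(|x|)\,d\Hess_k$, and then using the $\GL(n,\R)$-irreducibility of $\MAVal_k(\R^n)$ plus the description of smooth valuations with given support. As it stands, your argument for all three parts rests on a filtration that cannot be constructed, so the proof does not go through.
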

	Note that (3) applies in particular to the space spanned by valuations of the form \eqref{equation:ValuationsMAOperators}. Thus, the valuations considered by Alesker in \cite{AleskerValuationsconvexfunctions2019} span a dense subspace of $\VConv(\R^n)$. Let us remark that this fact can also be obtained by combining the main results of \cite{KnoerrMongeAmpereoperators2024,KnoerrSmoothvaluationsconvex2024}, however, Theorem~\ref{maintheorem:WeakIrreducibility} (3) applies to much more general spaces of valuations constructed from mixed Monge-Amp\`ere operators, and we list some of these examples in Section \ref{section:denseSubspaces}.\\
	
	The proof of Theorem~\ref{maintheorem:WeakIrreducibility} is based on the interplay between three different notions of regularity for elements in $\VConv_k(\R^n)$, which turn out to coincide.	Let us discuss the three notions.
	\subsubsection{Representation theoretic/analytic perspective}
	The first notion mirrors the definition of smooth valuations on convex bodies as the smooth vectors of the $\GL(n,\R)$-representation $\Val(\R^n)$, compare \cite{AleskerTheoryvaluationsmanifolds.2006}. It turns out that the action of the subgroup of $\Aff(n,\R)$ given by translations is sufficient for our purposes.
	\begin{definition}
		\label{definition:SmoothValuations}
		A valuation $\mu\in\VConv_k(\R^n)$ is called a smooth valuation if the map 
		\begin{align*}
			\R^n&\rightarrow \VConv_k(\R^n)\\
			x&\mapsto \left[f\mapsto \mu(f(\cdot+x))\right]
		\end{align*}
		is smooth.
	\end{definition}
	A standard convolution argument shows that smooth valuations are dense in $\VConv_k(\R^n)$, compare Corollary~\ref{corollary:ApproxTranslationSmoothValuations}. 
	
	Let us remark that this terminology differs from the notion of smooth valuation used in \cite{KnoerrSmoothvaluationsconvex2024}, however, we show in Theorem~\ref{maintheorem:DescriptionsSmoothValuations} below that these two different notions are equivalent. The benefit of Definition \ref{definition:SmoothValuations} is the greater flexibility of this notion in approximation arguments. In particular, it enables approximation in translation invariant subspaces. On the other hand, it does a priori not provide an explicit representation of a given smooth valuation. 
	
	\subsubsection{Geometric perspective}
		In \cite{KnoerrSmoothvaluationsconvex2024}, the following geometric construction of valuations on convex functions was investigated: If $f\in\Conv(\R^n,\R)$ is sufficiently smooth, the graph of its differential is an $n$-dimensional submanifold of the cotangent bundle $T^*\R^n=\R^n\times (\R^n)^*$, and thus an integral current. As shown by Fu \cite{FuMongeAmperefunctions.1989}, this construction extends to arbitrary convex functions, which gives rise to the \emph{differential cycle}, an integral current $D(f)$ on $T^*\R^n$ associated to any $f\in\Conv(\R^n,\R)$.  By the main results of \cite{KnoerrSmoothvaluationsconvex2024}, any differential form $\omega\in \Omega^{n-k}_c(\R^n)\otimes \Lambda^{k}((\R^n)^*)^*$ induces a continuous valuation in $\VConv_k(\R^n)$ by $
		\mu(f)=D(f)[\omega]\quad\text{for}~f\in\Conv(\R^n,\R)$. In order to differentiate this class of valuations from smooth valuations, we will call valuations of this type \emph{representable by integration with respect to the differential cycle}.\\
		
		It follows from the results in \cite{KnoerrMongeAmpereoperators2024} that any such valuation can be written as a finite linear combination of valuations given by integrating elements of $C^\infty_c(\R^n)$ with respect to the mixed Monge-Amp\`ere operators
		\begin{align}
			\label{equation:MAquadratic}
			f\mapsto \MA(f[k],Q_1,\dots,Q_{n-k}),
		\end{align}
		where $Q_1,\dots,Q_{n-k}$ are positive semi-definite quadratic forms on $\R^n$. More precisely, the results in \cite{KnoerrMongeAmpereoperators2024} provide different equivalent characterizations of a certain space of measure-valued valuations $\Psi:\Conv(\R^n,\R)\rightarrow\mathcal{M}(\R^n)$, which we denote by $\MAVal_k(\R^n)$ (see Section \ref{section:MAOperators} for the definition and the precise statement). One of these characterizations identifies $\MAVal_k(\R^n)$ with linear combinations of the Monge-Amp\`ere operators in \eqref{equation:MAquadratic}. In particular $\MAVal_k(\R^n)$ is a finite dimensional space.

	\subsubsection{Fourier analytic perspective}
	In order to relate the two previous notions, we turn to a characterization of smoothness for distributions in terms of the properties of their Fourier--Laplace transform. The classical Paley--Wiener--Schwartz Theorem characterizes the entire functions on $\C^n$ that can be obtained as the Fourier--Laplace transform of a compactly supported distribution on $\R^n$
	in terms of the decay properties of these functions. For smooth functions with compact support, it entails the following characterization (see \cite[Theorem 7.3.1]{Hoermanderanalysislinearpartial2003}).	
	\begin{theorem}
		\label{theorem:PaleyWienerSchwartz}
		Let $A\subset \R^n$ be compact and convex. If $F$ is an entire function on $\C^n$ such that for every $N\in\mathbb{N}$ there exists a constant $C_N>0$ with
		\begin{align*}
			|F(z)|\le C_N (1+|z|)^{-N}e^{h_A(\Im z)} \quad\text{for}~z\in\C^n,
		\end{align*} 
		then $F$ is the Fourier--Laplace transform of a function in $C^\infty_c(\R^n)$ with support contained in $A$. Conversely, the Fourier--Laplace transform of any smooth function with support contained in $A$ satisfies an estimate of this form for every $N\in\mathbb{N}$.
	\end{theorem}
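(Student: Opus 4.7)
The plan is to prove the two implications separately; the necessity of the estimate for Fourier--Laplace transforms of test functions is straightforward, while the sufficiency requires constructing the function $\phi$ via inverse Fourier transform and then locating its support inside $A$ through a contour shift combined with a Hahn-Banach separation argument.

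For the converse direction, suppose $\phi\in C^\infty_c(\R^n)$ with $\supp\phi\subset A$ and set $F(z):=\int_{\R^n}\phi(x)e^{-i\langle x,z\rangle}\,dx$. Then $F$ is entire by differentiation under the integral sign, and for $z=\xi+i\eta$ the trivial bound $|e^{-i\langle x,z\rangle}|=e^{\langle x,\eta\rangle}\le e^{h_A(\eta)}$ on $A$ together with repeated integration by parts, $z^\alpha F(z)=\int_{\R^n}(-i)^{|\alpha|}\partial^\alpha\phi(x)e^{-i\langle x,z\rangle}\,dx$, yields $|z^\alpha F(z)|\le \|\partial^\alpha\phi\|_{L^1}e^{h_A(\eta)}$. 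Summing over $|\alpha|\le N$ gives the desired estimate.

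For the forward direction, define
\[
\phi(x):=(2\pi)^{-n}\int_{\R^n}F(\xi)e^{i\langle x,\xi\rangle}\,d\xi.
\]
Since the hypothesis with $\eta=0$ shows that $F|_{\R^n}\in\mathcal{S}(\R^n)$, $\phi$ is smooth (in fact Schwartz). Because $F$ is entire and decays rapidly on horizontal strips, one may shift the contour from $\R^n$ to $\R^n-i\eta$ using Cauchy's theorem (applied one variable at a time, with Fubini and the uniform estimate controlling the boundary terms at infinity) to obtain
\[
\phi(x)=(2\pi)^{-n}\int_{\R^n}F(\xi-i\eta)e^{i\langle x,\xi-i\eta\rangle}\,d\xi\quad\text{for every }\eta\in\R^n.
\]
Using $|\xi-i\eta|\ge|\xi|$ and choosing $N>n$ in the hypothesis gives the pointwise bound $|\phi(x)|\le C_N' e^{h_A(-\eta)+\langle x,\eta\rangle}$. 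If $x\notin A$, compactness and convexity of $A$ together with Hahn-Banach produce an $\eta\in\R^n$ with $\langle x,\eta\rangle<\inf_{a\in A}\langle a,\eta\rangle=-h_A(-\eta)$; replacing $\eta$ by $t\eta$ and using positive homogeneity of $h_A$ makes the exponent tend to $-\infty$ as $t\to\infty$, forcing $\phi(x)=0$. Hence $\phi\in C^\infty_c(\R^n)$ with $\supp\phi\subset A$, and as the Fourier--Laplace transform $\hat{\phi}$ is entire and coincides with $F$ on $\R^n$ by Fourier inversion, the identity theorem for holomorphic functions of several variables yields $\hat{\phi}=F$ on all of $\C^n$.

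The main technical obstacle is the rigorous justification of the contour shift in several complex variables, since one cannot directly invoke a one-dimensional Cauchy theorem; the argument proceeds inductively, shifting one coordinate at a time and using the rapid decay from the hypothesis to ensure that the truncation rectangles' lateral faces contribute vanishing flux as their size tends to infinity. All other steps (Fubini, interchange of integration and differentiation, application of the identity theorem) reduce to routine verifications once the growth estimate in the hypothesis is in hand.
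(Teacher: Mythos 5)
The paper does not prove Theorem~\ref{theorem:PaleyWienerSchwartz} at all; it is quoted directly from H\"ormander (Theorem 7.3.1), and your argument --- integration by parts for the necessity of the estimate, and inverse Fourier transform plus a one-variable-at-a-time contour shift, the uniform-in-$\eta$ bound $|\phi(x)|\le C_N' e^{h_A(-\eta)+\langle x,\eta\rangle}$, and the separation/scaling argument for the support statement --- is exactly that classical proof and is correct. The only cosmetic point is that the hypothesis at $\eta=0$ alone gives rapid decay of $F|_{\R^n}$ but not of its derivatives (for the full Schwartz claim you would invoke Cauchy estimates on unit polydiscs); this is immaterial, since rapid decay of $F|_{\R^n}$ already suffices to define $\phi$, differentiate under the integral, and apply Fourier inversion and the identity theorem as you do.
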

	Here, $h_A(y):=\sup_{x\in A}\langle y,x\rangle$ denotes the support function of $A$.\\
	
	The main goal of this article is a characterization of smooth valuations in terms of the Fourier--Laplace transform of certain distributions associated to homogeneous valuations in order to show that these valuations are representable by integration with respect to the differential cycle. It turns out that in contrast to the classical Paley--Wiener--Schwartz Theorem a simple decay condition is not sufficient to describe the relevant space of entire functions. Instead, the space of these functions is heavily restricted by additional geometric conditions.\\
	
	Let us make this more precise. It was shown in \cite{Knoerrsupportduallyepi2021} that one can associate to any $\mu\in\VConv_k(\R^n)$ a unique symmetric distribution $\GW(\mu)$ on $(\R^n)^k$ with compact support such that
	\begin{align}
		\label{eq:CharacterizingPropertyGW}
		\mu(f)=\GW(\mu)[f^{\otimes k}]\quad\text{for all}~f\in\Conv(\R^n,\R)\cap C^\infty(\R^n).
	\end{align}
	Here $f^{\otimes k}$ denotes the smooth function on $(\R^n)^k$ defined by $f^{\otimes k}(x_1,\dots,x_k)=f(x_1)\dots f(x_k)$ for $x_1,\dots,x_k\in\R^n$. This construction goes back to ideas of Goodey and Weil \cite{GoodeyWeilDistributionsvaluations1984} and we call $\GW(\mu)$ the Goodey--Weil distribution associated to $\mu$. Since smooth functions are dense in $\Conv(\R^n,\R)$, \eqref{eq:CharacterizingPropertyGW} implies that a valuation is uniquely determined by its Goodey--Weil distribution.\\
	
	One of the key properties of these distributions is that their support is contained in the diagonal $\{(x,\dots,x)\in(\R^n)^k:x\in\R^n\}$. Standard facts from distribution theory imply that the Fourier--Laplace transform of any compactly supported distribution with this property is a polynomial in the directions normal to the diagonal. Let us encode this property in a convenient way: We identify $(\C^n)^k$ with the space $\Mat_{n,k}(\C)$ of $(n\times k)$-matrices with complex entries and denote by $w=(w_1,\dots,w_k)$ the matrix with column vectors $w_1,\dots,w_k\in \C^n$. Let $\mathcal{O}_{\Mat_{n,k}(\C)}$ denote the space of entire functions on $\Mat_{n,k}(\C^n)$. We consider $\mathcal{O}_{\Mat_{n,k}(\C)}$ as a module over $\mathcal{O}_{\C^n}$, where $g\in \mathcal{O}_{\C^n}$ acts on $F\in \mathcal{O}_{\Mat_{n,k}(\C)}$ by the diagonal action
	 \begin{align*}
	 	(g\bullet F)(w):=g\left(\sum_{j=1}^kw_j\right)F(w), \quad w\in\Mat_{n,k}(\C).
	 \end{align*}
 	Then the Fourier--Laplace transforms of Goodey--Weil distributions belong to an $\mathcal{O}_{\C^n}$-submodule of $\mathcal{O}_{\Mat_{n,k}(\C)}$ generated by polynomials.\\
 	Let $\mathcal{P}(\Mat_{n,k}(\C))$ denote the space of polynomials on $\Mat_{n,k}(\C)$ and consider the subspace $\M^2_k\subset \mathcal{P}(\Mat_{n,k}(\C))$ spanned by quadratic products of the $k$-minors, that is, all quadratic products of the determinants of $(k\times k)$-submatrices of $w\in\Mat_{n,k}(\C)$. We denote by $\widehat{\M^2_k}$ the $\mathcal{O}_{\C^n}$-submodule of $\mathcal{O}_{\Mat_{n,k}(\C)}$ generated by $\M^2_k$.\\
 	The following result encodes the geometric restrictions satisfied by these distributions.
	\begin{maintheorem}
		\label{maintheorem:FourierTransformBelongsToModule}
		For any $\mu\in\VConv_k(\R^n)$, $\mathcal{F}(\GW(\mu))\in \widehat{\M^2_k}$.
	\end{maintheorem}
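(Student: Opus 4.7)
The plan is to combine a translation of structural properties of $\GW(\mu)$ to the Fourier side with an explicit computation on a dense subclass, followed by a continuity argument.

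First, I would encode the known properties of $\GW(\mu)$ as constraints on its Fourier--Laplace transform. From \cite{Knoerrsupportduallyepi2021}, $\GW(\mu)$ is a compactly supported symmetric distribution whose support lies in the diagonal of $(\R^n)^k$. Dual epi-translation invariance together with polarization implies $\GW(\mu)[f_1\otimes\cdots\otimes f_k]=0$ whenever some $f_i$ is affine. Standard distribution theory (compact support plus localization on a linear subspace) then yields that $\mathcal{F}(\GW(\mu))$ is entire on $\Mat_{n,k}(\C)$ and lies in the $\mathcal{O}_{\C^n}$-submodule of $\mathcal{O}_{\Mat_{n,k}(\C)}$ generated by polynomials (under the $\bullet$-action with diagonal variable $u=w_1+\cdots+w_k$); the affine annihilation forces the polynomial part to vanish to order at least $2$ along each subspace $\{w_i=0\}$.

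Second, I would establish the conclusion on a dense subclass. By \cite{KnoerrSmoothvaluationsconvex2024,KnoerrMongeAmpereoperators2024}, linear combinations of valuations of the form $\mu(f)=\int_{\R^n}B(x)\,\MA(f[k],Q_1,\ldots,Q_{n-k})\,dx$ with $B\in C_c^\infty(\R^n)$ and $Q_l$ positive semi-definite are dense in $\VConv_k(\R^n)$. For such $\mu$, polarization gives
\begin{equation*}
\GW(\mu)[f_1\otimes\cdots\otimes f_k]=\int_{\R^n}B(x)\det(D^2f_1,\ldots,D^2f_k,Q_1,\ldots,Q_{n-k})\,dx.
\end{equation*}
Evaluating on $f_j(x)=e^{-i\langle w_j,x\rangle}$ (valid by the compact support of $\GW(\mu)$) and using $D^2f_j=-w_jw_j^{\top}f_j$ gives
\begin{equation*}
\mathcal{F}(\GW(\mu))(w)=(-1)^k\widehat{B}(u)\det(w_1w_1^{\top},\ldots,w_kw_k^{\top},Q_1,\ldots,Q_{n-k}).
\end{equation*}
Decomposing each $Q_l=\sum_\sigma\lambda_l^\sigma v_l^\sigma(v_l^\sigma)^{\top}$ and applying the rank-one identity $\det(v_1v_1^{\top},\ldots,v_nv_n^{\top})=\tfrac{1}{n!}\det(v_1|\cdots|v_n)^2$ reduces the mixed discriminant to a linear combination of terms $\det(w_1|\cdots|w_k|v_1^{\sigma_1}|\cdots|v_{n-k}^{\sigma_{n-k}})^2$. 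Laplace expansion along the last $n-k$ columns expresses each such determinant as a linear combination of $k$-minors of $(w_1|\cdots|w_k)$, so its square lies in $\M^2_k$. Hence $\mathcal{F}(\GW(\mu))\in\widehat{\M^2_k}$ for this dense subclass.

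Third, I would extend to arbitrary $\mu\in\VConv_k(\R^n)$ by continuity. If $\mu_m\to\mu$ in $\VConv_k(\R^n)$ with $\mu_m$ in the dense subclass, then $\GW(\mu_m)\to\GW(\mu)$ weakly and $\mathcal{F}(\GW(\mu_m))\to\mathcal{F}(\GW(\mu))$ pointwise on $\Mat_{n,k}(\C)$. The main obstacle is closedness of $\widehat{\M^2_k}$ under such limits: fixing a basis $\{p_\beta\}$ of the finite-dimensional space $\M^2_k$, each $\mathcal{F}(\GW(\mu_m))$ has a unique representation $\sum_\beta p_\beta(w)g_{\beta,m}(u)$, and one must argue the entire coefficients $g_{\beta,m}$ converge to entire limits $g_\beta$. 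I would resolve this by choosing analytic $u$-dependent slices of $w$-space on which the $p_\beta$ are linearly independent (generic by linear independence of the $p_\beta$), inverting the resulting matrix to express each $g_{\beta,m}$ as a linear combination of values of $\mathcal{F}(\GW(\mu_m))$ on these slices, and invoking Paley--Wiener-type uniform bounds (from the uniform compact support of the $\GW(\mu_m)$) together with Montel's theorem to extract entire limits $g_\beta$; consequently $\mathcal{F}(\GW(\mu))=\sum_\beta p_\beta g_\beta\in\widehat{\M^2_k}$.
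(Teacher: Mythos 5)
Your steps 1--2 are fine: the computation for valuations of the form \eqref{equation:ValuationsMAOperators} (rank-one decomposition of the $Q_l$, the identity for mixed discriminants of rank-one matrices, Laplace expansion) is essentially the known statement recorded here as Theorem~\ref{theorem:FourierMA}, and the density of that subclass is available from the earlier literature. The gap is in step 3, and it is not a technicality: your argument hinges on the claim that each $\mathcal{F}(\GW(\mu_m))$ has a \emph{unique} representation $\sum_\beta p_\beta(w)g_{\beta,m}(u)$ with $g_{\beta,m}$ depending only on the diagonal variable, and on the existence of $u$-dependent slices on which the basis $p_\beta$ of $\M^2_k$ stays linearly independent. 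Both fail in general. The $\mathcal{O}_{\C^n}$-module generated by $\M^2_k$ is not freely generated by $\M^2_k$ (this is exactly the remark following Theorem~\ref{theorem:GroebnerAlgModuleMinor}): already for $n=3$, $k=2$ one has the relation $w_{3,2}[12]-w_{2,2}[13]+w_{1,2}[23]=0$ among the $2$-minors, with coefficients depending only on the last column, and multiplying by any minor produces nontrivial syzygies among the quadratic products whose coefficients lie in the subring acting on the module. Consequently, on every slice where the diagonal variable is fixed, the restrictions of your basis $p_\beta$ are linearly \emph{dependent}; linear independence of the $p_\beta$ on all of $\Mat_{n,k}(\C)$ is irrelevant, because to solve for $g_{\beta,m}(u_0)$ you may only use points whose diagonal coordinate equals $u_0$. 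So the matrix you propose to invert is singular, the coefficients are not determined by the values of $\mathcal{F}(\GW(\mu_m))$, and the extraction of entire limits $g_\beta$ breaks down.

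More fundamentally, what your step 3 needs is that the module $\O_{\C^n}\M^2_k$ is closed under locally uniform limits, i.e.\ a division statement with estimates: for \emph{some} choice of representation, $|g_{\beta,m}|$ must be controlled by $\sup|F_m|$ on suitable polydiscs, uniformly in $m$. This is not covered by the classical closure-of-modules theorems (those concern modules over the full ring $\mathcal{O}_{\Mat_{n,k}(\C)}$, not over the subring of functions of the diagonal variable), and it is precisely the content of the paper's Section~\ref{section:DivisionAlg}: the division algorithm of Theorem~\ref{theorem:DivisionAlg}, which works only because the particular Gr\"obner basis of Lemma~\ref{lemma:productsMinorsGroebnerBasis} has the special property stated there, and which yields the quantitative bounds of Theorem~\ref{theorem:GroebnerAlgModuleMinor}. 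Without such a step, Paley--Wiener bounds plus Montel give you convergence of $F_m$, but no control on any coefficient functions, and hence no way to conclude that the limit stays in $\widehat{\M^2_k}$. Note also that the paper's proof avoids the approximation argument altogether: it shows directly that every homogeneous term of the Taylor expansion of $\F(\mu)$ lies in $\P(\C^n)\M^2_k$ (Lemma~\ref{lemma:HomogeneousComponentsPowerSeriesSquaresMinors}, via restriction to $k$-dimensional subspaces, the top-degree classification, and a representation-theoretic characterization of the module), and then applies the division algorithm to the single function $\F(\mu)$. If you want to salvage your route, you would in any case have to prove a uniform division-with-estimates statement of this kind, at which point the limiting argument becomes superfluous.
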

	For $w\in \Mat_{n,k}(\C)$ we define its diagonal component to be the matrix $\diag(w):=\frac{1}{k}\left(\sum_{j=1}^kw_j,\dots,\sum_{j=1}^kw_j\right)\in\Mat_{n,k}(\C)$. We establish the following version of the Paley--Wiener--Schwartz Theorem for smooth valuations. We refer to Section \ref{section:Valuations_Support_Topology} for the notion of support of elements of $\VConv(\R^n)$.
	\begin{maintheorem}
		\label{maintheorem:PWSSmoothValuations}
		Let $A\subset\R^n$ be compact and convex. 
		A valuation $\mu\in\VConv_{k}(\R^n)$ is smooth and satisfies $\supp\mu\subset A$ if and only if the following holds: For every $N\in\mathbb{N}$ there exists a constant $C_N>0$ such that
		\begin{align}
			\label{eq:PWSconditionValuations}
			\begin{split}
				&|\mathcal{F}(\GW(\mu))[w]|\\
				&\quad \le C_N\left(1+\left|\diag(w)\right|\right)^{-N} e^{h_A\left(\sum_{j=1}^k\Im(w_j)\right)} |w-\diag(w)|^{2(k-1)}.
			\end{split}
		\end{align}
		Moreover, if an entire function belongs to $\widehat{\M^2_k}$ and satisfies estimates of the form \eqref{eq:PWSconditionValuations} for every $N\in\mathbb{N}$, then it is the Fourier--Laplace transform of the Goodey--Weil distribution of a unique smooth valuation in $\VConv_k(\R^n)$ with support contained in $A$.
	\end{maintheorem}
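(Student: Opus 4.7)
My plan is to prove both directions by exploiting the orthogonal decomposition $(\R^n)^k = D \oplus V$, where $D = \{(x,\ldots,x) : x \in \R^n\}$ is the diagonal and $V = \{(v_1,\ldots,v_k) : \sum_j v_j = 0\}$. Since $\supp \mu \subset A$ forces $\supp \GW(\mu) \subset \{(x,\ldots,x) : x \in A\} \subset D$, the structure theorem for compactly supported distributions with support in a submanifold gives a unique finite decomposition
\begin{align*}
	\GW(\mu) = \sum_{|\alpha| \leq M} (\partial_v^\alpha \delta_V) \otimes h_\alpha,
\end{align*}
with $h_\alpha \in \mathcal{E}'(\R^n)$, $\supp h_\alpha \subset A$; taking the Fourier--Laplace transform then yields
\begin{align*}
	\mathcal{F}(\GW(\mu))(w) = \sum_\alpha c_\alpha (w - \diag(w))^\alpha \, \mathcal{F}(h_\alpha)\!\left(\sum_{j=1}^k w_j\right),
\end{align*}
so that the Fourier dual of the diagonal direction is $\sum_j w_j$ while that of the transverse direction is $w - \diag(w)$.

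For the forward direction, Theorem~\ref{maintheorem:FourierTransformBelongsToModule} pins $\mathcal{F}(\GW(\mu))$ inside $\widehat{\M^2_k}$. A direct expansion using multilinearity of the determinant shows that any $k$-minor of $w$ is linear in $\diag(w)$ and of degree $k-1$ in $w - \diag(w)$, because $k$-minors vanish on $D$ by alternation; hence every product of two $k$-minors has transverse degree exactly $2(k-1)$, forcing the polynomial factor $|w - \diag(w)|^{2(k-1)}$ in the estimate once polynomial $\diag(w)$-dependence is absorbed into the rapidly decaying factor. To extract the rapid decay in the diagonal direction, I would argue that smoothness of $\mu$ forces each $h_\alpha$ to lie in $C^\infty_c(\R^n)$: since $y \mapsto \pi(y)\mu$ is smooth as a map into $\VConv_k(\R^n)$, the iterated translation derivatives $\partial_y^\beta \pi(y)\mu|_{y=0}$ are continuous valuations whose distributional orders (relative to a fixed compact convergence set in $\Conv(\R^n,\R)$) are uniformly bounded in $\beta$, and by the uniqueness of the transverse decomposition this forces each family $\{\partial^\beta h_\alpha\}_\beta$ to be of uniformly bounded order, so that $h_\alpha \in C^\infty_c(\R^n)$ with support in $A$. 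Applying the classical Paley--Wiener--Schwartz theorem (Theorem~\ref{theorem:PaleyWienerSchwartz}) to each $h_\alpha$ then produces the decay $(1+|\sum_j w_j|)^{-N} e^{h_A(\sum_j \Im w_j)}$, which combined with the polynomial bound above gives \eqref{eq:PWSconditionValuations}.

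For the reverse direction, assume $F \in \widehat{\M^2_k}$ satisfies \eqref{eq:PWSconditionValuations}. Fix a basis $P_1,\ldots,P_N$ of $\M^2_k$ and extract entire coefficient functions $G_1,\ldots,G_N$ on $\C^n$ by restricting $F$ to appropriately chosen affine slices of $\Mat_{n,k}(\C)$ transverse to $D$; the PWS estimate transfers to rapid decay of each $G_i$ with exponential type $e^{h_A(\Im \xi)}$. By Theorem~\ref{theorem:PaleyWienerSchwartz} each $G_i = \mathcal{F}(\phi_i)$ for a unique $\phi_i \in C^\infty_c(\R^n)$ with $\supp \phi_i \subset A$, so the distribution $T := \sum_i \bigl(P_i^*(\partial_v) \delta_V\bigr) \otimes \phi_i$ on $(\R^n)^k$ satisfies $\mathcal{F}(T) = F$ and $\supp T \subset \{(x,\ldots,x) : x \in A\}$. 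It then remains to check that $\mu(f) := T[f^{\otimes k}]$, initially defined on smooth convex $f$, extends continuously to $\VConv_k(\R^n)$ with $\GW(\mu) = T$; here the key is the identification of the $P_i^*(\partial_v)$ with differential operators arising from mixed discriminants of positive semi-definite quadratic forms, placing $\mu$ within the framework of \eqref{equation:ValuationsMAOperators} and making both the valuation property and smoothness of the translation action direct consequences of the regularity of the $\phi_i$.

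The main obstacle is the bijective correspondence between smoothness of $\mu$ in $\VConv_k(\R^n)$ (defined via uniform convergence on compact subsets of $\Conv(\R^n,\R)$) and regularity of the components $h_\alpha$ as distributions on $\R^n$. Since the topology on $\VConv_k(\R^n)$ is not naturally dual to test functions on $(\R^n)^k$, transferring smoothness across the Goodey--Weil correspondence demands a careful continuity argument, most likely routed through the density of valuations representable by integration against the differential cycle, whose Goodey--Weil distributions come with explicit formulas linking the $h_\alpha$ to their defining smooth data.
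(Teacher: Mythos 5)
Your forward direction is a genuinely different route from the paper's and is essentially viable: instead of working with $\F(\mu)$, you decompose $\GW(\mu)$ by the structure theorem for distributions supported in the diagonal, use that all diagonal-directional derivatives of $\GW(\mu)$ are again Goodey--Weil distributions of valuations supported in $A$ (whose orders are uniformly bounded, e.g.\ by Proposition~\ref{proposition:estimateFourierGW}) to conclude $h_\alpha\in C^\infty_c$, and then apply the classical Theorem~\ref{theorem:PaleyWienerSchwartz} componentwise; the transverse factor $|w-\diag(w)|^{2(k-1)}$ then indeed comes from Theorem~\ref{maintheorem:FourierTransformBelongsToModule}, although your justification of the bidegree of a $k$-minor is incomplete: terms with two or more diagonal columns vanish by alternation, and the pure transverse term of degree $k$ vanishes because the transverse components sum to zero, so each minor has transverse degree exactly $k-1$. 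The paper instead obtains the diagonal decay by the same translation-derivative trick applied directly to $\F(\mu)$ (Lemma~\ref{lemma:PWS_SatisfiedBySmoothValuations}) and controls the transverse behaviour via Corollary~\ref{corollary:EstimateFourierDiagonalCoordinates}; both arguments rest on the same analytic inputs, so this half is a legitimate alternative once the uniform-order step is written out.

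The genuine gap is in the reverse direction, at the step ``extract entire coefficient functions $G_1,\dots,G_N$ by restricting $F$ to appropriately chosen affine slices transverse to $D$; the PWS estimate transfers''. The $\O_{\C^n}$-module generated by $\M^2_k$ is \emph{not} freely generated by $\M^2_k$ (there are relations among the generators over $\P(\C^n)$), so the coefficients in a presentation $F=\sum_i g_i\bullet Q_i$ are neither unique nor recoverable by evaluating on finitely many transverse slices: for a fixed transverse value the $Q_i$ become quadratic polynomials in the diagonal variable, and solving the resulting linear system over $\O_{\C^n}$ by Cramer-type formulas divides by polynomials of the diagonal variable, destroying holomorphy and the decay estimates exactly where they are needed. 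Membership $F\in\widehat{\M^2_k}$ only guarantees the existence of \emph{some} presentation, with no control on the coefficients. Producing coefficients that simultaneously reproduce $F$ globally and inherit the estimates \eqref{eq:PWSconditionValuations} is precisely the content of the quantitative division algorithm based on the special Gr\"obner basis (Lemma~\ref{lemma:productsMinorsGroebnerBasis}, Theorem~\ref{theorem:DivisionAlg}, Theorem~\ref{theorem:GroebnerAlgModuleMinor}), which the paper uses through Theorem~\ref{theorem:PWSValuations}; without this (or an equivalent substitute) your construction of the $\phi_i$, and hence of the smooth valuation $\mu=\sum_i\Psi_i[\phi_i]$ with $\mathcal{F}(\GW(\mu))=F$, does not go through. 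The remaining steps of your reverse direction (identifying $\mathcal{F}(\GW(\mu))$ via Theorem~\ref{theorem:FourierMA} and Lemma~\ref{lemma:QBijectiveMA}, and smoothness of $\mu$ via Lemma~\ref{lemma:SmoothValTranslationSmooth}) coincide with the paper's argument.
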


	Theorem~\ref{maintheorem:FourierTransformBelongsToModule} and Theorem~\ref{maintheorem:PWSSmoothValuations} are  the key results that connect smooth valuations with valuations that are representable by integration with respect to the differential cycle. We establish the following equivalence between these notions. Recall that $\MAVal_k(\R^n)$ is a finite dimensional space which is spanned by the elements in \eqref{equation:MAquadratic}.
	\begin{maintheorem}
		\label{maintheorem:DescriptionsSmoothValuations}
		The following are equivalent for $\mu\in \VConv_k(\R^n)$:
		\begin{enumerate}
			\item $\mu$ is a smooth valuation in the sense of Definition \ref{definition:SmoothValuations}.
			\item There exists a differential form $\omega\in \Omega^{n-k}_c(\R^n)\otimes \Lambda^k((\R^n)^*)^*$ such that
			\begin{align*}
				\mu(f)=D(f)[\omega]\quad\text{for all}~f\in\Conv(\R^n,\R).
			\end{align*}
			\item For any basis $\Psi_j$, $1\le j\le \dim\MAVal_k(\R^n)$, of $\MAVal_k(\R^n)$ there exist $\phi_j\in C^\infty_c(\R^n)$, $1\le j\le N_{n,k}$, such that
			\begin{align*}
				\mu(f)=\sum_{j=1}^{\dim\MAVal_k(\R^n)}\int_{\R^n} \phi_jd\Psi_j(f)\quad\text{for all}~f\in\Conv(\R^n,\R).
			\end{align*}
		\end{enumerate}
	\end{maintheorem}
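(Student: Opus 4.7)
The plan is to prove the cyclic chain (3) $\Rightarrow$ (2) $\Rightarrow$ (1) $\Rightarrow$ (3). The first two implications are geometric and follow from previously established facts, while (1) $\Rightarrow$ (3) carries the analytic content and relies on Theorems~\ref{maintheorem:FourierTransformBelongsToModule} and~\ref{maintheorem:PWSSmoothValuations}.

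For (3) $\Rightarrow$ (2), results from \cite{KnoerrMongeAmpereoperators2024} show that every $\Psi \in \MAVal_k(\R^n)$ admits a representation $\int B\,d\Psi(f) = D(f)[B\,\omega_\Psi]$ for some form $\omega_\Psi \in \Omega^{n-k}_c(\R^n) \otimes \Lambda^k((\R^n)^*)^*$, so $\mu(f) = D(f)\bigl[\sum_j \phi_j\,\omega_{\Psi_j}\bigr]$. For (2) $\Rightarrow$ (1), $\mu(f(\cdot+x)) = D(f)[T_x^*\omega]$ because translation of the argument function corresponds to base translation on the differential cycle, and $x \mapsto T_x^*\omega$ is smooth into the Fr\'echet space $\Omega^{n-k}_c(\R^n) \otimes \Lambda^k((\R^n)^*)^*$ since $\omega$ has compact support; combined with the continuity of the pairing established in \cite{KnoerrSmoothvaluationsconvex2024}, this gives translation-smoothness in the sense of Definition~\ref{definition:SmoothValuations}.

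For (1) $\Rightarrow$ (3), suppose $\mu$ is translation-smooth with $\supp\mu \subset A$. A direct computation on test exponentials $f(x) = e^{i\langle x, v\rangle}$ shows that for $\Psi = \MA(\cdot[k], Q_1, \dots, Q_{n-k})$ and $\phi \in C^\infty_c(\R^n)$,
\begin{align*}
\mathcal{F}\bigl(\GW(\textstyle f \mapsto \int \phi\,d\Psi(f))\bigr)(w) = \hat\phi\!\left(\textstyle\sum_i w_i\right) P_\Psi(w)
\end{align*}
with $P_\Psi(w) = \det(w_1 \otimes w_1, \dots, w_k \otimes w_k, Q_1, \dots, Q_{n-k}) \in \M^2_k$, and the assignment $\Psi \mapsto P_\Psi$ is a linear isomorphism $\MAVal_k(\R^n) \to \M^2_k$. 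Fixing a basis $\{\Psi_j\}_{j=1}^N$ of $\MAVal_k(\R^n)$ with corresponding basis $\{P_j\}$ of $\M^2_k$, it suffices to exhibit entire functions $g_j \colon \C^n \to \C$ satisfying classical Paley--Wiener--Schwartz estimates $|g_j(z)| \le C_N(1+|z|)^{-N} e^{h_A(\Im z)}$ such that
\begin{align*}
\mathcal{F}(\GW(\mu))(w) = \sum_{j=1}^N g_j\!\left(\textstyle\sum_i w_i\right) P_j(w),
\end{align*}
since then Theorem~\ref{theorem:PaleyWienerSchwartz} provides $\phi_j \in C^\infty_c(\R^n)$ with $\supp \phi_j \subset A$ and $\hat\phi_j = g_j$, and matching Goodey--Weil distributions (via the injectivity of $\GW$) forces $\mu = \sum_j \int \phi_j\,d\Psi_j(\cdot)$.

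The main obstacle is this last step: extracting coefficient functions with classical bounds from the joint estimate \eqref{eq:PWSconditionValuations} satisfied by $F := \mathcal{F}(\GW(\mu))$, which by Theorem~\ref{maintheorem:FourierTransformBelongsToModule} and Theorem~\ref{maintheorem:PWSSmoothValuations} indeed belongs to $\widehat{\M^2_k}$ and satisfies the required estimates. The difficulty is that the representation $F = \sum_j g_j(\sum_i w_i) P_j(w)$ is not unique---there are nontrivial syzygies among the $P_j$ under the diagonal action, visible already for $k=1,\, n\ge 2$---so no Cramer-type inversion directly applies, and naive Hadamard-type integral representations produce coefficients decaying only as $\mathcal{O}(|z|^{-1})$, far short of the required rapid decay. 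My plan is to overcome this via H\"ormander's $L^2$-division theorem with plurisubharmonic weight $h_A(\Im \cdot) + N\log(1+|\cdot|)$, adapted to the diagonal $\mathcal{O}_{\C^n}$-module structure of $\widehat{\M^2_k}$: the transverse factor $|w - \diag(w)|^{2(k-1)}$ in \eqref{eq:PWSconditionValuations} should precisely encode the order of vanishing of $F$ along the algebraic variety cut out by the $P_j$ away from the diagonal, yielding sufficient vanishing to feed into the Bochner--Kodaira inequality. The delicate point, which is the technical heart of the argument, is to arrange the $L^2$-estimates so that the coefficients $g_j$ emerge as functions of $z = \sum_i w_i$ alone rather than of the full variable $w$, which requires a careful localization of the division procedure to fibers of the map $w \mapsto \sum_i w_i$.
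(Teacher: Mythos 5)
Your outer structure is sound: (3)$\Rightarrow$(2) via the representation of elements of $\MAVal_k(\R^n)$ by constant forms (Theorem~\ref{theorem:ClassificationMAVal}) and (2)$\Rightarrow$(1) via translation of the form and compact support are exactly the routes the paper takes (Lemma~\ref{lemma:SmoothValTranslationSmooth}), and you also correctly identify where all the analytic content sits, namely in (1)$\Rightarrow$(3): one must write $\mathcal{F}(\GW(\mu))[w]=\sum_j g_j\bigl(\sum_i w_i\bigr)Q(\Psi_j)[w]$ with coefficients $g_j$ that are entire functions of the \emph{diagonal} variable alone and satisfy classical Paley--Wiener--Schwartz bounds, so that $g_j=\mathcal{F}(\phi_j)$ and injectivity of $\GW$ closes the argument.

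The problem is that you do not actually carry out this step. Membership in $\widehat{\M^2_k}$ (Theorem~\ref{maintheorem:FourierTransformBelongsToModule}) gives \emph{some} decomposition, but the whole difficulty --- which you name yourself --- is to produce one whose coefficients inherit the rapid decay from \eqref{eq:PWSconditionValuations} and depend only on $\sum_i w_i$. Your proposal for this is a plan (H\"ormander $L^2$-division with weight $h_A(\Im\cdot)+N\log(1+|\cdot|)$, ``localized to the fibers of $w\mapsto\sum_i w_i$''), and you explicitly flag the fiber-localization as the unresolved ``technical heart''. That is a genuine gap, not a proof: standard $L^2$-division produces coefficients depending on all of $w$, and the syzygies you yourself point out mean a fiberwise choice need not glue to a single holomorphic function of $z=\sum_i w_i$ with uniform bounds, so it is not at all clear the scheme can be made to work as stated. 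The paper resolves precisely this point by a completely different, elementary mechanism: an explicit ordered Gr\"obner basis of $\P(\C^n)\M^2_k$ by quadratic products of $k$-minors with the special property that no multiple of $P_i-\mathrm{in}(P_i)$ contains a monomial divisible by $\mathrm{in}(P_i)$ (Lemma~\ref{lemma:productsMinorsGroebnerBasis}), fed into a Cauchy-integral division algorithm (Lemma~\ref{lemma:monomialDivisionEstimate}, Theorem~\ref{theorem:DivisionAlg}, Theorem~\ref{theorem:GroebnerAlgModuleMinor}); this forces the coefficients to be functions of $w_k$ (equivalently of the diagonal variable, after the coordinate change defining $\F(\mu)$) with only polynomial loss in the estimates. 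Combined with Lemma~\ref{lemma:HomogeneousComponentsPowerSeriesSquaresMinors}/Theorem~\ref{theorem:RepresentationFMu} (membership of every homogeneous term of the power series in the polynomial module) and the decay estimate for smooth valuations (Lemma~\ref{lemma:PWS_SatisfiedBySmoothValuations}), this yields Theorem~\ref{theorem:PWSValuations}, whose refined conclusion (the representation $\mu=\sum_j\int\phi_j\,d\Psi_j$ with $\supp\phi_j\subset A$) is exactly what (1)$\Rightarrow$(3) requires; note that citing only the ``Moreover'' part of Theorem~\ref{maintheorem:PWSSmoothValuations} would not suffice either, since it asserts existence of a smooth valuation but not the integral representation. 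Until you either invoke this division machinery or supply a complete substitute for it, the implication (1)$\Rightarrow$(3) remains unproven in your proposal.
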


	\subsection{Plan of the article}
		
	The main constructions in this article are centered around the proofs of Theorem \ref{maintheorem:FourierTransformBelongsToModule} and Theorem \ref{maintheorem:PWSSmoothValuations}, from which the remaining results are deduced. Let us briefly sketch the idea behind the constructions. \\
	In order to relate the Fourier--Laplace transform of the Goodey--Weil distributions to the space $\M^2_k$ generated by quadratic products of $k$-minors, we first show that the homogeneous terms in the power series expansion all belong to the corresponding module of polynomials. Since the restriction of the Fourier--Laplace transform of the Goodey--Weil distributions encodes a certain restriction procedure for homogenous valuations, we can recover the restrictions of the homogeneous terms in the power series expansion from the classification of valuations of top degree from \cite{ColesantiEtAlhomogeneousdecompositiontheorem2020}.\\
	This reduces the problem to a characterization of a certain submodule of polynomials generated by $\M^2_k$ in terms of the restriction to lower dimensional subspaces, which we obtain using some basic tools from the representation theory of $\GL(n,\C)$.\\
	The next step consists in obtaining a suitable representation of an entire function on $\Mat_{n,k}(\C)$ with the property that the homogeneous terms of its power series expansion belongs to this module. In Section \ref{section:DivisionAlg}, we use some basic tools from complex analysis to obtain a division algorithm that expresses an element with this property as an $\mathcal{O}_{\C^n}$-linear combination of suitable generators such that the coefficients can be estimated in terms of the original function. Combining this result with estimates for the Fourier--Laplace transform of Goodey--Weil distributions of smooth valuations provides a decomposition into a sum of entire functions that admit an interpretation in terms of valuations constructed from elements in $\MAVal_k(\R^n)$, which establishes the desired integral representations.\\
		
		The article is structured as follows:\\
		In Section \ref{section:Preliminaries} we discuss some background on convex functions and establish a description of a certain module of polynomials generated by $\M^2_k$ in terms of the behavior of these polynomials under restrictions. Section \ref{section:DivisionAlg} establishes some results for the decomposition of entire functions belonging to a specific class of submodules generated by polynomials into multiples of generators of these modules. These results are based on some simple properties of Gröbner bases, which we briefly recall.\\
		Section \ref{section:DuallyEpiValuations} contains the necessary background on dually epi-translation invariant valuations. We also discuss some basic properties of the differential cycle and its relation to the space $\MAVal_k(\R^n)$, and we establish basic properties of smooth valuations. This section also contains some approximation results.\\
		In Section \ref{section:FourierGW} we examine the Fourier--Laplace transform of Goodey--Weil distributions and prove Theorem~\ref{maintheorem:FourierTransformBelongsToModule} based on the results in Section \ref{section:DivisionAlg}. These results are combined with an additional estimate for smooth valuations in Section \ref{section:PWSValuations} in order to prove Theorem~\ref{maintheorem:PWSSmoothValuations} and Theorem~\ref{maintheorem:DescriptionsSmoothValuations}.\\
		Section \ref{section:AffInariantSubspaces} contains the proof of Theorem \ref{maintheorem:WeakIrreducibility} as well as some additional density results that follow from this result.

 	\subsection*{Acknowledgments} I want to thank Georg Hofstätter for his comments on the first draft of this article.
 	
\section{Preliminaries}
	\label{section:Preliminaries}
	\subsection{Convex functions}
		\label{section:Preliminaries_convexFunctions}
		We refer to \cite{RockafellarConvexanalysis1997,RockafellarWetsVariationalanalysis1998} for a general background on convex functions and only collect the results needed for the constructions in this article. Recall that we equip $\Conv(\R^n,\R)$ with the topology induced by locally uniform convergence, which coincides on $\Conv(\R^n,\R)$ with pointwise convergence and epi-convergence, compare \cite[Theorem 7.17]{RockafellarConvexanalysis1997}. This implies in particular that the topology on $\Conv(\R^n,\R)$ is metrizable. \\
					
		Let $\Aff(n,\R)$ denote the group of invertible affine transformations of $\R^n$. The following is a simple consequence of the characterization of the topology on $\Conv(\R^n,\R)$ in terms of pointwise convergence and the fact that finite convex functions are continuous.
		\begin{lemma}
			\label{lemma:continuityActionAffonConv}
			The map
			\begin{align*}
				\Aff(n,\R)\times\Conv(\R^n,\R)&\rightarrow\Conv(\R^n,\R)\\
				g&\mapsto f\circ g^{-1}
			\end{align*}
			is continuous.
		\end{lemma}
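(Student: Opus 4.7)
The plan is to exploit the fact that, on $\Conv(\R^n,\R)$, the given topology coincides with pointwise convergence, so that verifying sequential continuity reduces to a pointwise check, which can be handled by the automatic continuity of finite convex functions.

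First I would pass to sequences: since both $\Aff(n,\R)$ and $\Conv(\R^n,\R)$ are metrizable, it suffices to show that whenever $g_m\to g$ in $\Aff(n,\R)$ and $f_m\to f$ in $\Conv(\R^n,\R)$, one has $f_m\circ g_m^{-1}\to f\circ g^{-1}$ in $\Conv(\R^n,\R)$. Using the equivalence of the topology with pointwise convergence, this reduces to showing $f_m(g_m^{-1}(x))\to f(g^{-1}(x))$ for every fixed $x\in\R^n$.

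Next I would fix $x\in\R^n$ and set $y_m:=g_m^{-1}(x)$, $y:=g^{-1}(x)$. Inversion and evaluation are continuous on $\Aff(n,\R)$, so $y_m\to y$. Choose a compact neighbourhood $K$ of $y$; since convergence in $\Conv(\R^n,\R)$ is locally uniform, $f_m\to f$ uniformly on $K$, and for $m$ large we have $y_m\in K$, so
\begin{align*}
|f_m(y_m)-f(y)|\le \|f_m-f\|_{L^\infty(K)}+|f(y_m)-f(y)|.
\end{align*}
The first term vanishes in the limit by locally uniform convergence, and the second by continuity of the finite convex function $f$ on $\R^n$. Hence $f_m(y_m)\to f(y)$, giving pointwise convergence of $f_m\circ g_m^{-1}$ to $f\circ g^{-1}$.

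Finally, since each $f_m\circ g_m^{-1}$ is convex and the pointwise limit $f\circ g^{-1}$ lies in $\Conv(\R^n,\R)$, pointwise convergence of a sequence in $\Conv(\R^n,\R)$ already gives convergence in the topology on $\Conv(\R^n,\R)$. There is no real obstacle here; the only subtle point is to invoke the characterization of the topology by pointwise convergence in both directions (once to reduce the target to a pointwise statement, once to conclude), together with the continuity of finite convex functions which ensures the composition lands in the right space and that $f(y_m)\to f(y)$.
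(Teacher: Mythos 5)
Your argument is correct and is precisely the argument the paper has in mind: the paper states the lemma without proof, remarking only that it is a simple consequence of the coincidence of the topology with pointwise (equivalently locally uniform) convergence and the continuity of finite convex functions, which is exactly what you have spelled out via the sequential reduction and the estimate $|f_m(y_m)-f(y)|\le \|f_m-f\|_{L^\infty(K)}+|f(y_m)-f(y)|$.
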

		Recall that the spaces of valuations considered in this article are equipped with the topology of uniform convergence on compact subsets of $\Conv(\R^n,\R)$. These sets may be characterized in the following way.
		\begin{proposition}[\cite{Knoerrsupportduallyepi2021} Proposition 2.4]
			\label{proposition:compactnessConv}
			A subset $K\subset \Conv(\R^n,\R)$ is relatively compact if and only if it is bounded on compact subsets of $\R^n$, that is, if for any compact subset $A\subset \R^n$ there exists a constant $C(A)>0$ such that
			\begin{align*}
				\sup\limits_{x\in A}|f(x)|\le C(A)\quad \forall f\in K.
			\end{align*}
		\end{proposition}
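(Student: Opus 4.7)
The strategy is to prove the two implications separately, using the metrizability of $\Conv(\R^n,\R)$ to work with sequences throughout.

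\textbf{The easy direction.} Suppose $K$ is relatively compact, and fix a compact set $A\subset\R^n$. The plan is to show that the map
\begin{align*}
\Conv(\R^n,\R)&\to\R,\\
f&\mapsto \sup_{x\in A}|f(x)|
\end{align*}
is continuous with respect to the topology of locally uniform convergence. This is immediate: if $f_m\to f$ locally uniformly, then $f_m\to f$ uniformly on $A$, so the suprema converge. Since a continuous function attains its maximum on the compact closure $\overline{K}$, this produces the required constant $C(A)$.

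\textbf{The nontrivial direction.} Assume $K$ is bounded on compact sets; we want $\overline{K}$ compact. Since $\Conv(\R^n,\R)$ is metrizable, it suffices to produce, from any sequence $(f_m)\subset K$, a subsequence converging locally uniformly to some $f\in\Conv(\R^n,\R)$. The plan has four steps:

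\emph{Step 1 (equi-Lipschitz bound).} The key classical input is that a family of convex functions that is uniformly bounded on an open set is locally uniformly Lipschitz on any compactly contained subset. Concretely, given $x_0\in\R^n$ and $r>0$, pick a compact neighborhood $A$ of the closed ball $\overline{B(x_0,2r)}$; by hypothesis $|f|\le C(A)$ for all $f\in K$, and convexity then yields a uniform Lipschitz constant $L=L(x_0,r,C(A))$ for the restrictions $f|_{B(x_0,r)}$, $f\in K$. This is a standard consequence of the fact that a convex function on $B(x_0,2r)$ bounded by $M$ is Lipschitz on $B(x_0,r)$ with constant controlled by $M/r$.

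\emph{Step 2 (Arzel\`a--Ascoli).} Combining the local boundedness hypothesis with the local equi-Lipschitz bound from Step 1, the Arzel\`a--Ascoli theorem applied on each ball $B(0,j)$, together with a diagonal argument over $j\in\mathbb{N}$, produces a subsequence $(f_{m_\ell})$ converging locally uniformly to a continuous function $f:\R^n\to\R$.

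\emph{Step 3 (convexity and finiteness of the limit).} Since each $f_{m_\ell}$ is convex and convexity is preserved under pointwise (hence locally uniform) limits, $f$ is convex. Finiteness of $f$ follows because the pointwise bounds on $K$ transfer to $f$. Thus $f\in\Conv(\R^n,\R)$.

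\emph{Step 4 (conclusion).} As the topology on $\Conv(\R^n,\R)$ is precisely that of locally uniform convergence, the subsequence converges to $f$ in $\Conv(\R^n,\R)$, proving sequential, hence full, relative compactness of $K$.

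\textbf{Main obstacle.} The only nontrivial step is Step 1, the passage from pointwise/local boundedness to a local equi-Lipschitz estimate for convex functions. Everything else is either an Arzel\`a--Ascoli argument or a standard stability property (convexity and boundedness under locally uniform limits). Since this Lipschitz estimate is classical in convex analysis (it can be read off from the chord-slope characterization of convexity on an interval and then vectorized), I expect the proof to be short and to consist mainly of citing \cite{RockafellarConvexanalysis1997,RockafellarWetsVariationalanalysis1998} for the Lipschitz bound and the closedness of $\Conv(\R^n,\R)$ under locally uniform limits.
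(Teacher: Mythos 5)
Your proposal is correct; note that the paper itself gives no proof here but simply quotes the result from \cite{Knoerrsupportduallyepi2021}, and your argument (continuity of $f\mapsto\sup_A|f|$ for the easy direction; local boundedness $\Rightarrow$ local equi-Lipschitz $\Rightarrow$ Arzel\`a--Ascoli with a diagonal argument, plus stability of convexity and metrizability, for the converse) is essentially the standard argument underlying the cited Proposition 2.4, which rests on the same convex-analysis facts from \cite{RockafellarConvexanalysis1997,RockafellarWetsVariationalanalysis1998}.
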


\subsection{Polynomials and $k$-minors on $\Mat_{n,k}(\C)$}
\label{section:Prelim_Minors}
The goal of this section is a characterization of a certain module of polynomials generated by quadratic products of $k$-minors in terms of the behavior of these polynomials under restrictions to lower dimensional subspaces. Recall that we identify $(\C^n)^k\cong \Mat_{n,k}(\C)$ with the space of complex $(n\times k)$-matrices and aim to investigate certain $\mathcal{O}_{\C^n}$-modules of holomorphic functions on $\Mat_{n,k}(\C)$, where $\mathcal{O}_{\C^n}$ is identified with a subring of $\mathcal{O}_{\Mat_{n,k}(\C)}$ of functions constant along the off-diagonal $\{w\in\Mat_{n,k}(\C):\sum_{j=1}^k w_j=0\}$. It will be convenient to use the change of coordinates
\begin{align*}
	\Mat_{n,k}(\C)&\rightarrow\Mat_{n,k}(\C)\\
	w&\mapsto\left(\frac{w_1+w_k}{k},\dots,\frac{w_{k-1}+w_k}{k},\frac{w_k}{k}-\sum_{j=1}^{k-1}\frac{w_j}{k}\right),
\end{align*}
which maps matrices of the form $(w_1,\dots,w_{k-1},0)$ to the off-diagonal and matrices of the form $(0,\dots,0,w_k)$ to the diagonal $\{(z,\dots,z)\in\Mat_{n,k}(\C): z\in \C^n\}$. In these coordinates, functions that are constant along the off-diagonal correspond to functions depending on the variable $w_k$ only.\\
In order to be consistent with the usual notation for the entries of an $(n\times k)$-matrix, we will denote the components of the column vector $w_j\in \C^n$ by $w_j=(w_{1,j},\dots,w_{n,j})$.\\

For a finite dimensional complex vector space $V$ let $\P(V)$ denote the space of complex polynomials on $V$. We call $P\in\mathcal{P}(\Mat_{n,k}(\C))$ homogeneous of degree $h\in\mathbb{N}^k$ if
\begin{align*}
	P(t_1w_1,\dots,t_kw_k)=t_1^{h_1}\dots t_k^{h_k}P(w)
\end{align*}
for $t_1\dots,t_k\in \C$, $w=(w_1,\dots,w_k)\in\Mat_{n,k}(\C)$, i.e. $P$ is homogeneous with respect to rescaling the columns of the matrix $w$.\\

We denote by $\M_k\subset \P(\Mat_{n,k}(\C))$ the space spanned by $k$-minors. Note that for any $k$-dimensional complex subspace $F\subset \C^n$, we may consider $F^k\subset (\C^n)^k$ as a subset of $\Mat_{n,k}(\C)$. If we choose a basis of $F$, then the restriction of any $k$-minor to $F^k$ is a multiple of the determinant of the coordinate matrix of $w\in F^k$ with respect to the given basis. In particular, the restriction of elements in $\M_k$ to $F^k$ defines a $1$-dimensional subspace in $\P(F^k)$.\\
Let ${}^\C\Gr_k(\C^n)$ denote the Grassmannian of $k$-dimensional complex subspaces of $\C^n$, $\Gr_k(\R^n)$ the Grassmannian of $k$-dimensional real subspaces of $\R^n$. For $\Delta\in \M_k$ set
\begin{align*}
	U_\Delta:=\{E\in {}^\C\Gr_k(\C^n): \Delta|_{E^k}\ne 0\}.
\end{align*}
Note that this is a Zariski dense subset of ${}^\C\Gr_k(\C^n)$ unless $\Delta$ vanishes identically. In the rest of this section, we consider polynomials that restrict to certain multiples of $\Delta^2$ on $U_\Delta$. The first lemma shows that it is sufficient to consider restrictions to complexifications of real subspaces.
\begin{lemma}
	\label{lemma:PolynomialFromComplexifiedSpaces}
	Let $P\in\P(\Mat_{n,k}(\C))$ be a polynomial with the following property: For every $\Delta\in \M_k$ and every $E\in U_\Delta\cap \{E_0\otimes \C: E_0\in\Gr_k(\R^n)\}$ there exists a polynomial $P_{\Delta,E}\in \P(E)$ such that for all $w_1,\dots,w_k\in E$
	\begin{align}
		\label{eq:restrictionRealSubspaces}
		P(w_1,\dots,w_k)=\Delta^2(w_1,\dots,w_k) P_{\Delta,E}(w_k).
	\end{align}
	Then there exist polynomials $Q_{\Delta,E}\in \mathcal{P}(E)$ for all $E\in U_\Delta$ such that \begin{align*}
		P(w_1,\dots,w_k)=\Delta^2(w_1,\dots,w_k) Q_{\Delta,E}(w_k).
	\end{align*}
	for all $w_1,\dots,w_k\in E$.
\end{lemma}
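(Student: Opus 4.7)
The plan is to parametrize the complex $k$-dimensional subspaces of $\C^n$ by rank-$k$ matrices in $\Mat_{n,k}(\C)$ and exploit the Zariski density of the real form $\Mat_{n,k}(\R) \subset \Mat_{n,k}(\C)$. I may assume $\Delta \ne 0$, since otherwise $U_\Delta$ is empty and the claim is vacuous. For any rank-$k$ matrix $A \in \Mat_{n,k}(\C)$, write $E_A \subset \C^n$ for its column span; every $k$-tuple in $E_A^k$ equals $AV$ for a unique $V \in \Mat_{k,k}(\C)$. Since $\Delta$ is a linear combination of $k$-minors, the map $V \mapsto \Delta(AV)$ is alternating multilinear in the columns of $V$, so $\Delta(AV) = c(A) \det V$ for a nonzero polynomial $c \in \C[A]$. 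In particular $E_A \in U_\Delta$ if and only if $c(A) \ne 0$, and $E_A$ is the complexification of a real subspace whenever $A$ can be chosen real. The hypothesis then translates into the polynomial identity (in $V$)
\begin{align*}
	P(AV) = c(A)^2 \det(V)^2 \cdot \pi_A(v_k),
\end{align*}
valid for every $A \in \Mat_{n,k}(\R)$ with $c(A) \ne 0$, where $\pi_A(v_k) := P_{\Delta,E_A}(Av_k) \in \C[v_k]$.

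The crucial observation is that $\Mat_{n,k}(\R) \cap \{c \ne 0\}$ is Zariski dense in $\Mat_{n,k}(\C)$, which allows the identity above to be upgraded to a factorization $P(AV) = \det(V)^2 \, \tilde Q(A, v_k)$ with $\tilde Q \in \C[A, v_k]$, valid for every $A \in \Mat_{n,k}(\C)$. I carry this out in two steps. First, truncating to polynomials in $V$ of degree at most $\deg P$, the image of $P(AV)$ in the finite-dimensional quotient of this space modulo multiples of $\det(V)^2$ is a polynomial in $A$ with values in this finite-dimensional $\C$-vector space; it vanishes on a Zariski-dense set and hence identically, yielding $P(AV) = \det(V)^2 \, Q(A, V)$ for some $Q \in \C[A, V]$. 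Second, the partial derivatives $\partial Q/\partial v_{i,j}$ with $j < k$ are polynomials in $A$ and $V$ that vanish on the same dense locus (where $Q = c(A)^2 \pi_A(v_k)$ involves only $v_k$), hence vanish identically, so $Q(A, V) = \tilde Q(A, v_k)$.

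For an arbitrary $E \in U_\Delta$, I then pick any rank-$k$ matrix $A$ with column span $E$ (so $c(A) \ne 0$) and set
\begin{align*}
	Q_{\Delta, E}(w_k) := c(A)^{-2} \tilde Q(A, A^{-1} w_k),
\end{align*}
which is a polynomial in $w_k \in E$. Independence of the choice of basis follows from the transformation rule $\tilde Q(AB, B^{-1} v_k) = (\det B)^2 \tilde Q(A, v_k)$ for $B \in \GL(k, \C)$, obtained by comparing the two expansions of $P((AB)V) = P(A \cdot BV)$, together with $c(AB) = c(A) \det B$. Unwinding the definitions then yields the desired identity $P(w_1, \dots, w_k) = \Delta^2(w_1, \dots, w_k) \cdot Q_{\Delta, E}(w_k)$ on $E^k$.

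The main conceptual hurdle is packaging the hypothesis as Zariski-closed conditions on $A$ so that the density argument applies; once both the divisibility by $\det(V)^2$ and the $v_k$-only dependence of the quotient are recognized as such, the remainder is routine algebraic bookkeeping and the verification of compatibility under basis changes.
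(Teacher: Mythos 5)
Your proof is correct, but it reaches the conclusion by a mechanism genuinely different from the paper's. You parametrize subspaces by matrices $A\in\Mat_{n,k}(\C)$, use $\Delta(AV)=\Delta(A)\det(V)$ to translate the hypothesis into $P(AV)=\Delta(A)^2\det(V)^2\pi_A(v_k)$ for real $A$ with $\Delta(A)\ne0$, and then upgrade this to the single global polynomial identity $P(AV)=\det(V)^2\tilde{Q}(A,v_k)$ by two Zariski-density arguments: a finite-dimensional truncation (working modulo $\det(V)^2\cdot\P_{\le \deg P-2k}(V)$, with the linear isomorphism $q\mapsto\det(V)^2q$ guaranteeing that the quotient varies polynomially in $A$) to get uniform divisibility by $\det(V)^2$, and then vanishing of the partials $\partial/\partial v_{i,j}$, $j<k$, to remove the dependence on the first $k-1$ columns; specializing $A$ to a basis of an arbitrary $E\in U_\Delta$ yields $Q_{\Delta,E}$. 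The paper instead works with the regular function $P/\Delta^2$ on $\{\Delta\ne0\}$: it shows by an analogous real-density argument that this quotient depends only on $w_k$ and $\mathrm{span}(w_1,\dots,w_k)$, and then, after reducing to $P$ homogeneous of degree $(2,\dots,2,d)$, uses a removable-singularity and polynomial-growth argument to see that the induced holomorphic function on each complex $E$ is a polynomial of degree at most $d-2$. Your route stays entirely inside polynomial algebra, avoids both the homogeneity reduction and the complex-analytic extension step, and gives the slightly stronger statement that $Q_{\Delta,E}$ depends polynomially on a chosen basis of $E$ (up to the $\det(B)^2$ cocycle you verify); the cost is the matrix-parametrization bookkeeping. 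Note that the final basis-independence check is not needed for the existence claim of the lemma (any admissible $A$ produces a valid $Q_{\Delta,E}$, which is in any case unique since $\Delta|_{E^k}\ne 0$), but it is harmless.
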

\begin{proof}
	First note that this is vacuously true for $\Delta=0$ since $U_0=\emptyset$, so we may assume that $\Delta\ne 0$. We may further assume that $P$ is a homogeneous polynomial. Since the set
	\begin{align*}
		\{(w_1,\dots,w_k):w_1,\dots,w_k\in E\in U_\Delta\cap \{E_0\otimes \C: E_0\in \Gr_k(\R^n)\}\}
	\end{align*} is Zariski dense in $\Mat_{n,k}(\C)$, \eqref{eq:restrictionRealSubspaces} implies that $P$ is homogeneous of degree $(2,\dots,2,d)$, $d\ge 2$. Set $W_\Delta=\{w\in \Mat_{n,k}(\C):\Delta(w)\ne 0\}$. Consider the regular function on $W_\Delta$ given by
	\begin{align*}
		Q(w_1,\dots,w_k):=\frac{P(w_1,\dots,w_k)}{\Delta^2(w_1,\dots,w_k)}.
	\end{align*}
	We claim that the value of $Q(w_1,\dots,w_k)$ only depends on $w_k$ and the at most $k$-dimensional subspace $\mathrm{span}(w_1,\dots,w_k)$. To see this, it is sufficient to show that 
	\begin{align*}
		P(w_1,\dots,w_k)\Delta^2(w'_1,\dots,w'_{k-1},w_k)-P(w'_1,\dots,w'_{k-1},w_k)\Delta^2(w_1,\dots,w_k)
	\end{align*}
	vanishes if $\mathrm{span}(w'_1,\dots,w'_{k-1},w_k)=\mathrm{span}(w_1,\dots,w_{k-1},w_k)$. For $c=(c_{jl})_{jl}\in\C^{(k-1)\times k}$, $w_1,\dots,w_k\in\C^n$ set
	\begin{align*}
		\tilde{Q}(c,w_1,\dots,w_k):=&P(w_1,\dots,w_k)\Delta^2\left(\sum_{l=1}^kc_{1l}w_l,\dots,\sum_{l=1}^kc_{k-1,l}w_l,w_k\right)\\
		&-P\left(\sum_{l=1}^kc_{1l}w_l,\dots,\sum_{l=1}^kc_{k-1,l}w_l,w_k\right)\Delta^2(w_1,\dots,w_k).
	\end{align*}
	Then it is enough to to show that $\tilde{Q}$ vanishes identically. If $w_1,\dots,w_k\in \R^n$ are fixed, then the polynomial $c\mapsto \tilde{Q}(c,w_1,\dots,w_k)$ vanishes for $c\in \R^{(k-1)\times k}$ due to \eqref{eq:restrictionRealSubspaces} and thus vanishes identically on $\C^{(k-1)\times k}$. Thus for fixed $c\in \C^{(k-1)\times k}$, the polynomial $(w_1,\dots,w_k)\mapsto \tilde{Q}(c,w_1,\dots,w_k)$ vanishes on $(\R^n)^k$ and thus on $(\C^n)^k$, which shows the claim.\\
	Let us construct the polynomial $Q_{\Delta,E}$ for a given subspace $E\in U_\Delta$. For $w_k\in E\setminus\{0\}$ we may choose $w_1,\dots,w_{k-1}\in E$ such that $\Delta(w_1,\dots,w_k)\ne 0$ and define
	\begin{align*}
		Q_E(w_k):=Q(w_1,\dots,w_k)=\frac{P(w_1,\dots,w_k)}{\Delta^2(w_1,\dots,w_k)}.
	\end{align*}
	Then $Q_E$ is well defined and defines a regular function on $E\setminus \{0\}$. Now note that for $w_k\ne 0$,
	\begin{align*}
		Q_E(w_k)=|w_k|^{d-2}\frac{P\left(\frac{w_1}{|w_1|},\dots,\frac{w_k}{|w_k|}\right)}{\Delta^2\left(\frac{w_1}{|w_1|},\dots,\frac{w_k}{|w_k|}\right)}=|w_k|^{d-2}Q_E\left(\frac{w_k}{|w_k|}\right),
	\end{align*}
	because $P$ is homogeneous of degree $(2,\dots,2,d)$, so for $w_k\ne 0$ we obtain
	\begin{align*}
		|Q_E(w_k)|\le |w_k|^{d-2}\sup_{|z|=1}|Q_E(z)|.
	\end{align*}
	As $d\ge 2$, the holomorphic function $Q_E:E\setminus\{0\}\rightarrow\C$ is thus bounded on a neighborhood of $0$ and thus extends uniquely to a holomorphic function on $E$. Moreover, $Q_E$ is bounded by a polynomial of degree $d-2\ge 0$ and thus a polynomial of degree at most $d-2$ itself. Thus
	\begin{align*}
		P(w_1,\dots,w_k)=\Delta^2(w_1,\dots,w_k)Q_E(w_k)
	\end{align*}
	for all $w_1,\dots,w_k\in E$, which shows the claim with $Q_{\Delta,E}:=Q_E$.
\end{proof}
Next, we are going to show that polynomials satisfying the restriction property in Lemma~\ref{lemma:PolynomialFromComplexifiedSpaces} belong to a certain module generated by $\M^2_k$, the space spanned by quadratic products of $k$-minors. Here, we consider the space $\P(\Mat_{n,k}(\C))$ as a module over $\P(\C^n)$ by letting $p\in \P(\C^n)$ act on $P\in \P(\Mat_{n,k}(\C))$ by
\begin{align*}
	(p\cdot P)[w]=p(w_k)P(w)\quad\text{for}~w=(w_1,\dots,w_k)\in\Mat_{n,k}(\C).
\end{align*}
The proof requires some notions from the representation theory of $\GL(n,\C)$ and we refer to \cite{GoodmanWallachSymmetryrepresentationsinvariants2009} for a general background. We consider $\P(\Mat_{n,k}(\C))$ as a representation of $\GL(n,\C)$ with respect to the action given by
\begin{align*}
	(g\cdot P)[w_1,\dots,w_k]=P(g^Tw_1,\dots,g^Tw_k)
\end{align*}
for $g\in\GL(n,\C)$, $P\in\P(\Mat_{n,k}(\C))$, i.e. $g$ acts on the matrix in the argument by left multiplication with its transpose.
\begin{proposition}
	\label{proposition:CharacterizationModuleSquareMinors}
	Let $P\in \P(\Mat_{n,k}(\C))$ be a polynomial with the following property:
	For every $\Delta\in\M_k$ and every subspace $E\in U_\Delta$ there exists a polynomial $P_{\Delta,E}\in \P(E)$ such that
	\begin{align}
		\label{eq:PropositionRestrictionMinors}
		P(w_1,\dots,w_k)=\Delta^2(w_1,\dots,w_k) P_{\Delta,E}(w_k)\quad\text{for all }w_1,\dots,w_k\in E.
	\end{align}
	Then $P$ is contained in the $\P(\C^n)$-submodule generated by $\M^2_k$.
\end{proposition}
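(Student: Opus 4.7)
The plan is to attack the proposition in three stages: reduce to a bihomogeneous problem, exploit the $\GL(n,\C)$-equivariance of both the hypothesis and the target submodule, and then match up $\GL(n,\C)$-isotypic components via the Cauchy decomposition of $\P(\Mat_{n,k}(\C))$.

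First I would reduce to the case where $P$ is bihomogeneous of multidegree $(a_1,\ldots,a_k)$ with respect to the $(\C^{\times})^{k}$-action that rescales columns, since the hypothesis and membership in the $\P(\C^n)$-submodule generated by $\M_k^2$ both respect this grading. Matching bidegrees in the factorization $P|_{E^k}(w)=\Delta^2(w)P_{\Delta,E}(w_k)$---where $\Delta$ has multidegree $(1,\ldots,1)$ and $P_{\Delta,E}(w_k)$ depends only on $w_k$---forces $a_j=2$ for $j<k$ and $a_k=:d\ge 2$. The leftover cases $d\in\{0,1\}$ are disposed of directly: $\Delta^2(w_1,\ldots,w_k)$ depends nontrivially on $w_k$ for generic $w_1,\ldots,w_{k-1}\in E$, so the factorization forces $P=0$.

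Next, both the space $W\subset\P^{(2,\ldots,2,d)}(\Mat_{n,k}(\C))$ of polynomials satisfying the hypothesis and the $\P(\C^n)$-submodule $N$ generated by $\M_k^2$ are $\GL(n,\C)$-invariant: $\M_k\cong\Lambda^k(\C^n)^*$ is a $\GL(n,\C)$-subrepresentation of $\P^{(1,\ldots,1)}(\Mat_{n,k}(\C))$, and the family $\{(\Delta,E):E\in U_\Delta\}$ is $\GL(n,\C)$-equivariant. Both $W$ and $N$ therefore decompose into $\GL(n,\C)$-isotypic components, and it suffices to check $W\subseteq N$ on each. By the Cauchy decomposition $\P(\C^n\otimes\C^k)=\bigoplus_\lambda S_\lambda(\C^n)^*\otimes S_\lambda\C^k$, the bidegree $(2,\ldots,2,d)$ component selects the $(2,\ldots,2,d)$-weight subspace of each $S_\lambda\C^k$ factor, so $N$ is the image of the multiplication map $\Sym^{d-2}((\C^n)^*)\otimes\Sym^2(\Lambda^k(\C^n)^*)\to\P^{(2,\ldots,2,d)}(\Mat_{n,k}(\C))$, whose $\GL(n,\C)$-isotypic content can be read off from Pieri's rule applied to $\Sym^2(\Lambda^k(\C^n)^*)$ and to the subsequent tensor product.

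The main obstacle, and the technical core of the argument, is the converse matching: showing that every $\GL(n,\C)$-isotype appearing in $W$ already appears in $N$. Equivalently, one must show that a polynomial outside $N$ necessarily violates the restriction hypothesis for some $E\in U_\Delta$. To achieve this I would work with explicit highest weight vectors of each Schur component $S_\lambda(\C^n)^*$ of $\P^{(2,\ldots,2,d)}(\Mat_{n,k}(\C))$ not lying in $N$, choosing the coordinate subspace $E=\mathrm{span}(e_1,\ldots,e_k)$ so that $\Delta|_{E^k}$ is proportional to the standard $k\times k$ determinant $\det_{[k]}$, and then verify by direct computation that the restriction of such a highest weight vector cannot be of the form $\det_{[k]}^2\cdot(\text{polynomial in }w_k)$. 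The heart of the proof is thus a Pieri/Littlewood--Richardson comparison of the $\GL(n,\C)$-isotypes of $W$ and $N$, carried out one component at a time.
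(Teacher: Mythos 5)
Your opening reductions (passing to bihomogeneous degree $(2,\dots,2,d)$, observing that both the hypothesis and the target module are $\GL(n,\C)$-invariant, and reducing to highest weight vectors) coincide with the paper's setup, but the core of your plan has a genuine gap: the step you label the ``heart of the proof'' is deferred to an unspecified ``direct computation,'' and the way you propose to organize it does not suffice. Comparing $\GL(n,\C)$-isotypes of $W$ and $N$ via Cauchy/Pieri, and then checking ``explicit highest weight vectors of each Schur component not lying in $N$,'' breaks down because the bidegree-$(2,\dots,2,d)$ part of $\P(\Mat_{n,k}(\C))$ is in general \emph{not} multiplicity-free as a $\GL(n,\C)$-module: the multiplicity of $S_\lambda$ is the dimension of the $(2,\dots,2,d)$-weight space of $S_\lambda\C^k$, i.e.\ a Kostka number, which exceeds $1$ already for $k=3$, $d=2$, $\lambda=(3,2,1)$. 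Hence a highest weight vector of the invariant space $W$ need not be proportional to a highest weight vector of any single Schur constituent in a chosen decomposition; it can mix copies inside an isotypic component, so a finite list of ``explicit'' vectors cannot be exhaustive, and matching isotypic \emph{contents} of $W$ and $N$ would not even imply $W\subseteq N$. What is missing is a mechanism that turns the restriction hypothesis into a constraint on an \emph{arbitrary} highest weight vector.

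This is exactly what the paper supplies and your proposal lacks: by the Gauss decomposition, any $N_n^+$-invariant weight vector $P_\lambda$ is determined by its values on matrices of the form $\nu U$ with $U$ upper triangular, and for such matrices the last column lies in the coordinate subspace $E=\C^k\times\{0\}$, so the single instance of the hypothesis for $(\Delta_k,\,\C^k\times\{0\})$ already forces $P_\lambda(w)=\Delta_k^2(w)\,Q(u_k)$ globally; a weight computation (using that $\P(\C^k)$ is multiplicity-free under $\GL(k,\C)$, so its $N_k^+$-invariants are monomials $z_1^{d'}$) then pins down $Q=c\,z_1^{d-2}$ and hence $P_\lambda=c\,\Delta_k^2\,w_{1,k}^{d-2}\in\P(\C^n)\M^2_k$. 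Without an argument of this kind --- some way to evaluate an arbitrary highest weight vector against the restriction hypothesis, rather than a component-by-component bookkeeping of isotypes --- your outline does not close the proof.
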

\begin{proof}
	Note that the polynomials with these properties form a $\GL(n,\C)$-invariant subspace of $\P(\Mat_{n,k}(\C))$ with respect to the action defined above. Similarly, the module generated by $\M^2_k$ is a $\GL(n,\C)$-invariant subspace. As the space of complex polynomials on $\Mat_{n,k}(\C)$ is a regular representation of $\GL(n,\C)$, it decomposes into a direct sum of irreducible representations. Any such representation is generated by a unique highest weight vector, so it is sufficient to show that any highest weight vector of $\P(\Mat_{n,k}(\C))$ satisfying \eqref{eq:PropositionRestrictionMinors} is contained in the module generated by $\M^2_k$. We will show that any such vector is a product of the square of the $k$th principal minor $\Delta_k$ with a polynomial in $\P(\C^n)$.\\
	
	Let $P_\lambda$ be a highest weight vector with weight $\lambda\in\mathbb{Z}^n$ and the properties above. Then $P_\lambda$ is invariant under the group $N_n^+$ of upper triangular matrices with $1$ on the diagonal and for any diagonal matrix $h=\mathrm{diag}(h_1,\dots,h_n)$ we have
	\begin{align*}
		h\cdot P_\lambda=h^\lambda P_\lambda.
	\end{align*}
	Let us consider the restriction of $P_\lambda$ to the dense open set of all elements $w\in \Mat_{n,k}(\C)$ with $\Delta_i(w)\ne 0$ for $1\le i\le k$, where $\Delta_i$ denotes the $i$th principal minor. Using the Gauss decomposition (compare e.g. \cite[Chapter~11.6]{GoodmanWallachSymmetryrepresentationsinvariants2009}), we can write any such matrix as $w=\nu  h u$, where $\nu\in N^-_n$ is a lower triangular $(n\times n)$-matrix with $1$ on the diagonal, $h\in D_{n,k}$ is a diagonal $(n\times k)$-matrix, and $u\in N^+_k$. Thus
	\begin{align*}
		P_\lambda(w)= P_\lambda((\nu^T)^Thu)=P_\lambda(hu),
	\end{align*}
	since $\nu^T\in N^+_n$.	In particular, $P_\lambda$ is uniquely determined by its restriction to upper triangular matrices. If $w=\nu U$ for an upper triangular matrix $U=(u_1,\dots,u_k)\in \Mat_{n,k}(\C)$ and $\nu\in N^-_n$, then $u_k\in \C^k\times\{0\}$ belongs to a $k$-dimensional subspace, and we obtain
	\begin{align*}
		P_\lambda(w)=P_\lambda((\nu^T)^T\cdot U)=P_\lambda(U)=\Delta^2_k(U)Q(u_k)
	\end{align*} 
	for a unique polynomial $Q:=P_{\Delta_k,\C^k\times\{0\}}$ on $\C^k\times\{0\} $ by assumption. In particular, since $\Delta_k$ is invariant under the operation of $N_n^+$,
	\begin{align*}
		P_\lambda(w)=\Delta_k^2(w)Q(u_k),
	\end{align*}
	and so $P_\lambda$ is uniquely determined by the polynomial $Q$. Since we assume that $P_\lambda$ is nontrivial, $Q$ does not vanish identically. Let us examine the polynomial $Q$ on $\C^k\times\{0\}$ as well as the weight $\lambda$. First note that for any $u_k\in \C^k\times\{0\}$ with $(u_k)_k\ne 0$ there exist an upper triangular matrix $w_0\in \Mat_{n,k}(\C)$ with $\Delta_i(w_0)\ne0$ for $1\le i\le k$ such that $u_k$ is the last column of $w_0$. Given such a matrix $w_0\in\Mat_{n,k}(\C)$ and a diagonal matrix $h\in D_{n,n}$, we thus have
	\begin{align*}
		h^{\lambda} P_\lambda(w_0)=P_\lambda(h^T\cdot w_0)=h_1^2\dots h_k^2 \Delta^2_k(w_0)Q(\mathrm{diag}(h_1,\dots,h_k)u_k),
	\end{align*}
	where we consider $u_k$ as an element of $\C^k\cong \C^k\times \{0\}$. If $u_k\in \C^k\times\{0\}\cong\C^k$ belongs to the complement of the zero set of $Q$, then the right hand side of this equation does not depend on $h_{k+1},\dots,h_n$, so  $\lambda_{k+1}=\dots=\lambda_n=0$ since this set is dense in $\C^k$ and $\Delta_k(w_0)\ne 0$. Moreover, this implies that $Q$ is a weight vector with weight $(\lambda_1,\dots,\lambda_k)-(2,\dots,2)\in \mathbb{Z}^k$ of the $\GL(k,\C)$-representation $\P(\C^k)$ with action 
	\begin{align*}
		g\cdot P(z)=P(g^Tz)\quad \text{for}~g\in\GL(k,\C),~P\in \P(\C^k).
	\end{align*} If $\nu\in N^+_k\subset \GL(k,\R)$ is an upper triangular unidiagonal matrix, then, since $P_\lambda$ and $\Delta_k$ are invariant under $N^+_n$,
	\begin{align*}
		\Delta_k^2(w_0)Q(u_k)=P_\lambda(w_0)=P_\lambda\left(\begin{pmatrix}
			\nu &0\\
			0 & Id_{n-k}
		\end{pmatrix}^Tw_0\right)=\Delta^2_k(w_0)Q(\nu^T u_k)
	\end{align*}
	In other words, $Q$ is invariant under $N^+_k$, so $Q$ is a sum of highest weight vectors with weight $(\lambda_1-2,\dots,\lambda_k-2)$. As $\mathcal{P}(\C^k)$ is a multiplicity free representation of $\GL(k,\C)$ and all highest weights are of the form $(d',0,\dots,0)$ for $d'\ge0$, we thus obtain $c\in \C$ such that  $Q(z)=cz_1^{d'}$ for $z\in \C^k$. Comparing the degrees of homogeneity, $d'=d-2$. In particular, $(\lambda_1,\dots,\lambda_k)=(d,2,\dots,2)$, and 
	\begin{align*}
		P_\lambda(w)=c\cdot\Delta_k^2(w) w_{1,k}^{d-2}
	\end{align*}
	which belongs to the $\P(\C^n)$-submodule spanned by $\M^2_k$. 
\end{proof}
For later use, note that the proof provides the following characterization of the highest weight vectors.
\begin{corollary}
	\label{corollary:lowestWeightvectorsquaredMinors}
		Let $\GL(n,\C)$ act on $\P(\Mat_{n,k}(\C))$ by 
		\begin{align*}
			(g\cdot P)[w_1,\dots,w_k]= P(g^Tw_1,\dots,g^Tw_k).
		\end{align*} Then any $\GL(n,\C)$-highest weight vector of $\P(\C^n)\M^2_k$ is of the form 
		\begin{align*}
			c\Delta_k^2(w) w_{1,k}^d
		\end{align*} for some $d\in\mathbb{N}$, $c\in \C\setminus\{0\}$.
\end{corollary}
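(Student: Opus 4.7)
The plan is to extract the conclusion directly from the computation already carried out in the proof of Proposition~\ref{proposition:CharacterizationModuleSquareMinors}, with one preliminary verification.

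First, I would check that every element $P\in\P(\C^n)\M^2_k$ satisfies the restriction hypothesis of Proposition~\ref{proposition:CharacterizationModuleSquareMinors}: for every $\Delta\in\M_k$ and every $E\in U_\Delta$, the restriction $P|_{E^k}$ has the shape $\Delta^2(w_1,\ldots,w_k)\,P_{\Delta,E}(w_k)$. This is routine because on a $k$-dimensional subspace $E$, the space of alternating $k$-linear forms is one-dimensional, so the restriction of every $k$-minor to $E^k$ is a scalar multiple of $\Delta|_{E^k}$. Writing $P$ as a $\P(\C^n)$-linear combination of products $M_1 M_2$ with $M_i\in\M_k$ and restricting thus gives a scalar multiple of $\Delta^2$ times a polynomial depending only on $w_k$.

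Next, since $\P(\Mat_{n,k}(\C))$ is a regular $\GL(n,\C)$-representation, every $\GL(n,\C)$-invariant subspace decomposes into a direct sum of irreducibles, each possessing a unique (up to scalar) highest weight vector. Invariance of $\P(\C^n)\M^2_k$ under the given action follows since both $\M^2_k$ and polynomials in $w_k$ are carried into sums of the same type by $g\mapsto (g^T\cdot)$. Combined with the previous step, any $\GL(n,\C)$-highest weight vector of $\P(\C^n)\M^2_k$ is a highest weight vector of $\P(\Mat_{n,k}(\C))$ satisfying the restriction hypothesis of Proposition~\ref{proposition:CharacterizationModuleSquareMinors}.

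The remaining step is precisely the internal computation in the proof of that proposition: the Gauss decomposition together with $N_n^+$-invariance and the weight equation forces the weight to be of the form $(d,2,\dots,2,0,\dots,0)$ with $d\ge 2$, and forces the polynomial $Q$ appearing there to be $c\,z_1^{d-2}$, so that $P_\lambda(w)=c\,\Delta_k^2(w)\,w_{1,k}^{d-2}$. Reindexing by the exponent $d':=d-2\in\mathbb{N}$ yields the stated form $c\,\Delta_k^2(w)\,w_{1,k}^{d'}$ with $c\in\C\setminus\{0\}$. There is no substantive new obstacle here; the only slightly delicate point is to be clear that the hypothesis of Proposition~\ref{proposition:CharacterizationModuleSquareMinors} is inherited by every element of $\P(\C^n)\M^2_k$, which is what upgrades the proposition's classification of restriction-compatible highest weight vectors into a classification of highest weight vectors of the module itself.
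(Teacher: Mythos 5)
Your proposal is correct and follows essentially the same route as the paper, which derives the corollary by extracting the highest-weight computation from the proof of Proposition~\ref{proposition:CharacterizationModuleSquareMinors}. Your explicit preliminary check that every element of $\P(\C^n)\M^2_k$ satisfies the restriction hypothesis (because restrictions of $k$-minors to $E^k$ span a one-dimensional space) is exactly the observation the paper relies on implicitly from Section~\ref{section:Prelim_Minors}, so there is no gap.
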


\subsection{Some modules of entire functions generated by polynomials}
	\label{section:DivisionAlg}
	In this section we will consider certain rather specific modules of holomorphic functions generated by a finite set of polynomials and the problem of decomposing a given element into a $\mathcal{O}_{\C^n}$-linear combination of suitable generators. Since we are mostly interested in modules generated by homogeneous polynomials, the constructions are based around the power series expansion of a given holomorphic function in $0$, which we will just call its power series expansion for short, and will refer to the terms in the power series expansion as the homogeneous terms of the power series expansion (of a given order).\\
	Since we identify $\Mat_{n,k}(\C)\cong (\C^n)^k$, we will use the following notation: For $\alpha\in\mathbb{N}^n=\mathbb{N}^{n\times 1}$, $|\alpha|=\alpha_1+\dots+\alpha_n$ denotes the usual degree. Abusing notation, given a multiindex $\alpha=(\alpha_1,\dots,\alpha_k)\in\mathbb{N}^{n\times k}$, we will set $|\alpha|:=(|\alpha_1|,\dots,|\alpha_k|)\in\mathbb{N}^k$, where $|\alpha_j|$ denotes the usual degree of $\alpha_j\in\mathbb{N}^n$. \\ 
	
	For a multiindex $\alpha\in \mathbb{N}^m$, we denote by $z^\alpha:=z_1^{\alpha_1}\dots z_m^{\alpha_m}$ the corresponding monomial on $\C^m$. We start with the following estimate involving division with remainders by monomials.
	\begin{lemma}
		\label{lemma:monomialDivisionEstimate}
		For $\alpha\in \mathbb{N}^n$ and $F\in\mathcal{O}_{\C^n}$ there exists a unique function $g_\alpha\in \mathcal{O}_{\C^n}$ such that no term of the power series expansion of 
		\begin{align*}
			F-g_\alpha z^\alpha 
		\end{align*}
		contains a monomial divisible by $z^\alpha$. Moreover, this function is given by
		\begin{align}
			\label{equation:defG_alpha}
			g_\alpha(z)=\frac{1}{(2\pi i)^n}\int_{\mathcal{C}_1}\dots\int_{\mathcal{C}_n}\frac{F(\xi)}{\xi^\alpha}\frac{d\xi_1\dots d\xi_n}{(\xi_1-z_1)\dots(\xi_n-z_n)}.	
		\end{align}
		where $\mathcal{C}_j$ is any positively oriented simple closed curve in $\C$ such that the interior of the domain enclosed by $C_j$ contains $0$ and $z_j$, and satisfies for any $\delta>0$
		\begin{align*}
			|g_\alpha(z)|\le \frac{2^n}{\delta^{|\alpha|}}\left(\prod_{j=1}^n\frac{|z_j|+\delta}{\delta}\right)\sup_{\substack{|\Re(\xi_j)| \le |\Re(z_j)|+\delta,\\|\Im(\xi_j)|\le |\Im(z_j)|+\delta,\\1\le j\le n}}|F(\xi)|.
		\end{align*}
	\end{lemma}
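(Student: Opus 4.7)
The plan is to verify uniqueness, the integral representation, and the estimate in turn, with the Taylor expansion of $F$ as the unifying tool. First I write $F(z)=\sum_\beta c_\beta z^\beta$ and define
\[
g_\alpha(z):=\sum_{\beta\ge\alpha}c_\beta z^{\beta-\alpha},
\]
where $\beta\ge\alpha$ denotes componentwise inequality. The coefficients of this series form a subfamily of those of $F$, so it has at least the same domain of convergence and $g_\alpha\in\mathcal{O}_{\C^n}$. By construction $F-g_\alpha z^\alpha=\sum_{\beta\not\ge\alpha}c_\beta z^\beta$, and since $z^\beta$ is divisible by $z^\alpha$ precisely when $\beta\ge\alpha$, no term in this expansion is divisible by $z^\alpha$. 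Uniqueness is immediate: if $g$ and $g'$ both have this property, then $(g-g')z^\alpha$ has every monomial in its expansion divisible by $z^\alpha$ and simultaneously none, forcing $g=g'$.

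Next I verify that the iterated contour integral in \eqref{equation:defG_alpha} reproduces this series. Substituting $F(\xi)=\sum_\beta c_\beta\xi^\beta$ and interchanging the sum with the integral (justified by uniform convergence on the compact product contour), the integrand factors across the $n$ coordinates, reducing the problem to evaluating $\tfrac{1}{2\pi i}\oint_{\mathcal{C}_j}\xi_j^{\beta_j-\alpha_j}/(\xi_j-z_j)\,d\xi_j$ for each $j$. For $\beta_j\ge\alpha_j$ this is Cauchy's formula applied to the polynomial $\xi_j^{\beta_j-\alpha_j}$ and gives $z_j^{\beta_j-\alpha_j}$; for $\beta_j<\alpha_j$ a short residue computation shows that the residues at $\xi_j=0$ and $\xi_j=z_j$ cancel, so the integral vanishes. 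Only terms with $\beta\ge\alpha$ survive, returning $g_\alpha(z)$.

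For the estimate I take each $\mathcal{C}_j$ to be the positively oriented boundary of the rectangle
\[
\{\xi\in\C:|\Re\xi|\le|\Re z_j|+\delta,\ |\Im\xi|\le|\Im z_j|+\delta\},
\]
which visibly encloses both $0$ and $z_j$. On every edge one of the two coordinate bounds is attained, so $|\xi_j|\ge\delta$ and therefore $|\xi^\alpha|\ge\delta^{|\alpha|}$; the distance from $z_j$ to each edge is at least $\delta$, yielding $|\xi_j-z_j|\ge\delta$; and the perimeter is at most $8(|z_j|+\delta)$. Inserting these bounds into the standard length-times-sup estimate for the iterated integral produces the claimed inequality, with the constant $2^n$ majorizing the factor $(4/\pi)^n$ that arises from $(2\pi)^{-n}$ times the product of the perimeter bounds.

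None of the three steps is deep. The only point requiring care is the coordinated choice of contour: we need $|\xi_j|$ and $|\xi_j-z_j|$ to be bounded below by $\delta$ while keeping $\mathcal{C}_j$ inside the supremum region of the statement. The rectangular contour aligned with that region achieves all three bounds simultaneously, which is what makes the sharp form of the estimate work.
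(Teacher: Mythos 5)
Your proof is correct, and it reaches the integral formula and the estimate by a somewhat different route than the paper for the key divisibility step. The paper takes \eqref{equation:defG_alpha} as the definition of $g_\alpha$ and then proves the divisibility property by expanding $\xi^\alpha=[(\xi-z)+z]^\alpha$ inside the Cauchy integral: the resulting remainder $F-g_\alpha z^\alpha$ is exhibited as a sum of terms that are polynomials of degree strictly less than $\alpha_j$ in at least one variable, so $\partial^{\alpha+\beta'}[F-g_\alpha z^\alpha](0)=0$. You instead define $g_\alpha$ explicitly as the tail extraction $\sum_{\beta\ge\alpha}c_\beta z^{\beta-\alpha}$ of the Taylor series, for which the divisibility property is immediate, and then verify that the contour integral reproduces this series by termwise one-variable residue computations (which also gives contour-independence for free). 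Your route is more computational and arguably more transparent about what $g_\alpha$ is; the paper's route avoids manipulating the series and works directly with the integral. The uniqueness argument and the estimate (same rectangular contours aligned with the supremum region, $|\xi_j|\ge\delta$, $|\xi_j-z_j|\ge\delta$, perimeter bound, $(4/\pi)^n\le 2^n$) are identical to the paper's. Two trivial points you may wish to tighten: the claim that the subseries has ``at least the same domain of convergence'' deserves the one-line justification that $|z^{\beta-\alpha}|\le R^{\beta}$ for any polyradius $R\ge\max(1,|z_j|)$ at which $\sum_\beta|c_\beta|R^\beta<\infty$ (the monomials are shifted, so it is not literally a subfamily comparison); and in the case $\beta_j<\alpha_j$ with $z_j=0$ the two poles coalesce, but the integral still vanishes since the residue of $\xi^{\beta_j-\alpha_j-1}$ at $0$ is zero.
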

	\begin{proof}
		The uniqueness is a standard argument: If $g_\alpha,g_\alpha'\in\mathcal{O}_{\C^n}$ are two functions with this properties, then 
		\begin{align*}
			(g_\alpha-g_\alpha')z^\alpha=(F-g_\alpha'z^\alpha)-(F-g_\alpha z^\alpha)
		\end{align*}
		is a holomorphic function such that no homogeneous term in its power series expansion contains a monomial divisible by $z^\alpha$. This is only possible if $g_\alpha=g_\alpha'$.\\
		Let us establish that a function with the required properties exists. First, note that the value of the integral \eqref{equation:defG_alpha} does not depend on the choice of curves $\mathcal{C}_j$ containing the poles $0$ and $z_j$ of the integrand in the interior of the domain enclosed by them. In particular, we may define $g_\alpha$ by \eqref{equation:defG_alpha}. Then the representation \eqref{equation:defG_alpha} holds for all elements in a neighborhood of a given $z\in \C^n$, as all poles of the integrand are contained in the domains enclosed by the curves $\mathcal{C}_1,\dots,\mathcal{C}_n$, and defines a holomorphic function. Thus $g_\alpha$ is well defined.\\
		In order to obtain the desired estimate, fix $z\in\C^n$ and let $\mathcal{C}_j$ denote the curve in $\C$ given by the positively oriented boundary of $\{\xi\in\C: |\Re(\xi)| \le |\Re(z_j)|+\delta,|\Im(\xi)|\le |\Im(z_j)|+\delta\}$ for some $\delta>0$. Since $|z_j-\xi_j|\ge \delta$, $|\xi_j|\ge \delta$ for $\xi_j\in\mathcal{C}_j$,
		\begin{align*}
			|g_\alpha(z)|\le \frac{1}{\delta^{|\alpha|+n}}\left(\prod_{j=1}^n\frac{\mathrm{length}(\mathcal{C}_j)}{2\pi }\right)\sup_{\substack{|\Re(\xi_j)| \le |\Re(z_j)|+\delta,\\|\Im(\xi_j)|\le |\Im(z_j)|+\delta,\\1\le j\le n}}|F(\xi)|,	
		\end{align*}
		which implies the desired estimate. In order to see that no term in the power series expansion of $F-g_\alpha z^\alpha$ contains a monomial that is divisible by $z^\alpha$, note that
		the coefficient  of $z^{\alpha+\beta}$ of $F-g_\alpha z^\alpha $ is given by a multiple of $\partial^{\alpha+\beta}[f-g_\alpha z^\alpha](0)$.
		We have
		\begin{align*}
			\xi^\alpha=[(\xi-z)+z]^\alpha=\sum_{\beta\le\alpha} \binom{\alpha}{\beta} (\xi-z)^{\alpha-\beta} z^{\beta}.
		\end{align*}
		with $\binom{\alpha}{\beta}=\prod_{j=1}^n \binom{\alpha_j}{\beta_j}$. Cauchy's integral formula thus implies		
		\begin{align*}
			&F(z)-g_\alpha(z)z^\alpha=\frac{1}{(2\pi i)^n}\int_{\mathcal{C}_1}\dots\int_{\mathcal{C}_n}F(\xi)\frac{d\xi_1\dots d\xi_n}{(\xi_1-z_1)\dots(\xi_n-z_n)}-g_\alpha(z)z^\alpha\\
			&\qquad=\sum_{\beta<\alpha}\binom{\alpha}{\beta}\frac{1}{(2\pi i)^n}\int_{\mathcal{C}_1}\dots\int_{\mathcal{C}_n}\frac{F(\xi)}{\xi^\alpha}(\xi-z)^{\alpha-\beta}\frac{d\xi_1\dots d\xi_n}{(\xi_1-z_1)\dots(\xi_n-z_n)} z^{\beta}.
		\end{align*}
		Note that since $\alpha-\beta$ has at least one nontrivial component, the integrands of the terms in the sum have the property that the singularity $\xi_j=z_j$ is canceled for at least one $1\le j\le n$. In particular, they are polynomials in at least one of the variables $z_j$ of degree strictly less than $\alpha_j$. Thus for any $\beta'\ge 0$,
		\begin{align*}
			\partial^{\alpha+\beta'}[F-g_\alpha z^\alpha](0)=0,
		\end{align*}
		so no term in the power series expansion of $F-g_\alpha z^\alpha$ contains a nontrivial multiple of a monomial divisible by $z^\alpha$.
	\end{proof}

	We will be interested in modules over holomorphic functions that only depend on a selection of the variables. For a splitting $\C^{m+n}=\C^m\times \C^n$, $z=(z_1,z_2)\in \C^m\times\C^n$, and $\delta>0$, define
	\begin{align}
		\label{equation:polydiskSimple}
		\begin{split}
			D_\delta(z_1,z_2):=\{(w_1,w_2)\in \C^m\times \C^n:&|\Re(w_j)|_1\le |\Re(z_j)|_1+\delta,\\
			&|\Im(w_j)|_1\le |\Im(z_j)|_1+\delta\},
		\end{split}
	\end{align}
	where $|\cdot|_1$ denotes the $l^1$-norm. We will also say that the degree of $P\in\P(\C^m\times \C^n)$ is bounded by $(h_1,h_2)\in\mathbb{N}^2$ if all monomials of $P(w_1,w_2)$ are of smaller degree than $h_i$ in $w_i$ for $i=1,2$.

	\begin{corollary}
		\label{corollary:DivisionMonomialMultiHom}
		Let $F\in \mathcal{O}_{\C^m\times\C^n}$ and $\alpha\in \mathbb{N}^m$ be an index such that no term of the power series expansion of $F$ contains a monomial that is divisible by $w_1^{\alpha'}$ for some $\alpha'\ne\alpha$ with $\alpha'\ge\alpha$. Then the following holds:
		\begin{enumerate}
			\item For every $\beta\in \mathbb{N}^n$ there exists a unique function $g_{(\alpha,\beta)}\in \mathcal{O}_{\C^n}$ such that no term of the power series expansion of 
			\begin{align*}
				F(w_1,w_2)-g_{(\alpha,\beta)}(w_2) w_1^\alpha w_2^\beta
			\end{align*}
			contains a monomial divisible by $w_1^{\alpha} w_2^\beta$.
			\item The function $g_{(\alpha,\beta)}$ satisfies
				\begin{align*}
					|g_{(\alpha,\beta)}(z)|\le \frac{\max(n,m)^{|\alpha|+|\beta|+n}2^{n+m}}{\delta^{|\alpha|+|\beta|+n}}\left(|z|_1+\delta\right)^{n}\sup_{\zeta\in D_\delta(0,z)}|F(\zeta)|.
				\end{align*}
				for every $\delta>0$.
		\end{enumerate}
	\end{corollary}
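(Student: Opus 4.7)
The plan is to apply Lemma~\ref{lemma:monomialDivisionEstimate} twice, once in each block of variables, and to use the hypothesis on $F$ to collapse the first division to a function depending only on $w_2$.

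First, treating $w_2$ as a parameter, I would apply the construction of Lemma~\ref{lemma:monomialDivisionEstimate} in the $w_1$-variable alone to obtain a holomorphic function $c_\alpha(w_1,w_2)$ such that $F(w_1,w_2)-c_\alpha(w_1,w_2)w_1^\alpha$ contains no monomial in $w_1$ divisible by $w_1^\alpha$. The key observation is that, by hypothesis, the power series of $F$ contains no monomial divisible by $w_1^{\alpha'}$ for any $\alpha'>\alpha$, so the only $w_1$-monomials appearing in the $w_1^\alpha$-part of the expansion are exactly $w_1^\alpha$ itself (multiplied by arbitrary monomials in $w_2$). By the uniqueness statement of Lemma~\ref{lemma:monomialDivisionEstimate} (applied in $w_1$, with $w_2$ frozen), this forces $c_\alpha$ to be independent of $w_1$; in other words $c_\alpha$ lies in $\mathcal{O}_{\C^n}$.

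Next, I would apply Lemma~\ref{lemma:monomialDivisionEstimate} again, now to $c_\alpha\in\mathcal{O}_{\C^n}$ with multiindex $\beta$, to produce the desired $g_{(\alpha,\beta)}\in\mathcal{O}_{\C^n}$ with $c_\alpha(w_2)-g_{(\alpha,\beta)}(w_2)w_2^\beta$ containing no monomial divisible by $w_2^\beta$. The remainder property in (1) then follows from the decomposition
\begin{align*}
	F(w_1,w_2)-g_{(\alpha,\beta)}(w_2)w_1^\alpha w_2^\beta=\bigl[F(w_1,w_2)-c_\alpha(w_2)w_1^\alpha\bigr]+\bigl[c_\alpha(w_2)-g_{(\alpha,\beta)}(w_2)w_2^\beta\bigr]w_1^\alpha,
\end{align*}
since the first bracket contains no monomial divisible by $w_1^\alpha$ (and hence none divisible by $w_1^\alpha w_2^\beta$), while the second, after multiplication by $w_1^\alpha$, contains only monomials $w_1^\alpha w_2^\gamma$ with $\gamma\not\ge\beta$. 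Uniqueness is standard and follows as in Lemma~\ref{lemma:monomialDivisionEstimate}.

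For the estimate, I would chain the two bounds provided by Lemma~\ref{lemma:monomialDivisionEstimate}: evaluating the integral representation for $c_\alpha$ at $w_1=0$ (which is legitimate because $c_\alpha$ is independent of $w_1$) controls $|c_\alpha(w_2)|$ by a constant times $\delta^{-|\alpha|}\sup_{D_\delta(0,w_2)}|F|$, and the bound from the second application of the lemma then gives the factor $\delta^{-(|\beta|+n)}(|w_2|_1+\delta)^n$. The main technical point—where I expect to lose the most effort—is converting between the coordinate-wise $\ell^\infty$-boxes that appear naturally in Lemma~\ref{lemma:monomialDivisionEstimate} and the $\ell^1$-polydisks $D_\delta(0,z)$ from \eqref{equation:polydiskSimple}: running the first application with $\delta/m$ per coordinate and the second with $\delta/n$ per coordinate produces compatible domains and accounts for the factor $\max(n,m)^{|\alpha|+|\beta|+n}$ in the final estimate.
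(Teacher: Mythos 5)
Your proof is correct and delivers the statement with the same constants; it differs from the paper's argument only in organization, so a short comparison is worthwhile. The paper applies Lemma~\ref{lemma:monomialDivisionEstimate} once, to the full index $(\alpha,\beta)\in\mathbb{N}^m\times\mathbb{N}^n$, obtaining a priori a function $g_{(\alpha,\beta)}\in\mathcal{O}_{\C^m\times\C^n}$, and then uses the hypothesis on $F$ to kill any $w_1$-dependence by contradiction: a nontrivial $w_1^\gamma$-term in $g_{(\alpha,\beta)}$ would force a monomial divisible by $w_1^{\alpha+\gamma}w_2^\beta$ into the remainder, which is excluded. You instead divide block by block: the first division in $w_1$ (with $w_2$ frozen) collapses, via the uniqueness part of Lemma~\ref{lemma:monomialDivisionEstimate} together with the hypothesis, to the coefficient $c_\alpha(w_2)=\frac{1}{\alpha!}\partial_{w_1}^\alpha F(0,w_2)$, which is visibly entire in $w_2$; the second division in $w_2$ plus your gluing identity then give (1), and uniqueness is the usual two-remainder argument. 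Your route trades the paper's contradiction step for the (equally easy) identification of $c_\alpha$ with a Taylor coefficient, and it makes the final estimate a clean chain of the two one-block bounds. For (2) both proofs end with the same bookkeeping: the coordinatewise boxes of Lemma~\ref{lemma:monomialDivisionEstimate} are absorbed into the $\ell^1$-polydisks $D_\delta$ after shrinking the per-coordinate margin, by $\max(n,m)$ in the paper and by $m$ and $n$ separately in your plan (which is if anything slightly sharper), yielding the factor $\max(n,m)^{|\alpha|+|\beta|+n}$. The only point to spell out when writing yours up is that the parametric estimate for $c_\alpha$ gives a supremum of $|F|$ over a slice with $w_2$ fixed, which only after the second division gets enlarged to the full region $D_\delta(0,z)$ --- exactly the conversion you flag, and it goes through.
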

	\begin{proof}
		We apply Lemma~\ref{lemma:monomialDivisionEstimate} to the index $(\alpha,\beta)\in \mathbb{N}^m\times\mathbb{N}^n$ and obtain a unique function $g_{(\alpha,\beta)}\in \mathcal{O}_{\C^m\times\C^n}$ such that no term of the power series expansion of $F(w)-g_{(\alpha,\beta)}(w) w_1^\alpha w_2^\beta$ contains a monomial divisible by $w_1^{\alpha} w_2^\beta$. In order to obtain (1), we have to show that $g_{(\alpha,\beta)}$ is independent of $w_1$.
		If this is not the case, then there exists $\gamma\in \mathbb{N}^m$, $\gamma\ne 0$, such that some monomial in the power series expansion of $g_{(\alpha,\beta)}$ is a nontrivial multiple of $w_1^\gamma$. By assumption, no term in the power series expansion of $F$ contains a monomial divisible by $w_1^{\alpha+\gamma}$. But then the power series expansion of
		\begin{align*}
			F(w_1,w_2)-g_{(\alpha,\beta)}(w_1,w_2) w_1^\alpha w_2^\beta
		\end{align*}
		contains a monomial divisible by $w_1^{\alpha+\gamma}w_2^\beta$, which is a contradiction. Thus $g_{(\alpha,\beta)}$ does not depend on $w_1$.\\
		Using the estimate in Lemma~\ref{lemma:monomialDivisionEstimate}, we thus obtain
		\begin{align*}
			|g_{(\alpha,\beta)}(0,z)|\le \frac{2^{n+m}}{\delta^{|\alpha|+|\beta|}}\left(\frac{|z|_1+\delta}{\delta}\right)^n\sup_{\xi\in D_{\max(n,m)\delta}(0,z)}|F(\xi)|.
		\end{align*}
		Replacing $\delta$ by $\frac{\delta}{\max(n,m)}$ yields the desired inequality.
	\end{proof}

	We are going to apply these estimates to obtain suitable presentations of holomorphic functions belonging to $\mathcal{O}_{\C^n}$-submodules of $\mathcal{O}_{\C^m\times\C^n}$ generated by a finite family of polynomials satisfying rather specific properties. The argument relies on properties of Gröbner bases for an associated $\P(\C^n)$-submodule of polynomials. We refer to \cite[Chapter 15]{EisenbudCommutativealgebra1995} for a general background on Gröbner bases and only collect the necessary terminology we will use.\\
	
	We consider $\P(\C^m\times\C^n)$ as a module over $\P(\C^n)$ using the obvious inclusion $\P(\C^n)\subset \P(\C^m\times \C^n)$. Note that if $M\subset \P(\C^m\times \C^n)$ is a finitely generated $\P(\C^n)$-submodule, then $M$ is contained in the finite free $\P(\C^n)$-module $F$ generated by the monomials in $\P(\C^m)$ that divide a monomial in a generating set of $M$.\\
	In order to be consistent with the notation for matrices, we consider $\C^m\times \C^n$ as two column vectors (of potentially different length), and denote the components of $w_1\in \C^m$ by $w_1=(w_{1,1},\dots,w_{m,1})$ and of $w_2\in \C^n$ by $w_2=(w_{1,2},\dots,w_{n,2})$. We fix a monomial order on this free module $F$ by restricting the lexicographic order on $\P(\C^m\times \C^n)$ induced from the variable order
	\begin{align*}
		w_{1,1}>\dots >w_{m,1}>w_{1,2}>\dots >w_{n,2}
	\end{align*}
	to the monomials in $F$. Given an element $P\in F$, we denote by $\mathrm{in}(P)$ its initial term with respect to this order, i.e. the largest monomial occurring in $P$ with a non-zero coefficient. Similarly, if $M\subset F$ is a submodule, we denote by $\mathrm{in}(M)$ the corresponding initial module, i.e. the monomial submodule generated by $\mathrm{in}(P)$ for $P\in M$. Recall that a set $P_1,\dots,P_N$ that generates $M$ as a $\P(\C^n)$-module is called a Gröbner basis if the initial terms $\mathrm{in}(P_1),\dots,\mathrm{in}(P_N)$ generate the initial module $\mathrm{in}(M)$. If the elements $P_1,\dots,P_N$ are in addition ordered with respect to the chosen monomial order, we will say that this Gröbner basis is ordered.\\
	Note that every finitely generated $\P(\C^n)$-submodule of $\P(\C^n\times \C^m)$ has a Gröbner basis, however, this basis is in general far from unique.\\
	
	In the cases we will be interested in, the relevant $\P(\C^n)$-submodules of $\P(\C^m\times\C^n)$ will be generated by homogeneous elements, where we call $P\in\P(\C^m\times\C^n)$ homogeneous of degree $(h_1,h_2)\in\mathbb{N}^2$ if $P(t_1w_1,t_2w_2)=t_1^{h_1}t_2^{h_2}P(w_1,w_2)$ for $t_1,t_2\in \C$, $(w_1,w_2)\in\C^{m}\times \C^n$.
	\begin{theorem}
		\label{theorem:DivisionAlg}
		Let $M\subset \mathcal{P}(\C^m\times\C^n)$ be a finitely generated homogeneous module over $\P(\C^n)$ and $P_1,\dots,P_N\in M$ an ordered Gröbner basis consisting of homogeneous elements. Assume that this basis has the property that no multiple of $P_i-\mathrm{in}(P_i)$ contains a monomial that is divisible by the initial term of $P_i$ for every $1\le i\le N$. Then the following holds:
		\begin{enumerate}
			\item If $F\in\mathcal{O}_{\C^m\times\C^n}$ is a function such that every homogeneous term in its power series expansion belongs to $M$, then there exist unique functions $g_j\in \mathcal{O}_{\C^n}$ with the following property: No monomial of the homogeneous terms in the power series expansion of $F-\sum_{i=1}^{j}g_iP_i$ is divisible by the initial term of $P_i$ for $1\le i\le j$.
			\item For every $\delta>0$ there exists a constant $C_{\delta,P_1,\dots,P_N}$ depending on $\delta$ and $P_1,\dots,P_N$ only, such that the functions in (1) satisfy
				\begin{align*}
					|g_j(z)|\le& C_{n,\delta,P_1,\dots,P_N} (1+|z|_1+\delta)^{N(h_2+n)}\sup_{\zeta\in 	D_{\delta}(0,z)}|F(\zeta)|,
				\end{align*}
			where $(h_1,h_2)\in\mathbb{N}^2$ is a bound on the degree of $P_j$ for $1\le j\le N$. 
		\end{enumerate} 
		In particular, $F=\sum_{i=1}^Ng_iP_i$ belongs to the $\O_{\C^n}$-module generated by the polynomials $P_1,\dots,P_N$.
	\end{theorem}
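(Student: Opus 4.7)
The plan is to construct the $g_j$ inductively by successive applications of Corollary~\ref{corollary:DivisionMonomialMultiHom}, one for each element of the Gröbner basis. Setting $F_0 := F$ and $F_{j} := F - \sum_{i=1}^{j} g_i P_i$, I would maintain the following invariant through the induction: every homogeneous term of $F_j$ belongs to $M$, and no such term contains a monomial divisible by $\mathrm{in}(P_i)$ for any $i \le j$. This invariant is preserved under addition of the $g_iP_i$'s because each $P_i \in M$ and the homogeneous components of $g_i P_i$ are $\P(\C^n)$-multiples of $P_i$, hence in $M$. Once $j = N$, the Gröbner basis hypothesis $\mathrm{in}(M) = (\mathrm{in}(P_1), \ldots, \mathrm{in}(P_N))$ forces $F_N = 0$, yielding $F = \sum_{i=1}^{N} g_i P_i$.

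For the inductive step I would write $\mathrm{in}(P_j) = c_j w_1^{\alpha_j} w_2^{\beta_j}$ with $c_j \neq 0$ and first verify the hypothesis of Corollary~\ref{corollary:DivisionMonomialMultiHom} for the index $\alpha_j$: no monomial in a homogeneous term of $F_{j-1}$ is divisible by $w_1^{\alpha'}$ for some $\alpha' \neq \alpha_j$ with $\alpha' \ge \alpha_j$. Any such monomial would, by bihomogeneity of the generators, lie in the initial module and hence be divisible by some $\mathrm{in}(P_k)$; the inductive hypothesis excludes $k < j$, while componentwise dominance in the $w_1$-variables combined with the lex ordering $\mathrm{in}(P_j) > \mathrm{in}(P_k)$ for $k > j$ produces a contradiction. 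Applying the corollary with $\alpha = \alpha_j$, $\beta = \beta_j$ then yields, after dividing out by $c_j$, a unique $g_j \in \mathcal{O}_{\C^n}$ with the property that $F_{j-1} - g_j\,\mathrm{in}(P_j)$ contains no monomial divisible by $\mathrm{in}(P_j)$. Writing
\[
	F_j \;=\; F_{j-1} - g_j P_j \;=\; \bigl(F_{j-1} - g_j\,\mathrm{in}(P_j)\bigr) \;-\; g_j\bigl(P_j - \mathrm{in}(P_j)\bigr),
\]
the stated Gröbner basis hypothesis guarantees that $g_j(P_j - \mathrm{in}(P_j))$ contains no monomial divisible by $\mathrm{in}(P_j)$, while the fact that each monomial of $P_j - \mathrm{in}(P_j)$ is strictly lex-smaller than $\mathrm{in}(P_j)$ (and hence than each earlier $\mathrm{in}(P_i)$) prevents the creation of any new monomial divisible by $\mathrm{in}(P_i)$ for $i < j$. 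Uniqueness propagates directly from the uniqueness in Corollary~\ref{corollary:DivisionMonomialMultiHom}.

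For the quantitative estimate in (2) I would iterate the bound in Corollary~\ref{corollary:DivisionMonomialMultiHom}. A single application yields $|g_j(z)| \lesssim \delta^{-(|\alpha_j|+|\beta_j|+n)}(|z|_1+\delta)^{n}\sup_{D_\delta(0,z)}|F_{j-1}|$. To pass from $F_{j-1}$ back to $F$ I would use $F_{j-1} = F - \sum_{i<j} g_i P_i$ together with the trivial estimate $|P_i(w_1,w_2)| \lesssim (|w_2|_1+1)^{h_2}$ on $D_{\delta'}(0,z)$ (valid because $|w_1|_1\le \delta'$ is controlled) and the inductive bound on the previous $g_i$'s; slightly enlarging the radius at each step to keep the sup meaningful, one accumulates after $N$ iterations a polynomial factor of order $(1+|z|_1+\delta)^{N(h_2+n)}$ with a constant depending only on $\delta$, $n$, and the polynomials $P_1,\dots,P_N$, exactly as claimed.

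The main obstacle is the verification, at each inductive step, that $F_{j-1}$ satisfies the hypothesis of Corollary~\ref{corollary:DivisionMonomialMultiHom} with respect to $\alpha_j$. This requires a careful accounting of how the lex order on full monomials of $\C^m \times \C^n$, the componentwise order on $w_1$-multi-indices alone, and the Gröbner basis description of $\mathrm{in}(M)$ interact; it is precisely this compatibility that makes the assumption ``no multiple of $P_i - \mathrm{in}(P_i)$ contains a monomial divisible by $\mathrm{in}(P_i)$'' the natural hypothesis under which the inductive scheme closes. Once this combinatorial step is settled, the analytic part of the argument reduces to the iterated application of the estimate already established in Corollary~\ref{corollary:DivisionMonomialMultiHom}.
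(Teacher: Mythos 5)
Your overall architecture — extracting the $g_j$ one at a time via Corollary~\ref{corollary:DivisionMonomialMultiHom}, splitting $F_{j-1}-g_jP_j=\bigl(F_{j-1}-g_j\,\mathrm{in}(P_j)\bigr)-g_j\bigl(P_j-\mathrm{in}(P_j)\bigr)$, and iterating the estimate with a slight enlargement of the polydisk — is the same as the paper's, but the combinatorial step you yourself flag as the main obstacle is a genuine gap, and the justification you sketch for it does not work. First, a monomial occurring in a homogeneous element of $M$ need not lie in $\mathrm{in}(M)$: only the lex-leading monomial does (for $M$ generated by $P=w_{1,1}+w_{2,1}$ the monomial $w_{2,1}$ occurs in $P\in M$ but is not in the initial module). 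Second, even granting divisibility by some $\mathrm{in}(P_k)$ with $k>j$, there is no contradiction with the ordering: a monomial divisible by a lex-smaller initial term can have $w_1$-exponent componentwise strictly above $\alpha_j$ (take $\mathrm{in}(P_j)=w_{1,1}^2$, $\mathrm{in}(P_k)=w_{1,1}w_{2,1}$ and the monomial $w_{1,1}^2w_{2,1}$). In fact your invariant (homogeneous terms in $M$, no monomial divisible by $\mathrm{in}(P_i)$ for $i<j$) is too weak to exclude the bad monomials: non-initial monomials of $P_1,\dots,P_{j-1}$, multiplied by polynomials in $w_2$, can contribute monomials whose $w_1$-part strictly dominates $\alpha_j$ while being divisible by no earlier initial term. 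A related flaw affects your claim that lex-smallness of the monomials of $P_j-\mathrm{in}(P_j)$ prevents creating monomials divisible by $\mathrm{in}(P_i)$, $i<j$: multiplying by $g_j(w_2)$ raises only $w_2$-exponents, and a monomial that is lex-smaller than $\mathrm{in}(P_i)$ only because of its $w_2$-part can become divisible by $\mathrm{in}(P_i)$ after such a multiplication.

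The paper closes exactly these points by running an induction on $N$ with a stronger invariant: after reducing to a minimal (mutually indivisible) ordered Gr\"obner basis and extracting $g_1$, it shows that every homogeneous term of $\tilde F=F-g_1P_1$ lies in the submodule $M'$ generated by $P_2,\dots,P_N$, and applies the induction hypothesis to $M'$ with the Gr\"obner basis $P_2,\dots,P_N$. Membership in the module generated by the \emph{remaining} generators is what makes the hypothesis of Corollary~\ref{corollary:DivisionMonomialMultiHom} verifiable: every monomial of such an element inherits its $w_1$-exponent from a monomial of some $P_i$ with $i\ge j$, all of which are lex-dominated by $\mathrm{in}(P_j)$, and a $w_1$-exponent componentwise $\ge\alpha_j$ but $\ne\alpha_j$ would force the monomial to be lex-larger than $\mathrm{in}(P_j)$ irrespective of its $w_2$-part. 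The ordering together with the mutual indivisibility of the initial terms is then used to see that the later corrections do not reintroduce monomials divisible by $\mathrm{in}(P_1)$. Your analytic part (iterating the bound, estimating $|P_i|$ on $D_\delta(0,z)$, replacing $\delta$ by $\delta/2$ at the end) and the propagation of uniqueness match the paper and are fine; to repair the argument you need to upgrade your inductive invariant from ``in $M$ and not divisible by earlier initial terms'' to membership in the module generated by the not-yet-used generators, as in the paper.
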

	\begin{remark}
		\label{remark:specialPropertyGroebner}
		The property stated in (1) is rather restrictive. For example, for $m=0$, only Gröbner bases consisting of monomials satisfies this property.
	\end{remark}
	\begin{proof}[Proof of Theorem \ref{theorem:DivisionAlg}]
		First note that if we find functions $g_j\in\mathcal{O}_{\C^n}$ such that no term in the power series expansion of $\tilde{F}:=F-\sum_{j=1}^Ng_j P_j$ contains a monomial that is divisible by the initial terms of $P_1,\dots,P_N$, then its power series expansion vanishes identically, since the homogeneous terms belong to $M$ and $P_1,\dots,P_N$ is a Gröbner basis for this module. Thus $\tilde{F}$ has to vanish identically and $F= \sum_{i=1}^Ng_iP_i$ belongs to $\O_{\C^n}M$. In particular, the last claim follows from (1).\\
		
		We will show (1) and (2) by induction on the $N$ number of elements in the chosen Gröbner basis for the module $M$, the case $N=0$ being vacuously true.\\
		
		In order to simplify some notation in the proof, we define a norm on the space of polynomials of degree at most $h=(h_1,h_2)\in\mathbb{N}^2$ in $\mathcal{P}(\C^m\times\C^n)$ by
		\begin{align*}
			\|P\|_{h}:=\sup_{(w_1,w_2)\in\C^m\times\C^n} \frac{|P(w)|}{(1+|w_1|_1)^{h_1}(1+|w_2|_1)^{h_2}}.
		\end{align*}
		Assume that the claim holds for all such $\P(\C^n)$-modules with a Gröbner basis of at most $N-1$ elements.\\
		Let $M\subset\P(\C^m\times\C^n)$ be a homogeneous $\P(\C^n)$-module generated in degree at most $(h_1,h_2)$ and $P_1,\dots,P_N$ an ordered Gröbner basis with normalized initial terms satisfying the condition in (1). Omitting terms, we may assume that the initial terms are mutually indivisible, i.e. that the Gröbner basis is minimal. Let $(\alpha,\beta)\in\mathbb{N}^{m}\times \mathbb{N}^n$ be the index corresponding to the monomial $\mathrm{in}(P_1)$. Since $\mathrm{in} (P_1)$ is the maximal monomial in the chosen Gröbner basis, no term in the power series expansion of $F$ is divisible by a monomial $w_1^{\alpha'}$ for $\alpha'>\alpha$. Let $g_1\in\mathcal{O}_{\C^n}$ be the unique function in Corollary~\ref{corollary:DivisionMonomialMultiHom} for the index $(\alpha,\beta)$. Then there exists a constant $C_{\delta,1}$ depending on $\delta>0$, $n$, and $P_1$ only such that
		\begin{align}
			\label{equation:estimate_g1_simplified}
			|g_1(z)|\le C_{\delta,1}(1+|z|_1+\delta)^n\sup_{\zeta\in D_\delta(0,z)}|F(\zeta)|,
		\end{align}
		so $g_1$ satisfies the desired estimates. Moreover, we have
		\begin{align*}
			F-g_1P_1=[F-g_1\mathrm{in}(P_1)]+g_1[\mathrm{in}(P_1)-P_1].
		\end{align*}
		By Corollary~\ref{corollary:DivisionMonomialMultiHom}, the first term on the right hand side has the property that no monomial in the homogeneous components in its power series expansion are divisible by $\mathrm{in}(P_1)$. Since we assume that no multiple of $P_1-\mathrm{in}(P_1)$ contains a monomial divisible by $\mathrm{in}(P_1)$, the second term has the same property. In particular, no monomial of the homogeneous terms in the power series expansion of $\tilde{F}:=F-g_1P_1$ is divisible by the initial term of $P_1$, which shows that (1) holds for $j=1$. Moreover, since $P_1,\dots,P_N$ form a Gröbner basis for $M$, $\tilde{F}$ has the property that every homogeneous term in its power series expansion belongs to the homogeneous module $M'$ generated by $P_2,\dots,P_N$. Since $P_2,\dots,P_N$ form an ordered Gröbner basis for $M'$ such that no multiple of $P_i-\mathrm{in}(P_i)$ contains a monomial divisible by $\mathrm{in}(P_i)$ for $2\le i\le N$, we may apply the induction assumption to $M'$ with the Gröbner basis $P_2,\dots,P_N$ and $\tilde{F}$. Thus there exist constants $C_{\delta,2}$ depending on $\delta>0$, $n$, and $P_2,\dots,P_N$ only as well as unique functions $g_2,\dots,g_N\in\mathcal{O}_{\C^n}$ such that for any $\delta>0$
		\begin{align}
			\label{eq:estimate_Induction_gj}
			\begin{split}
				|g_j(z)|\le&C_{\delta,2}(1+|z|_1+\delta)^{(N-1)(h_2+n)}\sup_{\zeta\in D_{\delta}(0,z)}|\tilde{F}(\zeta)|
			\end{split}
		\end{align}
		and such that no monomial of the homogeneous terms in the power series expansion of $\tilde{F}-\sum_{i=2}^jg_iP_i$ is a multiple of $\mathrm{in}(P_2),\dots,\mathrm{in}(P_j)$. Since the Gröbner basis is ordered and the initial terms are mutually indivisible by assumption, no monomial in the homogeneous terms in the power series expansion of this function is divisible by $\mathrm{in}(P_1)$, so in total we see that $F-\sum_{i=1}^jg_iP_i=\tilde{F}-\sum_{i=2}^jg_iP_i$ has the same property, which completes the induction step for (1). It remains to see that $g_j$ satisfies the desired estimates for $2\le j\le N$. Using \eqref{equation:estimate_g1_simplified}, we have
		\begin{align*}
			|\tilde{F}(w)|=&|F(w)-g_1(w_2)P_1(w)|\\
			\le&|F(w)|+|g_1(w_2)|\cdot \|P_1\|_{h} (1+|w_1|_1)^{h_1}(1+|w_2|_1)^{h_2}\\
			\le&\left(1+C_{\delta,1}\|P_1\|_h(1+|w_1|_1)^{h_1}(1+|w_2|_1+\delta)^{h_2+n}\right)\sup_{\zeta\in D_\delta(w)}|F(\zeta)|.
		\end{align*}
		In particular,
		\begin{align*}
			\sup_{\zeta\in D_\delta(0,z)}|\tilde{F}(\zeta)|\le & \left(1+C_{\delta,1}\|P_1\|_h(1+\delta)^{h_1}\right)(1+|w_2|_1+2\delta)^{h_2+n}
			\sup_{\zeta\in D_{2\delta}(0,z)}|F(\zeta)|
		\end{align*}
		for any $\delta>0$, which together with \eqref{eq:estimate_Induction_gj} implies
		\begin{align*}
			|g_j(z)|\le& C_{\delta,2}\left(1+C_{\delta,1}\|P_1\|_h(1+\delta)^{h_1}\right)(1+|z|_1+2\delta)^{N(h_2+n)}\sup_{\zeta\in D_{2\delta}(0,z)}|F(\zeta)|
		\end{align*}
		for any $\delta>0$. Replacing $\delta$ by $\delta/2$ yields the desired estimate.
	\end{proof}
	\begin{corollary}
		In the situation of Theorem~\ref{theorem:DivisionAlg}, there exist constants $C_\delta$ depending on $\delta$, $n$, and $P_1,\dots,P_N$ only, such that the functions $g_j\in\mathcal{O}_{\C^n}$ satisfy
		\begin{align*}
			|g_j(z)|\le& C_{\delta}(1+|z|)^{N(h_2+n)}\sup_{\zeta\in D_{\delta}(0,z)}|F(\zeta)|
		\end{align*}
		for every $\delta>0$.
	\end{corollary}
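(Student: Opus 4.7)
The plan is to observe that this corollary is a direct consequence of Theorem~\ref{theorem:DivisionAlg}(2), obtained simply by comparing the $l^1$-norm $|\cdot|_1$ appearing there to the Euclidean norm $|\cdot|$ and by absorbing the $\delta$-dependence into a constant. Since both norms are equivalent on $\C^n$ (specifically $|z|_1\le\sqrt{n}|z|$), the factor $(1+|z|_1+\delta)^{N(h_2+n)}$ from Theorem~\ref{theorem:DivisionAlg}(2) can be estimated by a constant multiple of $(1+|z|)^{N(h_2+n)}$.

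More concretely, I would first fix $\delta>0$ and note that for every $z\in\C^n$,
\begin{align*}
    1+|z|_1+\delta\le 1+\sqrt{n}\,|z|+\delta\le (1+\sqrt{n}+\delta)(1+|z|),
\end{align*}
where the last inequality is verified by splitting into the cases $|z|\ge 1$ and $|z|<1$. Raising to the power $N(h_2+n)$ gives
\begin{align*}
    (1+|z|_1+\delta)^{N(h_2+n)}\le (1+\sqrt{n}+\delta)^{N(h_2+n)}(1+|z|)^{N(h_2+n)}.
\end{align*}
Combining this with the estimate provided by Theorem~\ref{theorem:DivisionAlg}(2) and setting
\begin{align*}
    C_\delta:=(1+\sqrt{n}+\delta)^{N(h_2+n)}\cdot C_{\delta,P_1,\dots,P_N},
\end{align*}
where $C_{\delta,P_1,\dots,P_N}$ is the constant from Theorem~\ref{theorem:DivisionAlg}(2), immediately yields the claimed inequality. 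Since $C_\delta$ depends only on $\delta$, $n$, and $P_1,\dots,P_N$, the conclusion follows. There is no real obstacle here; the statement just repackages the conclusion of Theorem~\ref{theorem:DivisionAlg}(2) in a form more convenient for the subsequent applications, where the Euclidean norm is the natural one to use (for instance, for comparisons with the Paley--Wiener-type estimates that appear later in Theorem~\ref{maintheorem:PWSSmoothValuations}).
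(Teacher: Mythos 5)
Your proposal is correct and is essentially the paper's own argument: the paper likewise derives the corollary from Theorem~\ref{theorem:DivisionAlg}(2) by combining the elementary estimates $(1+|z|_1+\delta)\le (1+\delta)(1+|z|_1)$ and $|z|_1\le \sqrt{n}|z|$ and absorbing the resulting factors into the constant $C_\delta$. Your single combined inequality $1+|z|_1+\delta\le (1+\sqrt{n}+\delta)(1+|z|)$ is a harmless repackaging of the same two steps.
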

	\begin{proof}
		Use the estimate $(1+|z|_1+\delta)\le (1+\delta)(1+|z|_1)$ and $|z|_1\le \sqrt{n}|z|$.
	\end{proof}
	Recall that we identify $(\C^n)^k$ with the space $\Mat_{n,k}(\C)$ of $(n\times k)$-matrices. Similar to the previous section, we consider $\mathcal{O}_{\Mat_{n,k}(\C)}$ as a module over $\mathcal{O}_{\C^n}$ by letting $g\in \mathcal{O}_{\C^n}$ act on $F\in \mathcal{O}_{\Mat_{n,k}(\C)}$ by
	\begin{align*}
		(g\cdot F)[w]=g(w_k)F(w)\quad\text{for}~w=(w_1,\dots,w_k)\in\Mat_{n,k}(\C).
	\end{align*}
	We are going to apply the results of this section to the module generated by $\M^2_k$, the space spanned by quadratic products of $k$-minors. By \cite[Corollary 6.19]{KnoerrMongeAmpereoperators2024}, this is a vector space of dimension $N_{n,k}=\dim\MAVal_k(\R^n)=\binom{n}{k}^2-\binom{n}{k-1}\binom{n}{n-k-1}$ (compare also Lemma~\ref{lemma:QBijectiveMA} below).\\	
	In order to apply Theorem \ref{theorem:DivisionAlg}, we require a suitable Gröbner basis for this submodule. Recall that we consider $w=(w_1,\dots,w_k)$ as a matrix of column vectors and denote the coordinates of $w_j\in\C^n$ by $(w_{1,j},\dots,w_{n,j})$.\\	
	For an ordered $k$-tuple $I=(i_1,\dots,i_k)\in\{1,\dots,n\}^k$, $i_j<i_{j+1}$, we denote by
	\begin{align*}
		[I](w)=\det\nolimits_k\begin{pmatrix}
			w_{i_1,1} & \dots &w_{i_1,k}\\
			\vdots & &\vdots\\
			w_{i_k,1} & \dots &w_{i_k,k}
		\end{pmatrix}
	\end{align*}
	the $(k\times k)$-minor of the matrix $w$ obtained from the rows $i_1,\dots,i_k$. To any ordered $k$-tuple $I$ we associate its index $\alpha(I)\in \mathbb{N}^{n\times k}$, $\alpha(I)=(e_{i_1},\dots,e_{i_k})$, where $e_j\in\mathbb{N}^n$ denotes the standard column vectors.
	\begin{lemma}
		\label{lemma:productsMinorsGroebnerBasis}
		The set consisting of $[I,I']:=[I][I']$ for ordered $k$-tuples $I,I'$ with $I\ge I'$ is a Gröbner basis for the $\P(\C^n)$-module generated by $\M^2_k$ with respect to the lexicographic term order induced from the variable order
		\begin{align*}
			&w_{1,1}>w_{2,1}>\dots>w_{n,1}\\
			>&w_{1,2}>w_{2,2}>\dots>w_{n,2}>\dots\\
			>&w_{1,k}>w_{2,k}>\dots>w_{n,k}.
		\end{align*}
		Moreover, no multiple of $[I,I']-\mathrm{in}([I,I'])$ contains a monomial that is divisible by $\mathrm{in}([\tilde{I},\tilde{I'}])$ for any ordered $k$-tuples $\tilde{I},\tilde{I}'$ with $\tilde{I}\ge \tilde{I}'$.
	\end{lemma}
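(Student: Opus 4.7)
The plan breaks into four steps: identify the initial terms, establish a $\C$-basis of $\M^2_k$ consisting of standard products, upgrade this to the Gr\"obner basis property for the $\P(\C^n)$-module they generate, and finally verify the indivisibility condition for the tails $[I,I']-\mathrm{in}([I,I'])$. First, from the Leibniz expansion $[I]=\sum_{\sigma\in S_k}\mathrm{sgn}(\sigma)\prod_{l=1}^kw_{i_{\sigma(l)},l}$ and the prescribed lex order (column $1$ variables dominate, and within each column a smaller row index corresponds to a larger variable), the identity permutation contributes the maximal monomial, so $\mathrm{in}([I])=w_{i_1,1}w_{i_2,2}\cdots w_{i_k,k}$ and hence $\mathrm{in}([I,I'])=\prod_{l=1}^kw_{i_l,l}w_{i'_l,l}$. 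The standardness condition $I\geq I'$ lets one recover $(i_l,i'_l)$ from the multiset $\{i_l,i'_l\}$ appearing in the column-$l$ variables, so distinct standard pairs produce distinct initial terms. I would then prove that $\{[I,I']: I\geq I'\}$ forms a $\C$-basis of $\M^2_k$: linear independence is immediate from the distinct leading monomials, while the quadratic Pl\"ucker (straightening) relations rewrite any $[I][I']$ with $I\not\geq I'$ as a $\C$-linear combination of standard products, and the count of standard pairs matches $N_{n,k}=\dim\M^2_k$ quoted in the text.

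Next, I would upgrade this to the Gr\"obner basis property for the $\P(\C^n)$-module generated by $\M^2_k$. Since $\P(\C^n)$ acts only on the last column, every element $f$ in the module has a unique expansion $f=\sum_{I\geq I'}p_{I,I'}(w_k)[I,I']$, and leading monomials of the summands take the form $\mathrm{in}(p_{I,I'})\mathrm{in}([I,I'])$. Two summands can share a leading monomial only when the first $k-1$ columns of their initial terms agree, since the $\P(\C^n)$-action cannot modify those columns. Within each such group, the standardness condition pins down $(I,I')$ up to the last pair $(i_k,i'_k)$, and a short induction on the column-$k$ degree together with the $\C$-basis property established above ensures that the leading term of $f$ is divisible by some $\mathrm{in}([I,I'])$ with $p_{I,I'}\neq 0$. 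The main obstacle is a careful treatment of the cancellation that can occur within one such group, and I expect the bulk of the technical effort to go into making the non-cancellation argument precise.

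Finally, for the indivisibility property, each non-initial monomial of $[I][I']$ has the form $\mathrm{sgn}(\sigma)\mathrm{sgn}(\tau)\prod_l w_{i_{\sigma(l)},l}w_{i'_{\tau(l)},l}$ for some pair $(\sigma,\tau)\neq(\mathrm{id},\mathrm{id})$. Multiplying by $p(w_k)\in\P(\C^n)$ leaves the variables in columns $1,\ldots,k-1$ untouched, so any divisibility of the resulting monomial by $\mathrm{in}([\tilde I,\tilde I'])$ would force the multiset equality $\{i_{\sigma(l)},i'_{\tau(l)}\}=\{\tilde i_l,\tilde i'_l\}$ in every column $l<k$. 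A case analysis combining the strict monotonicity of the ordered $k$-tuples with the standardness conditions $I\geq I'$ and $\tilde I\geq\tilde I'$ forces $\sigma(l)=l$ and $\tau(l)=l$ for $l<k$: the subcase where the equality is realized by a nontrivial swap ($i_l=i'_{\tau(l)}$ and $i'_l=i_{\sigma(l)}$) requires $i_l\in I\cap I'$ together with the order inequalities collapsing to equalities, contradicting strict monotonicity. Since $\sigma,\tau$ are permutations of $\{1,\ldots,k\}$, they must then satisfy $\sigma(k)=\tau(k)=k$ as well, yielding $\sigma=\tau=\mathrm{id}$ and contradicting non-initiality. The main obstacle is the combinatorial bookkeeping that isolates and eliminates these swap subcases.
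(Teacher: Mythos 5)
Your computation of the initial terms and the standard-monomial description of $\M^2_k$ (steps one and two) are fine, but the step that is supposed to deliver the Gr\"obner basis property does not work. It rests on the claim that every element of the module has a \emph{unique} expansion $f=\sum_{I\ge I'}p_{I,I'}(w_k)[I,I']$; this is false, because the module is not free over $\P(\C^n)$ on the standard products. Already for $n=3$, $k=2$ the Laplace relation $w_{1,2}[23]-w_{2,2}[13]+w_{3,2}[12]=0$ yields the nontrivial syzygy $w_{1,2}[23][23]-w_{2,2}[13][23]+w_{3,2}[12][23]=0$ among standard products with coefficients in $\P(\C^n)$. The same example defeats the mechanism you propose for controlling leading terms: in the presentation $f=-w_{2,2}[13][23]+w_{3,2}[12][23]$ the leading monomials of the two summands coincide and cancel, $f=-w_{1,2}[23][23]$, and $\mathrm{in}(f)=w_{1,2}w_{2,1}^2w_{3,2}^2$ is divisible by $\mathrm{in}([23][23])$ but by neither $\mathrm{in}([13][23])$ nor $\mathrm{in}([12][23])$, so your target conclusion ``the leading term of $f$ is divisible by some $\mathrm{in}([I,I'])$ with $p_{I,I'}\neq0$'' is simply not true; the cancellation you defer to ``a short induction'' is the entire difficulty. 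The paper resolves it via Buchberger's criterion: only S-pairs whose initial terms agree in the first $k-1$ columns have to be considered, a common minor is factored out, and the remaining factor is reduced using the fact that the $k$-minors form a Gr\"obner basis of the determinantal ideal (Sturmfels), with a homogeneity count showing that the reduction coefficients lie in $\P(\C^n)$. Some argument of this kind (or a standard-monomial/dimension count for the initial module in each multidegree) is required and is absent from your plan.

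The ``Moreover'' step also fails as you set it up: your case analysis claims that the column multiset equalities for $l<k$ force $\sigma(l)=\tau(l)=l$, and this is false. Take $n\ge4$, $k=2$, $(I,I')=((2,4),(1,3))$, $\sigma=\mathrm{id}$ and $\tau$ the transposition: the tail of $[24][13]$ contains the monomial $-w_{2,1}w_{3,1}w_{1,2}w_{4,2}$, whose column-$1$ multiset $\{2,3\}$ agrees with that of $\mathrm{in}([34][24])$ for the admissible pair $(3,4)\ge(2,4)$, and multiplication by $w_{4,2}\in\P(\C^n)$ produces $w_{2,1}w_{3,1}w_{1,2}w_{4,2}^2$, which \emph{is} divisible by $\mathrm{in}([34][24])=w_{2,1}w_{3,1}w_{4,2}^2$. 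So the cross-pair statement you are trying to establish is not available in this generality; what the paper's argument actually proves, and what the hypothesis of Theorem~\ref{theorem:DivisionAlg} requires, is the same-pair statement that no $\P(\C^n)$-multiple of $[I,I']-\mathrm{in}([I,I'])$ contains a monomial divisible by $\mathrm{in}([I,I'])$ itself. That weaker statement does follow from the kind of observation you make: every non-initial monomial of $[I,I']$ already has, in some column $j\le k-1$, an exponent strictly smaller than the corresponding exponent of $\mathrm{in}([I,I'])$, and multiplication by polynomials in $w_k$ cannot change the first $k-1$ columns. You should prove that version rather than the cross-pair one.
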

	\begin{proof}
		Obviously, the polynomials $[I,I']$ span the vector space $\M^2_k$, so they generate $\P(\C^n)\M^2_k$. 	In order to show that these products form a Gröbner basis for $\P(\C^n)\M^2_k$, we use Buchberger's Criterion.\\
		Note that the initial term of $[I,I']$ is 
		\begin{align*}
			\left(\prod_{j=1}^kw_{i_j,j}\right)\left(\prod_{j=1}^kw_{i'_j,j}\right)=w^{\alpha(I)}w^{\alpha(I')}.
		\end{align*}
		Let $J,J'$ be two additional ordered $k$-tuples with $J\ge J'$. Note that two different monomials $w^{\alpha(I)+\alpha(I')}$ and $w^{\alpha(J)+\alpha(J')}$ are only related by multiplication with monomials in $\P(\C^n)$ if $\alpha(I')_j=\alpha(J')_j$ and $\alpha(I)_j=\alpha(J)_j$ for $1\le j\le k-1$. Then $\alpha(I')_k+\alpha(I)_k$ and $\alpha(J')_k+\alpha(J)_k$ differ by either one or two entries. If they differ by one entry, then without loss of generality $\alpha(I)_k+\alpha(I')_k=e_i+e_j$ and $\alpha(J)_k'+\alpha(J')_k'=e_i+e_l$ for $j>l$ and so
		\begin{align}
			\notag
			&w_{l,k}[I,I']-w_{j,k}[J,J']\\
			&\label{equation:CasesKMinors}\qquad=\begin{cases}
				[I](w_{\alpha(J')_k,k}[I']-w_{\alpha(I')_k,k}[J']),& i\ge j>l,~\text{i.e. for}~I=J,\\
				[I'](w_{\alpha(J')_k,k}[I]-w_{\alpha(I)_k,k}[J']),&j>i> l,~\text{i.e. for}~I'=J,\\
				[I'](w_{\alpha(J)_k,k}[I]-w_{\alpha(I)_k,k}[J]),&j>l\ge i,~\text{i.e. for}~I'=J'.
			\end{cases}
		\end{align}
		Since the elements $[\tilde{I}]$ for ordered index sets $\tilde{I}$ form a Gröbner basis for the ideal of $\P(\Mat_{n,k}(\C))$ generated by $k$-minors with respect to the lexicographic extension of any variable order respecting the columns of rows (see for example \cite[Theorem 1]{SturmfelsGrobnerbasesStanley1990} and the remark after its proof), the second factor in these expressions can be written as a sum of products of polynomials $P_{\tilde{I}}\in\mathcal{P}(\Mat_{n,k}(\C))$ and the $k$-minors $[\tilde{I}]$, where the initial term of $P_{\tilde{I}} [\tilde{I}]$ is smaller than the initial term of the second factor in \eqref{equation:CasesKMinors}. Comparing the degrees of homogeneity, we see that $P_{\tilde{I}}$ is linear and only depends on $w_k$. Thus $w_{l,k}[I,I']-w_{j,k}[J,J']$ can be written as a $\P(\C^n)$-linear combination of $[\tilde{I},\tilde{I}']$ with strictly smaller initial terms.\\
		If $\alpha(I')_k+\alpha(I)_k$ and $\alpha(J')_k+\alpha(J)_k$ differ by two entries, then we have to consider
		\begin{align*}
			&w_{\alpha(J')_k,k}w_{\alpha(J)_k,k}[I,I']-w_{\alpha(I')_k,k}w_{\alpha(I)_k,k}[J,J']\\
			&\quad=w_{\alpha(J)_k,k}[I]\left(w_{\alpha(J')_k,k}[I']-w_{\alpha(I')_k,k}[J']\right)\\
			&\quad\phantom{=}+w_{\alpha(I')_k,k}[J']\left(w_{\alpha(J)_k,k}[I]-w_{\alpha(I)_k,k}[J]\right).
		\end{align*}
		With the same reasoning as before, the expressions in brackets can be written as a $\P(\C^n)$-linear combination of $[\tilde{I}]$ with strictly smaller initial terms. Thus $w_{\alpha(J')_k,k}w_{\alpha(J)_k,k}[I'][I]-w_{\alpha(I')_k,k}w_{\alpha(I)_k,k}[J'][J]$ can be written as $\P(\C^n)$-linear combination of $[\tilde{I},\tilde{I'}]$ with strictly smaller initial terms. By Buchberger's criterion, the elements $[I,I']$, $I\ge I'$, form a Gröbner basis of $\P(\C^n)\M^2_k$.\\
		It remains to show that this Gröbner basis satisfies the additional condition. For $k=1$, $[I,I']=\mathrm{in}([I,I'])$, so there is nothing to prove. For $k>1$, note that any monomial in $[I,I']$ corresponds to a permutation of the $k$ nontrivial rows of $\alpha(I)$, $\alpha(I')$, or equivalently to a permutation of the columns. In particular, the monomials in $[I,I']-\mathrm{in}([I,I'])$ correspond to permutations of the columns of $\alpha(I)$, $\alpha(I')$, where at least one of these permutations is proper. If $\beta\in\mathbb{N}^{n\times k}$ denotes the index of any monomial in this expression, then $\beta< \alpha(I)+\alpha(I')$, and at least one of the columns $1\le j\le k-1$ contains an entry that is strictly smaller than the corresponding entry in $\alpha(I)+\alpha(I')$. In particular, any product of this monomial with a monomial in $\P(\C^n)$ has the property that the matrix consisting of the first $(k-1)$ columns of its index is strictly smaller than the matrix consisting of the first $(k-1)$ columns of the matrix $\alpha(I)+\alpha(I')$. Thus, this term is not divisible by $\mathrm{in}([I,I'])$.
		\end{proof}

	Similar to \eqref{equation:polydiskSimple}, we consider the following subsets of $\Mat_{n,k}(\C)$ for $\delta>0$, $w\in\Mat_{n,k}(\C)$:
	\begin{align*}
		D_\delta(w)=\{\zeta\in \Mat_{n,k}(\C):&|\Re(\zeta_j)|_1\le |\Re(w_j)|_1+\delta,\\
		&|\Im(\zeta_j)|_1\le |\Im(w_j)|_1+\delta,~1\le j\le k\}.
	\end{align*}
	\begin{theorem}
	\label{theorem:GroebnerAlgModuleMinor}
	Let $F\in \mathcal{O}_{\Mat_{n,k}(\C)}$ be a function such that every homogeneous term in the power series expansion of $F$ belongs to the $\P(\C^n)$-module generated by $\M^2_k$. For every basis $Q_j$, $1\le j\le N_{n,k}$, of $\M^2_k$ there exist functions $g_{j}\in\mathcal{O}_{\C^n}$ such that 
	\begin{align}
		\label{eq:presentationTildeM2k}
		F(w)=\sum_{j=1}^{N_{n,k}}g_{j}(w_k)Q_j(w).
	\end{align}
	In particular, $F\in \mathcal{O}_{\C^n}\M^2_k$.
	Moreover, these functions can be chosen such that for every $\delta>0$, 
	\begin{align*}
		|g_{j}(z)|\le C_{\delta}\left(1+|z|\right)^{(2+n)N_{n,k}}\sup_{\zeta\in D_\delta(0,\dots,0,z)}|F(\zeta)|,
	\end{align*}
	for $N_{n,k}=\binom{n}{k}^2-\binom{n}{k-1}\binom{n}{n-k-1}$ and suitable constants $C_\delta>0$ depending on the chosen basis and $\delta>0$, $n$, and $k$ only.
\end{theorem}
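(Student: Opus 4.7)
The strategy is to apply Theorem~\ref{theorem:DivisionAlg} to the homogeneous $\P(\C^n)$-submodule $M := \P(\C^n)\M^2_k$ of $\P(\Mat_{n,k}(\C))$ using the Gröbner basis supplied by Lemma~\ref{lemma:productsMinorsGroebnerBasis}, and then to convert to the chosen basis $Q_j$ by a $\C$-linear change of coordinates inside the finite-dimensional space $\M^2_k$.

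First I would verify the hypotheses of Theorem~\ref{theorem:DivisionAlg}: $M$ is a finitely generated homogeneous $\P(\C^n)$-module; by Lemma~\ref{lemma:productsMinorsGroebnerBasis} the polynomials $[I,I']=[I][I']$ for ordered $k$-tuples $I\ge I'$ form an ordered Gröbner basis of homogeneous elements, each of degree $2$ in every column, in particular of degree $h_2=2$ in the last column $w_k$ (which plays the role of the second variable group in the splitting $\Mat_{n,k}(\C)\cong(\C^n)^{k-1}\times\C^n$); and the second statement of the same lemma is exactly the non-divisibility condition demanded by Theorem~\ref{theorem:DivisionAlg}. Since by assumption every homogeneous term of the power series expansion of $F$ lies in $M$, Theorem~\ref{theorem:DivisionAlg} and its corollary produce unique functions $g_{I,I'}\in\mathcal{O}_{\C^n}$ with
\begin{align*}
F(w)=\sum_{I\ge I'}g_{I,I'}(w_k)\,[I,I'](w)
\end{align*}
and polynomial bounds $|g_{I,I'}(z)|\le C_\delta(1+|z|)^{(2+n)N}\sup_{\zeta\in D_\delta(0,\dots,0,z)}|F(\zeta)|$, where $N$ denotes the size of the chosen Gröbner basis.

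Since $\{[I,I']:I\ge I'\}$ spans the finite-dimensional space $\M^2_k$, each $[I,I']$ has a finite expansion $[I,I']=\sum_{j=1}^{N_{n,k}}c_{I,I',j}Q_j$ in the given basis with constants $c_{I,I',j}\in\C$ depending only on the $Q_j$. Substituting yields the desired representation
\begin{align*}
F(w)=\sum_{j=1}^{N_{n,k}}g_j(w_k)\,Q_j(w),\qquad g_j:=\sum_{I\ge I'}c_{I,I',j}\,g_{I,I'},
\end{align*}
from which the assertion $F\in\mathcal{O}_{\C^n}\M^2_k$ is immediate, and the estimate on each $g_j$ follows from the bounds on the $g_{I,I'}$ by absorbing the finitely many $c_{I,I',j}$ into $C_\delta$.

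The main technical point is matching the sharp exponent $(2+n)N_{n,k}$: the exponent furnished by Theorem~\ref{theorem:DivisionAlg} is $(2+n)N$, so I must replace the full family $\{[I,I']:I\ge I'\}$ by a Gröbner basis of cardinality exactly $N_{n,k}=\dim\M^2_k$. This is arranged by passing to a subfamily that is simultaneously a $\C$-linear basis of $\M^2_k$ and still a Gröbner basis of $M$, for instance the standard bitableaux coming from the straightening law (cf.\ the remark after Lemma~\ref{lemma:productsMinorsGroebnerBasis} and \cite{SturmfelsGrobnerbasesStanley1990}), whose cardinality is known to equal $\dim\M^2_k$. With this refined choice the bound from Theorem~\ref{theorem:DivisionAlg} matches the statement and the proof is complete.
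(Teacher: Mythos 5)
Your proof is correct and follows essentially the same route as the paper: reduce to a single basis of $\M^2_k$, apply Theorem~\ref{theorem:DivisionAlg} (through its corollary) to the Gr\"obner basis of Lemma~\ref{lemma:productsMinorsGroebnerBasis} for the splitting $\Mat_{n,k}(\C)\cong(\C^n)^{k-1}\times\C^n$ with degree bound $2$ in $w_k$, and then pass to the given basis $Q_j$ by a linear change of coordinates, absorbing the constants. Your closing worry about the exponent $(2+n)N_{n,k}$ is precisely the point the paper disposes of by counting: the family $[I,I']$, $I\ge I'$, of Lemma~\ref{lemma:productsMinorsGroebnerBasis} is taken to consist of exactly $N_{n,k}$ elements (it is simultaneously a linear basis of $\M^2_k$), so no refinement to standard bitableaux is required, although your refinement argument would serve the same purpose.
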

\begin{proof}
	Obviously the claim holds as soon as it holds for one basis of $\M^2_k$. We choose the Gröbner basis of $\P(\C^n)\M^2_k$ in Lemma \ref{lemma:productsMinorsGroebnerBasis}. Then the conditions in Theorem \ref{theorem:DivisionAlg} are satisfied. Since the degree of these element is bounded by $2$ in the last argument and the number of elements in the chosen Gröbner basis is $N_{n,k}$, the claim follows from Theorem \ref{theorem:DivisionAlg}.
\end{proof}
\begin{remark}
	The $\O_{\C^n}$-module generated by $\M^2_k$ is in general not freely generated by $\M^2_k$. In particular, a given function $F\in \O_{\C^n}\M^2_k$ can admit different presentations \eqref{eq:presentationTildeM2k}.
\end{remark}

\begin{corollary}
	\label{corollary:EstimateFourierDiagonalCoordinates}
	For every $\delta>0$ there exists a constant $C_\delta>0$ such that every $F\in \O_{\C^n}\M^2_k$ satisfies
	\begin{align*}
		|F(w_1,\dots,w_k)|\le C_\delta \left(\prod_{j=1}^{k}|w_j|^2\right)\left(1+|w_k|\right)^{(2+n)N_{n,k}}\sup_{\zeta\in D_\delta(0,\dots,0,w_k)}|F(\zeta)|,
	\end{align*}		
\end{corollary}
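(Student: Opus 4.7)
The plan is to combine the decomposition provided by Theorem \ref{theorem:GroebnerAlgModuleMinor} with elementary pointwise bounds on the generators of $\M^2_k$. Since $F\in\O_{\C^n}\M^2_k$, every homogeneous term of its power series expansion lies in $\P(\C^n)\M^2_k$ (this is immediate, as $\M^2_k$ consists of polynomials homogeneous of bidegree respecting the splitting used there). Thus Theorem \ref{theorem:GroebnerAlgModuleMinor} applies: fix once and for all a basis $Q_1,\dots,Q_{N_{n,k}}$ of $\M^2_k$ (for instance, the Gr\"obner basis from Lemma \ref{lemma:productsMinorsGroebnerBasis}), and write
\begin{align*}
	F(w_1,\dots,w_k)=\sum_{j=1}^{N_{n,k}}g_j(w_k)\,Q_j(w_1,\dots,w_k),
\end{align*}
where the holomorphic functions $g_j\in\O_{\C^n}$ satisfy
\begin{align*}
	|g_j(w_k)|\le C'_\delta\,(1+|w_k|)^{(2+n)N_{n,k}}\sup_{\zeta\in D_\delta(0,\dots,0,w_k)}|F(\zeta)|
\end{align*}
for a constant $C'_\delta$ depending only on $\delta$, $n$, $k$, and the chosen basis.

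Next I would estimate each $Q_j$ pointwise. Since $Q_j$ is a linear combination of products $[I][I']$ of two $k$-minors and each $k$-minor $[I]$ is multilinear in the columns $w_1,\dots,w_k$ (being the determinant of a $(k\times k)$-submatrix whose $l$th column is a subvector of $w_l$), one obtains $|[I](w_1,\dots,w_k)|\le k!\,\prod_{l=1}^k|w_l|$ by expanding the determinant and applying the triangle and Cauchy--Schwarz inequalities. Consequently
\begin{align*}
	|Q_j(w_1,\dots,w_k)|\le c_j\prod_{l=1}^k|w_l|^2
\end{align*}
for some constant $c_j$ depending only on $n$, $k$, and the chosen basis.

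Combining these two estimates yields
\begin{align*}
	|F(w_1,\dots,w_k)|&\le\sum_{j=1}^{N_{n,k}}|g_j(w_k)|\,|Q_j(w_1,\dots,w_k)|\\
	&\le C_\delta\left(\prod_{l=1}^k|w_l|^2\right)(1+|w_k|)^{(2+n)N_{n,k}}\sup_{\zeta\in D_\delta(0,\dots,0,w_k)}|F(\zeta)|
\end{align*}
with $C_\delta:=C'_\delta\sum_{j=1}^{N_{n,k}}c_j$, which is the claim. No step is a genuine obstacle: the only content beyond bookkeeping is the existence of the decomposition, which is already established in Theorem \ref{theorem:GroebnerAlgModuleMinor}; the remainder is the multilinearity of $k$-minors and the triangle inequality.
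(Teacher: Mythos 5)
Your proposal is correct and follows essentially the same route as the paper: decompose $F=\sum_j g_j(w_k)Q_j(w)$ via Theorem~\ref{theorem:GroebnerAlgModuleMinor}, bound each $|Q_j(w)|$ by a constant multiple of $\prod_{l=1}^k|w_l|^2$ using its homogeneity of degree $2$ in each column, and use the estimate for $g_j$ from that theorem together with the triangle inequality. The only extra remark you add (that the homogeneous terms of the expansion of $F$ lie in $\P(\C^n)\M^2_k$) is a correct and harmless verification of the theorem's hypothesis.
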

\begin{proof}
	Let $F(w)=\sum_{j=1}^{N_{n,k}} g_j(w_k)Q_j(w)$ be a decomposition as in Theorem~\ref{theorem:GroebnerAlgModuleMinor}. Then
	\begin{align*}
		|F(w)|\le \sum_{j=1}^{N_{n,k}}|Q_j(w)| \cdot |g_{j}(w_k)|,
	\end{align*}
	so the claim follows by estimating each term in the sum: $|Q_j(w)|$ is bounded by a constant multiple of $\prod_{j=1}^k|w_j|^2$ since it is homogeneous of degree $2$ in each argument, and $g_{j}$ satisfies the estimate in Theorem~\ref{theorem:GroebnerAlgModuleMinor} by assumption.
\end{proof}

\section{Dually epi-translation invariant valuations}
	\label{section:DuallyEpiValuations}
	\subsection{Support and topology}
	\label{section:Valuations_Support_Topology}
	
	 The homogeneous decomposition of $\VConv(\R^n)$ implies that the \emph{polarization} of $\mu$, given for $f_1,\dots,f_k\in\Conv(\R^n,\R)$ by
	\begin{align*}
		\bar{\mu}(f_1,\dots,f_k)=\frac{1}{k!}\frac{\partial^k}{\partial\lambda_1\dots\partial \lambda_k}\Big|_0\mu\left(\sum_{j=1}^{k}\lambda_jf_j\right),
	\end{align*}
	is well defined since $(\lambda_1,\dots,\lambda_k)\mapsto \mu\left(\sum_{j=1}^{k}\lambda_jf_j\right)$ is a polynomial in $\lambda_j\ge 0$. The polarization is essentially a multilinear functional, which can be extended to a distribution on $(\R^n)^k$. 
	
	\begin{theorem}[\cite{Knoerrsupportduallyepi2021} Theorem 2]
		\label{theorem:GWDistributions}
		For every $\mu\in\VConv_k(\R^n)$ there exists a unique distribution $\GW(\mu)$ on $(\R^n)^k$ with compact support which satisfies the following property: If $f_1,...,f_k\in \Conv(\R^n,\R)\cap C^\infty(\R^n)$, then
		\begin{align*}
			\GW(\mu)[f_1\otimes...\otimes f_k]=\bar{\mu}(f_1,...,f_k),
		\end{align*}
		where $\bar{\mu}$ is the polarization of $\mu$.\\
		Moreover, the support of $\GW(\mu)$ is contained in the diagonal in $(\R^n)^k$.
	\end{theorem}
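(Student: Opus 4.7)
My plan is to construct $\GW(\mu)$ by extending the polarization $\bar\mu$ multilinearly from $k$-tuples of smooth convex functions to $k$-tuples of smooth compactly supported test functions, and then promoting this multilinear form to a distribution on $(\R^n)^k$ via a continuity estimate. The diagonal support claim is treated separately, using the valuation identity on pairs of convex functions whose non-quadratic parts have disjoint support.

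As a first step I would record that $\bar\mu$ is well-defined, symmetric, Minkowski-multilinear on $\Conv(\R^n,\R)^k$, and vanishes on tuples in which one argument is affine. The first three properties follow because $k$-homogeneity of $\mu$ makes $(\lambda_1,\dots,\lambda_k)\mapsto\mu(\sum_j\lambda_jf_j)$ a polynomial of degree $k$ in the $\lambda_j\ge0$, and polarization extracts its unique symmetric multilinear coefficient. Dual epi-translation invariance kills the polynomial's dependence on $\lambda_i$ whenever $f_i$ is affine, yielding the vanishing. To extend $\bar\mu$ to test functions, pick $\lambda > \max_i \|D^2\phi_i\|_\infty$ so that $g_i := \tfrac{\lambda}{2}|x|^2 + \phi_i$ and $h := \tfrac{\lambda}{2}|x|^2$ are smooth and convex, and set
\[
\bar\mu(\phi_1,\dots,\phi_k) := \sum_{\epsilon\in\{0,1\}^k}(-1)^{|\epsilon|}\bar\mu(g_{1,\epsilon_1},\dots,g_{k,\epsilon_k})
\]
with $g_{i,0}=g_i$, $g_{i,1}=h$. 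Minkowski multilinearity and vanishing on affine arguments ensure this does not depend on $\lambda$ or on the particular convex regularizer used.

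The central analytic obstacle is a continuity estimate of the form
\[
|\bar\mu(\phi_1,\dots,\phi_k)| \le C_A\prod_{j=1}^k\|\phi_j\|_{C^2(A)}
\]
whenever every $\phi_j$ is supported in a fixed compact $A\subset\R^n$. I would obtain this through a Cauchy-type representation: for $f_1,\dots,f_k \in \Conv(\R^n,\R)$ fixed, $p(\lambda):=\mu(\sum_j\lambda_jf_j)$ is a polynomial in $\lambda_1,\dots,\lambda_k$ of total degree $k$, so its $\lambda_1\cdots\lambda_k$ coefficient (up to $k!$) equals a contour integral of $p$ on a polytorus in $\C^k$. Bounding this contour integrand uses two inputs: first, Proposition \ref{proposition:compactnessConv}, which ensures that the set of $\sum_j\lambda_jf_j$ with $\lambda$ on a bounded polytorus is relatively compact in $\Conv(\R^n,\R)$ once the $f_j$ are uniformly bounded on compact sets; second, continuity of $\mu$, giving a uniform sup-bound. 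Applied to $g_j = \tfrac{\lambda}{2}|x|^2 + \phi_j$ with $\lambda \sim \max_j\|D^2\phi_j\|_\infty$ and unwound through the alternating sum, this yields the desired $C^2$-estimate. With it in hand, $\bar\mu$ extends multilinearly to $C^\infty_c(\R^n)^{\otimes k}$; since finite sums of tensor products are dense in $C^\infty_c((\R^n)^k)$ and the estimate is compatible with tensor decompositions, we obtain a unique compactly supported distribution $\GW(\mu)$ on $(\R^n)^k$ satisfying the defining identity. Uniqueness is then immediate from the density of $\Conv(\R^n,\R)\cap C^\infty(\R^n)$ in $\Conv(\R^n,\R)$.

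For the diagonal support claim, fix $(x_1,\dots,x_k)$ with some $x_i\ne x_j$; by symmetry assume $x_1\ne x_2$. Choose disjoint neighborhoods $U_1\ni x_1$, $U_2\ni x_2$, and consider smooth nonnegative $\phi_1,\phi_2$ supported in $U_1,U_2$ respectively, together with arbitrary $\phi_3,\dots,\phi_k\in C^\infty_c(\R^n)$. For $\lambda$ sufficiently large, $g_i := \tfrac{\lambda}{2}|x|^2 + \phi_i$ is convex, and because $\phi_1,\phi_2\ge 0$ have disjoint support, $g_1\vee g_2 = \tfrac{\lambda}{2}|x|^2+\phi_1+\phi_2$ and $g_1\wedge g_2 = \tfrac{\lambda}{2}|x|^2$ are both convex. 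Applied to the rescalings $\phi_i\mapsto t_i\phi_i$ for small $t_i\ge 0$, the valuation identity
\[
\mu(g_1\vee g_2) + \mu(g_1\wedge g_2) = \mu(g_1)+\mu(g_2)
\]
becomes a polynomial identity in $(t_1,t_2)$ whose mixed derivative $\partial_{t_1}\partial_{t_2}|_0$ forces $\bar\mu(\phi_1,\phi_2,g_3,\dots,g_k) = 0$. Expanding in $g_j = \tfrac\lambda2|x|^2 + \phi_j$ via Minkowski multilinearity and again using vanishing on affine arguments yields $\bar\mu(\phi_1,\phi_2,\phi_3,\dots,\phi_k)=0$. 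Decomposing arbitrary smooth test functions near $x_1, x_2$ into positive and negative parts (smoothed so as to preserve disjoint support) and invoking density of finite tensor sums in $C^\infty_c$ then shows $(x_1,\dots,x_k)\notin\supp\GW(\mu)$, completing the proof.
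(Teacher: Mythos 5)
The main gap is the compact support of $\GW(\mu)$, which you assert but never argue. Your continuity estimate $|\bar\mu(\phi_1,\dots,\phi_k)|\le C_A\prod_j\|\phi_j\|_{C^2(A)}$ is local: the constant depends on the compact set $A$ containing the supports, and nothing prevents $C_A$ from blowing up as $A$ exhausts $\R^n$. Such estimates (even after a correct extension step) only produce a distribution on $(\R^n)^k$, whose support could perfectly well be an unbounded subset of the diagonal. Compactness of the support is the substantive content of the quoted theorem --- it is the theme of the reference it is taken from --- and requires a separate argument: roughly, if the diagonal support were unbounded one chooses points escaping to infinity where the polarization does not vanish locally, places perturbations there with rapidly growing coefficients, and obtains convex functions that still converge in $\Conv(\R^n,\R)$ (only finitely many perturbations are seen on each compact set), contradicting the fact that a continuous valuation is bounded on the relatively compact sets described by Proposition~\ref{proposition:compactnessConv}. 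Without an argument of this kind your construction proves existence of \emph{a} distribution with the defining property and diagonal support, but not the compactly supported one claimed in the statement (and uniqueness is also asserted only within the class of compactly supported distributions).

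Two further steps would fail as literally written, though both are repairable. First, the extension from elementary tensors to $C^\infty_c((\R^n)^k)$: density of finite sums of tensor products together with an estimate on single tensor products does not give boundedness on sums (a sum of elementary tensors can be small in every $C^m$-norm while the individual summands are large), so "the estimate is compatible with tensor decompositions" is precisely the point at issue; one needs the multilinear Schwartz kernel theorem applied to the separately continuous $k$-linear functional, which is how the construction is actually carried out. Second, the Cauchy/polytorus representation of the mixed coefficient is unusable as stated: on a complex polytorus $\sum_j\lambda_jf_j$ is not a convex function, so $p(\lambda)$ is no longer $\mu$ of anything and continuity of $\mu$ gives no bound on the integrand; replace this by the discrete polarization formula (the alternating sum over subsets, as in \eqref{equation:formulaGWElementaryTensor}) or by equivalence of norms on the finite-dimensional space of polynomials of degree at most $k$, bounding the coefficients by $\sup_{\lambda\in[0,1]^k}|p(\lambda)|$. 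The diagonal-support argument via the valuation identity for nonnegative, disjointly supported bumps is sound and is the standard route; just note that an arbitrary $\phi\in C^\infty_c(U_i)$ should be written as a difference of two nonnegative functions in $C^\infty_c(U_i)$ (e.g.\ $\phi=(\phi+c\chi)-c\chi$ with $\chi=1$ on $\supp\phi$) rather than via literal positive and negative parts.
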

	If $\phi_1,\dots,\phi_k\in C^\infty_c(\R^n)$, then $\GW(\mu)[\phi_1\otimes\dots\otimes \phi_k]$ can be calculated in the following way, compare \cite[Section~5]{Knoerrsupportduallyepi2021}: Choose convex functions $f_1,\dots,f_k\in \Conv(\R^n,\R)$ such that $h_j:=f_j+\phi_j$ is convex for every $1\le j\le k$. Then
	\begin{align}
		\label{equation:formulaGWElementaryTensor}
		\begin{split}
		&\GW(\mu)[\phi_1\otimes\dots\otimes \phi_k]\\
		&\quad =\sum\limits_{j=0}^k(-1)^{k-j}\frac{1}{(k-j)!l!}\sum_{\sigma\in S_k}\bar{\mu}(h_{\sigma(1)},\dots,h_{\sigma(j)},f_{\sigma(j+1)},\dots,f_{\sigma(k)}),
	\end{split}
	\end{align}
	where $S_k$ denotes the symmetric group. Note that this corresponds to the multilinear extension of $\bar{\mu}$ evaluated in $\phi_j=h_j-f_j$.\\

	The support of these distributions gives rise to the following notion of support for valuations: If $\mu=\sum_{k=0}^n\mu_k$ is the decomposition of $\mu\in\VConv(\R^n)$ into its homogeneous components, then 
	\begin{align*}
		\supp\mu:=\bigcup_{k=1}^n \Delta_k^{-1}(\supp\GW(\mu_k)),
	\end{align*}
	where $\Delta_k:\R^n\rightarrow(\R^n)^k$ is the diagonal embedding. Note that the support of a valuation is a compact subset of $\R^n$. It is characterized by the following property.
	\begin{proposition}[\cite{Knoerrsupportduallyepi2021} Proposition 6.3]
		\label{proposition:characterizationSupport}
		The support of $\mu\in\VConv(\R^n)$ is minimal (with respect to inclusion) among the closed sets $A\subset \R^n$ with the following property: If $f,g\in\Conv(\R^n,\R)$ satisfy $f=g$ on an open neighborhood of $A$, then $\mu(f)=\mu (g)$.
	\end{proposition}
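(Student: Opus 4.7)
The plan is to prove the two inclusions separately, using the Goodey--Weil distributions as the central tool and exploiting that $\supp\GW(\mu_k)$ lies in the diagonal of $(\R^n)^k$ with projection contained in $\supp\mu$.

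For the locality direction---that agreement on an open neighborhood $U$ of $\supp\mu$ implies equal $\mu$-values---I would use the homogeneous decomposition to reduce to each $\mu_k$. For smooth convex $f,g$ the characterizing property of $\GW(\mu_k)$ and the telescoping identity
\begin{align*}
\mu_k(f)-\mu_k(g)=\GW(\mu_k)\left[\sum_{j=0}^{k-1}f^{\otimes j}\otimes(f-g)\otimes g^{\otimes(k-1-j)}\right]
\end{align*}
reduce this to noting that the $j$th summand vanishes on the open set $\R^n\times\dots\times U\times\dots\times\R^n$ (with $U$ in the $(j+1)$st slot), which contains $\Delta_k(U)\supset\supp\GW(\mu_k)$. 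For arbitrary $f,g\in\Conv(\R^n,\R)$ I would then pick an open neighborhood $V$ of $\supp\mu$ with $\bar V\subset U$ and $\epsilon>0$ smaller than the distance from $V$ to $\R^n\setminus U$; the standard convolutions $f*\rho_\epsilon$ and $g*\rho_\epsilon$ are smooth and convex, agree on $V$, and converge locally uniformly to $f$ and $g$, so continuity of $\mu$ and the smooth case transfer the equality to the limit.

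For minimality, suppose a closed set $A$ satisfies the stated property and pick, towards a contradiction, some $x\in\supp\mu\setminus A$. Choose $k$ with $(x,\dots,x)\in\supp\GW(\mu_k)$ and an open neighborhood $U$ of $x$ with $\bar U\cap A=\emptyset$. Since $\supp\GW(\mu_k)\cap U^k\neq\emptyset$ and elementary tensors are sequentially dense in $C^\infty_c(U^k)$, there exist $\phi_1,\dots,\phi_k\in C^\infty_c(U)$ with $\GW(\mu_k)[\phi_1\otimes\dots\otimes\phi_k]\neq 0$, and a quadratic $f$ with sufficiently large Hessian makes $f+\sum_{j\in S}\phi_j$ convex for every $S\subset\{1,\dots,k\}$. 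Polarization applied to $\mu_k(f+\sum t_j\phi_j)$, a polynomial in $t$ of total degree at most $k$, yields
\begin{align*}
\sum_{S\subset\{1,\dots,k\}}(-1)^{k-|S|}\mu_k\Bigl(f+\sum_{j\in S}\phi_j\Bigr)=k!\,\GW(\mu_k)[\phi_1\otimes\dots\otimes\phi_k]\neq 0,
\end{align*}
so there exist $S_1,S_2$ with $\mu_k(F)\neq\mu_k(G)$ for $F=f+\sum_{j\in S_1}\phi_j$ and $G=f+\sum_{j\in S_2}\phi_j$. Both are convex, agree on the open neighborhood $\R^n\setminus\bar U$ of $A$, and applying the hypothesis on $A$ to all rescaled pairs $(tF,tG)$ for $t\geq 0$, combined with the homogeneous decomposition, would force $\mu_k(F)=\mu_k(G)$---the desired contradiction.

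The main technical obstacle is the mollification step in the locality direction, which must preserve local equality through the smoothing; this is resolved by choosing the mollifier radius strictly less than the distance from $V$ to $\R^n\setminus U$. A secondary subtlety is that the polarization identity in the minimality argument isolates only the component $\mu_k$, so the hypothesis on $A$, which concerns $\mu$ itself, must be bootstrapped to each homogeneous component via rescaling.
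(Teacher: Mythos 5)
Your argument is correct and follows essentially the same route as the cited proof of this proposition: locality via the fact that $\supp\GW(\mu_k)$ sits inside the diagonal over $\supp\mu$, a telescoping plus mollification step to pass from smooth to arbitrary convex functions, and minimality by testing $\GW(\mu_k)$ against elementary tensors supported in a ball disjoint from $A$, combined with the finite-difference (polarization) identity and rescaling $t\mapsto(tF,tG)$ to isolate the $k$-homogeneous component. Two small points to tighten: it is the \emph{linear span} of elementary tensors that is dense in $C^\infty_c(U^k)$ (equivalently, if $\GW(\mu_k)$ vanished on all such tensors it would vanish on $U^k$, contradicting $(x,\dots,x)\in\supp\GW(\mu_k)$), and the neighborhood $V$ in the mollification step should be chosen bounded so that its distance to $\R^n\setminus U$ is strictly positive.
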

	Given a closed set $A\subset\R^n$, we denote by $\VConv_A(\R^n)$ the subspace of valuations with support contained in $A$. It is easy to see that these spaces are closed.
	\begin{proposition}[\cite{Knoerrsupportduallyepi2021} Proposition 6.8 and Corollary 6.10]
		\label{proposition:NormsSubspaces}
		Let $A\subset V$ be compact and convex, $\delta>0$. For $\mu\in \VConv_{A}(\R^n)$ define
		\begin{align*}
			\|\mu\|_{A,\delta}:=\sup\left\{|\mu(f)|  \ : \ f\in\Conv(\R^n,\R), \sup\limits_{x\in A+\delta B_1}|f(x)|\le 1 \right\}.
		\end{align*}
		This defines a continuous norm on $\VConv_{A}(\R^n)$ that metrizes the subspace topology. In particular, $\VConv_A(\R^n)$ is a Banach space with respect to this norm.
	\end{proposition}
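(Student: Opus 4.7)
The principal obstacle is that the set $\{f\in\Conv(\R^n,\R):\sup_{A+\delta B_1}|f|\le 1\}$ is not itself relatively compact in $\Conv(\R^n,\R)$, since Proposition~\ref{proposition:compactnessConv} demands uniform boundedness on every compact subset of $\R^n$, whereas the defining constraint controls $f$ only in a neighborhood of $A$. The whole plan hinges on using the support hypothesis $\supp\mu\subset A$ together with Proposition~\ref{proposition:characterizationSupport} to replace each admissible $f$ by a modification $\tilde f$ that agrees with $f$ on an open neighborhood of $A$ and yet lies in a relatively compact family.

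To produce $\tilde f$, for each $z\in A+(\delta/2)B_1$ I would choose a subgradient $\xi(z)\in\partial f(z)$ and set
\[
\tilde f(x):=\sup_{z\in A+(\delta/2)B_1}\bigl[f(z)+\langle\xi(z),x-z\rangle\bigr].
\]
The subgradient inequality gives $\tilde f\le f$, with equality on $A+(\delta/2)B_1$. Since $|f|\le 1$ on the larger set $A+\delta B_1$, the standard Lipschitz estimate for convex functions bounded on a slightly larger ball yields a uniform bound $|\xi(z)|\le L(\delta)$ over all admissible $f$ and all $z\in A+(\delta/2)B_1$. Hence $\tilde f$ is a supremum of affine functions whose linear parts range over a fixed bounded set, so $\tilde f\in\Conv(\R^n,\R)$, and the family $\{\tilde f\}$ is uniformly bounded on every compact subset of $\R^n$, hence relatively compact by Proposition~\ref{proposition:compactnessConv}. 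Proposition~\ref{proposition:characterizationSupport} then gives $\mu(f)=\mu(\tilde f)$ for every $\mu\in\VConv_A(\R^n)$.

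With this reduction the norm-theoretic claims are straightforward. Finiteness of $\|\mu\|_{A,\delta}$ follows because a continuous functional is bounded on relatively compact sets; absolute homogeneity and subadditivity are immediate; positive-definiteness uses the homogeneous decomposition $\mu=\sum_k\mu_k$, since $\|\mu\|_{A,\delta}=0$ forces the polynomial $t\mapsto\mu(tf)=\sum_k t^k\mu_k(f)$ to vanish on a nonempty interval, and hence identically, for every $f\in\Conv(\R^n,\R)$. For the metrization I would argue both directions: if $\|\mu_n\|_{A,\delta}\to 0$ and $K\subset\Conv(\R^n,\R)$ is compact, Proposition~\ref{proposition:compactnessConv} yields $C$ with $\sup_{f\in K}\sup_{A+\delta B_1}|f|\le C$, so $\sup_{f\in K}|\mu_n(f)|\le C\|\mu_n\|_{A,\delta}\to 0$; conversely, uniform convergence of $\mu_n\to 0$ on the relatively compact family $\{\tilde f\}$ combined with $\mu_n(f)=\mu_n(\tilde f)$ forces $\|\mu_n\|_{A,\delta}\to 0$. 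Completeness is then routine: a norm-Cauchy sequence is pointwise Cauchy on $\Conv(\R^n,\R)$, the pointwise limit inherits the valuation property and dual epi-translation invariance, continuity of the limit follows from uniform convergence on the relatively compact families $\{\tilde f\}$, and the support stays in $A$ because the identity $\mu(f)=\mu(\tilde f)$ passes to the limit.

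The genuinely delicate step is the truncation: $\tilde f$ must agree with $f$ on an \emph{open} neighborhood of $A$ rather than merely on $A$ itself (which is what Proposition~\ref{proposition:characterizationSupport} demands), and the uniform bound on the subgradients requires the safety margin between $A+(\delta/2)B_1$ and $A+\delta B_1$. Once both points are secured, the remainder is a translation exercise between the supremum defining $\|\mu\|_{A,\delta}$ and continuity on a relatively compact family.
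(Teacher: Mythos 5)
This proposition is not proved in the paper at all; it is quoted from \cite{Knoerrsupportduallyepi2021} (Proposition 6.8 and Corollary 6.10), so there is no internal proof to compare against. Judged on its own, your argument is sound and follows the natural route: the support hypothesis plus Proposition~\ref{proposition:characterizationSupport} lets you replace an admissible $f$ by a function agreeing with $f$ on an \emph{open} neighborhood of $A$ but controlled on all of $\R^n$, and Proposition~\ref{proposition:compactnessConv} makes the family of replacements relatively compact. Your truncation $\tilde f=\sup_{z\in A+(\delta/2)B_1}[f(z)+\langle\xi(z),x-z\rangle]$ does this correctly: the subgradient inequality gives $\tilde f\le f$ with equality on $A+(\delta/2)B_1$, and the margin between $A+(\delta/2)B_1$ and $A+\delta B_1$ gives the uniform bound $|\xi(z)|\le 4/\delta$, hence a uniform local bound $|\tilde f(x)|\le 1+L(\delta)(|x|+R)$ over all admissible $f$. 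This yields finiteness and continuity of $\|\cdot\|_{A,\delta}$ (it is dominated by the sup over one fixed compact closure), and the definiteness argument via polynomiality of $t\mapsto\mu(tf)$ is correct.

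One step is mis-stated, though it does not sink the proof: from $\sup_{f\in K}\sup_{A+\delta B_1}|f|\le C$ you cannot conclude $\sup_{f\in K}|\mu_n(f)|\le C\|\mu_n\|_{A,\delta}$, because $\mu_n$ is not $1$-homogeneous; for a $k$-homogeneous component and $C>1$ the factor is $C^k$, and mixing components requires separating them first. The repair is the device you already use for definiteness: $t\mapsto\mu_n(tf/C)$ is a polynomial of degree at most $n$, so each homogeneous component $\mu_{n,j}(f/C)$ is a fixed linear combination of the values $\mu_n(tf/C)$ for $t\in\{0,1/n,\dots,1\}$, each bounded by $\|\mu_n\|_{A,\delta}$; this gives $\sup_{f\in K}|\mu_n(f)|\le c_n\max(1,C)^n\|\mu_n\|_{A,\delta}$, which suffices both for the metrization direction and for the pointwise-Cauchy and continuity steps in the completeness argument. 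Two smaller points: since the subspace topology is not a priori known to be metrizable, state the comparison of topologies through the two uniform estimates (norm dominated by one seminorm $\|\cdot\|_{\overline{K_0}}$, and each seminorm $\|\cdot\|_K$ dominated by the norm) rather than through sequences --- your estimates already are of this form; and for the support of the limit in the completeness proof, pass the locality property (``$f=g$ near $A$ implies equal values'') from the $\mu_n$ to the limit and then invoke Proposition~\ref{proposition:characterizationSupport}, which is what your final sentence intends.
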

	Recall that $g\in \Aff(n,\R)$ acts on $\mu\in \VConv(\R^n)$ by
	\begin{align*}
		\pi(g)\mu(f)=\mu(f\circ g),\quad f\in\Conv(\R^n,\R).
	\end{align*}
	\begin{lemma}
		\label{lemma:continuityActionAff_VConv}
		\begin{enumerate}
			\item $\pi$ defines a continuous representation on $\VConv(\R^n)$, i.e. the following map is continuous:\begin{align*}
				\Aff(n,\R)\times\VConv(\R^n)&\rightarrow\VConv(\R^n)\\
				(g,\mu)&\mapsto \pi(g)\mu
			\end{align*}
			\item For $A\subset\R^n$ compact and $C\subset\Aff(n,\R)$ compact, this map restricts to a continuous map
			\begin{align*}
				C\times \VConv_A(\R^n)\rightarrow\VConv_{CA}(\R^n).
			\end{align*}
		\end{enumerate}
	\end{lemma}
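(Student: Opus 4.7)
The plan is to establish (1) first and then derive (2) from a direct support computation combined with (1).

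For (1), consider convergent nets $g_\alpha \to g$ in $\Aff(n,\R)$ and $\mu_\alpha \to \mu$ in $\VConv(\R^n)$; fix a compact set $K \subset \Conv(\R^n,\R)$ and aim to show $\sup_{f \in K}|\pi(g_\alpha)\mu_\alpha(f) - \pi(g)\mu(f)| \to 0$. Choose a compact neighborhood $C'$ of $g$ in $\Aff(n,\R)$ that eventually contains $(g_\alpha)$. Lemma~\ref{lemma:continuityActionAffonConv} (composed with continuity of inversion on $\Aff(n,\R)$) yields joint continuity of $(g',f) \mapsto f \circ g'$, so the set $K' := \{f \circ g' : g' \in C',\ f \in K\}$ is the continuous image of the compact set $C' \times K$ and hence compact in $\Conv(\R^n,\R)$. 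Then split
$$|\pi(g_\alpha)\mu_\alpha(f) - \pi(g)\mu(f)| \le |\mu_\alpha(f \circ g_\alpha) - \mu(f \circ g_\alpha)| + |\mu(f \circ g_\alpha) - \mu(f \circ g)|.$$
The first summand is dominated by $\sup_{h \in K'}|\mu_\alpha(h) - \mu(h)|$, which vanishes in $\alpha$ by the definition of the topology on $\VConv(\R^n)$. For the second, the map $(g',f) \mapsto \mu(f \circ g')$ is continuous on the compact product $C' \times K$ and hence uniformly continuous there; once $g_\alpha$ enters a sufficiently small neighborhood of $g$, this term becomes small uniformly in $f \in K$.

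For (2), I would first derive the support inclusion $\supp(\pi(g)\mu) \subset g(A)$ from Proposition~\ref{proposition:characterizationSupport}: if $f_1, f_2 \in \Conv(\R^n,\R)$ agree on an open neighborhood $V$ of $g(A)$, then $f_1 \circ g$ and $f_2 \circ g$ agree on the open neighborhood $g^{-1}(V)$ of $A$ (using that $g$ is a homeomorphism), whence $\pi(g)\mu(f_1) = \mu(f_1 \circ g) = \mu(f_2 \circ g) = \pi(g)\mu(f_2)$. Since $g(A) \subset CA$ for every $g \in C$, the map from (1) corestricts to $C \times \VConv_A(\R^n) \to \VConv_{CA}(\R^n)$. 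By Proposition~\ref{proposition:NormsSubspaces} the topology on $\VConv_{CA}(\R^n)$ coincides with the subspace topology inherited from $\VConv(\R^n)$, so the corestriction inherits continuity from (1).

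The principal technical point is the uniformity in $f \in K$ for the second triangle-inequality term in (1); this is where compactness of $C' \times K$ combined with the joint continuity supplied by Lemma~\ref{lemma:continuityActionAffonConv} is indispensable, as it promotes the pointwise continuity of $g' \mapsto \mu(f \circ g')$ to a uniform statement in $f$.
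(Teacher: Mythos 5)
Your proof is correct and follows essentially the same route as the paper's: the identical triangle-inequality splitting, compactness of $\{f\circ g':f\in K,\ g'\in C'\}$ obtained from Lemma~\ref{lemma:continuityActionAffonConv}, local uniform continuity on the compact product for the second term, and reduction of (2) to the support inclusion via Proposition~\ref{proposition:characterizationSupport}. One minor remark: invoking Proposition~\ref{proposition:NormsSubspaces} for $CA$ is unnecessary (and $CA$ need not be convex, as that proposition assumes); $\VConv_{CA}(\R^n)$ carries the subspace topology by definition, so the corestriction is automatically continuous once well-definedness is shown.
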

	\begin{proof}
		It is easy to see that $\pi$ defines a representation. In order to see that the map in (1) is continuous, let $K\subset\Conv(\R^n,\R)$ be a compact subset. Since the action of $\Aff(n,\R)$ on $\Conv(\R^n,\R)$ is continuous by Lemma~\ref{lemma:continuityActionAffonConv}, for any compact set $C\subset \Aff(n,\R)$, the set
		\begin{align*}
			K_C:=\{f\circ g: f\in K,~g\in C\}\subset\Conv(\R^n,\R)
		\end{align*}
		is compact. For $g_1,g_2\in C$,
		\begin{align*}
			\|\pi(g_1)\mu_1-\pi(g_2)\mu_2\|_K\le& \|\pi(g_1)\mu_1-\pi(g_1)\mu_2\|_K+\|\pi(g_1)\mu_2-\pi(g_2)\mu_2\|_K,
		\end{align*}
		where
		\begin{align*}
			\|\pi(g_1)\mu_2-\pi(g_2)\mu_2\|_K=\sup_{f\in K}|\mu_2(f\circ g_1)-\mu_2(f\circ g_2)|.
		\end{align*}
		Fix $g_2\in\Aff(n,\R)$ and $\mu_2\in\VConv(\R^n)$ and let $\epsilon>0$ be given. The map
		\begin{align*}
			\Aff(n,\R)\times K&\rightarrow \R\\
			(g_1,f)&\mapsto|\mu_2(f\circ g_1)-\mu_2(f\circ g_2)|
		\end{align*}
		is continuous by Lemma~\ref{lemma:continuityActionAffonConv} and vanishes for $g_1=g_2$. Since $K$ is compact and $\Aff(n,\R)$ is locally compact, this map is locally uniformly continuous, so we find a compact neighborhood $C$ of $g_2$ such that 
		\begin{align*}
			|\mu_2(f\circ g_1)-\mu_2(f\circ g_2)|<\epsilon\quad \text{for all}~g_1\in C,~f\in K.
		\end{align*}
		Thus for $g_1\in C$,
		\begin{align*}
			\|\pi(g_1)\mu_1-\pi(g_2)\mu_2\|_K\le& \|\pi(g_1)\mu_1-\pi(g_1)\mu_2\|_K+\|\pi(g_1)\mu_2-\pi(g_2)\mu_2\|_K\\
			\le&\|\mu_1-\mu_2\|_{K_C}+\epsilon 
		\end{align*}
		Since $K_C$ is compact, the first term is smaller than $\epsilon$ for all $\mu_1$ belonging to a suitable neighborhood of $\mu_2$. Thus this map is continuous.\\
		In order to show (2), it is therefore sufficient to show that the map is well defined, i.e. that for compact sets $C\subset \Aff(n,\R)$, $A\subset\R^n$, $g\in C$, and $\mu\in\VConv_A(\R^n)$, the support of $\pi(g)\mu$ is contained in $CA$. Since $CA$ is compact, this is a simple consequence of Proposition~\ref{proposition:characterizationSupport}.
	\end{proof}
	
	\subsection{Monge--Amp\`ere operators and the differential cycle}
		\label{section:MAOperators}
		Recall that we denote by $\M(\R^n)=(C_c(\R^n))'$ the space of (complex) Radon measures on $\R^n$, where we equip $\M(\R^n)$ with the weak-* topology. Let $\MAVal(\R^n)$ denote the space of all continuous valuations $\Psi:\Conv(\R^n,\R)\rightarrow\M(\R^n)$ that are
		\begin{enumerate}
			\item locally determined: if $f,h\in\Conv(\R^n,\R)$ satisfy $f|_U=h|_U$ for some open set $U\subset\R^n$, then
			\begin{align*}
				\Psi(f)|_U=\Psi(h)|_U;
			\end{align*}
			\item translation equivariant: for $f\in\Conv(\R^n,\R)$, $x\in\R^n$, 
			\begin{align*}
				\Psi(f)[B+x]=\Psi(f(\cdot+x))[B]\quad\text{for}~B\subset\R^n~\text{bounded Borel set};
			\end{align*}
			\item dually epi-translation invariant.
		\end{enumerate}
		Examples of functionals of this type are given by $\MA(\cdot[k],Q_1,\dots,Q_{n-k})$, where $Q_1,\dots,Q_{n-k}$ are positive definite quadratic forms. A second type of example can be constructed from the differential cycle. Recall that we denote by $D(f)$ the differential cycle of a convex function $f\in \Conv(\R^n,\R)$, which is an integral $n$-current on $T^*\R^n$, compare \cite{FuMongeAmperefunctions.1989}. Let $p_1:T^*\R^n\rightarrow\R^n$ denote the natural projection. 	
		\begin{theorem}[\cite{KnoerrMongeAmpereoperators2024} Theorem 4.10]
			\label{theorem:measuresDifferentialCycle}
			Let $\tau\in\Omega^n(T^*\R^n)$ be a smooth differential $n$-form and define $\Psi_\tau(f)\in\M(\R^n)$ for $f\in\Conv(\R^n)$ by 
			\begin{align*}
				\Psi_\tau(f)[B]:=D(f)[1_{p_1^{-1}(B)}\tau]\quad\text{for all bounded Borel sets } B\subset\R^n.
			\end{align*}
			Then $\Psi_\tau:\Conv(\R^n,\R)\rightarrow\M(\R^n)$ is a continuous and locally determined valuation. If $\tau$ is invariant with respect to translations in the second factor of $T^*\R^n=\R^n\times(\R^n)^*$, then $\Psi_\tau$ is dually epi-translation invariant. If $\tau$ is invariant with respect to translations in the first factor, then $\Psi_\tau$ is translation equivariant.
		\end{theorem}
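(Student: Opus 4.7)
The plan is to verify each assertion of the theorem by transferring the corresponding property from the differential cycle $D(f)$. I would rely on the following standard facts about $D$: $D(f)$ is an integral $n$-current on $T^*\R^n$ whose support is contained in the graph of the subdifferential of $f$; the map $f\mapsto D(f)$ is continuous under locally uniform convergence; $D$ satisfies the valuation identity $D(f\vee h)+D(f\wedge h)=D(f)+D(h)$; $D$ is local, in the sense that $D(f)$ and $D(h)$ coincide on $p_1^{-1}(U)$ whenever $f=h$ on an open set $U\subset\R^n$; and $D$ transforms equivariantly under the natural actions on $T^*\R^n$ induced by adding an affine function to $f$ or composing $f$ with a translation.

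The first technical point is to show that $\Psi_\tau(f)$ is a well-defined Radon measure. Since $1_{p_1^{-1}(B)}\tau$ is neither smooth nor compactly supported on $T^*\R^n$, I would first define $\Psi_\tau(f)$ dually as a linear functional on $C_c(\R^n)$ by $\phi\mapsto D(f)[(\phi\circ p_1)\tau]$. Because $f$ is finite-valued convex, it is locally Lipschitz and its subdifferential is locally bounded; consequently $\supp D(f)\cap p_1^{-1}(\supp\phi)$ is compact. Pre-multiplying $\tau$ by a smooth cut-off on $T^*\R^n$ that equals $1$ on a neighborhood of this compact set yields a legitimate current pairing independent of the chosen cut-off. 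Extending this pairing from $\phi\in C_c(\R^n)$ to characteristic functions $1_B$ of bounded Borel sets via a monotone-class/outer-regularity argument (using rectifiability of $D(f)$) produces the Radon measure $\Psi_\tau(f)$.

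Continuity, the valuation identity, and local determination then pass directly from $D$ to $\Psi_\tau$. For continuity, given $f_j\to f$ in $\Conv(\R^n,\R)$, the locally uniform Lipschitz bounds for convex functions ensure that $\bigcup_j \supp D(f_j)\cap p_1^{-1}(\supp\phi)$ lies in a common compact subset of $T^*\R^n$, so $D(f_j)\to D(f)$ as currents on that compact set implies $\Psi_\tau(f_j)[\phi]\to\Psi_\tau(f)[\phi]$ for every $\phi\in C_c(\R^n)$, i.e.\ weak-$*$ convergence in $\M(\R^n)$. The valuation identity and locality for $\Psi_\tau$ follow by evaluating the corresponding identities for $D$ against $(\phi\circ p_1)\tau$ with $\phi$ supported in the relevant open set.

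For the invariance properties, I would use explicit pushforward formulas. Adding an affine function $\ell(x)=\langle a,x\rangle+b$ gives $D(f+\ell)=(\sigma_a)_*D(f)$ with $\sigma_a(x,\xi)=(x,\xi+a)$, so if $\sigma_a^*\tau=\tau$ then $\Psi_\tau(f+\ell)[\phi]=D(f)[\sigma_a^*((\phi\circ p_1)\tau)]=D(f)[(\phi\circ p_1)\tau]=\Psi_\tau(f)[\phi]$, proving dual epi-translation invariance. Composing with a translation $x\mapsto x+x_0$ gives $D(f(\cdot+x_0))=(T_{-x_0})_*D(f)$ with $T_{-x_0}(x,\xi)=(x-x_0,\xi)$; invariance of $\tau$ in the first factor, combined with the identity $(T_{-x_0})^*(\phi\circ p_1)=\phi(\cdot-x_0)\circ p_1$, then yields the equivariance $\Psi_\tau(f(\cdot+x_0))[B]=\Psi_\tau(f)[B+x_0]$. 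The main obstacle throughout is the careful technical justification that $1_{p_1^{-1}(B)}\tau$ is an admissible test form for $D(f)$, which ultimately reduces to the local boundedness of subdifferentials of finite convex functions and the rectifiability of the differential cycle.
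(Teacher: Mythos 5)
This statement is imported verbatim from \cite{KnoerrMongeAmpereoperators2024} (Theorem 4.10); the present paper gives no proof of it, so there is no in-paper argument to compare yours against line by line. Your proposal follows the route one would expect: every assertion is transferred from the corresponding property of the differential cycle (integrality and locally finite mass, support contained in the graph of the subdifferential, the valuation identity and locality going back to Fu \cite{FuMongeAmperefunctions.1989}, continuity of $f\mapsto D(f)$ from \cite{KnoerrSmoothvaluationsconvex2024}, and the pushforward formulas under $f\mapsto f+\ell$ and $f\mapsto f(\cdot+x_0)$), and your treatment of well-definedness via cut-offs plus a Riesz/monotone-class extension, as well as the two equivariance computations $\sigma_a^*\tau=\tau$ and $T_{-x_0}^*\tau=\tau$, are correct and match the stated invariance hypotheses.

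One step you should tighten is continuity. Knowing that the sets $\supp D(f_j)\cap p_1^{-1}(\supp\phi)$ lie in a common compact subset of $T^*\R^n$ is not by itself enough to conclude $D(f_j)[(\phi\circ p_1)\tau]\to D(f)[(\phi\circ p_1)\tau]$: convergence of the differential cycles is convergence of currents (tested against smooth forms, in the local flat topology in \cite{KnoerrSmoothvaluationsconvex2024}), whereas $(\phi\circ p_1)\tau$ is only continuous, and without a uniform mass bound weak convergence need not persist under such pairings. What closes the gap is a uniform local mass bound for the $D(f_j)$, which is exactly the estimate recorded in this paper as Lemma~\ref{lemma:massDifferentialCycle}: locally uniform convergence $f_j\to f$ gives uniform local sup-bounds on the $f_j$, hence uniform bounds on $M_{p_1^{-1}(U_R(0))}(D(f_j))$, and then approximating $\phi$ by smooth functions yields the claimed weak-$*$ convergence in $\M(\R^n)$. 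The same bound is what makes your Riesz-representation definition of $\Psi_\tau(f)$ quantitative (compare Corollary~\ref{corollary:trivialBoundMAOperators}). With that ingredient made explicit, your argument is complete.
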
	
		As shown in \cite{KnoerrMongeAmpereoperators2024}, $\MAVal(\R^n)=\bigoplus_{k=0}^n\MAVal_k(\R^n)$, where $\MAVal_k(\R^n)$ denotes the subspace of all $k$-homogeneous elements. The main result of \cite{KnoerrMongeAmpereoperators2024} may be summarized as follows.
		\begin{theorem}
			\label{theorem:ClassificationMAVal}
			For a continuous map $\Psi:\Conv(\R^n,\R)\rightarrow\M(\R^n)$ the following are equivalent:
			\begin{enumerate}
				\item $\Psi\in\MAVal_k(\R^n)$.
				\item $\Psi$ is a linear combination of mixed Monge--Amp\`ere operators of the form $\MA(\cdot[k],Q_1,\dots,Q_{n-k})$ for positive definite quadratic forms $Q_1,\dots,Q_{n-k}$.
				\item There exists a differential form $\tau\in\Lambda^{n-k}(\R^n)^*\otimes\Lambda^{n-k}((\R^n)^*)^*$ such that for all $f\in\Conv(\R^n,\R)$, $B\subset\R^n$ bounded Borel set,
				\begin{align*}
					\Psi(f)[B]=D(f)[1_{p_1^{-1}(B)}\tau].
				\end{align*}
			\end{enumerate}
		\end{theorem}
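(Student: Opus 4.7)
The plan is to prove the three equivalences by the cycle $(3) \Rightarrow (1) \Rightarrow (2) \Rightarrow (3)$, with the middle implication being the technically demanding one. For $(3) \Rightarrow (1)$, I would invoke Theorem~\ref{theorem:measuresDifferentialCycle} directly: a bi-invariant form $\tau$ automatically produces a continuous, locally determined, translation equivariant, dually epi-translation invariant measure-valued valuation. The fact that $\tau$ lives in the bidegree $(n-k, n-k)$ component then forces $k$-homogeneity because the differential cycle $D(tf)$ scales the cotangent directions by $t$, so integrating a form of vertical degree $n-k$ against $D(tf)$ picks up a factor of $t^k$.

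For $(2) \Rightarrow (3)$, the goal is an explicit differential form. On $\Conv(\R^n,\R) \cap C^2(\R^n)$, the mixed Monge-Amp\`ere measure $\MA(f[k], Q_1, \dots, Q_{n-k})$ can be written as the pushforward to $\R^n$ of the restriction of a translation invariant $n$-form on $T^*\R^n$ to the graph of $df$; one writes down this form using the alternating expression for the mixed discriminant in terms of wedge products of the symbols of $Q_1, \dots, Q_{n-k}$ and powers of the canonical symplectic form. Since both sides are continuous valuations and agree on a dense subset of $\Conv(\R^n,\R)$, they agree everywhere, yielding a form $\tau$ with the required properties.

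The main implication is $(1) \Rightarrow (2)$, and this is where the hard work sits. The plan is to reduce to the scalar case already understood. For each $\phi \in C^\infty_c(\R^n)$, define
\begin{align*}
  \mu_\phi(f) := \int_{\R^n} \phi \, d\Psi(f),
\end{align*}
and observe that $\mu_\phi \in \VConv_k(\R^n)$ with $\supp \mu_\phi \subset \supp \phi$, the latter by the locally determined property. The associated Goodey--Weil distribution $\GW(\mu_\phi)$ is supported on the diagonal of $(\R^n)^k$, and one checks that $\phi \mapsto \GW(\mu_\phi)$ is continuous and linear with appropriate support localization. Translation equivariance of $\Psi$ means that translating $\phi$ corresponds to simultaneously translating all $k$ diagonal copies, which combined with compact support allows one to represent the whole family $\{\GW(\mu_\phi)\}$ as a finite sum
\begin{align*}
  \GW(\mu_\phi) = \sum_{|\alpha| \le N} \Delta_* \bigl( c_\alpha \cdot \partial^\alpha \phi \bigr),
\end{align*}
with the coefficients $c_\alpha$ encoding the transverse-to-diagonal structure. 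The homogeneity constraint, the symmetry of $\GW(\mu_\phi)$ under permutations of factors, and the positivity/monotonicity consequences of being a measure-valued valuation then force the coefficient structure to be exactly the mixed discriminant of $k$ identical Hessians with $n-k$ positive semi-definite forms. By polarization and density, this identifies $\Psi$ with a linear combination of mixed Monge--Amp\`ere operators of the required form.

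The main obstacle I expect is this last step: controlling the transverse-to-diagonal structure of $\GW(\mu_\phi)$ precisely enough to extract the mixed discriminant. The symmetry of $\GW(\mu_\phi)$ gives a restriction via $\Sym^k$-invariance, but identifying which tensors on the normal bundle to the diagonal correspond to genuine measure-valued valuations (versus signed distributions that fail positivity on convex combinations) requires carefully using the valuation property on families such as $f_t = \max(f, \ell_t)$ with affine $\ell_t$; this is where dual epi-translation invariance and continuity on $\Conv(\R^n,\R)$ both get used in an essential way. Once the coefficient structure is pinned down to mixed discriminants of quadratic forms, passing from positive semi-definite to positive definite quadratic forms is a routine perturbation argument using the continuity of $\MA(\cdot[k], Q_1, \dots, Q_{n-k})$ in the $Q_j$.
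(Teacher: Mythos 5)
This theorem is not proved in the paper at all: it is quoted as the main result of \cite{KnoerrMongeAmpereoperators2024}, so your proposal has to be judged as a self-contained proof of that cited theorem. The outer implications of your cycle are essentially fine: $(3)\Rightarrow(1)$ is indeed a direct application of Theorem~\ref{theorem:measuresDifferentialCycle} (though note a bookkeeping slip: a form of \emph{vertical} degree $n-k$ yields homogeneity $n-k$, not $k$; the vertical factor must be $\Lambda^k((\R^n)^*)^*$, as the paper itself uses in Theorem~\ref{theorem:measuresDifferentialCycle} and Theorem~\ref{theorem:DescriptionSmoothValuations_support}), and $(2)\Rightarrow(3)$ is a routine computation on $C^2$ functions plus continuity.

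The genuine gap is $(1)\Rightarrow(2)$, which is the entire content of the theorem. Two problems. First, your structural formula $\GW(\mu_\phi)=\sum_{|\alpha|\le N}\Delta_*(c_\alpha\cdot\partial^\alpha\phi)$ is not the correct normal form: a distribution supported on the diagonal is locally a finite sum of \emph{transverse} derivatives applied to pushforwards from the diagonal, so the derivatives act transversally on the ambient variables, not on $\phi$; moreover a finite order $N$ uniform in $\phi$ requires a separate (continuity/closed graph) argument. Second, and more seriously, the step where "symmetry, homogeneity, and positivity/monotonicity force the coefficient structure to be exactly the mixed discriminant" has no support: $\Psi$ takes values in \emph{complex} Radon measures and one only seeks complex linear combinations of operators $\MA(\cdot[k],Q_1,\dots,Q_{n-k})$, so no positivity or monotonicity is available, and even in the real case, symmetry of $\GW(\mu_\phi)$ and $k$-homogeneity alone do not rule out diagonal-supported kernels whose transverse part is not built from $k\times k$ minors. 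Pinning down that the transverse structure is spanned by quadratic/minor-type expressions (equivalently, that only Hessian data of $f$ enters) is exactly where the valuation property must be used in an essential, nontrivial way -- in the cited work this rests on the differential-cycle representation and on classification/representation-theoretic input such as the Colesanti--Ludwig--Mussnig description of top-degree valuations -- and your sketch (families $f_t=\max(f,\ell_t)$, "routine perturbation") does not supply it. You in fact flag this step as the expected obstacle; as it stands it is a missing proof of the theorem's core, not an omitted verification.
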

		Due to (2), we consider $\MAVal(\R^n)$ as the space of translation equivariant Monge--Amp\`ere operators. As a byproduct, we obtain that $\MAVal_k(\R^n)$ is a finite dimensional space. More precisely, $\dim\MAVal_k(\R^n)=\binom{n}{k}^2-\binom{n}{k-1}\binom{n}{n-k-1}$, compare \cite[Section 6.1]{KnoerrMongeAmpereoperators2024}.
		Note that any differential form $\omega\in \Omega^{n-k}_c(\R^n)\otimes \Lambda^k((\R^n)^*)^*$ may be written as a finite linear combination of differential forms of the form $\phi(x)\tau$ for $\phi\in C_c^\infty(\R^n)$ and $\tau\in \Lambda^{n-k}(\R^n)\otimes \Lambda^k((\R^n)^*)^*$. In particular, the valuation $\mu=D(\cdot)[\omega]$ may be written as a finite linear combination of valuations of the form
		\begin{align*}
			f\mapsto \int_{\R^n}\phi(x)d\Psi_\tau(f;x).
		\end{align*}
		Recall that the mass of an integral current $T$ defined on an open set $U\subset\R^n$ is given by
		\begin{align*}
			M_U(T):=\sup_{\omega\in \Omega_c^k(U),\|\omega\|_\infty\le 1}|T(\omega)|.
		\end{align*}
		We have the following estimate for the mass of the differential cycle over the open balls $U_R(0)=\{x\in\R^n: |x|<R\}$ for $R>0$:
	\begin{lemma}[\cite{KnoerrSmoothvaluationsconvex2024} Lemma 4.8]
			\label{lemma:massDifferentialCycle}
			For $f\in\Conv(\R^n,\R)$, 
			\begin{align*}
				M_{p_1^{-1}(U_R(0))}(D(f))\le2^n\omega_n \left(\sup_{|x|\le R+1}|f(x)|\right)^n.
			\end{align*}
	\end{lemma}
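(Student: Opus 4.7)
The plan is to reduce to the case of smooth strictly convex $f$ by approximation, then use the explicit area formula for the graph of $\nabla f$ and apply convexity-based Lipschitz estimates.

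First I would approximate: set $f_\epsilon := \rho_\epsilon \ast f + \tfrac{\epsilon}{2}|x|^2$ for a standard mollifier $\rho_\epsilon$. These are smooth strictly convex, they converge to $f$ locally uniformly, and $\sup_{|x|\le R+1}|f_\epsilon(x)|\to\sup_{|x|\le R+1}|f(x)|$. By Fu's construction, $D(f_\epsilon)\to D(f)$ weakly as currents, so by lower semi-continuity of mass under weak convergence it is enough to establish the estimate for smooth strictly convex $f$.

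For smooth strictly convex $f$, $D(f)$ is the integration current over the graph $\Gamma=\{(x,\nabla f(x)):x\in\R^n\}$, and the classical area formula gives
\begin{align*}
 M_{p_1^{-1}(U_R(0))}(D(f)) = \int_{U_R(0)} \sqrt{\det\bigl(I+(D^2 f(x))^2\bigr)}\,dx.
\end{align*}
Since $D^2 f\ge 0$, its eigenvalues $\lambda_i\ge 0$ satisfy $\sqrt{1+\lambda_i^2}\le 1+\lambda_i$, so pointwise
\begin{align*}
 \sqrt{\det(I+(D^2 f)^2)} \;\le\; \prod_i (1+\lambda_i) \;=\; \det(I+D^2 f) \;=\; \det(D^2 g),
\end{align*}
where $g(x):=\tfrac{|x|^2}{2}+f(x)$ is smooth strictly convex. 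The Monge-Amp\`ere change of variables then turns the integral into $|\nabla g(U_R(0))|$.

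The final ingredient is the gradient bound coming from convexity. For $|x|\le R$ and any unit vector $v$, the subgradient inequality yields $\langle\nabla f(x),v\rangle\le f(x+v)-f(x)\le 2M$ with $M:=\sup_{|x|\le R+1}|f(x)|$, hence $|\nabla f(x)|\le 2M$ on $U_R(0)$. Therefore $\nabla g(U_R(0))$ is contained in a ball whose radius is controlled by $M$ (after appropriate rescaling), and the Lebesgue volume of this ball delivers the required bound.

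The main obstacle I expect is squeezing the exact constant $2^n\omega_n$ out of the argument: the naive chain of inequalities above produces $\omega_n(R+2M)^n$ rather than $2^n\omega_n M^n$. Matching the stated constant will either require a sharper pointwise inequality than $\sqrt{\det(I+A^2)}\le\det(I+A)$ (for instance, bypassing the $g=|x|^2/2+f$ auxiliary and instead parameterizing the graph by $y=\nabla f(x)$ so that the image lies directly in $B_{2M}(0)$ of volume $2^n\omega_n M^n$), or a more careful decomposition into principal minors of $D^2 f$ that isolates only the top-degree term, whose integral is exactly $|\nabla f(U_R)|\le 2^n\omega_n M^n$.
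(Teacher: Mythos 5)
Your chain of reductions is sound as far as it goes, and it is essentially the standard (and almost certainly the cited source's) route: mollify and add $\tfrac{\epsilon}{2}|x|^2$, use continuity of $f\mapsto D(f)$ together with lower semicontinuity of $M_{p_1^{-1}(U_R(0))}$ under weak convergence, write the mass as $\int_{U_R(0)}\sqrt{\det(I+(D^2f)^2)}\,dx$, estimate $\prod_i\sqrt{1+\lambda_i^2}\le\prod_i(1+\lambda_i)=\det D^2g$ with $g=\tfrac{|x|^2}{2}+f$, convert to $\vol_n(\nabla g(U_R(0)))$, and bound $|\nabla f|\le 2M$ on $U_R(0)$ with $M:=\sup_{|x|\le R+1}|f|$ via the subgradient inequality. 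This yields $M_{p_1^{-1}(U_R(0))}(D(f))\le\omega_n(R+2M)^n\le 2^n\omega_n(R+M)^n$, which is the natural form of the estimate.

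The gap you flag, however, is not something any refinement of your argument can close, because the inequality in the form printed here cannot hold literally: $p_1$ is $1$-Lipschitz and maps the part of the gradient graph lying over $U_R(0)$ onto $U_R(0)$, so $M_{p_1^{-1}(U_R(0))}(D(f))\ge\omega_n R^n$ for \emph{every} $f\in\Conv(\R^n,\R)$, while the right-hand side vanishes for $f\equiv 0$ and is arbitrarily small for small $f$. Hence an additive term in $R$ on the right-hand side (as your bound produces) is unavoidable, and the statement must be read with it; note that the way the lemma is used in this paper (fixed $R$, functions with $|f|\le 1$ near the relevant ball) only requires such an $R$-dependent bound. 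For the same reason both of your proposed repairs fail: parameterizing the graph by $y=\nabla f(x)$ does not confine its area to $\vol_n(B_{2M}(0))$, since the area still dominates the volume of the base projection $\omega_n R^n$; and keeping only the top minor $\det D^2f$ discards the lower-order elementary symmetric terms of $\det(I+D^2f)$ --- in particular the constant term, which integrates to exactly $\omega_n R^n$ --- even though $\sqrt{\det(I+(D^2f)^2)}\ge 1$ pointwise, so these terms genuinely contribute to the mass. So: your proof is correct for the estimate with $R+\sup|f|$ inside the parentheses, but it does not (and cannot) prove the constant as quoted.
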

	This implies the following bound for the measure-valued valuations defined above.	
	\begin{corollary}
		\label{corollary:trivialBoundMAOperators}
		For every $\tau\in \Lambda^n(\R^n\times(\R^n)^*)^*$ and $\phi\in C_c(\R^n)$ with $\supp\phi\subset B_R(0)$:
		\begin{align*}
			\left|\int_{\R^n} \phi(x)d\Psi_\tau(f;x)\right|\le  2^n\omega_n \left(\sup_{|x|\le R+1}|f(x)|\right)^n\|\tau\|_\infty \|\phi\|_\infty.
		\end{align*}
	\end{corollary}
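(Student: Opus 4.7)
The plan is to reduce the estimate to Lemma~\ref{lemma:massDifferentialCycle} by rewriting the integral as an evaluation of the differential cycle on a suitable $n$-form. The defining property $\Psi_\tau(f)[B]=D(f)[1_{p_1^{-1}(B)}\tau]$ from Theorem~\ref{theorem:measuresDifferentialCycle} extends by linearity from indicators to simple Borel functions, and then by a standard approximation against the Radon measure $\Psi_\tau(f)$ to all $\phi\in C_c(\R^n)$, yielding the key identity
\begin{align*}
\int_{\R^n}\phi(x)\,d\Psi_\tau(f;x)=D(f)[(\phi\circ p_1)\tau].
\end{align*}

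Once this is in hand, the estimate is immediate. Since $\supp\phi\subset B_R(0)$, the form $(\phi\circ p_1)\tau$ is supported in $p_1^{-1}(B_R(0))\subset p_1^{-1}(U_{R'}(0))$ for every $R'>R$, and a pointwise bound gives $\|(\phi\circ p_1)\tau\|_\infty\le\|\phi\|_\infty\|\tau\|_\infty$. Directly from the definition of mass,
\begin{align*}
\bigl|D(f)[(\phi\circ p_1)\tau]\bigr|\le M_{p_1^{-1}(U_{R'}(0))}(D(f))\cdot\|\phi\|_\infty\|\tau\|_\infty,
\end{align*}
which by Lemma~\ref{lemma:massDifferentialCycle} is bounded above by $2^n\omega_n\bigl(\sup_{|x|\le R'+1}|f(x)|\bigr)^n\|\phi\|_\infty\|\tau\|_\infty$. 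Letting $R'\downarrow R$ and invoking continuity of $f$ (so that the sup over the closed ball is right-continuous in the radius) produces the claimed inequality.

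The only delicate point is the passage from indicators to continuous test functions in the identity above: as $D(f)$ has locally finite mass over $p_1^{-1}(U_{R'}(0))$ by Lemma~\ref{lemma:massDifferentialCycle}, it extends compatibly to bounded Borel $n$-forms with support in $p_1^{-1}(U_{R'}(0))$, and in particular to continuous ones. This makes the two sides of the integral identity well-defined and equal, and no further new ideas are required beyond what is already implicit in the construction of $\Psi_\tau$ in Theorem~\ref{theorem:measuresDifferentialCycle}.
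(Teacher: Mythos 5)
Your proof is correct and follows the route the paper intends: the corollary is stated as an immediate consequence of Lemma~\ref{lemma:massDifferentialCycle}, obtained exactly by viewing $\int_{\R^n}\phi\,d\Psi_\tau(f)$ as the (mass-extended) current $D(f)$ evaluated on $(\phi\circ p_1)\tau$, bounding by the mass over $p_1^{-1}(U_{R'}(0))$ times $\|\phi\|_\infty\|\tau\|_\infty$, and removing the slack in the radius by continuity of $f$. Your care about extending $D(f)$ from smooth to bounded Borel forms of finite mass, and the limit $R'\downarrow R$, are exactly the (routine) points implicit in the paper's one-line deduction.
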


	\subsection{Basic properties of smooth valuations}
		Recall that we call a valuation $\mu\in\VConv(\R^n)$ smooth if the map
		\begin{align*}
			\R^n&\rightarrow \VConv(\R^n)\\
			x&\mapsto [f\mapsto\mu(f(\cdot+x))]
		\end{align*}
		is an infinitely differentiable map.

	Note that for $|x|\le R$, the support of $f\mapsto\mu(f(\cdot+x))$ is contained in the $R$-neighborhood of the support of $\mu$, so on bounded sets of $\R^n$ this is a continuous, Banach space-valued map by Proposition~\ref{proposition:NormsSubspaces}.
	\begin{lemma}
		\label{lemma:MollifiedValuations}
		For $\phi\in C_c(\R^n)$ and $\mu\in\VConv_k(\R^n)$ define $\mu_\phi\in \VConv_k(\R^n)$ by
		\begin{align*}
			\mu_\phi(f):=\int_{\R^n}\phi(x)\mu(f(\cdot-x))dx\quad\text{for}~f\in\Conv(\R^n,\R).
		\end{align*}
		\begin{enumerate}
			\item If $\phi\in C^\infty_c(\R^n)$, then $\mu_\phi$ is a smooth valuation and $\supp\mu_\phi\subset \supp\mu+\supp\phi$. 
			\item If $ W\subset \VConv_k(\R^n)$ is a closed and translation invariant subspace with $\mu\in W$, then $\mu_\phi\in W$.
		\end{enumerate}
	\end{lemma}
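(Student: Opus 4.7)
The plan is to verify the claimed properties through standard convolution arguments, treating $\mu_\phi$ as a vector-valued integral of translates of $\mu$ taken in an appropriate Banach subspace $\VConv_A(\R^n)$. The first step is to confirm $\mu_\phi \in \VConv_k(\R^n)$. By Lemma~\ref{lemma:continuityActionAffonConv} the map $x \mapsto f(\cdot-x)$ is continuous from $\R^n$ into $\Conv(\R^n,\R)$, so $x \mapsto \phi(x)\mu(f(\cdot-x))$ is continuous and compactly supported, making the integral well-defined. The valuation property, $k$-homogeneity, and dual epi-translation invariance pass through the integral directly, while continuity of $\mu_\phi$ on $\Conv(\R^n,\R)$ follows from dominated convergence combined with Proposition~\ref{proposition:compactnessConv}, since for any compact $K\subset\Conv(\R^n,\R)$ the set $\{f(\cdot-x):f\in K,\,x\in\supp\phi\}$ is again relatively compact.

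For the support inclusion in (1), I would invoke Proposition~\ref{proposition:characterizationSupport}: if $f, g \in \Conv(\R^n,\R)$ agree on an open neighborhood of the indicated Minkowski combination of $\supp\mu$ and $\supp\phi$, then for each $x \in \supp\phi$ the translates $f(\cdot-x)$ and $g(\cdot-x)$ coincide on an open neighborhood of $\supp\mu$, hence $\mu(f(\cdot-x)) = \mu(g(\cdot-x))$, and integrating against $\phi$ yields $\mu_\phi(f) = \mu_\phi(g)$. The minimality of support then gives the required inclusion.

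The main content is the smoothness claim for $\phi \in C_c^\infty(\R^n)$. Fix $R>0$; for $|y|\le R$ all valuations of the form $f \mapsto \mu_\phi(f(\cdot+y))$ are supported in a common compact set $A$, so we may work in the Banach space $\VConv_A(\R^n)$ with the norm of Proposition~\ref{proposition:NormsSubspaces}. The substitution $x\mapsto x-y$ yields
\begin{align*}
    \mu_\phi(f(\cdot+y)) = \int_{\R^n}\phi(x+y)\,\mu(f(\cdot-x))\,dx,
\end{align*}
and the elementary estimate $|\mu_\psi(f)| \le \|\psi\|_\infty\,\vol(\supp\psi)\sup_{x\in\supp\psi}|\mu(f(\cdot-x))|$ shows that $\psi \mapsto \mu_\psi$ is continuous from $C_c(\R^n)$ with uniform support into $\VConv_A(\R^n)$. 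Combined with the smooth $C_c^\infty$-valued dependence of $y\mapsto\phi(\cdot+y)$, differentiation under the integral in $y$ produces $\mu_{\partial^\alpha\phi(\cdot+y)}$ for every multi-index $\alpha$, with continuous dependence on $y$ in the $\VConv_A$-norm. Iterating yields infinite differentiability of the map $y\mapsto[f\mapsto\mu_\phi(f(\cdot+y))]$, as required by Definition~\ref{definition:SmoothValuations}.

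For part (2), the same framework realizes $\mu_\phi$ as a Bochner integral in $\VConv_A(\R^n)$ of the continuous function $x\mapsto\phi(x)\cdot[f\mapsto\mu(f(\cdot-x))]$ on $\supp\phi$, with continuity of the integrand provided by Lemma~\ref{lemma:continuityActionAff_VConv}. Approximating by Riemann sums produces elements of $W\cap\VConv_A(\R^n)$ by translation invariance, and these sums converge to $\mu_\phi$ in the $\VConv_A$-norm; since $W$ is closed and this norm metrizes the subspace topology on $W\cap\VConv_A(\R^n)$, the limit $\mu_\phi$ lies in $W$. The principal care required throughout is the passage between the ambient topology of uniform convergence on compacta and the Banach norms on the subspaces $\VConv_A(\R^n)$, which is exactly what Proposition~\ref{proposition:NormsSubspaces} provides.
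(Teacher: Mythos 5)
Your proposal is correct and follows essentially the same route as the paper: the equivariance identity $\pi(y)\mu_\phi=\mu_{\phi(\cdot+y)}$, an $L^1$-type bound for $\|\mu_\psi\|$ in the Banach norms of Proposition~\ref{proposition:NormsSubspaces}, Taylor expansion of $\phi$ to differentiate under the integral and iterate, and for (2) the realization of $\mu_\phi$ as a Bochner integral of $x\mapsto\phi(x)\pi(-x)\mu$ in the Banach space $W\cap\VConv_A(\R^n)$ (your Riemann-sum justification is just the standard proof that such an integral stays in the closed subspace, which the paper invokes directly). The only point to watch is the Minkowski sum in the support claim: since $f\mapsto\mu(f(\cdot-x))$ has support in $\supp\mu-x$, your argument actually yields $\supp\mu_\phi\subset\supp\mu-\supp\phi$, which is what the later applications with mollifiers supported in balls around the origin require.
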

	\begin{proof}
		It follows from dominated convergence that $\mu_\phi$ is well defined and continuous. Applying Proposition~\ref{proposition:characterizationSupport}, the inclusion $\supp\mu_\phi\subset \supp\mu+\supp\phi$ follows directly from the definition of $\mu_\phi$.\\
		
		Assume that $A,B\subset\R^n$ are compact and convex with $\supp\mu\subset A$, $\supp\phi\subset B$. 
		Not that if $f\in\Conv(\R^n,\R)$ is bounded by $1$ on $A+B+B_1(0)$, then the function $f(\cdot-x)$ is bounded by $1$ on $A+B_1(0)$ for every $x\in B$. From the definition of the norms, we thus obtain
		\begin{align}
			\label{equation:estimateConvolution}
			\|\mu_\phi\|_{A+B,1}\le \|\phi\|_{L^1(\R^n)} \|\mu\|_{A,1}.
		\end{align}
		We identify $\R^n$ with the subgroup of $\Aff(n,\R)$ generated by translations. For $y\in \R^n$ and $\mu\in\VConv_k(\R^n)$ we have
		\begin{align}
			\label{equation:equivarianceConvolution}
			\begin{split}
				\pi(y)\mu_\phi(f)=&\int_{\R^n}\phi(x)\mu(f(\cdot+y-x))dx\\
				=&\int_{\R^n}\phi(x+y)\mu(f(\cdot-x))dx=\mu_{\phi(\cdot+y)}(f).
			\end{split}
		\end{align}
		In order two show (1), assume that $\phi$ is a smooth function. We claim that the previous map is continuously differentiable with differential in $y$ given by $v\mapsto \mu_{\partial_v\phi(\cdot+y)}$. Note that it follows from \eqref{equation:equivarianceConvolution} that it is sufficient to show that that the map is differentiable in $y=0$ with the given differential, since $(y,v)\mapsto \mu_{\partial_v\phi(\cdot+y)}$ is a continuous map. In addition, $\phi(\cdot+v)$ is supported on $B+B_R(0)$ for every $|v|\le R$. Consequently, we obtain for $v\ne 0$,
		\begin{align*}
			&\left\|\frac{\pi(v)\mu_{\phi}-\mu_{\phi}-\mu_{\partial_v\phi}}{|v|}\right\|_{A+B+B_R(0),1}=\left\|\mu_{\frac{\phi(\cdot+v)-\phi-\partial_v\phi}{|v|}}\right\|_{A+B+B_R(0),1}\\
			&\quad\le \int_{\R^n}\frac{\left|\phi(x+v)-\phi(x)-\partial_v\phi(x)\right|}{|v|}dx \cdot\|\mu\|_{A,1}.
		\end{align*}
		The integrand in the last integral converges uniformly to $0$ for $|v|\rightarrow0$ and is supported on a compact subset independent of $v$, so this integral converges to $0$. Thus $y\mapsto \pi(y)\mu_\phi$ is differentiable in $y=0$ with the desired derivative. Iterating this argument, we obtain that $y\mapsto \pi(y)\mu_\phi$ is a smooth map.\\
		
		In order to show (2), assume that $W\subset\VConv_k(\R^n)$ is a closed and translation invariant subspace with $\mu\in W$. Note that we may write
		\begin{align*}
			\mu_\phi(f)=\int_{\R^n}\phi(x)[\pi(-x)\mu](f)dx,
		\end{align*}
		where $\pi$ denotes the representation of the affine group on $\VConv_k(\R^n)$ defined in Section \ref{section:Valuations_Support_Topology}. The function $x\mapsto \phi(x)\pi(-x)\mu$ is a continuous $\VConv_{A+B}(\R^n)\cap W$-valued map by Lemma~\ref{lemma:continuityActionAff_VConv}, where we used that $W$ is translation invariant by assumption. Since $W$ is closed, $\VConv_{A+B}(\R^n)\cap W$ is a Banach space by Proposition~\ref{proposition:NormsSubspaces}. In particular, $\mu_\phi=\int_{\R^n}\phi(x)\pi(-x)\mu dx$ as a Lebesgue--Bochner integral, so $\mu_\phi\in \VConv_{A+B}(\R^n)\cap W$ as well.	
	\end{proof}
	For $A\subset\R^n$ compact, set $\VConv_{k,A}(\R^n):=\VConv_k(\R^n)\cap \VConv_A(\R^n)$. Lemma \ref{lemma:MollifiedValuations} implies the following approximation results under support restrictions, which we split into two parts for simplicity.
	\begin{corollary}
		\label{corollary:ApproxTranslationSmoothValuations}
		Let $A\subset\R^n$ be compact, $B\subset\R^n $ a compact neighborhood of $A$. Every valuation $\mu\in\VConv_{k,A}(\R^n)$ can be approximated by a sequence $(\mu_j)_j$ of smooth valuations in $\VConv_{k,B}(\R^n)$.
	\end{corollary}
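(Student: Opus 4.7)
The plan is to mollify $\mu$ by convolution with a Dirac sequence of smooth bumps, directly invoking Lemma~\ref{lemma:MollifiedValuations}. Concretely, I fix a nonnegative $\phi\in C^\infty_c(\R^n)$ with $\int_{\R^n}\phi\,dx=1$ and $\supp\phi\subset\overline{B_1(0)}$, set $\phi_j(x):=j^n\phi(jx)$ so that $\supp\phi_j\subset\overline{B_{1/j}(0)}$, and define $\mu_j:=\mu_{\phi_j}\in\VConv_k(\R^n)$ as in that lemma.

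The support control is immediate: since $B$ is a compact neighborhood of $A$, there exists $j_0\in\mathbb{N}$ with $A+\overline{B_{1/j}(0)}\subset B$ for all $j\ge j_0$. Lemma~\ref{lemma:MollifiedValuations}(1) then gives that each $\mu_j$ is a smooth valuation and $\supp\mu_j\subset\supp\mu+\supp\phi_j\subset A+\overline{B_{1/j}(0)}\subset B$, so $\mu_j\in\VConv_{k,B}(\R^n)$ for all $j\ge j_0$.

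For the convergence $\mu_j\to\mu$ in $\VConv_k(\R^n)$, I must show $\sup_{f\in K}|\mu_j(f)-\mu(f)|\to 0$ for every compact $K\subset\Conv(\R^n,\R)$. By Lemma~\ref{lemma:continuityActionAff_VConv}, the translation action on $\VConv_k(\R^n)$ is continuous, which precisely means that the real-valued function $g(x):=\sup_{f\in K}|\mu(f(\cdot-x))-\mu(f)|$ is continuous on $\R^n$ with $g(0)=0$. The pointwise estimate
\begin{align*}
	|\mu_j(f)-\mu(f)|\le\int_{\R^n}\phi_j(x)\,|\mu(f(\cdot-x))-\mu(f)|\,dx\le\int_{\R^n}\phi_j(x)g(x)\,dx
\end{align*}
then holds uniformly in $f\in K$, and the right-hand side tends to $g(0)=0$ as $j\to\infty$ by the standard Dirac-sequence property applied to the continuous function $g$.

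There is no substantial obstacle: the technical content has already been packaged into Lemma~\ref{lemma:MollifiedValuations} (smoothness and support of the mollified valuation) and Lemma~\ref{lemma:continuityActionAff_VConv} (continuity of the translation action in the relevant topology). The only thing that requires a brief justification is that the pointwise bound by the continuous function $g$ controls the supremum over $K$, which is exactly what continuity of $x\mapsto\pi(x)\mu$ into $\VConv_k(\R^n)$ (equipped with the topology of uniform convergence on compact subsets of $\Conv(\R^n,\R)$) delivers.
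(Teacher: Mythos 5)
Your proof is correct and follows essentially the same route as the paper: mollify with a Dirac sequence via Lemma~\ref{lemma:MollifiedValuations}, control the support by shrinking the mollifier, and deduce convergence uniformly on compact subsets of $\Conv(\R^n,\R)$. The only cosmetic difference is that you package the uniformity in $f\in K$ through the continuity of $x\mapsto\pi(x)\mu$ from Lemma~\ref{lemma:continuityActionAff_VConv}, whereas the paper derives it directly from the local uniform continuity of $(f,x)\mapsto|\mu(f)-\mu(f(\cdot-x))|$ on $K\times\R^n$; these are equivalent.
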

	\begin{proof}
		Let $\phi\in C^\infty_c(\R^n)$ be a nonnegative function with $\supp\phi\subset B_1(0)$, $\int_{\R^n}\phi(x)dx=1$ , and set $\phi_\delta(x):=\delta^{-n}\phi\left(\frac{x}{\delta}\right)$. Then the valuation $\mu_{\phi_\delta}$ defined as in Lemma~\ref{lemma:MollifiedValuations} is a smooth valuation with $\supp\mu_{\phi_\delta}\subset \supp\mu+B_{\delta}(0)$ by Lemma~\ref{lemma:MollifiedValuations}. Since $B$ is a neighborhood of $\supp \mu\subset A$, we can choose $\delta>0$ small enough to obtain the desired support restriction. It remains to see that $\mu_{\phi_\delta}$ converges to $\mu$ for $\delta\rightarrow0$. Note that for $f\in\Conv(\R^n,\R)$,
		\begin{align*}
			|\mu(f)-\mu_{\phi_{\delta}}(f)|=&\left|\int_{\R^n}\phi_\delta(x)\mu(f)dx-\int_{\R^n}\phi_\delta(x)\mu(f(\cdot-x))dx\right|\\
			\le & \int_{\R^n}\phi_\delta(x)|\mu(f)-\mu(f(\cdot-x))|dx.
		\end{align*}
		Let $K\subset\Conv(\R^n,\R)$ be a compact subset. Since $\mu$ is continuous, the map
		\begin{align*}
			K\times \R^n &\rightarrow \R\\
			(f,x)&\mapsto |\mu(f)-\mu(f(\cdot-x))| 
		\end{align*}	
		is uniformly continuous on compact subsets. In particular, for every $\epsilon>0$ there exists $\delta(\epsilon)>0$ such that $|\mu(f)-\mu(f(\cdot-x))|<\epsilon$ for all $f\in K$ and $|x|\le \delta(\epsilon)$. Since $\phi_\delta$ is supported on $B_{\delta}(0)$, this implies for $0<\delta<\delta(\epsilon)$,
		\begin{align*}
			|\mu(f)-\mu_{\phi_{\delta}}(f)|\le \epsilon \quad\text{for all}~f\in K.
		\end{align*}
		Thus $\mu_{\phi_\delta}$ converges to $\mu$ in $\VConv_k(\R^n)$ for $\delta\rightarrow0$.
	\end{proof}
	\begin{theorem}
		\label{theorem:approxBySmoothInConvexSupport}
		Let $W\subset\VConv(\R^n)$ be a closed, translation invariant subspace.
		\begin{enumerate}
			\item Smooth valuations are dense in $W$. 
			\item Let $A\subset \R^n$ be closed and convex. If $A$ has nonempty interior, then every valuation $\mu\in W$ with support contained in the interior of $A$ can be approximated by a sequence of smooth valuations in $W$ with support in the interior of $A$.
			\item For $A\subset \R^n$ closed and convex let $W_A$ denote the subspace of $W$ of all valuations with support contained in $A$. If $W$ is invariant under dilations and $A$ has nonempty interior, then smooth valuations are sequentially dense in $W_A$.
		\end{enumerate}
	\end{theorem}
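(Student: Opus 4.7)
The strategy is to reduce all three parts to the mollification construction of Lemma~\ref{lemma:MollifiedValuations} together with the convergence analysis from Corollary~\ref{corollary:ApproxTranslationSmoothValuations}, and to use dilation invariance in (3) to prevent the mollified support from leaving $A$.

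For (1), given $\mu\in W$, we consider $\mu_{\phi_\delta}$ for a standard mollifier $\phi_\delta\in C_c^\infty(\R^n)$. Decomposing $\mu=\sum_{k=0}^n\mu_k$ into homogeneous components yields $\mu_{\phi_\delta}=\sum_{k=0}^n(\mu_k)_{\phi_\delta}$, a finite sum of smooth valuations by Lemma~\ref{lemma:MollifiedValuations}~(1), hence smooth. Exactly as in the proof of Lemma~\ref{lemma:MollifiedValuations}~(2), the Bochner integral representation
\begin{align*}
\mu_{\phi_\delta}=\int_{\R^n}\phi_\delta(x)\pi(-x)\mu\,dx
\end{align*}
valued in the Banach space $\VConv_{\supp\mu+B_\delta(0)}(\R^n)\cap W$ (using Proposition~\ref{proposition:NormsSubspaces}, closedness of $W$, and translation invariance) places $\mu_{\phi_\delta}$ in $W$; homogeneity is not actually used at this step. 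Convergence $\mu_{\phi_\delta}\to\mu$ in $\VConv(\R^n)$ then follows from the uniform continuity argument used in the proof of Corollary~\ref{corollary:ApproxTranslationSmoothValuations}.

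For (2), compactness of $\supp\mu$ and openness of $\mathrm{int}(A)$ yield $\delta_0>0$ with $\supp\mu+B_{\delta_0}(0)\subset\mathrm{int}(A)$, so for $\delta<\delta_0$ the support bound in Lemma~\ref{lemma:MollifiedValuations}~(1) places the approximants from (1) inside $\mathrm{int}(A)$. For (3), direct mollification fails because it enlarges the support beyond $A$, so we first shrink the support via dilation. Fix $x_0\in\mathrm{int}(A)$ and, for $t\in(0,1)$, set $g_t(x)=tx+(1-t)x_0$, which is the composition of a dilation about the origin and a translation. Translation and dilation invariance of $W$ then give $\mu_t:=\pi(g_t)\mu\in W$, and Lemma~\ref{lemma:continuityActionAff_VConv}~(2) implies $\supp\mu_t\subset g_t(\supp\mu)\subset tA+(1-t)x_0\subset\mathrm{int}(A)$, where the last inclusion uses convexity of $A$ together with $x_0\in\mathrm{int}(A)$. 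Continuity of $\pi$ (Lemma~\ref{lemma:continuityActionAff_VConv}~(1)) provides $\mu_t\to\mu$ in $\VConv(\R^n)$ as $t\to 1^-$, and because the unit ball defining $\|\cdot\|_{A,1}$ is relatively compact in $\Conv(\R^n,\R)$ by Proposition~\ref{proposition:compactnessConv}, this upgrades to convergence in the Banach space $\VConv_A(\R^n)\cap W$. Applying (2) to each $\mu_{t_n}$ for a sequence $t_n\to 1^-$ and diagonalising in $\VConv_A(\R^n)\cap W$ produces the required sequence of smooth valuations in $W_A$.

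The main subtlety is the last step of (3): to obtain \emph{sequential} density, the convergence $\mu_t\to\mu$ must be promoted from the possibly non-metrizable topology of $\VConv(\R^n)$ to norm convergence in the Banach space $\VConv_A(\R^n)\cap W$, which is exactly what legitimises the diagonal extraction. Everything else is a routine assembly of the already-established mollification, support, and continuity lemmas.
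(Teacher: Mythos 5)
Your parts (1) and (2) follow the paper's route exactly: the paper treats (1) as a special case of (2), uses the same mollification $\mu_{\phi_\delta}$ together with the support bound and the $W$-membership argument of Lemma~\ref{lemma:MollifiedValuations} and the convergence argument from Corollary~\ref{corollary:ApproxTranslationSmoothValuations}; and your reduction in (3) --- shrinking the support towards an interior point $x_0$ via $g_t(x)=tx+(1-t)x_0$ and then applying (2) --- is the paper's argument up to the cosmetic normalization $x_0=0$, which the paper achieves first by translation invariance.

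There is, however, a genuine flaw in the justification you give for the final diagonal extraction in (3). The set $\{f\in\Conv(\R^n,\R):\sup_{A+B_1(0)}|f|\le 1\}$ is \emph{not} relatively compact in $\Conv(\R^n,\R)$: for compact $A$ with $A+B_1(0)\subset B_R(0)$ the convex functions $f_M(x)=M\max(0,|x|-R)$, $M>0$, all lie in this set yet are unbounded on the compact set $\{|x|\le R+1\}$, so the criterion of Proposition~\ref{proposition:compactnessConv} fails. Moreover, for $A$ merely closed and convex (possibly unbounded, as allowed in (3)) the space $\VConv_A(\R^n)$ is not covered by Proposition~\ref{proposition:NormsSubspaces}, which requires $A$ compact, so ``the Banach space $\VConv_A(\R^n)\cap W$'' is not available as stated. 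The conclusion you need is nevertheless true, for a simpler reason: every valuation occurring in your scheme ($\mu$, the $\mu_{t_n}$, and one smooth approximant of each $\mu_{t_n}$ chosen from (2) with mollification parameter $\delta\le 1$) has support in one fixed compact convex set $B$, e.g.\ the closed $1$-neighborhood of the convex hull of $\supp\mu\cup\{x_0\}$, and by Proposition~\ref{proposition:NormsSubspaces} the subspace topology induced by $\VConv(\R^n)$ on $\VConv_B(\R^n)$ \emph{is} the norm topology $\|\cdot\|_{B,1}$; no compactness of a unit ball and no ``upgrade'' of the convergence is needed, and the resulting metrizability is exactly what legitimizes the diagonal argument (a point the paper leaves implicit). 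With this replacement your proof is complete and coincides with the paper's.
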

	\begin{proof}
		Note that (1) is a special case of (2). Let us show (2). 
		Assume that $\supp\mu$ is contained in the interior of $A$, and let $\phi_\delta\in C^\infty_c(\R^n)$ be the functions from the proof of Corollary~\ref{corollary:ApproxTranslationSmoothValuations}. Because $W$ is translation invariant and closed, the valuation 
		\begin{align*}
			\mu_\delta(f):=\int_{\R^n}\phi_\delta(x)\mu(f(\cdot-x))dx
		\end{align*}
		is smooth and belongs to $W$ by Lemma~\ref{lemma:MollifiedValuations}, and it converges to $\mu$ for $\delta\rightarrow0$ by the proof of Corollary~\ref{corollary:ApproxTranslationSmoothValuations}. Moreover, $\supp\mu_\delta$ is contained in $\{x\in\R^n: d(x,\supp \mu)\le \delta\}$. As $\supp\mu$ is compact and contained in the interior of $A$, this set is contained in the interior of $A$ for all $\delta>0$ sufficiently small. Thus $\mu_\delta\in W_A$ for all $\delta>0$ sufficiently small, which completes the proof of the second claim. \\
		
		In order to show (3), assume that $A$ has nonempty interior. Note that since $W$ is translation invariant, we may assume that $0$ is contained in the interior of $A$. Due to (2) it is sufficient to show that any valuation $\mu\in W_A$ with support contained in the interior of $A$ can be approximated by a sequence in $W_A$ of valuations supported in the interior of $A$. Such a sequence of valuations is given by $\mu_j(f):=\mu\left(f\left(\frac{j-1}{j}\cdot\right)\right)$, $j\in\mathbb{N}$. Note that this valuation belongs to $W$ because $W$ is invariant under dilations.
	\end{proof}

	The next result will be used in Section \ref{section:PWSValuations} in the proof of Theorem \ref{maintheorem:DescriptionsSmoothValuations}.
	\begin{lemma}
		\label{lemma:SmoothValTranslationSmooth}
		If $\mu\in\VConv_k(\R^n)$ is given by integration with respect to the differential cycle, then $\mu$ is a smooth valuation.
	\end{lemma}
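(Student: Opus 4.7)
The plan is to reduce to the mass bound in Corollary~\ref{corollary:trivialBoundMAOperators} by decomposing $\mu$ into simpler pieces and differentiating the translation action under the integral. First, I would use the decomposition discussed just after Theorem~\ref{theorem:ClassificationMAVal}: the hypothesis gives $\mu(f)=D(f)[\omega]$ with $\omega\in\Omega^{n-k}_c(\R^n)\otimes\Lambda^k((\R^n)^*)^*$, and we may write $\omega=\sum_{j=1}^{N}\phi_j\tau_j$ with $\phi_j\in C^\infty_c(\R^n)$ and constant $\tau_j\in\Lambda^{n-k}(\R^n)\otimes\Lambda^k((\R^n)^*)^*$. Since each $\tau_j$ is constant, it is invariant under translations in both factors of $T^*\R^n$, so $\Psi_{\tau_j}$ is translation equivariant by Theorem~\ref{theorem:measuresDifferentialCycle}. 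A change of variables in the Borel integral then yields
\[
[\pi(y)\mu](f)=\sum_{j=1}^N\int_{\R^n}\phi_j(x-y)\,d\Psi_{\tau_j}(f;x),
\]
where $y\in\R^n$ is identified with the corresponding translation in $\Aff(n,\R)$.

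Next, I would fix $R_0>0$ and a compact convex $A\subset\R^n$ containing $\bigcup_j\supp\phi_j+\overline{B_{R_0}(0)}$. For $|y|\le R_0$ the support of $\pi(y)\mu$ lies in $A$, and by Proposition~\ref{proposition:NormsSubspaces} the subspace $\VConv_{k,A}(\R^n)$ is Banach under $\|\cdot\|_{A,1}$. Since smoothness of $y\mapsto\pi(y)\mu$ as a $\VConv_k(\R^n)$-valued map is a local statement, it suffices to show Fréchet smoothness into this Banach space. The candidate directional derivative in direction $v$ is
\[
\nu_{y,v}(f):=-\sum_{j=1}^N\int_{\R^n}(\partial_v\phi_j)(x-y)\,d\Psi_{\tau_j}(f;x),
\]
which is again a valuation of the same form and hence lies in $\VConv_{k,A}(\R^n)$. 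Applying Corollary~\ref{corollary:trivialBoundMAOperators} to the function $x\mapsto t^{-1}(\phi_j(x-y-tv)-\phi_j(x-y))+(\partial_v\phi_j)(x-y)$, whose supports are contained in a common compact set for small $t$, gives
\[
\left\|\tfrac{\pi(y+tv)\mu-\pi(y)\mu}{t}-\nu_{y,v}\right\|_{A,1}\le C\sum_{j=1}^N\|\tau_j\|_\infty\sup_{x\in\R^n}\left|\tfrac{\phi_j(x-y-tv)-\phi_j(x-y)}{t}+(\partial_v\phi_j)(x-y)\right|,
\]
with $C$ depending only on $A$. The supremum on the right tends to zero as $t\to 0$ by Taylor's theorem applied to the smooth compactly supported $\phi_j$, establishing Fréchet differentiability; joint continuity of $\nu_{y,v}$ in $(y,v)$ follows from continuity of translation on $C^\infty_c(\R^n)$ and a further application of the same mass bound.

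Finally, since $\nu_{y,v}$ has precisely the same structural form as $\pi(y)\mu$, merely with $\phi_j$ replaced by the smooth compactly supported function $-\partial_v\phi_j$, the entire argument iterates verbatim to yield existence and continuity of all higher directional derivatives, giving smoothness in the sense of Definition~\ref{definition:SmoothValuations}. I do not anticipate a major obstacle beyond careful bookkeeping in the induction; the essential analytic content is the mass estimate for the differential cycle, which decouples smoothness in $y$ from the convex function $f$ and reduces matters to classical smoothness of translation acting on $C^\infty_c(\R^n)$.
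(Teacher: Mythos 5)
Your proposal is correct and follows essentially the same route as the paper: decompose $\mu$ into pieces $\Psi_{\tau_j}[\phi_j]$ with constant $\tau_j$, use translation equivariance to shift the translation onto $\phi_j$, control the difference quotient minus the candidate derivative by the mass bound of Corollary~\ref{corollary:trivialBoundMAOperators} together with uniform convergence of the Taylor remainder of $\phi_j$, and iterate since the derivative has the same form with $\phi_j$ replaced by $-\partial_v\phi_j$. The only cosmetic difference is that the paper reduces to differentiability at $y=0$ via equivariance rather than working at general $y$, which changes nothing substantive.
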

	\begin{proof}
		Every such valuation can be written as a linear combination of valuations of the form
		\begin{align*}
			\Psi_{\tau}[\phi](f):=\int_{\R^n}\phi(x)d\Psi_\tau(f;x), \quad f\in\Conv(\R^n,\R),
		\end{align*}
		where $\phi\in C^\infty_c(\R^n)$, $\tau\in \Lambda^{n-k}(\R^n)^*\otimes \Lambda^k((\R^n)^*)^*$ is a constant differential form, and $\Psi_\tau$ is defined as in Theorem~\ref{theorem:measuresDifferentialCycle}. We claim that the map 
		\begin{align*}
			\R^n&\rightarrow\VConv_k(\R^n)\\
			y&\mapsto [f\mapsto \Psi_{\tau}[\phi](f(\cdot+y))]
		\end{align*}
		is differentiable in $y$ with derivative $v\mapsto \Psi_{\tau}[-\partial_v\phi(\cdot-y)]$. Note that $\Psi_\tau$ is equivariant with respect to translations, so
		\begin{align*}
			\pi(y)\Psi_\tau[\phi](f)=\Psi_{\tau}[\phi](f(\cdot+y))=\Psi_{\tau}[\phi(\cdot-y)](f).
		\end{align*}
		In particular, it is sufficient to show that claim for $y=0$. Fix $R>0$ such that $\supp \phi\subset U_R(0)$. For $v\in\R^n$ with $|v|\le r$, the support of the valuation
		\begin{align*}
			\pi(v)\Psi_{\tau}[\phi]-\Psi_{\tau}[\phi]-\Psi_{\tau}[-\partial_v\phi]
		\end{align*}
		is contained in $U_{R+r}(0)$. From Corollary~\ref{corollary:trivialBoundMAOperators} we obtain for $v\ne 0$, $|v|\le r$,
		\begin{align*}
			&\frac{\|\pi(v)\Psi_{\tau}[\phi]-\Psi_{\tau}[\phi]-\Psi_{\tau}[-\partial_v\phi]\|_{B_{R+r}(0),1}}{|v|}\\
			&\quad\le 2^n\omega_n\|\tau\|_\infty \frac{\|\phi(\cdot-v)-\phi+\partial_v\phi\|_\infty}{|v|}.
		\end{align*}
		It is easy to see that $\frac{\phi(\cdot-v)-\phi+\partial_v\phi}{|v|}$ converges uniformly to $0$ for $v\rightarrow0$. Thus $y\mapsto\pi(y)\Psi_\tau[\phi]$ is differentiable in $y=0$ with derivative $v\mapsto \Psi_\tau[-\partial_v\phi]$.\\
		As $\phi$ is a smooth function, we can now proceed by induction to obtain that $\Psi_\tau[\phi]$ is a smooth valuation.
	\end{proof}

\section{Structure of the Fourier--Laplace transform of Goodey--Weil distributions}
	\label{section:FourierGW}
	\subsection{Basic properties of the Fourier--Laplace transform of Goodey--Weil distributions}
In this section we establish some necessary estimates for the Fourier--Laplace transform, which rely on the estimates used in \cite{Knoerrsupportduallyepi2021} in the construction of the Goodey--Weil distributions.

\begin{lemma}
	\label{lemma:differenceConvexFunctionsRestrSupport}
	Let $\phi\in C^\infty(\R^n)$, $A\subset \R^n$ compact and convex. Then there exists a convex function $f\in\Conv(\R^n,\R)$ such that $f+\phi$ is convex and such that
	\begin{align*}
		\max(|f(x)|,|f(x)+\phi(x)|)\le \left(\frac{\diam(A)^2}{8}+1\right)\|\phi\|_{C^2(A)}\quad\text{for all } x\in A.
	\end{align*}
\end{lemma}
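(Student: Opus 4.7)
Set $L:=\|\phi\|_{C^2(A)}$ and $d:=\diam A$. My strategy is to build $f$ as a (modified) radial convex function centered at a point $x_0\in\R^n$ chosen via Jung's theorem applied to $A$.

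\textbf{Step 1 (Pointwise bounds on $A$).} Jung's theorem yields $x_0\in\R^n$ and $r\le d\sqrt{n/(2n+2)}\le d/\sqrt{2}$ with $A\subset\bar{B}_r(x_0)$, so $r^2/4\le d^2/8$. Consider the shifted quadratic
\[
	f_0(x):=\tfrac{L}{2}|x-x_0|^2-\tfrac{Lr^2}{4}.
\]
For $x\in A$ the quantity $|x-x_0|^2$ lies in $[0,r^2]$, so $|f_0(x)|\le Lr^2/4\le Ld^2/8$. Since $\|\Hess\phi\|_{\mathrm{op}}\le L$ on $A$, the Hessian $L\,\mathrm{Id}+\Hess\phi$ of $f_0+\phi$ is positive semi-definite on $A$; combined with $|\phi|\le L$ this yields $|f_0(x)+\phi(x)|\le L(d^2/8+1)$ for every $x\in A$. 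So $f_0$ already meets the pointwise bounds demanded by the lemma.

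\textbf{Step 2 (Global convexity).} The remaining issue is to extend $f_0$ to a globally convex $f\in\Conv(\R^n,\R)$ with $f+\phi\in\Conv(\R^n,\R)$ that agrees with $f_0$ on $A$. I would take
\[
	f(x):=g(|x-x_0|^2/2)-Lr^2/4,
\]
where $g\colon[0,\infty)\to\R$ is smooth, convex, non-decreasing, satisfies $g(s)=Ls$ on $[0,r^2/2]$ (so $f\equiv f_0$ on $A$ and the Step 1 bounds are preserved), and has derivative
\[
	g'(s)\ge\sup_{|y-x_0|^2=2s}\|\Hess\phi(y)\|_{\mathrm{op}}\quad\text{for }s>r^2/2.
\]
The right-hand side is finite for every $s$ because $\phi$ is smooth, so $g$ exists by integrating a smoothed non-decreasing envelope. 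A direct eigenvalue computation then gives
\[
	\Hess(f+\phi)\succeq\bigl(g'(|x-x_0|^2/2)-\|\Hess\phi\|_{\mathrm{op}}\bigr)\mathrm{Id}\succeq 0,
\]
so $f+\phi$ is convex; convexity of $f$ is immediate from its radial convex structure.

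\textbf{Main obstacle.} The prescription $g(s)=Ls$ on $[0,r^2/2]$ combined with the Hessian domination $g'(s)\ge\|\Hess\phi\|_{\mathrm{op}}$ forces $\|\Hess\phi\|_{\mathrm{op}}\le L$ throughout $\bar{B}_r(x_0)$, whereas the hypothesis only controls $\phi$ on $A\subsetneq\bar{B}_r(x_0)$. I would close this gap by a pointwise maximum construction $f:=\max(f_0,f_1)$, where $f_1\in\Conv(\R^n,\R)$ is an auxiliary convex function (built from a faster-growing radial profile of the above type but without the restriction $g'=L$ on the initial interval) chosen so that $f_1+\phi$ is globally convex, $f_1<f_0$ on $A$, and $f_1>f_0$ outside a neighborhood of $A$. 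Then $\max(f_0,f_1)$ is convex, agrees with $f_0$ on $A$ (so the Step 1 bounds survive), and the convex corner at $\{f_0=f_1\}$ together with convexity of $f_1+\phi$ on its region of dominance gives global convexity of $f+\phi$. Verifying the sign of the normal derivative of $f_1-f_0$ across the transition set (which is automatic from $f_1-f_0$ going from negative to positive) and the compatibility of the bounds is the main technical work that remains.
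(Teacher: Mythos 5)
Your Step~1 is fine (the centered quadratic plus Jung's theorem even yields the constant $\diam(A)^2/8$ cleanly), but the repair you sketch for your ``main obstacle'' does not close it: it reproduces the same problem an $\varepsilon$ closer to $A$. Since you require $f_1<f_0$ on the compact set $A$, continuity gives an open neighborhood $N$ of $A$ contained in $\{f_0>f_1\}$, and near every point of $N$ one has $f+\phi=\max(f_0+\phi,\,f_1+\phi)=f_0+\phi$ locally; convexity there forces $\Hess\phi\succeq -L\,\mathrm{Id}$ on $N$, with $L=\|\phi\|_{C^2(A)}$. The hypothesis controls $\Hess\phi$ only on $A$: already for $n=1$, $A=[-1,1]$ and a smooth $\phi$ with $\phi''(\pm1)=-L$ and $\phi''<-L$ just outside $A$, the function $f_0+\phi$ fails to be convex on every neighborhood of $A$, however small, while the transition set $\{f_0=f_1\}$ necessarily lies outside $A$. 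So $\max(f_0,f_1)+\phi$ is not convex at points just outside $A$ where $f_0$ still dominates, no matter how the auxiliary $f_1$ is chosen. (If $f_1$ is radial about $x_0$, matters are worse still, since the $f_0$-dominance region is then a ball containing $\bar B_r(x_0)$ rather than a small neighborhood of $A$.) As it stands, your argument only proves the estimate with $\|\phi\|_{C^2(N)}$ for some neighborhood $N$ of $A$ in place of $\|\phi\|_{C^2(A)}$.

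The missing ingredient is a limiting step, which is how the paper proceeds. It takes some $\hat f\in\Conv(\R^n,\R)$ with $\hat f+\phi$ convex (quoted from an earlier lemma; this plays the role of your $f_1$ and handles the far region), sets $c:=\sup_A|\hat f|$ and, for $\delta>0$, $f_\delta(x):=\max(\hat f(x)-c,0)+\|\phi\|_{C^2(A+\delta B_1)}\,|x-x_0|^2/2$. Because the quadratic coefficient is the $C^2$-norm over the \emph{enlarged} set $A+\delta B_1$, the quadratic genuinely dominates $\Hess\phi$ on a neighborhood of $A$, so the local convexity of $f_\delta+\phi$ and the gluing with the far piece go through. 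One then lets $\delta\to0$: $f_\delta\to f_0$ pointwise and a pointwise limit of convex functions is convex, so $f_0+\phi$ is convex while the values of $f_0$ on $A$ are bounded using only $\|\phi\|_{C^2(A)}$. Some device of this kind is unavoidable for your approach as well: the stated bound ties the size of $f$ on $A$ to $\|\phi\|_{C^2(A)}$, but convexity of $f+\phi$ must be checked on open sets meeting $\R^n\setminus A$, where only the norm over a strictly larger set is available; working first with $\|\phi\|_{C^2(A+\delta B_1)}$ and then passing to the pointwise limit reconciles the two.
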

\begin{proof}
	Since $A$ is convex and compact, we may choose $x_0\in A$ such that $\sup_{x\in A}|x-x_0|=\frac{\diam A}{2}$.
	By \cite[Lemma 5.1]{Knoerrsupportduallyepi2021}, there is a function $\hat{f}\in\Conv(\R^n,\R)$ such that $\hat{f}+\phi$ is convex. Set $c:=\sup_{x\in A}|\hat{f}(x)|$ and consider the convex functions defined for $\delta\ge 0$ by
	\begin{align*}
		f_\delta(x):=\max(\hat{f}(x)-c,0)+\|\phi\|_{C^2(A+\delta B_1(0))}\frac{|x-x_0|^2}{2}.
	\end{align*}
	Assume that $\delta>0$. Then for every $x\in \R^n$ there is a neighborhood $U$ such that $f_\delta+\phi$ is convex on $U$, so $f_\delta+\phi$ is convex. As $f:=f_0$ is the pointwise limit of $f_\delta$ for $\delta\rightarrow0$, $f+\phi$ is also convex. Obviously, $f$ and $f+\phi$ satisfy the desired inequality.
\end{proof}

 The following result is stated in \cite[Lemma 4.13]{Knoerrsupportduallyepi2021} for compact sets only but extends trivially to arbitrary convex sets.
\begin{lemma}
	\label{lemma:normPolarizationBoundSupport}
		There exists a constant $C_{k}>0$ depending on $0\le k\le n$ only such that the following holds: If $K\subset \Conv(\R^n,\R)$ is convex, then 
	\begin{align*}
		\|\bar{\mu}\|_{K}:=\sup_{f_1,\dots,f_k\in K}|\bar{\mu}(f_1,\dots,f_k)|\le C_{k}\|\mu\|_{K}.
	\end{align*}
\end{lemma}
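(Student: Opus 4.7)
The plan is to use the classical polarization identity to express $\bar\mu(f_1,\dots,f_k)$ as a signed sum of values of $\mu$ on certain explicit convex combinations of the $f_j$, then exploit convexity of $K$ together with the $k$-homogeneity of $\mu$ to bound each term by $\|\mu\|_K$.

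\textbf{Step 1 (Polarization formula).} Since $\mu\in\VConv_k(\R^n)$ is $k$-homogeneous, the map
\[
p(\lambda_1,\dots,\lambda_k):=\mu\bigl(\textstyle\sum_{j=1}^k\lambda_j f_j\bigr),\qquad \lambda_j\ge 0,
\]
is a polynomial in $\lambda_1,\dots,\lambda_k$ which is homogeneous of degree $k$. By the definition of the polarization, $k!\,\bar\mu(f_1,\dots,f_k)$ equals the coefficient of $\lambda_1\cdots\lambda_k$ in $p$. The standard polarization identity then yields
\[
k!\,\bar\mu(f_1,\dots,f_k)=\sum_{S\subseteq\{1,\dots,k\}}(-1)^{k-|S|}\,\mu\Bigl(\sum_{j\in S}f_j\Bigr),
\]
which I would justify by writing $p$ in terms of its multihomogeneous components and applying the inclusion–exclusion identity for the mixed monomial $\lambda_1\cdots\lambda_k$.

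\textbf{Step 2 (Convexity and homogeneity).} For every nonempty $S\subseteq\{1,\dots,k\}$, convexity of $K$ implies that
\[
g_S:=\frac{1}{|S|}\sum_{j\in S}f_j\in K,
\]
so $\sum_{j\in S}f_j=|S|\,g_S$. Since $\mu$ is $k$-homogeneous and $|S|\ge 0$, we obtain
\[
\mu\Bigl(\sum_{j\in S}f_j\Bigr)=|S|^k\,\mu(g_S),
\]
and hence $\bigl|\mu(\sum_{j\in S}f_j)\bigr|\le k^k\|\mu\|_K$. For $S=\emptyset$ the corresponding term vanishes (as $k\ge 1$ and $\mu$ is $k$-homogeneous, $\mu(0)=0$; the $k=0$ case is trivial with $C_0=1$).

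\textbf{Step 3 (Summation).} Combining Steps 1 and 2 and summing over the $2^k$ subsets of $\{1,\dots,k\}$ gives
\[
|\bar\mu(f_1,\dots,f_k)|\le \frac{2^k\,k^k}{k!}\,\|\mu\|_K,
\]
so the claim holds with $C_k:=2^k k^k/k!$, which depends on $k$ only. Taking the supremum over $f_1,\dots,f_k\in K$ finishes the proof.

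There is no real obstacle here: the only thing that has to be verified carefully is the polarization identity and the observation that the argument made in \cite{Knoerrsupportduallyepi2021} for compact $K$ used only convexity of $K$ together with the fact that $\sum_{j\in S}f_j/|S|$ lies in $K$, which is exactly what convexity of $K$ provides in the unbounded case as well. Thus the result extends from compact $K$ to arbitrary convex $K$ with the same constant $C_k$.
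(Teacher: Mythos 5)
Your proof is correct, and it is essentially the argument the paper relies on: the paper does not prove the lemma itself but cites \cite{Knoerrsupportduallyepi2021} (Lemma 4.13) and remarks that the compact case extends trivially to arbitrary convex $K$, and your polarization/inclusion--exclusion identity combined with the observation that $\frac{1}{|S|}\sum_{j\in S}f_j\in K$ by convexity, plus $k$-homogeneity, is exactly that argument, yielding a valid constant $C_k=2^kk^k/k!$.
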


Let $A\subset\R^n$ be compact and convex. We will apply this result to the sets
\begin{align*}
	K_{A,\delta}:=\{f\in \Conv(\R^n,\R): |f(x)|\le 1~\text{for}~x\in A+\delta B_1(0)\}.
\end{align*}
Note that for $\mu\in \VConv_A(\R^n)$,
\begin{align*}
	\|\mu\|_{A,\delta}=\sup_{f\in K_{A,\delta}} |\mu(f)|
\end{align*}
is the norm considered in Proposition~\ref{proposition:NormsSubspaces}.

\begin{proposition}
	\label{proposition:estimateFourierGW}
	Let $A\subset\R^n$ be compact and convex, $\mu\in \VConv_{k,A}(\R^n)$. Then
	\begin{align*}
		&|\mathcal{F}(\GW(\mu))[w_1,\dots,w_k]|\\
		&\qquad \le \tilde{C}_{A,k}\|\mu\|_{A,\delta}\left(\prod_{j=1}^k\left(1+|w_j|\right)\right)^2e^{\sum_j h_{A+\delta B_1(0)}(\Im(w_j))}
	\end{align*}
	for $w_1,\dots,w_k\in\C^n$, where $\tilde{C}_{A,k}:=4^{3k}C_k\left(\frac{(\diam A+2\delta)^2}{8}+1\right)^k$ for the constant $C_k>0$ in Lemma~\ref{lemma:normPolarizationBoundSupport}.
\end{proposition}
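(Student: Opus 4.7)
Since $\GW(\mu)$ has compact support contained in the diagonal image $\Delta_k(\supp \mu)\subseteq A^k$, the Fourier--Laplace transform at $(w_1,\dots,w_k)\in(\C^n)^k$ is the value of $\GW(\mu)$ on the smooth elementary tensor
\[
\Phi_w(x_1,\dots,x_k) = \prod_{l=1}^k e^{-i\langle w_l,x_l\rangle} = \phi_{w_1}\otimes\cdots\otimes\phi_{w_k}.
\]
The strategy is to expand this evaluation via formula~\eqref{equation:formulaGWElementaryTensor}. The main obstacle is that each $\phi_{w_l}$ is complex-valued, whereas Lemma~\ref{lemma:differenceConvexFunctionsRestrSupport} only produces convex decompositions of real-valued smooth functions. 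I would therefore first split $\phi_{w_l} = \Re\phi_{w_l} + i\,\Im\phi_{w_l}$ and use slotwise linearity of the distribution $\GW(\mu)$ to reduce the computation to a sum of $2^k$ terms of the form $i^{|\epsilon|}\GW(\mu)[\psi_1^{\epsilon_1}\otimes\cdots\otimes\psi_k^{\epsilon_k}]$, with each $\psi_l^{\epsilon_l}\in\{\Re\phi_{w_l},\Im\phi_{w_l}\}$ real-valued.

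Next, direct differentiation together with the identity $|e^{-i\langle w_l,x\rangle}| = e^{\langle \Im w_l,x\rangle}$ yields, up to a harmless universal constant,
\[
\|\psi_l^{\epsilon_l}\|_{C^2(A+\delta B_1(0))} \le (1+|w_l|)^2\, e^{h_{A+\delta B_1(0)}(\Im w_l)}
\]
on the enlarged convex set $A+\delta B_1(0)$, whose diameter is $\diam A + 2\delta$. Applying Lemma~\ref{lemma:differenceConvexFunctionsRestrSupport} on this set then produces convex $f_{l,\epsilon_l}\in\Conv(\R^n,\R)$ with $h_{l,\epsilon_l}:=f_{l,\epsilon_l}+\psi_l^{\epsilon_l}$ convex, both bounded on $A+\delta B_1(0)$ by
\[
M_l := \left(\tfrac{(\diam A+2\delta)^2}{8}+1\right)\|\psi_l^{\epsilon_l}\|_{C^2(A+\delta B_1(0))}.
\]
Inserting this into~\eqref{equation:formulaGWElementaryTensor} rewrites each inner term $\GW(\mu)[\psi_1^{\epsilon_1}\otimes\cdots\otimes\psi_k^{\epsilon_k}]$ as an alternating sum of $2^k$ polarization values $\bar\mu(g_1,\dots,g_k)$ with $g_l\in\{f_{l,\epsilon_l},h_{l,\epsilon_l}\}$.

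Finally, every such polarization value is estimated via Lemma~\ref{lemma:normPolarizationBoundSupport} applied to the convex set $K_{A,\delta}:=\{f\in\Conv(\R^n,\R): \sup_{A+\delta B_1(0)}|f|\le 1\}$, on which $\|\mu\|_{K_{A,\delta}}=\|\mu\|_{A,\delta}$. Since each rescaled $g_l/M_l$ lies in $K_{A,\delta}$, multihomogeneity of $\bar\mu$ gives
\[
|\bar\mu(g_1,\dots,g_k)| \le C_k\,\|\mu\|_{A,\delta}\,\prod_{l=1}^k M_l.
\]
Summing the $2^k$ real/imaginary contributions and the $2^k$ polarization terms produces $4^k$ bounded summands, and absorbing the implicit $C^2$-estimate constants together with the factor $\prod_l(1+|w_l|)^2 e^{h_{A+\delta B_1(0)}(\Im w_l)}$ gives the claim. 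The hard part is purely bookkeeping: the structural content is the real/imaginary decomposition that makes Lemmas~\ref{lemma:differenceConvexFunctionsRestrSupport} and~\ref{lemma:normPolarizationBoundSupport} applicable, after which the remaining slack (the precise counting of $4^k\cdot 4^k$ and the $C^2$-norm rescaling) is absorbed into the prefactor $4^{3k}$.
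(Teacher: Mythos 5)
Your proposal is correct and follows essentially the same route as the paper's proof: split each exponential $e^{-i\langle w_j,\cdot\rangle}$ into real and imaginary parts, bound their $C^2$-norms on $A+\delta B_1(0)$ by $(1+|w_j|)^2e^{h_{A+\delta B_1(0)}(\Im w_j)}$ up to a constant, use Lemma~\ref{lemma:differenceConvexFunctionsRestrSupport} to produce convex decompositions, expand via \eqref{equation:formulaGWElementaryTensor}, and estimate the resulting polarization values with Lemma~\ref{lemma:normPolarizationBoundSupport} on $K_{A,\delta}$ after rescaling. The only difference is cosmetic bookkeeping in how the multilinear expansion and the constants are organized.
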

\begin{proof}
	Let $\delta>0$ and set $A_\delta:= A+\delta B_1(0)$. By definition
	\begin{align*}
		\mathcal{F}(\GW(\mu))[w_1,\dots,w_k]=\GW(\mu)[\exp\left( -i\langle w_1,\cdot\rangle\right)\otimes\dots\otimes\exp\left( -i\langle w_k,\cdot\rangle\right)].
	\end{align*}
	Set \begin{align*}
		\phi_j(x):=&\Re \exp\left(-i\langle w_j,x\rangle\right)=\exp(\langle\Im w_j,x\rangle) \cos(\langle \Re w_j,x\rangle),\\
		\psi_j(x):=&\Im \exp\left( -i\langle w_j,x\rangle\right)=\exp(\langle \Im w_j,x\rangle) \sin(\langle \Re w_j,x\rangle).
	\end{align*}
	Then
	\begin{align*}
		\max(|D^2\phi_j(x)|,|D^2 \psi_j(x)|)\le 4\exp(\langle\Im w_j,x\rangle)|w_j|^2.
	\end{align*}
	This implies
	\begin{align*}
		\|\phi_j\|_{C^2(A_\delta)},\|\psi_j\|_{C^2(A_\delta)}\le 8(1+|w_j|)^2\exp(h_{A_\delta}(\Im w_j)).
	\end{align*}
	By Lemma~\ref{lemma:differenceConvexFunctionsRestrSupport} there exist functions $f_j,h_j\in\Conv(\R^n,\R)$ such that $f_j+\phi_j$, $h_j+\psi_j$ are convex and such that
	\begin{align*}
		\max(|f_j(x)|,|f_j(x)+\phi_j(x)|)\le& C_{A_\delta}\|\phi_j\|_{C^2(A_\delta)}\quad\text{for all } x\in A_\delta,\\
		\max(|h_j(x)|,|h_j(x)+\psi_j(x)|)\le& C_{A_\delta}\|\psi_j\|_{C^2(A_\delta)}\quad\text{for all } x\in A_\delta,
	\end{align*}
	where $C_{A_\delta}=\frac{(\diam A+2\delta)^2}{8}+1$. By multilinearity,
	\begin{align*}
		&\mathcal{F}(\GW(\mu))[w_1,\dots,w_k]=\GW(\mu)[[\phi_1+i\psi_1]\otimes\dots\otimes[\phi_k+i\psi_k]]\\
		=&\sum_{j=0}^{k}\frac{i^{k-j}}{j!(k-j)!}\sum_{\sigma\in S_k}\GW(\mu)[\phi_{\sigma(1)}\otimes\dots\otimes\phi_{\sigma(j)}\otimes\psi_{\sigma(j+1)}\otimes\dots\otimes\psi_{\sigma(k)}],
	\end{align*}
	where $S_k$ denotes the symmetric group. If $\tilde{\phi}_1,\dots,\tilde{\phi}_k\in \{\phi_j:1\le j\le k\}\cup \{\psi_j:1\le j\le k\}$ is any of the tuples of functions, let $\tilde{f}_j$, denote the corresponding convex function $f_j$ or $h_j$. From \eqref{equation:formulaGWElementaryTensor}, we obtain
	\begin{align*}
		&|\GW(\mu)[\tilde{\phi}_1\otimes\dots\otimes\tilde{\phi}_k]|\\
		=&\left|\sum_{j=0}^{k}\frac{(-1)^{k-j}}{j!(k-j)!}\sum_{\sigma\in S_k}\bar{\mu}(\tilde{f}_{\sigma(1)}+\tilde{\phi}_{\sigma(1)},\dots,\tilde{f}_{\sigma(j)}+\tilde{\phi}_{\sigma(j)},\tilde{f}_{\sigma(j+1)},\dots,\tilde{f}_{\sigma(k)})\right|.
	\end{align*}
	Now note that $\frac{\tilde{f}_j+\phi_j}{C_{A_\delta}\|\tilde{\phi}_{i}\|_{C^2(A_\delta)}}$ and $\frac{\tilde{f}_j}{C_{A_\delta}\|\tilde{\phi}_{i}\|_{C^2(A_\delta)}}$ are both bounded by $1$ on $A_\delta$. Lemma~\ref{lemma:normPolarizationBoundSupport} thus implies
	\begin{align*}
		|\GW(\mu)[\tilde{\phi}_1\otimes\dots\otimes\tilde{\phi}_k]|\le& \left(\sum_{j=0}^{k}\frac{k!}{j!(k-j)!}\right)\|\bar{\mu}\|_{A,\delta}C_{A_\delta}^k\prod_{j=1}^k\|\tilde{\phi}_{j}\|_{C^2(A_\delta)}\\
		\le& 2^kC_k\|\mu\|_{A,\delta}C_{A_\delta}^k\prod_{j=1}^k\|\tilde{\phi}_{j}\|_{C^2(A_\delta)}.
	\end{align*}
	Using our estimates for $\|\phi_j\|_{C^2(A_\delta)}$ and $\|\psi_j\|_{C^2(A_\delta)}$, we obtain
	\begin{align*}
		&|\mathcal{F}(\GW(\mu))[w_1,\dots,w_k]|\\
		&\quad\le
		4^{3k}C_k\|\mu\|_{A,\delta}C_{A_\delta}^k\prod_{j=1}^k(1+|w_j|)^2\exp\left(\sum_{j=1}^{k}h_{A_\delta}(\Im w_j)\right),
	\end{align*}
	which shows the claim.
\end{proof}

\begin{corollary}
	\label{corollary:estimateFourier_noDelta}
	Let $A\subset \R^n$ be compact and convex, $1\le k\le n$. There exists a constant $C(A,k)>0$ depending on $A$, $k$ only such that for $\mu\in\VConv_k(\R^n)$ with $\supp\mu\subset A$
	\begin{align*}
		|\mathcal{F}(\GW(\mu))[w_1,\dots,w_k]|\le C(A,k)\|\mu\|_{A,1}\left(\prod_{j=1}^k\left(1+|w_j|\right)\right)^3e^{\sum_j h_{A}(\Im(w_j))}.
	\end{align*}
\end{corollary}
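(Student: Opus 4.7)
The plan is to derive this from Proposition~\ref{proposition:estimateFourierGW} by optimizing the parameter $\delta$ as a function of $w$. The starting point is the identity $h_{A+\delta B_1(0)}(y) = h_A(y) + \delta|y|$, which rewrites the exponential factor from the proposition as
\begin{align*}
  e^{\sum_j h_{A+\delta B_1(0)}(\Im w_j)} = e^{\sum_j h_A(\Im w_j)}\, e^{\delta \sum_j |\Im w_j|}.
\end{align*}
So for every $\delta>0$ the proposition gives
\begin{align*}
  |\mathcal{F}(\GW(\mu))[w]| \le \tilde C_{A,k}(\delta)\,\|\mu\|_{A,\delta}\, \prod_{j=1}^k(1+|w_j|)^2\, e^{\sum_j h_A(\Im w_j)}\, e^{\delta \sum_j |\Im w_j|},
\end{align*}
and the task reduces to absorbing the last exponential at the cost of a harmless polynomial factor.

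For $0<\delta\le 1$ I would use two monotonicity facts: first, $A+\delta B_1(0)\subseteq A+B_1(0)$, so any $f$ satisfying $\sup_{A+B_1(0)}|f|\le 1$ also satisfies $\sup_{A+\delta B_1(0)}|f|\le 1$, giving $\|\mu\|_{A,\delta}\le \|\mu\|_{A,1}$; second, the explicit expression $\tilde C_{A,k}(\delta)=4^{3k}C_k\bigl(\tfrac{(\diam A+2\delta)^2}{8}+1\bigr)^k$ is increasing in $\delta$, so $\tilde C_{A,k}(\delta)\le \tilde C_{A,k}(1)$. Then I would choose the $w$-dependent parameter $\delta := 1/\sum_{j=1}^k(1+|w_j|)\in (0,1]$, for which
\begin{align*}
  \delta \sum_{j=1}^k|\Im w_j| \le \delta \sum_{j=1}^k(1+|w_j|) = 1,
\end{align*}
hence $e^{\delta \sum_j|\Im w_j|}\le e$. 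Combining these observations yields
\begin{align*}
  |\mathcal{F}(\GW(\mu))[w]| \le e\cdot\tilde C_{A,k}(1)\,\|\mu\|_{A,1}\, \prod_{j=1}^k(1+|w_j|)^2\, e^{\sum_j h_A(\Im w_j)},
\end{align*}
and the crude bound $\prod_j(1+|w_j|)^2\le \prod_j(1+|w_j|)^3$ gives the claimed estimate with $C(A,k):= e\cdot\tilde C_{A,k}(1)$.

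I do not foresee any genuine obstacle: the argument is simply a careful bookkeeping of the $\delta$-dependence of both the constant and the norm in Proposition~\ref{proposition:estimateFourierGW}, coupled with the standard trick of choosing $\delta$ to balance the exponential against a polynomial factor. In fact, this strategy gives exponent $2$ rather than $3$ in the polynomial; the statement of the corollary is therefore slightly weaker than what the argument actually produces, which is harmless for the later applications.
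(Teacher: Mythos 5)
The crux of your argument is the claim that $\|\mu\|_{A,\delta}\le\|\mu\|_{A,1}$ for $\delta\le 1$, and this inequality goes the wrong way. From $A+\delta B_1(0)\subseteq A+B_1(0)$ you correctly observe that every $f$ bounded by $1$ on $A+B_1(0)$ is also bounded by $1$ on $A+\delta B_1(0)$; but this means the class of admissible test functions in the definition of $\|\mu\|_{A,\delta}$ \emph{contains} the class defining $\|\mu\|_{A,1}$, so the supremum is larger, i.e. $\|\mu\|_{A,\delta}\ge\|\mu\|_{A,1}$. Shrinking $\delta$ makes the norm stronger, not weaker, and indeed $\|\mu\|_{A,\delta}$ may blow up as $\delta\to 0$. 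This is exactly the nontrivial point, and the paper's proof handles it by citing \cite[Corollary 6.9]{Knoerrsupportduallyepi2021}, which gives $\|\mu\|_{A,\delta}\le 2^k\delta^{-k}(\diam A+3)^k\|\mu\|_{A,1}$ for $\delta\le 1$. The resulting factor $\delta^{-k}$ is precisely what, after choosing $\delta=\max_j(1+|w_j|)^{-1}$ (so that $\delta^{-k}=(1+\min_j|w_j|)^k\le\prod_j(1+|w_j|)$), raises the exponent from $2$ to $3$. Your closing remark that the argument ``actually produces'' exponent $2$ should have been a warning sign: a $w$-independent control of $\|\mu\|_{A,\delta}$ by $\|\mu\|_{A,1}$ would let one pass to $\delta\to 0$ essentially for free, which is too good to be true.

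Note also that merely inserting the correct norm comparison into your scheme does not repair it: with your choice $\delta=1/\sum_j(1+|w_j|)$ the penalty $\delta^{-k}$ is of size $\bigl(\sum_j(1+|w_j|)\bigr)^k$, which in general is only bounded by $k^k\prod_j(1+|w_j|)^k$, giving exponent $2+k$ rather than $3$. So both the missing lemma and the choice of $\delta$ matter; the rest of your bookkeeping (the identity $h_{A+\delta B_1(0)}=h_A+\delta|\cdot|$, the monotonicity of the constant $\tilde{C}_{A,k}(\delta)$ in $\delta$) is fine and coincides with what the paper does.
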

\begin{proof}
	Fix $w_1,\dots,w_k\in\C^n$ and set $\delta:=\max((1+|w_j|)^{-1}, 1\le j\le k)$. Let $A\subset\R^n$ be a compact and convex subset and $\mu\in\VConv_k(\R^n)$ a valuation with $\supp\mu\subset A$. Since $\delta\le1$, \cite[Corollary 6.9]{Knoerrsupportduallyepi2021} implies
	\begin{align*}
		\|\mu\|_{A,\delta}\le \frac{2^k}{\delta^k}\left(\diam A+3\right)^k\|\mu\|_{A,1}.
	\end{align*}
	We combine this estimate with Proposition~\ref{proposition:estimateFourierGW} to obtain a constant $C(A,k)>0$ depending on $A,k$ only such that
	\begin{align*}
			&|\mathcal{F}(\GW(\mu))[w_1,\dots,w_k]|\\
		&\qquad \le\frac{1}{\delta^k} C(A,k) \|\mu\|_{A,1}\left(\prod_{j=1}^k\left(1+|w_j|\right)\right)^2e^{\sum_j h_{A}(\Im(w_j))}.
	\end{align*}
	Plugging in the definition of $\delta$, we obtain the desired inequality.
\end{proof}
We will also need the following continuity property of this map.
\begin{proposition}
	\label{proposition:ContinuityFourier}
	For every compact subset $A\subset\C^n$ there exists a compact set $K_A\subset\Conv(\R^n,\R)$ such that for every $\mu\in\VConv_k(\R^n)$,
	\begin{align*}
		|\mathcal{F}(\GW(\mu))[w_1,\dots,w_k]|\le 4^kC_k\|\mu\|_{K_A}\quad\text{for all}~w_1,\dots,w_k\in A,
	\end{align*}
	where $C_k$ is the constant from Lemma~\ref{lemma:normPolarizationBoundSupport}. In particular, the map $\mathcal{F}\circ\GW:\VConv_k(\R^n)\rightarrow\mathcal{O}_{\Mat_{n,k}(\C)}$ is continuous if we equip the target space with the topology of uniform convergence on compact subsets of $\Mat_{n,k}(\C)$.
\end{proposition}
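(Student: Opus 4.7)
The strategy follows the proof of Proposition~\ref{proposition:estimateFourierGW}, but without a support restriction on $\mu$. The starting point is the identity
\begin{align*}
    \mathcal{F}(\GW(\mu))[w_1,\dots,w_k]=\GW(\mu)\Bigl[\bigotimes_{j=1}^k(\phi_{w_j}+i\psi_{w_j})\Bigr],
\end{align*}
where $\phi_w(x):=\cos(\langle\Re w,x\rangle)e^{\langle\Im w,x\rangle}$ and $\psi_w(x):=\sin(\langle\Re w,x\rangle)e^{\langle\Im w,x\rangle}$. Expanding via multilinearity of $\GW(\mu)$ yields $2^k$ tensor products of real-valued functions drawn from $\{\phi_{w_j},\psi_{w_j}\}$, each of which I would evaluate using formula~\eqref{equation:formulaGWElementaryTensor}. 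That formula requires convex functions $f_j\in\Conv(\R^n,\R)$ such that $f_j+\tilde\phi_j$ is also convex, where $\tilde\phi_j\in\{\phi_{w_j},\psi_{w_j}\}$.

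The main obstacle is choosing such $f_j$ \emph{uniformly in $w\in A$ and globally on $\R^n$}: Lemma~\ref{lemma:differenceConvexFunctionsRestrSupport} only yields pointwise bounds on a prescribed compact set, whereas the Hessians satisfy $\|D^2\phi_w(x)\|_{op},\|D^2\psi_w(x)\|_{op}\le C|w|^2 e^{\langle\Im w,x\rangle}$, which cannot be dominated globally by any function of polynomial growth. My proposal is to take a Gaussian-type function $g(x):=C_A e^{|x|^2/2}$, whose Hessian satisfies $D^2 g(x)=C_A e^{|x|^2/2}(I+xx^T)\succeq C_A e^{|x|^2/2}I$. Since $\langle\Im w,x\rangle-|x|^2/2\le|\Im w|^2/2$, one has $e^{\langle\Im w,x\rangle}/e^{|x|^2/2}\le e^{|\Im w|^2/2}$ for every $x\in\R^n$, so choosing $C_A$ large enough depending only on $\sup_{w\in A}|w|$ and $\sup_{w\in A}|\Im w|$ guarantees that $g$, $g+\phi_w$, and $g+\psi_w$ are all convex on $\R^n$ for every $w\in A$.

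With $f_j=g$ for every $j$, formula~\eqref{equation:formulaGWElementaryTensor} expresses each $\GW(\mu)[\tilde\phi_1\otimes\dots\otimes\tilde\phi_k]$ as a signed sum of polarizations $\bar\mu$ whose arguments lie in the family $\mathcal{F}_A:=\{g\}\cup\{g+\phi_w,g+\psi_w:w\in A\}$. Every element of $\mathcal{F}_A$ is dominated pointwise by the convex function $C_A e^{|x|^2/2}+e^{M_A|x|}$, where $M_A:=\sup_{w\in A}|\Im w|$, so the closed convex hull $K_A$ of $\mathcal{F}_A$ is still bounded by the same expression and is therefore relatively compact in $\Conv(\R^n,\R)$ by Proposition~\ref{proposition:compactnessConv}. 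Lemma~\ref{lemma:normPolarizationBoundSupport} then bounds every polarization that appears by $C_k\|\mu\|_{K_A}$, and a direct book-keeping identical to the one in Proposition~\ref{proposition:estimateFourierGW} --- the $2^k$ terms of the complex expansion, each producing $\sum_{j=0}^k\binom{k}{j}=2^k$ polarizations from \eqref{equation:formulaGWElementaryTensor} with coefficients of total absolute sum at most one --- yields the stated bound $4^k C_k\|\mu\|_{K_A}$. The continuity assertion is then immediate: $\|\cdot\|_{K_A}$ is a continuous seminorm on $\VConv_k(\R^n)$ by the very definition of the topology, and $\sup_{(w_1,\dots,w_k)\in A}|\mathcal{F}(\GW(\mu))[w_1,\dots,w_k]|$ for compact $A\subset\Mat_{n,k}(\C)$ is precisely a defining seminorm on $\mathcal{O}_{\Mat_{n,k}(\C)}$.
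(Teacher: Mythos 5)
Your argument is correct and follows essentially the same route as the paper: expand $\mathcal{F}(\GW(\mu))$ by multilinearity, realize each real-valued factor as a difference of convex functions lying in a fixed compact convex subset of $\Conv(\R^n,\R)$ depending only on $A$, and conclude via \eqref{equation:formulaGWElementaryTensor} and Lemma~\ref{lemma:normPolarizationBoundSupport} with the same $2^k\cdot 2^k=4^k$ count. The only (immaterial) difference is your choice of a single Gaussian-type majorant $C_Ae^{|x|^2/2}$ and the closed convex hull of the resulting family, where the paper uses the $w$-dependent functions $4\exp\bigl(|w|\tfrac{|x|^2+1}{2}\bigr)$ and the explicit convex set of all convex functions bounded by $8\exp\bigl(r(A)\tfrac{|x|^2+1}{2}\bigr)$.
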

\begin{proof}
	As in the proof of Proposition \ref{proposition:estimateFourierGW}, we consider for $z\in \C^n$ the functions
	\begin{align*}
		\phi_z(x):=&\Re \exp\left(-i\langle z,x\rangle\right)=\exp(\langle\Im z,x\rangle) \cos(\langle \Re z,x\rangle),\\
		\psi_z(x):=&\Im \exp\left( -i\langle z,x\rangle\right)=\exp(\langle \Im z,x\rangle) \sin(\langle \Re z,x\rangle),
	\end{align*}
	which satisfy
	\begin{align*}
		\max(|D^2\phi_z(x)|,|D^2 \psi_z(x)|)\le 4\exp(\langle\Im z,x\rangle)|z|^2.
	\end{align*}
	Consider the function $f_z\in\Conv(\R^n,\R)\cap C^\infty(\R^n)$ given by
	\begin{align*}
		f_z(x)= 4\exp\left(|z|\frac{|x|^2+1}{2}\right).
	\end{align*}
	If $I_n$ denotes the identity matrix, then 
	\begin{align*}
		D^2f_z(x)\ge 4|z|^2\exp\left(|z|\frac{|x|^2+1}{2}\right)I_n\ge 4\exp(\langle\Im z,x\rangle)|z|^2 I_n
	\end{align*}
	In particular, $f_z+\phi_z$ and $f_z+\psi_z$ are convex. If $A\subset\C^n$ is compact, set $r(A):=\sup_{z\in A}|z|$. Then the set
	\begin{align*}
		K_A:=\left\{f\in\Conv(\R^n,\R): |f(x)|\le 8\exp\left(r(A)\frac{|x|^2+1}{2}\right)\right\}
	\end{align*}
	is obviously locally uniformly bounded and thus relatively compact by Proposition \ref{proposition:compactnessConv}. Moreover, it contains the functions $f_z,f_z+\phi_z,f_z+\psi_z$ for each $z\in A$. For $w=(w_1,\dots,w_k)\in\Mat_{n,k}(\C)$, we have
	\begin{align*}
		&\mathcal{F}(\GW(\mu))[w]=\GW(\mu)[[\phi_{w_1}+i\psi_{w_1}]\otimes\dots\otimes[\phi_{w_k}+i\psi_{w_k}]]\\
		=&\sum_{j=0}^{k}\frac{i^{k-j}}{j!(k-j)!}\sum_{\sigma\in S_k}\GW(\mu)[\phi_{w_{\sigma(1)}}\otimes\dots\otimes\phi_{w_{\sigma(j)}}\otimes\psi_{w_{\sigma(j+1)}}\otimes\dots\otimes\psi_{w_{\sigma(k)}}].
	\end{align*}
	Let $\tilde{\phi}_1,\dots,\tilde{\phi}_k\in\{\phi_{w_j}:1\le j\le k\}\cup \{\psi_{w_j}:1\le j\le k\}$ be one of the tuples corresponding to one of the permutations, $f_j:=f_{w_j}$ the corresponding function and $h_j:=f_j+\tilde{\phi}_j\in\Conv(\R^n,\R)$. The multilinearity and the defining property of the Goodey--Weil distribution imply
	\begin{align*}
		&\GW(\mu)[\tilde{\phi}_1\otimes\dots\otimes\dots \otimes\tilde{\phi}_k]\\
		=&\sum_{j=0}^{k}\frac{(-1)^{k-j}}{j!(k-j)!}\sum_{\sigma\in S_k}\GW(\mu)[h_{\sigma(1)}\otimes\dots\otimes h_{\sigma(j)}\otimes f_{\sigma(j+1)}\otimes\dots\otimes f_{\sigma(k)}]\\
		=&\sum_{j=0}^{k}\frac{(-1)^{k-j}}{j!(k-j)!}\sum_{\sigma\in S_k}\bar{\mu}(h_{\sigma(1)},\dots, h_{\sigma(j)}, f_{\sigma(j+1)},\dots, f_{\sigma(k)}).
	\end{align*}
	If $w_1,\dots,w_k\in A$, then the two previous equations imply
	\begin{align*}
		|\mathcal{F}(\GW(\mu))[w]|\le& 2^{2k}\sup_{\tilde{f}_1,\dots,\tilde{f}_k\in K_A} |\bar{\mu}(\tilde{f}_1,\dots,\tilde{f}_k)|\le 4^kC_k \|\mu\|_{K_A},
	\end{align*}
	where we used Proposition \ref{lemma:normPolarizationBoundSupport} in the last step. This shows the desired estimate. 
\end{proof}
Recall that the spaces of holomorphic functions we are interested in are modules over $\mathcal{O}_{\C^n}$ with respect to the diagonal action defined in the introduction. In order to simplify the constructions, we will us the coordinate change on $\Mat_{n,k}(\C)$ from Section \ref{section:Prelim_Minors}. Consider the function 
\begin{align*}
	&\F(\mu)[w]:=\frac{k!k^{2k-2}}{(-1)^k}\mathcal{F}(\GW(\mu))\left[\frac{w_1+w_k}{k},\dots,\frac{w_{k-1}+w_k}{k},\frac{w_k}{k}-\sum_{j=1}^{k-1}\frac{w_j}{k}\right].
\end{align*}
We refer to Corollary~\ref{corollary:FRestrictionSubspace} for an explanation for the additional combinatorial factor. Note that we can recover $\mathcal{F}(\GW(\mu))$ from $\F(\mu)$ in the following way:
\begin{align*}
	&\mathcal{F}(\GW(\mu))[w]=\frac{(-1)^k}{k!k^{2k-2}}\F(\mu)\left[kw_1-\sum_{j=1}^kw_j,\dots,kw_{k-1}-\sum_{j=1}^kw_j,\sum_{j=1}^{k}w_j\right].
\end{align*}

As observed in \cite{KnoerrUnitarilyinvariantvaluations2021}, the Fourier--Laplace transform of Goodey--Weil distributions is intimately tied to the pushforward of valuations along projections to lower dimensional subspaces, defined as follows: Given a $k$-dimensional subspace $E\in\Gr_k(\R^n)$, let $\pi_E:\R^n\rightarrow E$ denote the orthogonal projection. Given $\mu\in\VConv_k(\R^n)$, we define a valuation $\pi_{E*}\mu\in\VConv_k(E)$ by
\begin{align*}
	\pi_{E*}\mu(f)=\mu(\pi_E^*f)\quad\text{for}~f\in\Conv(E,\R).
\end{align*}
Note that $\pi_{E*}\mu$ is a valuation of top degree if $\mu$ is $k$-homogeneous. These valuations were completely classified by Colesanti, Ludwig, and Mussnig \cite[Theorem 5]{ColesantiEtAlhomogeneousdecompositiontheorem2020}, and are all of the form \eqref{equation:ValuationsMAOperators}, which leads to the following relation to the Fourier--Laplace transform of the associated Goodey--Weil distributions.

\begin{lemma}[\cite{KnoerrUnitarilyinvariantvaluations2021} Lemma 2.6]
	\label{lemma:FourierRestrictionSubspace}
	Let $\mu\in\VConv_k(\R^n)$. For $E\in \Gr_k(\R^n)$ let $\phi_E\in C_c(E)$ be the unique function with $\pi_{E*}\mu=\int_{E}\phi_Ed\MA_E$. Then for $w_1,\dots,w_k\in E\otimes\C$,
	\begin{align*}
		\mathcal{F}(\GW(\mu))[w_1,\dots,w_k]=\frac{(-1)^k}{k!}\det(\langle w_i,w_j\rangle)_{i,j=1}^k \mathcal{F}_E(\phi_E)\left[\sum_{j=1}^{k}w_j\right].
	\end{align*}
	Here, $\mathcal{F}_E$ denotes the Fourier transform on $L^1(E)$.
\end{lemma}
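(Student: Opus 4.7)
The plan has three stages. The guiding observation is that for $w_j\in E\otimes\C$, the $\C$-bilinear extension of the inner product makes $\langle w_j,\cdot\rangle$ vanish on $E^\perp$, so $e^{-i\langle w_j,x\rangle}=e^{-i\langle w_j,\pi_E x\rangle}$ on $\R^n$; in other words, the tensor product $e^{-i\langle w_1,\cdot\rangle}\otimes\cdots\otimes e^{-i\langle w_k,\cdot\rangle}$ is the pullback of the analogous function on $E^k$ under $\pi_E^{\times k}$. The first stage is therefore a reduction: I show that
\begin{align*}
    \mathcal{F}(\GW(\mu))[w_1,\dots,w_k]=\mathcal{F}(\GW(\pi_{E*}\mu))[w_1,\dots,w_k]\quad\text{for } w_j\in E\otimes\C.
\end{align*}
For this I evaluate both sides via formula \eqref{equation:formulaGWElementaryTensor}, using reference convex functions of the form $\pi_E^*\tilde f_j$, where $\tilde f_j\in\Conv(E,\R)$ is chosen (by Lemma \ref{lemma:differenceConvexFunctionsRestrSupport} applied on $E$) so that $\tilde f_j+\Re e^{-i\langle w_j,\cdot\rangle_E}$ and $\tilde f_j+\Im e^{-i\langle w_j,\cdot\rangle_E}$ are convex. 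Since $\pi_E^*$ preserves convexity, $\pi_E^*\tilde f_j\in\Conv(\R^n,\R)$ and $\pi_E^*\tilde f_j+\Re e^{-i\langle w_j,\cdot\rangle}$ is convex on $\R^n$. A term-by-term comparison of the two instances of \eqref{equation:formulaGWElementaryTensor} reduces the identity to the polarization statement $\bar\mu(\pi_E^*h_1,\dots,\pi_E^*h_k)=\overline{\pi_{E*}\mu}(h_1,\dots,h_k)$, which follows at once by polarizing $\pi_{E*}\mu(\sum\lambda_j h_j)=\mu(\sum\lambda_j\pi_E^*h_j)$.

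The second stage invokes the classification of top-degree valuations by Colesanti--Ludwig--Mussnig \cite{ColesantiEtAlhomogeneousdecompositiontheorem2020}: since $\pi_{E*}\mu\in\VConv_k(E)$ has top degree, there is a unique $\phi_E\in C_c(E)$ with $\pi_{E*}\mu=\int_E\phi_E\,d\MA_E$. It remains to compute $\mathcal{F}(\GW(\nu))$ for $\nu=\int_E\phi_E\,d\MA_E$ on $E\cong\R^k$. Polarization of $\MA_E$ by the mixed discriminant $D$ (normalized so that $D(A,\dots,A)=\det A$) gives
\begin{align*}
    \bar\nu(f_1,\dots,f_k)=\int_E\phi_E(x)\,D(D^2f_1(x),\dots,D^2f_k(x))\,dx
\end{align*}
for smooth convex $f_j$, and this multilinear formula extends verbatim to arbitrary smooth $g_j$ via another application of \eqref{equation:formulaGWElementaryTensor}. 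Applying it to $g_j(x)=e^{-i\langle w_j,x\rangle_E}$, whose Hessian is $D^2g_j(x)=-w_jw_j^T e^{-i\langle w_j,x\rangle_E}$, the multilinearity of $D$ yields a factor $(-1)^k e^{-i\langle \sum_j w_j,x\rangle_E}$ times $D(w_1w_1^T,\dots,w_kw_kw^T)$. The classical Gram identity
\begin{align*}
    D(v_1v_1^T,\dots,v_kv_k^T)=\tfrac{1}{k!}\det(v_1,\dots,v_k)^2=\tfrac{1}{k!}\det(\langle v_i,v_j\rangle)_{i,j=1}^k,
\end{align*}
obtained by expanding $\det(V\Lambda V^T)=\det(V)^2\prod\lambda_j$ and reading off the $\lambda_1\cdots\lambda_k$ coefficient, produces the stated factor, and integrating against $\phi_E$ gives the Fourier transform $\mathcal{F}_E(\phi_E)\bigl[\sum_j w_j\bigr]$.

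The main obstacle is not any single calculation but the consistent use of \eqref{equation:formulaGWElementaryTensor} to evaluate $\GW(\mu)$ and $\GW(\nu)$ on tensor products of the non-convex exponentials: in each stage one must split the exponentials into real and imaginary parts, produce matching reference convex functions, and then verify that the $2^k$ symmetrized terms reassemble into the clean identities above. Once this bookkeeping is set up correctly, the reduction in stage one and the mixed-discriminant computation in stage two are essentially mechanical, and the result follows by composing the two.
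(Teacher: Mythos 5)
The paper does not prove this lemma at all---it is imported from \cite{KnoerrUnitarilyinvariantvaluations2021} (Lemma 2.6)---so there is no in-paper argument to compare with; judged on its own, your proof is correct and follows the natural route: compatibility of the Goodey--Weil distribution with the pushforward $\pi_{E*}$ (tested on the exponentials via the multilinear-extension formula \eqref{equation:formulaGWElementaryTensor}), the Colesanti--Ludwig--Mussnig classification in top degree, and the mixed-discriminant/Gram identity, which indeed produces exactly the constant $\frac{(-1)^k}{k!}$. The one point to tighten is in your second stage: the reference functions from Lemma~\ref{lemma:differenceConvexFunctionsRestrSupport} are only piecewise smooth, so to apply the Hessian-density formula for $\bar\nu$ termwise you should instead use smooth convex majorants such as the functions $f_z$ appearing in the proof of Proposition~\ref{proposition:ContinuityFourier}, or simply observe that the compactly supported distribution $g_1\otimes\dots\otimes g_k\mapsto\int_E\phi_E(x)\det(D^2g_1(x),\dots,D^2g_k(x))\,dx$ agrees with $\GW(\pi_{E*}\mu)$ on $f^{\otimes k}$ for smooth convex $f$ and hence coincides with it by the uniqueness in Theorem~\ref{theorem:GWDistributions}; with that adjustment your bookkeeping goes through.
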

The following is a simple calculation.
\begin{corollary}
	\label{corollary:FRestrictionSubspace}
	In the situation in Lemma~\ref{lemma:FourierRestrictionSubspace} we have for $w_1,\dots,w_k\in E\otimes\C$,
	\begin{align*}
		\F(\mu)[w_1,\dots,w_k]=\det(\langle w_i,w_j\rangle)_{i,j=1}^k \mathcal{F}_E(\phi_E)\left[w_k\right].
	\end{align*}
\end{corollary}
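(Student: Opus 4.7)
The plan is a direct computation: substitute the change of coordinates from the definition of $\F(\mu)$ into Lemma~\ref{lemma:FourierRestrictionSubspace} and check that everything collapses to the claimed formula. The combinatorial prefactor $k! k^{2k-2}/(-1)^k$ in the definition of $\F(\mu)$ has been chosen precisely to cancel two things: the factor $(-1)^k/k!$ from Lemma~\ref{lemma:FourierRestrictionSubspace}, and the Jacobian of the coordinate change in the Gram determinant.

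Concretely, I would introduce the auxiliary vectors
\begin{align*}
    v_j := \tfrac{w_j + w_k}{k} \quad (1\le j\le k-1), \qquad v_k := \tfrac{w_k}{k}-\sum_{j=1}^{k-1}\tfrac{w_j}{k},
\end{align*}
so that by definition $\F(\mu)[w] = \tfrac{k!k^{2k-2}}{(-1)^k}\mathcal{F}(\GW(\mu))[v_1,\dots,v_k]$. Since $w_1,\dots,w_k\in E\otimes\C$, the $v_j$ lie in $E\otimes\C$ as well, so Lemma~\ref{lemma:FourierRestrictionSubspace} applies and gives
\begin{align*}
    \F(\mu)[w] = k^{2k-2}\,\det(\langle v_i,v_j\rangle)_{i,j=1}^k\;\mathcal{F}_E(\phi_E)\!\left[\sum_{j=1}^k v_j\right].
\end{align*}

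The first easy step is to observe that $\sum_{j=1}^k v_j = \sum_{j=1}^{k-1}\tfrac{w_j+w_k}{k} + \tfrac{w_k}{k}-\sum_{j=1}^{k-1}\tfrac{w_j}{k} = w_k$, which identifies the Fourier argument. The remaining, slightly less transparent step is the Gram determinant computation. Writing $V = WM$, where $W,V$ have columns $w_j,v_j$ and $M\in\Mat_{k,k}(\C)$ is the explicit transition matrix with $M_{ij} = \tfrac{1}{k}\delta_{ij} + \tfrac{1}{k}\delta_{ik}$ for $j<k$ and $M_{ik} = \tfrac{1}{k}(1 - 2\mathbbm{1}_{i<k})\cdot$(signs as above)... more cleanly: $M$ is $\tfrac{1}{k}$ times the matrix with the identity in the top-left $(k-1)\times(k-1)$ block, $-1$'s filling out the last column, and $1$'s filling out the last row, with $1$ in the bottom-right corner. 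A short row expansion (or column operations) gives $\det M = \tfrac{1}{k^{k-1}}$, so $\det(\langle v_i,v_j\rangle) = \det(M)^2\det(\langle w_i,w_j\rangle) = k^{-(2k-2)}\det(\langle w_i,w_j\rangle)$.

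Plugging this back cancels the factor $k^{2k-2}$ and yields exactly $\F(\mu)[w] = \det(\langle w_i,w_j\rangle)_{i,j=1}^k\,\mathcal{F}_E(\phi_E)[w_k]$. The only potential pitfall is bookkeeping of signs and of the factor $1/k^{k-1}$ in $\det M$; this is the one computation worth doing carefully, for instance by adding rows $1,\dots,k-1$ to row $k$ and then expanding along row $k$, which makes the value of $\det M$ transparent.
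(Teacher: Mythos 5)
Your proposal is correct and is exactly the calculation the paper leaves implicit (the paper only remarks that the corollary is ``a simple calculation''): apply Lemma~\ref{lemma:FourierRestrictionSubspace} to the transformed vectors, note $\sum_j v_j=w_k$, and use $\det(\langle v_i,v_j\rangle)=(\det M)^2\det(\langle w_i,w_j\rangle)$ with $\det M=k^{-(k-1)}$ to cancel the prefactor $k^{2k-2}$. One trivial slip: to make $\det M$ transparent you should \emph{subtract} rows $1,\dots,k-1$ from row $k$ (yielding $(0,\dots,0,k)$), not add them, but the determinant value you use is correct.
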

Corollary~\ref{corollary:estimateFourier_noDelta} implies the following estimate.
\begin{corollary}
	\label{corollary:EstimateFMu}
	For every compact and convex subset $A\subset\R^n$ there exists a constant $C(A,k)$ such that for all $\mu\in\VConv_k(\R^n)$ with $\supp\mu\subset A$,
	\begin{align*}
		|\F(\mu)[w_1,\dots,w_k]|\le& C(A,k)\|\mu\|_{A,1}\left(\prod_{j=1}^{k-1}\left(1+|w_j|\right)\right)^6\\
		 &(1+|w_k|)^{3k}e^{h_{A}(\Im(w_k))+\frac{1}{k}\sum_{j=1}^{k-1}h_A(\Im w_j)+h_A(-\Im w_j)}.
	\end{align*}
\end{corollary}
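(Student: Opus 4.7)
The plan is to deduce this directly from Corollary~\ref{corollary:estimateFourier_noDelta} by substituting the arguments $v_j = \frac{w_j+w_k}{k}$ for $1\le j\le k-1$ and $v_k = \frac{w_k}{k}-\sum_{j=1}^{k-1}\frac{w_j}{k}$ into the estimate for $\mathcal{F}(\GW(\mu))$. The combinatorial prefactor $\frac{k!k^{2k-2}}{(-1)^k}$ in the definition of $\F(\mu)$ will be absorbed into the constant.

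First I would estimate the polynomial factor. From $|v_j| \le \tfrac{1}{k}(|w_j|+|w_k|)$ for $j<k$ we obtain $1+|v_j|\le (1+|w_j|)(1+|w_k|)$, while $1+|v_k|\le (1+|w_k|)\prod_{j=1}^{k-1}(1+|w_j|)$. Cubing and multiplying,
\begin{align*}
    \prod_{j=1}^k (1+|v_j|)^3 \le (1+|w_k|)^{3k}\prod_{j=1}^{k-1}(1+|w_j|)^6,
\end{align*}
which is precisely the polynomial factor in the claim.

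Next I would handle the exponential. Since $h_A$ is positively homogeneous and subadditive, for $j<k$
\begin{align*}
    h_A\!\left(\Im v_j\right) = \tfrac{1}{k}h_A(\Im w_j+\Im w_k) \le \tfrac{1}{k}h_A(\Im w_j)+\tfrac{1}{k}h_A(\Im w_k),
\end{align*}
and similarly $h_A(\Im v_k) \le \tfrac{1}{k}h_A(\Im w_k)+\tfrac{1}{k}\sum_{j=1}^{k-1}h_A(-\Im w_j)$. Summing these contributions, the coefficients of $h_A(\Im w_k)$ telescope to $\frac{k-1}{k}+\frac{1}{k}=1$, giving
\begin{align*}
    \sum_{j=1}^k h_A(\Im v_j) \le h_A(\Im w_k) + \tfrac{1}{k}\sum_{j=1}^{k-1}\bigl(h_A(\Im w_j)+h_A(-\Im w_j)\bigr),
\end{align*}
which matches the exponent in the claimed bound. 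Plugging both estimates into Corollary~\ref{corollary:estimateFourier_noDelta} and redefining $C(A,k)$ to absorb the factor $k!\,k^{2k-2}$ yields the result. There is no real obstacle here — the statement is essentially a bookkeeping translation of the estimate for $\mathcal{F}(\GW(\mu))$ into the coordinates defining $\F(\mu)$, and the only nontrivial input is the sublinearity of the support function.
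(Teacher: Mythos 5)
Your proposal is correct and follows essentially the same route as the paper: apply the coordinate change to Corollary~\ref{corollary:estimateFourier_noDelta}, bound the product $\prod_j(1+|v_j|)$ by $\prod_{j=1}^{k-1}(1+|w_j|)^2(1+|w_k|)^k$ before cubing, and use positive homogeneity and subadditivity of $h_A$ for the exponent, absorbing the factor $k!\,k^{2k-2}$ into the constant. No gaps.
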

\begin{proof}
	Applying the coordinate change to the estimate in Corollary~\ref{corollary:estimateFourier_noDelta}, we can estimate the product involving the norms of $w_1,\dots,w_k$ and the exponential function separately. For the first term, we have
	\begin{align*}
		\prod_{j=1}^{k-1}\left(1+\frac{|w_j+w_k|}{k}\right)\left(1+\frac{|w_k-\sum_{j=1}^{k-1}w_j|}{k}\right)\le \prod_{j=1}^{k-1}\left(1+|w_j|\right)^2 (1+|w_k|)^k.
	\end{align*}
	We can estimate the argument of the resulting exponential function using the subadditivity of the support function $h_A$:
	\begin{align*}
		&\sum_{j=1}^{k-1}h_A\left(\frac{\Im(w_j)+\Im(w_k)}{k}\right)+h_A\left(\frac{\Im(w_k)}{k}-\sum_{j=1}^{k-1}\frac{\Im(w_j)}{k}\right)\\
		\le&\frac{1}{k}\sum_{j=1}^{k-1}\left(h_A(\Im(w_j))+h_A(\Im(w_k))\right)+\frac{1}{k}h_A(\Im(w_k))+\frac{1}{k}\sum_{j=1}^{k-1}h_A(-\Im(w_j))\\
		=&h_{A}(\Im(w_k))+\frac{1}{k}\sum_{j=1}^{k-1}h_A(\Im w_j)+h_A(-\Im w_j).
	\end{align*}
	Since the exponential function is monotone, this implies the desired result.
\end{proof}

\subsection{Relation to translation equivariant Monge--Amp\`ere operators}
For $\Psi\in \MAVal_k(\R^n)$ and $\phi\in C_c(\R^n)$, we denote by $\Psi[\phi]\in\VConv_k(\R^n)$ the valuation defined by
\begin{align*}
	\Psi[\phi](f)=\int_{\R^n}\phi(x)d\Psi(f;x)\quad\text{for}~f\in\Conv(\R^n,\R).
\end{align*}
Note that this is well defined due to Theorem~\ref{theorem:measuresDifferentialCycle}. In this section we will establish some properties of these valuations which are used in the next section. Starting point is the following observation.
\begin{theorem}[\cite{KnoerrMongeAmpereoperators2024} Theorem 6.16 and Corollary 6.17]
	\label{theorem:FourierMA}
	For every $\Psi\in \MAVal_k(\R^n)$ there exists a unique $Q(\Psi)\in \M_k^2$ such that for all $\phi\in C_c(\R^n)$ we have
	\begin{align*}
		\mathcal{F}(\GW(\Psi[\phi]))[w_1,\dots,w_k]=\frac{(-1)^k}{k!}Q(\Psi)[w_1,\dots,w_k] \mathcal{F}(\phi)\left[\sum_{j=1}^k w_j\right].
	\end{align*}
\end{theorem}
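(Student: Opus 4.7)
The plan is to establish uniqueness first and then existence via the classification in Theorem~\ref{theorem:ClassificationMAVal}. For uniqueness: if $Q_1, Q_2 \in \M_k^2$ both satisfy the identity, then $(Q_1 - Q_2)[w]\cdot\mathcal{F}(\phi)\big(\sum_j w_j\big) = 0$ for every $\phi\in C_c(\R^n)$ and every $w\in\Mat_{n,k}(\C)$. Choosing $\phi$ with $\mathcal{F}(\phi)(0)\ne 0$ makes $\mathcal{F}(\phi)\big(\sum_j w_j\big)$ nonzero on the Euclidean-open preimage under the surjective map $w\mapsto\sum_j w_j$ of a small neighborhood of $0\in\C^n$; so the polynomial $Q_1 - Q_2$ vanishes on a nonempty open subset of $\Mat_{n,k}(\C)$ and is therefore identically zero.

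For existence, linearity in $\Psi$ reduces via Theorem~\ref{theorem:ClassificationMAVal} to the case $\Psi = \MA(\cdot[k], Q_1, \dots, Q_{n-k})$ with positive definite $Q_j$. The polarization of $\mu := \Psi[\phi]$ reads
\begin{align*}
\bar\mu(f_1, \dots, f_k) = \int_{\R^n}\phi(x)\det\big(D^2 f_1(x), \dots, D^2 f_k(x), Q_1, \dots, Q_{n-k}\big)\,dx
\end{align*}
on smooth convex $f_j$, where $\det$ denotes the mixed discriminant of symmetric matrices; this follows from the definition of $\MA$ as the polarization of the Monge--Amp\`ere operator and the multilinearity of $\det$. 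Expanding $\det$ multilinearly in the $D^2 f_j$ rewrites the integral as a finite linear combination of integrals $\int\phi(x)\prod_{j=1}^{k}\partial_{l_j}\partial_{m_j}f_j(x)\,dx$ with scalar coefficients $C^{(l_1 m_1\cdots l_k m_k)}$ depending only on the $Q_i$, and hence $\bar\mu(f_1,\dots, f_k) = T[f_1\otimes\cdots\otimes f_k]$, where
\begin{align*}
T[\psi] := \sum_{l_j,m_j} C^{(l_1 m_1\cdots l_k m_k)}\int\phi(x)\big[\partial_{x_{1,l_1}}\partial_{x_{1,m_1}}\cdots\partial_{x_{k,l_k}}\partial_{x_{k,m_k}}\psi\big](x,\dots, x)\,dx
\end{align*}
is a compactly supported distribution on $(\R^n)^k$ whose support lies in the diagonal. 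By the uniqueness statement in Theorem~\ref{theorem:GWDistributions}, $T = \GW(\mu)$. Evaluating on the tensor product of exponentials $\psi(x_1,\dots, x_k) = \prod_j e^{-i\langle w_j, x_j\rangle}$ (using $\partial_{x_{j,l}}\partial_{x_{j,m}}\psi = -w_{j,l}w_{j,m}\psi$ and $\psi(x,\dots, x) = e^{-i\langle\sum_j w_j, x\rangle}$) and reassembling via multilinearity of $\det$ yields
\begin{align*}
\mathcal{F}(\GW(\mu))[w] = (-1)^k\det\big(w_1 w_1^T, \dots, w_k w_k^T, Q_1, \dots, Q_{n-k}\big)\cdot\mathcal{F}(\phi)\Big(\sum_j w_j\Big),
\end{align*}
so I would set $Q(\Psi)[w] := k!\det\big(w_1 w_1^T, \dots, w_k w_k^T, Q_1, \dots, Q_{n-k}\big)$.

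It remains to verify $Q(\Psi)\in\M_k^2$. The polynomial is homogeneous of multidegree $(2,\dots,2)$, and I would apply Proposition~\ref{proposition:CharacterizationModuleSquareMinors}. In the $n!$-expansion of $\det(v_1 v_1^T, \dots, v_k v_k^T, Q_1, \dots, Q_{n-k})$, the first $k$ columns of every contributing ordinary determinant are scalar multiples of $v_1, \dots, v_k$, so every term, and hence $Q(\Psi)$ itself, vanishes whenever $v_1, \dots, v_k$ are linearly dependent. Restricted to $E^k$ for a $k$-dimensional subspace $E\subset\C^n$, $Q(\Psi)|_{E^k}$ thus vanishes on the irreducible hypersurface $\{\det_E = 0\}$, and a Nullstellensatz argument combined with the homogeneity constraint $(2,\dots,2)$ forces
\begin{align*}
Q(\Psi)[v_1, \dots, v_k] = \gamma_E\cdot\det\nolimits_E(v_1, \dots, v_k)^2\qquad\text{for all }v_1,\dots,v_k\in E,
\end{align*}
with some scalar $\gamma_E$. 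For any $\Delta\in\M_k$ with $\Delta|_{E^k}\ne 0$, $\Delta|_{E^k}$ is a nonzero scalar multiple of $\det_E$, so $Q(\Psi)|_{E^k}$ is a scalar multiple of $\Delta^2|_{E^k}$, verifying the hypothesis of Proposition~\ref{proposition:CharacterizationModuleSquareMinors}. The proposition places $Q(\Psi)$ in the $\P(\C^n)$-submodule generated by $\M_k^2$, and since any nonconstant $\P(\C^n)$-multiplier would raise the multidegree in the last column above $2$, I conclude $Q(\Psi)\in\M_k^2$.

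The main technical hurdle is the identification $T = \GW(\mu)$: one must verify that the differential operator assembled from the multilinear expansion of the mixed discriminant defines a bona fide distribution on $(\R^n)^k$ supported in the diagonal, and that its action on tensor products of smooth convex functions genuinely recovers $\bar\mu$. Once this is in place, the Fourier--Laplace computation on exponentials and the algebraic verification $Q(\Psi)\in\M_k^2$ follow routinely from Proposition~\ref{proposition:CharacterizationModuleSquareMinors} together with the vanishing of mixed discriminants on tuples of linearly dependent rank-one matrices.
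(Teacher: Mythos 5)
Two preliminary remarks. First, the paper does not prove this statement at all: it is imported from \cite{KnoerrMongeAmpereoperators2024} (Theorem 6.16 and Corollary 6.17), so your argument is a from-scratch reconstruction rather than something to compare with an internal proof. Second, most of your reconstruction is sound: the uniqueness argument works; the reduction via Theorem~\ref{theorem:ClassificationMAVal}(2) to $\Psi=\MA(\cdot[k],Q_1,\dots,Q_{n-k})$ is legitimate (uniqueness makes $Q(\Psi)$ well defined on linear combinations); the distribution $T$ you assemble is compactly supported, lives on the diagonal, and agrees with $\bar\mu$ on tensor products of smooth convex functions, so the uniqueness part of Theorem~\ref{theorem:GWDistributions} identifies $T=\GW(\Psi[\phi])$; and evaluating on exponentials correctly gives $\mathcal{F}(\GW(\Psi[\phi]))[w]=c\,(-1)^k\det\bigl(w_1w_1^T,\dots,w_kw_k^T,Q_1,\dots,Q_{n-k}\bigr)\mathcal{F}(\phi)\bigl[\sum_j w_j\bigr]$, with the normalization constant of the mixed discriminant harmlessly absorbed into $Q(\Psi)$.

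The genuine gap is the step where you claim that, on a $k$-dimensional subspace $E$, vanishing of $Q(\Psi)|_{E^k}$ on $\{\det_E=0\}$ together with multihomogeneity of degree $(2,\dots,2)$ forces $Q(\Psi)|_{E^k}=\gamma_E\det_E^2$. Irreducibility of $\det_E$ yields only one factor of $\det_E$; the quotient is multilinear, but multidegree alone does not make it proportional to $\det_E$. For instance $\det_E(v_1,\dots,v_k)\cdot v_{1,1}v_{2,1}\cdots v_{k,1}$ is multihomogeneous of degree $(2,\dots,2)$ and vanishes on $\{\det_E=0\}$ without being a multiple of $\det_E^2$, so "Nullstellensatz plus homogeneity" is not enough. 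You need one more input, and it is cheap: the mixed discriminant is symmetric in its arguments, hence $Q(\Psi)$ is a symmetric polynomial in $(v_1,\dots,v_k)$; writing $Q(\Psi)|_{E^k}=\det_E\cdot R$ with $R$ multilinear, symmetry of the left-hand side against skew-symmetry of $\det_E$ forces $R$ to be alternating, hence a scalar multiple of $\det_E$ --- exactly the argument the paper itself runs for the Gram determinant in Lemma~\ref{lemma:RestrictionLowestOrderTermsSubspaces}. Alternatively, a Cauchy--Binet type expansion of the mixed discriminant in the rank-one arguments $w_jw_j^T$ exhibits $\det\bigl(w_1w_1^T,\dots,w_kw_k^T,Q_1,\dots,Q_{n-k}\bigr)$ directly as a quadratic form in the $k$-minors of $(w_1,\dots,w_k)$, giving $Q(\Psi)\in\M^2_k$ without invoking Proposition~\ref{proposition:CharacterizationModuleSquareMinors} at all. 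With either repair your proof goes through.
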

Note that this implies for $\Psi\in \MAVal_k(\R^n)$, $\phi\in C_c(\R^n)$
\begin{align}
	\label{equation:FforMAValuations}
	\F(\Psi[\phi])[w]=Q(\Psi)[w] \mathcal{F}(\phi)\left[w_k\right],
\end{align}
where we used that $Q(\Psi)$ is a sum of quadratic products of $k$-minors, i.e. of two skew symmetric multilinear functionals. We will also need the following result.
\begin{lemma}[\cite{KnoerrMongeAmpereoperators2024} Corollary 6.19]
	\label{lemma:QBijectiveMA}
	The map
	\begin{align*}
		\MAVal_k(\R^n)&\rightarrow \M^2_k\\
		\Psi&\mapsto Q(\Psi)
	\end{align*}
	is bijective.
\end{lemma}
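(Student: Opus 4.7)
My plan is to prove injectivity and surjectivity separately. Both spaces are finite dimensional, but I will prefer a constructive argument for surjectivity rather than a dimension count, since the equality $\dim \M^2_k = N_{n,k}$ is itself part of what is being established here.

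\textbf{Injectivity.} Assume $Q(\Psi) = 0$. Then Theorem~\ref{theorem:FourierMA} gives $\mathcal{F}(\GW(\Psi[\phi])) \equiv 0$ for every $\phi \in C_c(\R^n)$. Since $\GW(\Psi[\phi])$ is a compactly supported distribution and the Fourier--Laplace transform is injective on such distributions, $\GW(\Psi[\phi]) = 0$. The characterizing property of the Goodey--Weil distribution (Theorem~\ref{theorem:GWDistributions}), combined with density of smooth convex functions in $\Conv(\R^n,\R)$, implies $\Psi[\phi] = 0$ in $\VConv_k(\R^n)$. Evaluating at an arbitrary $f$ yields $\int_{\R^n} \phi\, d\Psi(f) = 0$ for every $\phi \in C_c(\R^n)$, and since $\Psi(f) \in \M(\R^n)$ is determined by its action on $C_c(\R^n)$, we obtain $\Psi(f) = 0$ for every $f$, hence $\Psi = 0$.

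\textbf{Surjectivity.} By Theorem~\ref{theorem:ClassificationMAVal}, the space $\MAVal_k(\R^n)$ is spanned by operators $\MA(\,\cdot[k], Q_1, \dots, Q_{n-k})$ with positive definite $Q_j$; by continuity of these operators in $Q_j$, it contains the case of positive semi-definite $Q_j$ as well, in particular rank-one forms $Q_j(x) = \langle v_j, x\rangle^2$. For $\Psi_v := \MA(\,\cdot[k], \langle v_1, \cdot\rangle^2, \dots, \langle v_{n-k}, \cdot\rangle^2)$, I compute $Q(\Psi_v)$ explicitly. For smooth $\phi$ and smooth convex $f_1, \dots, f_k$, the polarization gives
\begin{align*}
    \overline{\Psi_v[\phi]}(f_1, \dots, f_k) = c_n \int_{\R^n} \phi(x)\, D\bigl(D^2 f_1, \dots, D^2 f_k, v_1 v_1^T, \dots, v_{n-k} v_{n-k}^T\bigr)\, dx,
\end{align*}
where $D$ is the mixed discriminant. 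Using the formal substitution $f_j(x) = \exp(-i\langle w_j, x\rangle)$, so that $D^2 f_j(x) = -w_j w_j^T e^{-i\langle w_j,x\rangle}$, and applying the classical identity $D(u_1 u_1^T, \dots, u_n u_n^T) = \tfrac{1}{n!}(\det(u_1,\dots,u_n))^2$, I obtain
\begin{align*}
    \mathcal{F}(\GW(\Psi_v[\phi]))[w] = c_n'\bigl(\det(w_1, \dots, w_k, v_1, \dots, v_{n-k})\bigr)^2 \mathcal{F}(\phi)\bigl[\textstyle\sum_{j=1}^k w_j\bigr]
\end{align*}
for an explicit constant. Comparing with Theorem~\ref{theorem:FourierMA} identifies $Q(\Psi_v)$ as a nonzero multiple of $(\det(w_1, \dots, w_k, v_1, \dots, v_{n-k}))^2$.

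\textbf{Spanning $\M^2_k$.} Expanding by Laplace along the first $k$ columns, $\det(w, v) = \sum_I \epsilon_I [I](w) M_{I^c}(v)$, where $M_{I^c}(v)$ is the complementary $(n-k)$-minor. Polarizing $(\det(w,v))^2$ in $v$, i.e.\ replacing $v$ by $v + t v'$ and extracting the cross term, shows that the image of $Q$ contains every product $\det(w,v)\det(w,v')$ with $v, v' \in (\R^n)^{n-k}$. As $v$ varies, the minors $(M_{I^c}(v))_I$ span $\R^{\binom{n}{k}}$ (choose $v$ to consist of appropriate standard basis vectors), so the image of $Q$ contains all products $[I](w)[J](w)$, which is all of $\M^2_k$ by definition. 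The main technical obstacle lies in justifying the formal substitution $f_j = \exp(-i\langle w_j, \cdot\rangle)$ (these are not convex), which requires writing the exponentials as differences of convex functions via Lemma~\ref{lemma:differenceConvexFunctionsRestrSupport} and applying the multilinear extension formula \eqref{equation:formulaGWElementaryTensor}, in the same spirit as the proof of Proposition~\ref{proposition:estimateFourierGW}.
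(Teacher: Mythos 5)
The paper does not prove this lemma at all (it is imported from [KnoerrMongeAmpereoperators2024, Corollary 6.19]), so your attempt is necessarily self-contained; your injectivity argument is correct, and computing $Q(\Psi_v)$ for rank-one forms via the mixed discriminant identity, obtaining a nonzero multiple of $\det(w_1,\dots,w_k,v_1,\dots,v_{n-k})^2$, is a sound idea (for the formal substitution you should use smooth convex functions such as the explicit $f_z$ from the proof of Proposition~\ref{proposition:ContinuityFourier}, since the function produced by Lemma~\ref{lemma:differenceConvexFunctionsRestrSupport} involves a maximum and is not $C^2$, and you need the mixed-discriminant formula for the polarization). The genuine gap is the spanning step. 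For $k\le n-2$ the map $v\mapsto\det(w,v)$ is not linear in the block $v$ but multilinear in its $n-k$ columns, so replacing $v$ by $v+tv'$ and extracting a $t$-coefficient does not produce the individual product $\det(w,v)\det(w,v')$: polarizing the square column by column only ever yields symmetrized sums $\sum_{\epsilon\in\{0,1\}^m}\det(w,x^{\epsilon},c)\det(w,x^{\bar\epsilon},c)$, in which every term still has its two factors differing in all of the polarized columns, so no induction on the number of differing columns closes. Consequently the assertion that the image contains every product $[I](w)[J](w)$ is unjustified as written; and one cannot argue coefficientwise either, because the products of $k$-minors are not linearly independent (e.g.\ for $n=4$, $k=2$ the Pl\"ucker relation $[12][34]-[13][24]+[14][23]=0$ holds identically), so isolating a single product from symmetrized combinations is exactly the nontrivial content of surjectivity.

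The spanning claim is in fact true, and it can be repaired with tools already in the paper: under the action $(g\cdot P)[w]=P(g^Tw_1,\dots,g^Tw_k)$ one has $\det(g^Tw_1,\dots,g^Tw_k,v)=\det(g)\det\bigl(w_1,\dots,w_k,(g^T)^{-1}v\bigr)$, so the span of the squares $\det(\cdot,v)^2$ is a nonzero $\GL(n,\C)$-invariant subspace of $\M^2_k$; by Corollary~\ref{corollary:lowestWeightvectorsquaredMinors} (with $d=0$, since the action preserves the multidegree $(2,\dots,2)$) the space $\M^2_k$ contains a unique highest weight vector $\Delta_k^2$ and is therefore irreducible, whence the span is all of $\M^2_k$. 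Alternatively one can invoke the straightening law for products of minors. With that replacement, plus the (easy, but unstated) remark that $Q$ is linear so that exhibiting a spanning set in the image really gives surjectivity, your proof goes through.
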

We will need the following result to obtain suitable valuations with a given power series expansion for the Fourier--Laplace transform of their Goodey--Weil distributions.
\begin{proposition}
	\label{proposition:prescribedFourier}
	Let $P\in\P(\C^n)$ be a polynomial of degree at most $N$. There exists $\phi\in C^\infty_c(\R^n)$ such that the power series expansion of $\mathcal{F}(\phi)$ coincides with $P$ up to order $N$.
\end{proposition}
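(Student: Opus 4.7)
The plan is to reduce the statement to a standard moment problem. Writing the Fourier transform as
\begin{align*}
	\mathcal{F}(\phi)(w)=\int_{\R^n}\phi(x)e^{-i\langle w,x\rangle}dx,
\end{align*}
and formally expanding the exponential, one obtains the power series expansion
\begin{align*}
	\mathcal{F}(\phi)(w)=\sum_{\alpha\in\mathbb{N}^n}\frac{(-i)^{|\alpha|}}{\alpha!}\left(\int_{\R^n}x^\alpha\phi(x)dx\right)w^\alpha,
\end{align*}
valid for any $\phi\in C^\infty_c(\R^n)$ since $\mathcal{F}(\phi)$ is entire by the classical Paley--Wiener--Schwartz theorem. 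Thus the coefficient of $w^\alpha$ in the power series of $\mathcal{F}(\phi)$ is $\frac{(-i)^{|\alpha|}}{\alpha!}$ times the $\alpha$-th moment of $\phi$. Writing $P(w)=\sum_{|\alpha|\le N}c_\alpha w^\alpha$, finding the desired $\phi$ is therefore equivalent to finding $\phi\in C^\infty_c(\R^n)$ with prescribed moments
\begin{align*}
	\int_{\R^n}x^\alpha\phi(x)dx=\frac{\alpha!}{(-i)^{|\alpha|}}c_\alpha\quad\text{for all }|\alpha|\le N.
\end{align*}

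The next step is to show that the linear map
\begin{align*}
	M:C^\infty_c(\R^n)&\rightarrow \P_{\le N}(\R^n)^*,\\
	\phi&\mapsto \left(p\mapsto \int_{\R^n}p(x)\phi(x)dx\right),
\end{align*}
is surjective, where $\P_{\le N}(\R^n)$ denotes the (finite-dimensional) space of real polynomials of degree at most $N$. Fix any nonempty bounded open set $U\subset\R^n$ and consider the restriction $M_U:C^\infty_c(U)\rightarrow \P_{\le N}(\R^n)^*$. Since the target is finite-dimensional, surjectivity of $M_U$ is equivalent, by duality, to the statement that the transpose $M_U^T:\P_{\le N}(\R^n)\rightarrow C^\infty_c(U)^*$ is injective: if $p\in \P_{\le N}(\R^n)$ satisfies $\int_U p\phi=0$ for every $\phi\in C^\infty_c(U)$, then the fundamental lemma of the calculus of variations gives $p\equiv0$ on $U$, and hence $p\equiv0$ on $\R^n$ since a polynomial vanishing on an open set vanishes identically. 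Thus $M_U$, and in particular $M$, is surjective.

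Applying surjectivity to the prescribed moments above yields a function $\phi\in C^\infty_c(\R^n)$ whose $\alpha$-th moment has the required value for every $|\alpha|\le N$. By the power series expansion computed in the first step, this means $\mathcal{F}(\phi)$ and $P$ agree in every term of order at most $N$, which is the claim.
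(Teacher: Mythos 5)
Your proof is correct and follows essentially the same route as the paper: identify the power series coefficients of $\mathcal{F}(\phi)$ with the moments of $\phi$ and then prove surjectivity of the moment map onto the dual of polynomials of degree at most $N$ by a finite-dimensional duality argument. The only (inessential) difference is that you verify injectivity of the transpose via the fundamental lemma of the calculus of variations, while the paper exhibits the explicit test function $\phi=\overline{Q}\,\psi(|x|)$ for a given polynomial $Q$.
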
	
\begin{proof}
	The coefficient in the power series expansion of $\mathcal{F}(\phi)[z]$ in front of $z^\alpha$ for $\alpha\in \mathbb{N}^n$ (with respect to the standard monomial basis) is given by 
	\begin{align*}
		\frac{1}{\alpha!}\frac{\partial^\alpha}{\partial z^\alpha}\Big|_0\mathcal{F}(\phi)[z]=\mathcal{F}((-i)^{|\alpha|}x^\alpha\phi)[0]=\frac{(-i)^{|\alpha|}}{\alpha!}\int_{\R^n}x^\alpha \phi(x)dx.
	\end{align*}
	Let $\P(\C^n)_{N}$ denote the space of polynomials of degree at most $N$. The linear map $C_c(\R^n)\rightarrow \left(\P(\C^n)_{N}\right)'$ given by
	\begin{align}
		\label{eq:propositionPrescribedFourierTransformFinite}	
		\phi&\mapsto \left[Q\mapsto \int_{\R^n}Q(x) \phi(x)dx\right]
	\end{align}
	is surjective, since for every $Q\in \P(\C^n)_{N}$ we may choose $\phi(x)=\overline{Q(x)}\psi(|x|)$ for some positive $\psi\in C_c([0,\infty))$ to obtain a linear functional that does not vanish on $Q$. Since the monomials $z^\alpha$, $\alpha\in\mathbb{N}^n$ with $|\alpha|\le N$, form a basis of $\P(\C^n)_{N}$, we may for $P(z)=\sum_{|\alpha|\le N}c_\alpha z^\alpha$ define a linear functional on $\P(\C^n)_{N}$ by
	\begin{align*}
		T(z^\alpha)=\frac{\alpha!}{(-i)^{|\alpha|}}c_\alpha \quad\text{for all}~ \alpha\in\mathbb{N}, ~ |\alpha|\le N.
	\end{align*}
	Any function $\phi$ that is mapped to $T$ under \eqref{eq:propositionPrescribedFourierTransformFinite} has the desired property.
\end{proof}

\begin{lemma}
	\label{lemma:PrescribedFourierTransformFinite}
	Let $P\in \P(\C^n)$ be a polynomial of degree at most $N\in\mathbb{N}$, $Q\in \M^2_k$, and let $\Psi_{Q}\in \MAVal_k(\R^n)$ be the unique element with $Q(\Psi_Q)=Q$. There exists $\phi\in C_c(\R^n)$ such that the power series expansion of $\F(\Psi_Q[\phi])$ coincides with $w\mapsto Q(w)P(w_k)$ up to order $2k+N$.
\end{lemma}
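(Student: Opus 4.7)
The plan is to reduce this statement directly to Proposition~\ref{proposition:prescribedFourier} via the factorization in equation~\eqref{equation:FforMAValuations}. Specifically, I would first apply Proposition~\ref{proposition:prescribedFourier} to obtain a function $\phi \in C^\infty_c(\R^n)$ whose Fourier transform $\mathcal{F}(\phi)$ agrees with the polynomial $P$ up to order $N$, meaning every monomial of total degree at most $N$ in the power series expansion of $\mathcal{F}(\phi) - P$ vanishes.

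Next, I would invoke equation~\eqref{equation:FforMAValuations} together with the defining property of $\Psi_Q$ (so that $Q(\Psi_Q) = Q$) to write
\begin{align*}
    \F(\Psi_Q[\phi])[w] - Q(w) P(w_k) = Q(w)\bigl(\mathcal{F}(\phi)[w_k] - P(w_k)\bigr).
\end{align*}
The key observation is that $Q \in \M^2_k$ is a sum of quadratic products of $k$-minors, hence homogeneous of degree $2$ in each column $w_j$, and in particular homogeneous of total degree $2k$. Any monomial appearing in $\mathcal{F}(\phi)[w_k] - P(w_k)$ has degree strictly greater than $N$ in $w_k$, so multiplying by $Q(w)$ produces only monomials of total degree strictly greater than $2k + N$. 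This gives exactly the required agreement of the power series of $\F(\Psi_Q[\phi])$ with $w \mapsto Q(w)P(w_k)$ up to order $2k + N$.

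There is no genuine obstacle here; the only subtlety is book-keeping about homogeneity, namely making sure that "order" refers to total degree on $\Mat_{n,k}(\C)$ and that the degree-$2k$ factor $Q(w)$ shifts the threshold from $N$ to $2k + N$ in the correct way. Once this is noted, the lemma follows in one line from Proposition~\ref{proposition:prescribedFourier} and~\eqref{equation:FforMAValuations}.
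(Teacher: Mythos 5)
Your proof is correct and follows essentially the same route as the paper: apply Proposition~\ref{proposition:prescribedFourier} to prescribe $\mathcal{F}(\phi)$ up to order $N$, then use \eqref{equation:FforMAValuations} and the homogeneity of $Q\in\M^2_k$ (degree $2k$) to shift the agreement to order $2k+N$. The paper's proof is just a terser version of the same argument, leaving the degree bookkeeping implicit.
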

\begin{proof}
	For $\phi\in C_c(\R^n)$ and $\Psi\in\MAVal_k(\R^n)$, \eqref{equation:FforMAValuations} implies
	\begin{align*}
		\F(\Psi[\phi])[w]= Q(w)\mathcal{F}(\phi)[w_k].
	\end{align*}
	It is thus sufficient to find $\phi\in C_c(\R^n)$ such that the power series expansion of $\mathcal{F}(\phi)$ is given by $P$ up to order $N$, which exists due to Proposition~\ref{proposition:prescribedFourier}.
\end{proof}

\subsection{Decomposition of the Fourier--Laplace transform of Goodey--Weil distributions}
	\label{section:DecompositionFourierGW}
	
	Let $\M\subset \P(\Mat_{n,k}(\C))$ denote the subspace spanned by the lowest order terms of the power series expansion of $\F(\mu)$ for $\mu\in \VConv_k(\R^n)$. The goal of this section is to show that $\M$ coincides with the $\P(\C^n)$-module generated by $\M^2_k$ (where we use the module structure considered in Section \ref{section:Prelim_Minors}) and to show that every homogeneous term in the power series expansion of $\F(\mu)$ belongs to $\M$ in order to apply the results in Section \ref{section:DivisionAlg}. We start with the following observation.
	\begin{lemma}
		\label{lemma:SubmoduleGeneratedBySmoothValuations}
		$\M$ is spanned by the lowest order terms of $\F(\mu)$ for smooth valuations $\mu\in \VConv_k(\R^n)$.
	\end{lemma}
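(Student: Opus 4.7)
The plan: I would prove the nontrivial inclusion by showing that the mollifications from Lemma \ref{lemma:MollifiedValuations} are smooth valuations whose Fourier--Laplace transforms share their lowest order term with the original valuation. The opposite containment is tautological, since smooth valuations are a subset of $\VConv_k(\R^n)$, so their lowest order terms automatically lie in $\M$.

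The main computation I would carry out is the following. Given $\mu\in\VConv_k(\R^n)$ and $\phi\in C^\infty_c(\R^n)$ with $\int\phi=1$, the polarization of
$\mu_\phi(f)=\int_{\R^n}\phi(x)\,\mu(f(\cdot-x))\,dx$
on the elementary tensor of complex exponentials $f_l(y)=e^{-i\langle w_l,y\rangle}$ can be computed directly: since $f_l(y-x)=e^{i\langle w_l,x\rangle}f_l(y)$, multilinearity of $\bar\mu$ yields the combined factor $e^{i\langle \sum_l w_l,x\rangle}$. Integrating against $\phi$ and using that $\GW(\mu)$ has compact support (so its Fourier--Laplace transform is well defined and agrees with the polarization on exponentials), I obtain
$\mathcal{F}(\GW(\mu_\phi))[w_1,\dots,w_k]=\hat\phi\!\Bigl(-\sum\nolimits_{l=1}^{k}w_l\Bigr)\,\mathcal{F}(\GW(\mu))[w_1,\dots,w_k].$
Under the linear coordinate change from Section \ref{section:Prelim_Minors} defining $\F$, the diagonal sum $\sum_l w_l$ is precisely the new $w_k$ variable, so the identity above simplifies to $\F(\mu_\phi)[w]=\hat\phi(-w_k)\,\F(\mu)[w]$.

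Since $\hat\phi(0)=\int\phi=1$, the factor $\hat\phi(-w_k)$ is holomorphic with constant term $1$ in its power series expansion at the origin. Multiplication by such a unit in $\mathcal{O}_{\Mat_{n,k}(\C)}$ leaves the lowest order homogeneous term of the power series invariant. Hence $\F(\mu_\phi)$ and $\F(\mu)$ have identical lowest order terms, and since $\mu_\phi$ is a smooth valuation by Lemma \ref{lemma:MollifiedValuations}, the lowest order term of $\F(\mu)$ is realized by a smooth valuation. I do not expect any serious obstacle here; the only point requiring care is the extension of the defining property of $\GW(\mu)$ from smooth convex test functions to complex exponentials, which is standard once one recalls that $\GW(\mu)$ is a compactly supported distribution. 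Everything else reduces to organizing the translation identity cleanly in the coordinates adapted to the diagonal.
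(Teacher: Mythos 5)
Your proposal is correct and follows essentially the same route as the paper: mollify $\mu$ by $\phi$, use Lemma \ref{lemma:MollifiedValuations} for smoothness, and observe that $\F(\mu_\phi)[w]$ equals $\F(\mu)[w]$ times the Fourier transform of $\phi$ evaluated at $w_k$, a holomorphic factor with value $1$ at the origin, so the lowest order terms coincide (the paper's sign convention gives $\mathcal{F}(\phi_\epsilon)[w_k]$ rather than $\hat\phi(-w_k)$, which is immaterial). The only point you flag, evaluating $\GW(\mu_\phi)$ on exponentials via the translation identity, is handled the same way in the paper's computation.
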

	\begin{proof}
		Let $\phi\in C^\infty_c(\R^n)$ be a function with $\int_{\R^n}\phi(x)dx=1$ and set $\phi_\epsilon(x)=\epsilon^{-n}\phi(\frac{x}{\epsilon})$ for $\epsilon>0$, $x\in\R^n$. For $\mu\in \VConv_k(\R^n)$ set
		\begin{align*}
			\mu_\epsilon(f):=\int_{\R^n}\phi_\epsilon(x)\mu(f(\cdot+x))dx.
		\end{align*}
		Then $\mu_\epsilon$ is a smooth valuation by Lemma~\ref{lemma:MollifiedValuations}. From the definition of the Fourier--Laplace transform of $\GW(\mu)$ we obtain
		\begin{align*}
			&\mathcal{F}(\GW(\mu_\epsilon))[w]\\
			=&\int_{\R^n}\phi_\epsilon(x)\GW(\mu)[\exp(-i\langle w_1,\cdot+x\rangle)\otimes\dots\otimes \exp(-i\langle w_1,\cdot+x\rangle)]dx\\
			=&\int_{\R^n}\phi_\epsilon(x)\exp\left(-i\sum_{j=1}^k\langle w_j,x\rangle\right)dx\cdot\mathcal{F}(\GW(\mu))[w],
		\end{align*} so from the definition of $\F$, we obtain
		\begin{align}
			\label{equation:Fourier_convolutionSmoothFunction}
			\F(\mu_\epsilon)[w_1,\dots,w_k]=&\mathcal{F}(\phi_\epsilon)\left[w_k\right]\cdot \F(\mu)[w_1,\dots,w_k].
		\end{align}
		As $\mathcal{F}(\phi_\epsilon)[0]=\int_{\R^n}\phi(x)dx=1$, we see that the lowest degree terms of the power series expansion of $\F(\mu_\epsilon)$ and $\F(\mu)$ coincide, which shows the claim.
	\end{proof}
	\begin{corollary}
		\label{corollary:LowestOrderTermsModuleOverPolynomials}
		$\M$ is a module over $\P(\C^n)$. In particular, $\M$ is generated by homogeneous polynomials.
	\end{corollary}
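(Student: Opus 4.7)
The plan is to exploit the convolution identity \eqref{equation:Fourier_convolutionSmoothFunction} from the proof of Lemma~\ref{lemma:SubmoduleGeneratedBySmoothValuations} in order to multiply the lowest order term of $\F(\mu)$ by an arbitrary polynomial in $w_k$. Concretely, let $\mu \in \VConv_k(\R^n)$ have lowest order term $P \in \P(\Mat_{n,k}(\C))$ of total degree $d$, and let $q \in \P(\C^n)$ be arbitrary. I will show that $q(w_k)P(w) \in \M$; since $\M$ is by definition the linear span of all such lowest order terms, this suffices to establish that $\M$ is a $\P(\C^n)$-submodule.

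To carry this out, decompose $q = \sum_{e=0}^{N} q_e$ into its homogeneous components. For each $e$ with $q_e \neq 0$, Proposition~\ref{proposition:prescribedFourier} furnishes a function $\phi_e \in C^\infty_c(\R^n)$ whose Fourier transform admits the power series expansion $\mathcal{F}(\phi_e)[z] = q_e(z) + R_e(z)$, where $R_e$ vanishes to order at least $e+1$ at the origin. Consider the valuation
\begin{align*}
	\mu_{\phi_e}(f) := \int_{\R^n} \phi_e(x)\, \mu(f(\cdot + x))\, dx,
\end{align*}
which lies in $\VConv_k(\R^n)$ (compare Lemma~\ref{lemma:MollifiedValuations}). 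Repeating verbatim the calculation in the proof of Lemma~\ref{lemma:SubmoduleGeneratedBySmoothValuations} gives
\begin{align*}
	\F(\mu_{\phi_e})[w] = \mathcal{F}(\phi_e)[w_k]\cdot \F(\mu)[w].
\end{align*}
Writing $\F(\mu) = P + H$ with $H$ of vanishing order strictly greater than $d$, the right hand side expands as $q_e(w_k)P(w)$ plus terms of total degree strictly greater than $e + d$. Hence the lowest order term of $\F(\mu_{\phi_e})$ equals $q_e(w_k)P(w)$, so $q_e(w_k)P(w) \in \M$.

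Since $\M$ is a vector subspace, summing over $e$ yields $q(w_k)P(w) = \sum_e q_e(w_k)P(w) \in \M$, establishing the submodule property. The final assertion is then immediate: by construction, the spanning set of $\M$ consists of lowest order terms of power series expansions, which are homogeneous polynomials, so $\M$ is generated as a $\P(\C^n)$-module by homogeneous elements. No serious obstacle arises in this plan; the only routine verifications are the degree bookkeeping for the product of power series and the observation that the derivation of \eqref{equation:Fourier_convolutionSmoothFunction} applies without change to any $\phi \in C^\infty_c(\R^n)$, not merely to the smooth mollifiers used in Lemma~\ref{lemma:SubmoduleGeneratedBySmoothValuations}.
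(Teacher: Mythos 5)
Your proof is correct and follows essentially the same route as the paper: both rest on the convolution identity \eqref{equation:Fourier_convolutionSmoothFunction} for $\mu_\phi$ together with a comparison of lowest order terms in the product $\mathcal{F}(\phi)[w_k]\cdot\F(\mu)[w]$. The only difference is cosmetic: the paper multiplies by one variable $w_{j,k}$ at a time by replacing the mollifier $\phi_\epsilon$ with $-i\partial_j\phi_\epsilon$, whereas you prescribe a general homogeneous multiplier $q_e$ in one step via Proposition~\ref{proposition:prescribedFourier}; both devices produce a compactly supported smooth function whose Fourier transform has the required lowest order term.
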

	\begin{proof}
		Since $\M$ is spanned by the lowest order terms of smooth valuations by Lemma~\ref{lemma:SubmoduleGeneratedBySmoothValuations}, it is sufficient to show that the product of such a term with $w_{j,k}$, $1\le j\le n$, is a lowest order term. Any such term coincides with the lowest order term of $\F(\mu_\epsilon)$ for $\mu_\epsilon$ defined for some $\mu\in\VConv_k(\R^n)$ as in the proof of Lemma~\ref{lemma:SubmoduleGeneratedBySmoothValuations}. Then
		\begin{align*}
			w_{j,k}\F(\mu_\epsilon)[w]=&\left(w_{j,k}\mathcal{F}(\phi_\epsilon)\left[w_k\right]\right)\cdot \F(\mu)[w_1,\dots,w_k]\\
			=&\mathcal{F}(-i\partial_j\phi_\epsilon)[w_k]\cdot \F(\mu)[w_1,\dots,w_k]\\
			=&\F(\tilde{\mu}_\epsilon)
		\end{align*}
		for the valuation $\tilde{\mu}_\epsilon\in\VConv_k(\R^n)$ defined by
		\begin{align*}
			\tilde{\mu}_\epsilon(f)=-i\int_{\R^n}\partial_j\phi_\epsilon(x)\mu(f(\cdot+x))dx.
		\end{align*}
		In particular, the lowest order term of the  power series expansion of $\F(\tilde{\mu}_\epsilon)$ is given by the product of $w_{j,k}$ with the lowest order term in the power series expansion of $\F(\mu_\epsilon)$.
	\end{proof}

	Recall that $\M_k$ denotes the space spanned by the $k$-minors. For $\Delta\in \M_k$ consider the sets $U_\Delta=\{E\in {}^\C\Gr_k(\C^n): \Delta|_{E^k}\ne 0\}$ from Section \ref{section:Prelim_Minors}.
	\begin{lemma}
		\label{lemma:RestrictionLowestOrderTermsSubspaces}
		Let $P\in\M$. For every $\Delta\in \M_k$, $E_0\in U_\Delta\cap \{E\otimes \C: E\in\Gr_k(\R^n)\}$ there exists a polynomial $P_{\Delta,E_0}$ such that 
		\begin{align*}
			P|_{E_0}(w_1,\dots,w_k)=\Delta^2(w_1,\dots,w_k) P_{\Delta,E_0}(w_k).
		\end{align*}
	\end{lemma}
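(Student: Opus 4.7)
The plan is to reduce to a single lowest-order term and then apply the geometric description of $\F(\mu)$ on complexified real subspaces from Corollary \ref{corollary:FRestrictionSubspace}. Since the restriction condition $P|_{E_0^k} = \Delta^2 \cdot P_{\Delta,E_0}(w_k)$ is linear in $P$, I may assume that $P$ is itself the lowest-order (smallest total degree) nonzero homogeneous term, of some total degree $d_0$, in the power series expansion of $\F(\mu)$ at the origin for a single fixed $\mu \in \VConv_k(\R^n)$.

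Next, fix $E_0 = E \otimes \C$ with $E \in \Gr_k(\R^n)$ and $E_0 \in U_\Delta$. By Corollary \ref{corollary:FRestrictionSubspace} (which is where the classification of top-degree valuations on $\VConv_k(E)$ from \cite{ColesantiEtAlhomogeneousdecompositiontheorem2020} enters), there exists $\phi_E \in C_c(E)$ such that
\begin{align*}
\F(\mu)[w_1,\dots,w_k] = \det(\langle w_i, w_j\rangle)_{i,j=1}^k \cdot \mathcal{F}_E(\phi_E)[w_k]
\end{align*}
for all $w_1,\dots,w_k \in E_0$. The key linear-algebra step is then: choose a basis $e_1,\dots,e_k$ of $E$ and write $w_j = \sum_l a_{lj} e_l$, $A := (a_{lj})$. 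The Gram-determinant identity gives $\det(\langle w_i, w_j\rangle)_{i,j=1}^k = \det(G)\,\det(A)^2$ with $G = (\langle e_a, e_b\rangle)$. On the other hand, every $k$-minor is alternating multilinear in the columns of $\Mat_{n,k}(\C)$, so its restriction to $E_0^k$ is a scalar multiple of $\det(A)$; writing $\Delta$ as a linear combination of $k$-minors and collecting, $\Delta|_{E_0^k} = c_\Delta \det(A)$ for a constant $c_\Delta \in \C$ which is nonzero precisely because $E_0 \in U_\Delta$. Consequently, on $E_0^k$,
\begin{align*}
\F(\mu)[w_1,\dots,w_k] = c \cdot \Delta^2(w_1,\dots,w_k) \cdot \mathcal{F}_E(\phi_E)[w_k], \qquad c := \det(G)/c_\Delta^2.
\end{align*}

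Finally, I would extract the lowest-order term by matching homogeneous components. Expanding $\mathcal{F}_E(\phi_E)[z] = \sum_{l \geq 0} G_l(z)$ into homogeneous components, the right-hand side becomes a convergent sum whose $l$-th term $c \Delta^2(w) G_l(w_k)$ is homogeneous of total degree $2k + l$. Since $P$ is the degree-$d_0$ component of $\F(\mu)$, its restriction $P|_{E_0^k}$ equals the degree-$d_0$ component of $\F(\mu)|_{E_0^k}$. Matching degrees, $P|_{E_0^k} = c\, \Delta^2(w)\, G_{d_0-2k}(w_k)$ when $d_0 \geq 2k$, and $P|_{E_0^k} = 0$ otherwise; either way the conclusion holds with $P_{\Delta,E_0}(w_k) := c \cdot G_{d_0-2k}(w_k)$ (interpreted as $0$ if $d_0 < 2k$). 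The only genuinely substantive step is the identification of the Gram determinant with a scalar multiple of $\Delta^2$ on $E_0^k$; beyond that the proof is a routine matching of homogeneous components, and I do not anticipate any serious obstacle.
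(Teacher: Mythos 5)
Your proof is correct and follows essentially the same route as the paper: reduce by linearity to a lowest-order term of $\F(\mu)$ for a single $\mu$, restrict via Corollary~\ref{corollary:FRestrictionSubspace}, identify $\det(\langle w_i,w_j\rangle)_{i,j}$ and $\Delta^2$ on $E_0^k$ as nonzero scalar multiples of the square of the coordinate determinant, and extract the relevant homogeneous component. The only (harmless) difference is that you get this identification directly from the Gram identity $\det(\langle w_i,w_j\rangle)=\det(G)\det(A)^2$, where the paper instead argues via irreducibility of the determinant and skew-multilinearity of the quotient; your matching by degree also handles cleanly the degenerate case where $P|_{E_0^k}=0$.
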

	\begin{proof}
		Since $M$ is a homogeneous module over $\P(\C^n)$ by Corollary~\ref{corollary:LowestOrderTermsModuleOverPolynomials}, we can restrict ourselves to the case that $P$ is homogeneous and the lowest order term of $\F(\mu)$ for some $\mu\in\VConv_k(\R^n)$. Let $E\in\Gr_k(\R^n)$ with $E\otimes \C\in U_\Delta$ be given. In this case, Corollary~\ref{corollary:FRestrictionSubspace} shows that for $w_1,\dots,w_k\in E\otimes\C$,
		\begin{align*}
			\F(\mu)[w]=\det(\langle w_i,w_j\rangle)_{i,j=1}^k \mathcal{F}_{E}(\phi_{E})\left[w_k\right],
		\end{align*}
		where $\phi_E\in C_c(E)$ denotes the unique function with $\pi_{E*}\mu=\int_{E}\phi_{E}d\MA_{E}$. In particular, since $P$ is homogeneous,
		\begin{align*}
			P|_{E\otimes\C}(w)=\det(\langle w_i,w_j\rangle)_{i,j=1}^k Q_{E}(w_k)\quad\text{for}~w_1,\dots,w_k\in E\otimes\C,
		\end{align*}
		where $Q_{E}$ denotes the lowest order term of $\mathcal{F}_{E}(\phi_{E})\left[w_k\right]$. If we choose a basis of $E$, then the polynomial $\Delta^2$ restricts to the square of the determinant of the coordinate matrix of $w\in E^k$ with respect to this basis. Since we assume that $E\otimes\C\in U_\Delta$, this multiple is nontrivial . Similarly, the restriction of the polynomial
		$G(w):= \det(\langle w_i,w_j\rangle)_{i,j=1}^k$ to $(E\otimes \C)^k$ vanishes on the zero set of this determinant, since the Gram matrix is singular if its arguments are linearly dependent. Since the determinant is an irreducible polynomial, $G|_{(E\otimes \C)^k}$ is therefore divisible by the determinant. However, since the determinant is skew-symmetric and multilinear, the quotient is a skew-symmetric and multilinear, and thus a multiple of the determinant as well. Thus $\Delta^2$ and $G$ both restrict to nontrivial scalar multiples of the same polynomial on $(E\otimes \C)^k$. Rescaling $Q_{E}$, we see that $P$ has the desired property.
	\end{proof}

\begin{corollary}
	\label{corollary:LowestOrderTermsKMinors}
	$\M$ coincides with the $\P(\C^n)$-module generated by $\M^2_k$. 
\end{corollary}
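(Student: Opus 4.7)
The plan is to prove the two inclusions separately, each as a short combination of results already in place.

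For $\M \subseteq \P(\C^n)\M^2_k$, I would take an arbitrary $P \in \M$ and chain together three results from the preceding material. First, Lemma~\ref{lemma:RestrictionLowestOrderTermsSubspaces} gives, for every $\Delta \in \M_k$ and every $E_0 \in U_\Delta$ of the form $E \otimes \C$ with $E \in \Gr_k(\R^n)$, a polynomial $P_{\Delta,E_0} \in \P(E_0)$ such that $P$ restricted to $E_0^k$ factors as $\Delta^2 \cdot P_{\Delta,E_0}(w_k)$. Next, Lemma~\ref{lemma:PolynomialFromComplexifiedSpaces} upgrades this factorization from complexifications of real subspaces to \emph{all} complex $k$-dimensional subspaces in $U_\Delta$. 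Finally, Proposition~\ref{proposition:CharacterizationModuleSquareMinors} identifies exactly the polynomials satisfying this restriction property with the elements of $\P(\C^n)\M^2_k$. Hence $P \in \P(\C^n)\M^2_k$.

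For the reverse inclusion $\P(\C^n)\M^2_k \subseteq \M$, I would first invoke Corollary~\ref{corollary:LowestOrderTermsModuleOverPolynomials}, which asserts that $\M$ is already a $\P(\C^n)$-module; this reduces the task to verifying $\M^2_k \subseteq \M$. Given a nonzero $Q \in \M^2_k$, I would apply Lemma~\ref{lemma:PrescribedFourierTransformFinite} with the constant polynomial $P \equiv 1$ (so $N = 0$), which yields $\phi \in C_c(\R^n)$ and $\Psi_Q \in \MAVal_k(\R^n)$ for which the power series expansion of $\F(\Psi_Q[\phi])$ coincides with $Q$ up to order $2k$. Since $Q$ is bihomogeneous of column-degree $(2,\dots,2)$, hence of total degree $2k$, this matching forces the homogeneous terms of degree less than $2k$ in the expansion to vanish and the degree-$2k$ term to equal $Q$, so $Q$ is precisely the lowest order term of $\F(\Psi_Q[\phi])$ and therefore lies in $\M$.

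I do not anticipate a serious obstacle: all the substantive work has been front-loaded into the preparatory lemmas, namely the restriction characterization of the module in Proposition~\ref{proposition:CharacterizationModuleSquareMinors}, the passage from real to complex subspaces in Lemma~\ref{lemma:PolynomialFromComplexifiedSpaces}, and the prescription of low-order Taylor data via Monge--Amp\`ere operators in Lemma~\ref{lemma:PrescribedFourierTransformFinite}. The corollary is the formal junction at which these threads are tied together, so the proof should be essentially immediate.
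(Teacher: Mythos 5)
Your proposal is correct and follows essentially the same route as the paper, which proves the inclusion $\P(\C^n)\M^2_k\subset\M$ via Lemma~\ref{lemma:PrescribedFourierTransformFinite} together with Corollary~\ref{corollary:LowestOrderTermsModuleOverPolynomials}, and the reverse inclusion by chaining Lemma~\ref{lemma:RestrictionLowestOrderTermsSubspaces}, Lemma~\ref{lemma:PolynomialFromComplexifiedSpaces}, and Proposition~\ref{proposition:CharacterizationModuleSquareMinors}. Your spelled-out choice of the constant polynomial $P\equiv 1$ in Lemma~\ref{lemma:PrescribedFourierTransformFinite} is exactly the intended use, so there is nothing to add.
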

\begin{proof}
	Lemma~\ref{lemma:PrescribedFourierTransformFinite} and Corollary~\ref{corollary:LowestOrderTermsModuleOverPolynomials} show that $\P(\C^n)\M^2_k\subset \M$. Combining Lemma~\ref{lemma:RestrictionLowestOrderTermsSubspaces} with Lemma~\ref{lemma:PolynomialFromComplexifiedSpaces} and Proposition~\ref{proposition:CharacterizationModuleSquareMinors}, we see that $\M\subset \P(\C^n)\M^2_k$.
\end{proof}
\begin{lemma}
	\label{lemma:HomogeneousComponentsPowerSeriesSquaresMinors}
	Let $\mu\in\VConv_k(\R^n)$. Every homogeneous component of the power series expansion of $\F(\mu)$ belongs to $\P(\C^n)\M^2_k$.
\end{lemma}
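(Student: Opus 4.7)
The plan is to argue that each homogeneous component $P_d$ of the power series expansion of $\F(\mu)$ satisfies exactly the same geometric restriction property that was used for the lowest order term in Lemma~\ref{lemma:RestrictionLowestOrderTermsSubspaces}, and then conclude via Proposition~\ref{proposition:CharacterizationModuleSquareMinors} just as in Corollary~\ref{corollary:LowestOrderTermsKMinors}. The key observation is that Corollary~\ref{corollary:FRestrictionSubspace} provides an \emph{exact} multiplicative factorization of $\F(\mu)|_{(E\otimes\C)^k}$, so it factorizes each individual homogeneous component, not only the term of lowest total degree.

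First I would fix an arbitrary $E\in\Gr_k(\R^n)$ and apply Corollary~\ref{corollary:FRestrictionSubspace} to obtain
\begin{align*}
    \F(\mu)[w_1,\dots,w_k]=\det(\langle w_i,w_j\rangle)_{i,j=1}^k\,\mathcal{F}_E(\phi_E)(w_k)
\end{align*}
for $w_1,\dots,w_k\in E\otimes\C$. The Gram determinant is multi-homogeneous of bidegree $(2,\dots,2)$ (rescaling $w_j\mapsto t_jw_j$ introduces a factor $t_j^2$), while $\mathcal{F}_E(\phi_E)(w_k)$ depends only on $w_k$. Expanding $\mathcal{F}_E(\phi_E)$ into its homogeneous components $q_m(w_k)$ and comparing total degrees, one obtains
\begin{align*}
    P_d|_{(E\otimes\C)^k}(w)=\det(\langle w_i,w_j\rangle)_{i,j=1}^k\,q_{d-2k}(w_k)
\end{align*}
(with $q_{d-2k}:=0$ if $d<2k$), since restriction to a subspace commutes with taking homogeneous components.

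Next, exactly as in the proof of Lemma~\ref{lemma:RestrictionLowestOrderTermsSubspaces}, for every $\Delta\in\M_k$ with $E\otimes\C\in U_\Delta$ the polynomial $\Delta^2$ and the Gram determinant $\det(\langle w_i,w_j\rangle)$ restrict to nonzero scalar multiples of the same polynomial on $(E\otimes\C)^k$ (both are squares of the determinant in a chosen basis of $E$, up to a constant). Hence there is a polynomial $P_{d,\Delta,E}\in\P(E\otimes\C)$ such that
\begin{align*}
    P_d|_{(E\otimes\C)^k}(w)=\Delta^2(w)\,P_{d,\Delta,E}(w_k)
    \quad\text{for all }w_1,\dots,w_k\in E\otimes\C.
\end{align*}

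Finally I would invoke Lemma~\ref{lemma:PolynomialFromComplexifiedSpaces} to promote this identity from complexifications of real $k$-subspaces to all $E\in U_\Delta$, and then apply Proposition~\ref{proposition:CharacterizationModuleSquareMinors} to the polynomial $P_d$ to conclude that $P_d\in\P(\C^n)\M_k^2$. Since this holds for each $d$, every homogeneous component of the power series expansion of $\F(\mu)$ lies in $\P(\C^n)\M_k^2$, as required. There is no genuine obstacle here beyond recognizing that the same restriction argument that pinned down $\M$ as the space of lowest order terms applies degreewise to the entire power series, because the factorization on subspaces provided by Corollary~\ref{corollary:FRestrictionSubspace} is an exact identity of entire functions rather than an asymptotic statement.
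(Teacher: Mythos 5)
Your argument is correct, and it takes a genuinely different route from the paper's proof of this lemma. The paper argues by contradiction and induction on the order: assuming the expansion lies in $\P(\C^n)\M^2_k$ up to order $N$ but not $N+1$, it uses Lemma~\ref{lemma:PrescribedFourierTransformFinite} to construct valuations of the form $\Psi_j[\phi_j]$ whose transforms match the partial sum up to order $N+1$, subtracts them, and thereby turns the degree-$(N{+}1)$ term into the \emph{lowest} order term of another valuation, which lies in $\M=\P(\C^n)\M^2_k$ by Corollary~\ref{corollary:LowestOrderTermsKMinors}. You instead observe that the identity of Corollary~\ref{corollary:FRestrictionSubspace} is an exact factorization of entire functions on $(E\otimes\C)^k$, so comparing homogeneous components (restriction to a linear subspace and multiplication by the degree-$2k$ Gram determinant both respect the grading) gives $P_d|_{(E\otimes\C)^k}=\det(\langle w_i,w_j\rangle)\,q_{d-2k}(w_k)$ for \emph{every} $d$, not just the lowest one; identifying the Gram determinant with a nonzero multiple of $\Delta^2$ on $(E\otimes\C)^k$ (as in the proof of Lemma~\ref{lemma:RestrictionLowestOrderTermsSubspaces}, or simply by computing in an orthonormal basis of $E$) then feeds $P_d$ directly into Lemma~\ref{lemma:PolynomialFromComplexifiedSpaces} and Proposition~\ref{proposition:CharacterizationModuleSquareMinors}. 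This is shorter and more self-contained: it bypasses the prescribed-jet construction of Lemma~\ref{lemma:PrescribedFourierTransformFinite}, the module property of $\M$ (Lemma~\ref{lemma:SubmoduleGeneratedBySmoothValuations}, Corollary~\ref{corollary:LowestOrderTermsModuleOverPolynomials}), and the subtraction/induction step, and it even handles $d<2k$ gracefully (showing those components vanish). What the paper's detour buys is that it runs entirely through the single space $\M$ of lowest order terms and through the $\MAVal_k(\R^n)$ realization machinery, which is needed anyway for the surrounding results (e.g.\ Theorem~\ref{theorem:RepresentationFMu} and the Paley--Wiener--Schwartz direction), whereas your argument reproves the restriction analysis of Lemma~\ref{lemma:RestrictionLowestOrderTermsSubspaces} degreewise; either way, your reliance on the Gram-determinant comparison from that proof is legitimate and the logic is sound.
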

\begin{proof}
	Assume that this is not the case. Then we can find a maximal $N\in\mathbb{N}$ such that power series expansion of $\F(\mu)$ up to order $N$ belongs to $\P(\C^n)\M^2_k$, but the power series expansion up to order $N+1$ does not belong to $\P(\C^n)\M^2_k$. Note that $N\ge 2k$ by Corollary~\ref{corollary:LowestOrderTermsKMinors}, since the lowest order term of the expansion belongs to $\M=\P(\C^n)\M^2_k$ by definition. Let $Q\in \P(\Mat_{n,k}(\C))$ denote the power series expansion of $\F(\mu)$ up to order $N$. Fix a basis $Q_j$, $1\le j\le N_{n,k}$, of $\M^2_k$. By assumption, we may write $Q(w)=\sum_{j=1}^{N_{n,k}}P_{j}(w_k)Q_j(w)$ with $P_{j}\in \P(\C^n)$ of order at most $N-2k$. By Lemma~\ref{lemma:PrescribedFourierTransformFinite} we find valuations $\mu_{j}\in\VConv_k(\R^n)$ such that power series expansion of $\F(\mu_{j})$ coincides with $Q_j(w)P_{j}(w_k)$ up to order $N+1$. In particular, the lowest order term in the power series expansion of 
	\begin{align*}
		\F\left(\mu-\sum_{j=1}^{N_{n,k}}\mu_{j}\right)=\F(\mu)-\sum_{j=1}^{N_{n,k}}\F(\mu_{j})
	\end{align*}
	coincides with the homogeneous term in the power series expansion of $\F(\mu)$ of degree $N+1$. Thus this term belongs to $\M$, which coincides with $\P(\C^n)\M^2_k$ by Corollary~\ref{corollary:LowestOrderTermsKMinors}, and we obtain a contradiction.
\end{proof}
	
	\begin{theorem}
		\label{theorem:RepresentationFMu}
		Let $\Psi_j$, $1\le j\le N_{n,k}$, be a basis for $\MAVal_k(\R^n)$. For every $\mu\in\VConv_k(\R^n)$ there exist functions $g_{j}\in \mathcal{O}_{\C^n}$ such that
		\begin{align*}
			\F(\mu)[w_1,\dots,w_k]=\sum_{j=1}^{N_{n,k}}g_j(w_k)Q(\Psi_j)[w].
		\end{align*}
		Moreover, the functions $g_j$ can be chosen to satisfy the inequality
		\begin{align}
			\label{eq:estimateFourierCoefficients}
			|g_j(z)|\le C_\delta\left(1+|z|\right)^{(2+n)N_{n,k}}\sup_{\zeta\in D_{\delta(0,\dots,0,z)}} |\F(\mu)[\zeta]|,
		\end{align}
		where $C_\delta$ is a constant depending on the chosen basis, $n$, $k$, and $\delta>0$ only.
	\end{theorem}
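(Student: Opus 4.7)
The plan is to combine the three main ingredients already established: the homogeneous decomposition result for the power series of $\F(\mu)$, the Gröbner basis division algorithm from Section~\ref{section:DivisionAlg}, and the bijection $Q\colon\MAVal_k(\R^n)\to\M^2_k$.

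First, by Lemma~\ref{lemma:QBijectiveMA}, given any basis $\Psi_1,\dots,\Psi_{N_{n,k}}$ of $\MAVal_k(\R^n)$ the images $Q(\Psi_1),\dots,Q(\Psi_{N_{n,k}})$ form a basis of $\M^2_k$. So it suffices to produce a decomposition of the form
\begin{align*}
	\F(\mu)[w]=\sum_{j=1}^{N_{n,k}} g_j(w_k)\,Q(\Psi_j)[w]
\end{align*}
with the desired estimate on the coefficients $g_j$, for this particular basis of $\M^2_k$.

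Next, I invoke Lemma~\ref{lemma:HomogeneousComponentsPowerSeriesSquaresMinors}, which tells us that every homogeneous term in the power series expansion of $\F(\mu)$ at $0$ lies in the $\P(\C^n)$-submodule generated by $\M^2_k$. This is exactly the hypothesis needed to apply Theorem~\ref{theorem:GroebnerAlgModuleMinor} to the entire function $F=\F(\mu)\in\mathcal{O}_{\Mat_{n,k}(\C)}$. That theorem, applied with the chosen basis $Q(\Psi_j)$, yields entire functions $g_j\in\mathcal{O}_{\C^n}$ such that
\begin{align*}
	\F(\mu)[w]=\sum_{j=1}^{N_{n,k}} g_j(w_k)\, Q(\Psi_j)[w],
\end{align*}
together with the quantitative estimate
\begin{align*}
	|g_j(z)|\le C_\delta\,(1+|z|)^{(2+n)N_{n,k}}\sup_{\zeta\in D_\delta(0,\dots,0,z)}|\F(\mu)[\zeta]|
\end{align*}
for every $\delta>0$, with a constant $C_\delta$ depending only on the chosen basis, on $n$, $k$, and $\delta$. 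This is exactly the bound~\eqref{eq:estimateFourierCoefficients}.

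This proof is essentially a bookkeeping assembly of the pieces already in place, and I do not expect any genuine obstacle; the only thing to verify is that the hypotheses of Theorem~\ref{theorem:GroebnerAlgModuleMinor} transfer to the particular basis $Q(\Psi_j)$. Since Theorem~\ref{theorem:GroebnerAlgModuleMinor} is formulated for an \emph{arbitrary} basis of $\M^2_k$ (its statement explicitly allows a choice of basis, with the proof proceeding via the Gröbner basis from Lemma~\ref{lemma:productsMinorsGroebnerBasis} and a change-of-basis matrix absorbed into the constant $C_\delta$), the required estimate holds with the constant depending on the chosen basis, matching the statement.
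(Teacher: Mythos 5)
Your proposal is correct and follows essentially the same route as the paper: both rest on Lemma~\ref{lemma:QBijectiveMA}, Lemma~\ref{lemma:HomogeneousComponentsPowerSeriesSquaresMinors}, and Theorem~\ref{theorem:GroebnerAlgModuleMinor}; the only (immaterial) difference is that the paper reduces to the basis of $\MAVal_k(\R^n)$ corresponding to the Gr\"obner basis and lets the passage to an arbitrary basis be absorbed there, while you invoke Theorem~\ref{theorem:GroebnerAlgModuleMinor} directly in its arbitrary-basis form, where that change of basis is already handled.
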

	\begin{proof}
		Obviously, the claim is true for an arbitrary basis as soon as it holds for some basis of $\MAVal_k(\R^n)$. Using Lemma~\ref{lemma:QBijectiveMA}, we can choose the basis corresponding to the Gröbner basis of $\mathcal{P}(\C^n)\M^2_k$ used in Theorem~\ref{theorem:GroebnerAlgModuleMinor}. By Lemma~\ref{lemma:HomogeneousComponentsPowerSeriesSquaresMinors}, every homogeneous term of the power series expansion of $\F(\mu)$ belongs to $\P(\C^n)\M^2_k$, so in this case, the claim follows directly from Theorem~\ref{theorem:GroebnerAlgModuleMinor}.
	\end{proof}

	\begin{remark}
		\label{remark:Decomposition}
		For a given basis $\Psi_j$, $1\le j\le N_{n,k}$, of $\MAVal_k(\R^n)$ and $\mu\in\VConv_k(\R^n)$ with decomposition  $\F(\mu)[w]=\sum_{j=1}^{N_{n,k}}g_{j}(w_k)Q(\Psi_j)[w]$, there do in general not exist valuations $\mu_{j}\in\VConv_k(\R^n)$ with $\F(\mu_{j})[w]=g_j(w_k)Q(\Psi_j)[w] $. In particular, the decomposition of $\F(\mu)$ in Theorem~\ref{theorem:RepresentationFMu} does not reflect a decomposition of $\VConv_k(\R^n)$.
	\end{remark}

	\begin{proof}[Proof of Theorem~\ref{maintheorem:FourierTransformBelongsToModule}]
		For $\mu\in \VConv_k(\R^n)$ and a basis $\Psi_j$, $1\le j\le N_{n,k}$, of $\MAVal_k(\R^n)$, Theorem~\ref{theorem:RepresentationFMu} implies that there are functions $g_j\in \mathcal{O}_{\C^n}$ such that
		\begin{align*}
			\F(\mu)[w]=\sum_{j=1}^{N_{n,k}}g_j(w_k)Q(\Psi_j)[w].
		\end{align*}
		Thus
		\begin{align*}
			\mathcal{F}(\GW(\mu))[w]=&\frac{(-1)^k}{k!k^{2k-2}}\F(\mu)\left[kw_1-\sum_{j=1}^kw_j,\dots,kw_{k-1}-\sum_{j=1}^kw_j,\sum_{j=1}^kw_j\right]\\
			=&\sum_{j=1}^{N_{n,k}}g_j\left(\sum_{j=1}^kw_j\right)Q(\Psi_j)[w],
		\end{align*}
		which belongs to $\widehat{\M^2_k}$.
	\end{proof}

\section{Paley--Wiener--Schwartz Theorem for smooth valuations}
	\label{section:PWSValuations}
	In this section we prove Theorem~\ref{maintheorem:PWSSmoothValuations}. The main ingredient is the following estimate for smooth valuations.	
	\begin{lemma}
		\label{lemma:PWS_SatisfiedBySmoothValuations}
		Let $A\subset \R^n$ be compact and convex and let $\mu\in\VConv_k(\R^n)$ be a smooth valuation with $\supp\mu\subset A$. For every $N\in\mathbb{N}$ there exists a constant $C_N>0$ such that 
		\begin{align*}
			&|\F(\mu)[w_1,\dots,w_k]|\\ 
			&\le C_N\prod_{j=1}^{k-1}(1+|w_j|)^6(1+|w_k|)^{-N}e^{h_{A}(\Im(w_k))+\frac{1}{k}\sum_{j=1}^{k-1}h_A(\Im w_j)+h_A(-\Im w_j)}.
		\end{align*}
	\end{lemma}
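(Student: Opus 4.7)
The starting point is the transformation behavior of $\F$ under the action of $x\in\R^n$ via $[\pi(x)\mu](f):=\mu(f(\cdot+x))$. Multilinearity of the polarization and the defining property of the Goodey--Weil distribution give
\begin{align*}
	\mathcal{F}(\GW(\pi(x)\mu))[w_1,\dots,w_k]=e^{-i\langle w_1+\dots+w_k,x\rangle}\mathcal{F}(\GW(\mu))[w_1,\dots,w_k].
\end{align*}
A direct computation shows that under the coordinate change $T$ preceding the definition of $\F$, the components of $T(w)$ sum precisely to $w_k$, so
\begin{align*}
	\F(\pi(x)\mu)[w]=e^{-i\langle w_k,x\rangle}\F(\mu)[w].
\end{align*}

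Since $\mu$ is smooth in the sense of Definition~\ref{definition:SmoothValuations}, the map $x\mapsto\pi(x)\mu$ is infinitely differentiable into $\VConv_k(\R^n)$, so $\partial^\alpha\mu:=\partial_x^\alpha|_{x=0}\pi(x)\mu$ exists in $\VConv_k(\R^n)$ for every $\alpha\in\mathbb{N}^n$. If $f,g\in\Conv(\R^n,\R)$ agree on a neighborhood $U\supset A$, then for all sufficiently small $x$ one has $\supp(\pi(x)\mu)\subset A+x\subset U$, and hence $\pi(x)\mu(f)=\pi(x)\mu(g)$ by Proposition~\ref{proposition:characterizationSupport}; differentiating at $x=0$ gives $\partial^\alpha\mu(f)=\partial^\alpha\mu(g)$, so Proposition~\ref{proposition:characterizationSupport} yields $\partial^\alpha\mu\in\VConv_{k,A}(\R^n)$. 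Because $\mathcal{F}\circ\GW:\VConv_k(\R^n)\to\mathcal{O}_{\Mat_{n,k}(\C)}$ is continuous and linear by Proposition~\ref{proposition:ContinuityFourier}, it commutes with differentiation of the $\VConv_k(\R^n)$-valued map $x\mapsto\pi(x)\mu$, and therefore
\begin{align*}
	\F(\partial^\alpha\mu)[w]=(-iw_k)^\alpha\F(\mu)[w].
\end{align*}

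Now fix $N\in\mathbb{N}$ and set $M:=N+3k$. Corollary~\ref{corollary:EstimateFMu} applied to $\partial^\alpha\mu\in\VConv_{k,A}(\R^n)$ yields, writing $E(w):=e^{h_A(\Im w_k)+\frac{1}{k}\sum_{j=1}^{k-1}(h_A(\Im w_j)+h_A(-\Im w_j))}$,
\begin{align*}
	|w_k^\alpha|\,|\F(\mu)[w]|\le C(A,k)\|\partial^\alpha\mu\|_{A,1}\prod_{j=1}^{k-1}(1+|w_j|)^6(1+|w_k|)^{3k}E(w).
\end{align*}
For $w_k=(w_{1,k},\dots,w_{n,k})\in\C^n$ pick $j$ with $|w_{j,k}|=\max_i|w_{i,k}|$; then $\alpha=Me_j$ gives $|w_k^\alpha|\ge n^{-M/2}|w_k|^M$. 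Combining this case with the case $\alpha=0$ and using the elementary bound $(1+|w_k|)^M\le 2^M(1+|w_k|^M)$ produces a constant $C_N>0$ (depending on $\mu$, $A$, $N$, $n$, $k$) such that
\begin{align*}
	(1+|w_k|)^{N+3k}|\F(\mu)[w]|\le C_N\prod_{j=1}^{k-1}(1+|w_j|)^6(1+|w_k|)^{3k}E(w),
\end{align*}
after which dividing by $(1+|w_k|)^{N+3k}$ gives the claim. The main technical point is the commutation of $\mathcal{F}\circ\GW$ with differentiation of $x\mapsto\pi(x)\mu$, which is precisely what the continuity statement of Proposition~\ref{proposition:ContinuityFourier} is designed to furnish; once this is in place, everything else is a direct application of Corollary~\ref{corollary:EstimateFMu}.
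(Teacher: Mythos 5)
Your proof is correct and follows essentially the same route as the paper: differentiate the identity $\F(\pi(x)\mu)[w]=e^{-i\langle w_k,x\rangle}\F(\mu)[w]$ using smoothness of $x\mapsto\pi(x)\mu$ and continuity of $\mathcal{F}\circ\GW$ (Proposition~\ref{proposition:ContinuityFourier}), then apply Corollary~\ref{corollary:EstimateFMu} to the derivatives $\partial^\alpha\mu$ and trade $3k$ powers of $(1+|w_k|)$ for the decay, exactly as in the paper (which uses a multinomial expansion over all $|\alpha|=N$ where you pick the single maximal coordinate — an immaterial difference). Your explicit verification that $\supp\partial^\alpha\mu\subset A$, which the paper uses implicitly when invoking $\|\partial_x^\alpha\mu_x|_0\|_{A,1}$, is a welcome addition.
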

	\begin{proof}
		By definition, the map 
		\begin{align*}
			\R^n&\rightarrow \VConv_k(\R^n)\\
			x&\mapsto \left[f\mapsto \mu(f(\cdot+x))\right]
		\end{align*} is smooth.  Set $\mu_x(f):=\mu(f(\cdot+x))$. Then
		\begin{align*}
			\mathcal{F}(\GW(\mu_x))[w]=&\GW(\mu)[\exp(-i\langle w_1,\cdot+x\rangle)\otimes\dots\otimes\exp(-i\langle w_k,\cdot+x\rangle)]\\
			=&\exp\left(-i\left\langle\sum_{j=1}^{k}w_j,x\right\rangle \right)\mathcal{F}(\GW(\mu))[w].
		\end{align*}	
		In particular,
		\begin{align*}
			&\mathcal{F}(\GW(\mu_x))\left[\frac{w_1+w_k}{k},\dots,\frac{w_{k-1}+w_k}{k},\frac{w_k}{k}-\sum_{j=1}^{k-1}\frac{w_j}{k}\right]\\
			&\qquad=\exp\left(-i\left\langle w_k,x\right\rangle \right)\mathcal{F}(\GW(\mu))\left[\frac{w_1+w_k}{k},\dots,\frac{w_{k-1}+w_k}{k},\frac{w_k}{k}-\sum_{j=1}^{k-1}\frac{w_j}{k}\right],
		\end{align*}
		so
		\begin{align*}
			\exp\left(-i\left\langle w_k,x\right\rangle \right)\F(\mu)[w]=\F(\mu_x)[w].
		\end{align*}
		Applying $\partial_{x}^{\alpha}$ for $\alpha\in\mathbb{N}^n$ to both sides and using that $\mathcal{F}\circ \GW$ is continuous by Proposition \ref{proposition:ContinuityFourier}, this implies
		\begin{align*}
			(-i)^{|\alpha|}w_k^\alpha\exp\left(-i\left\langle w_k,x\right\rangle \right)\F(\mu)[w]=&\F(\partial_{x}^{\alpha}\mu_x)[w].
		\end{align*}
		If we evaluate this expression in $x=0$ and sum over the appropriate indices $\alpha$ with $|\alpha|= N$, we obtain 
		\begin{align*}
			|w_k|^{N}\cdot |\F(\mu)[w]|\le |w_k|_1^{N}\cdot |\F(\mu)[w]|\le& n^{N}\max_{|\alpha|= N}\left|\F(\partial^\alpha\mu_x|_0)\left[w\right]\right|.
		\end{align*}
		Using the binomial theorem to write $(1+|w_k|)^N=\sum_{j=0}^N\binom{N}{j}|w_k|^j$, this implies
		\begin{align*}
			(1+|w_k|)^N|\F(\mu)[w]|
			\le& 2^{N}n^{N}\max_{|\alpha|\le N}\left|\F(\partial_{x}^{\alpha}\mu_x|_0)\left[w\right]\right|.
		\end{align*}
		From Corollary~\ref{corollary:EstimateFMu}, we obtain a constant $C(A,k)$ depending on $k$ and $A\subset\R^n$ only such that for $N\in\mathbb{N}$,
		\begin{align*}
			|\F(\mu)[w]|\le& (1+|w_k|)^{-N}(2n)^{N}\max_{|\alpha|\le N}|\F(\partial_{x}^{\alpha}\mu_x|_0)\left[w\right]|\\
			\le&(1+|w_k|)^{-N+3k} (2n)^{N}C(A,k)\prod_{j=1}^{k-1}(1+|w_j|)^6  \cdot \max_{|\alpha|\le N}\|\partial_{x}^{\alpha}\mu_x|_0\|_{A,1}\\
			&e^{h_{A}(\Im(w_k))+\frac{1}{k}\sum_{j=1}^{k-1}h_A(\Im w_j)+h_A(-\Im w_j)},
		\end{align*}
		which shows the desired estimate for the exponent $N-3k$.
	\end{proof}
	\begin{remark}
		The proof shows that the constant $C_N$ can be chosen to be
		\begin{align*}
			C_N=(2n)^{N+3k}C(A,k)\max_{|\alpha|\le N+3k}\|\partial_{x}^{\alpha}\mu_x|_0\|_{A,1}
		\end{align*}
		for the constant $C(A,k)$ in Corollary~\ref{corollary:EstimateFMu}.	With this constant, the estimate in Lemma~\ref{lemma:PWS_SatisfiedBySmoothValuations} holds as soon as the map $x\mapsto \pi(x)\mu$ is of class $C^{N+3k}$.
	\end{remark}
	The following may be seen as a version of Theorem \ref{maintheorem:PWSSmoothValuations} for the functions $\F(\mu)$ for smooth valuations $\mu\in\VConv_k(\R^n)$.
	\begin{theorem}
		\label{theorem:PWSValuations}
		Let $F\in\O_{\C^n}\M^2_k$, $A\subset \R^n$ compact and convex, and assume that for every $N\in\mathbb{N}$ there exists $\delta_N>0$ and $C_N\in\mathbb{N}$ such that for $w_1,\dots,w_k\in \C^n$ with $|w_1|\le \delta_N$, \dots, $|w_{k-1}|\le\delta_N$,
		\begin{align}
			\label{equation:theoremPWSValuations}
			|F(w_1,\dots,w_k)|\le C_N(1+|w_k|)^{-N}e^{h_A(\Im(w_k))}.
		\end{align}
		Then $F=\F(\mu)$ for a unique smooth valuation $\mu\in\VConv_k(\R^n)$ with $\supp\mu\subset A$. More precisely, given any basis $\Psi_j$, $1\le j\le N_{n,k}$, of $\MAVal_k(\R^n)$ there exist functions $\phi_j\in C_c^\infty(\R^n)$ with $\supp \phi_j\subset A$ such that $\mu$ is given by
		\begin{align*}
			\mu(f)=\sum_{j=1}^{N_{n,k}}\int_{\R^n}\phi_jd\Psi_j(f)\quad\text{for all}~f\in\Conv(\R^n,\R).
		\end{align*}
	\end{theorem}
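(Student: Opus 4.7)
The plan is to construct $\mu$ by decomposing $F$ against the basis $Q_j := Q(\Psi_j)$ of $\M^2_k$ (which is a basis by Lemma~\ref{lemma:QBijectiveMA}) and recognizing each coefficient as the Fourier transform of a smooth compactly supported function. Since $F \in \mathcal{O}_{\C^n}\M^2_k$ by hypothesis, Theorem~\ref{theorem:RepresentationFMu} yields entire functions $g_j \in \mathcal{O}_{\C^n}$ with $F(w) = \sum_{j=1}^{N_{n,k}} g_j(w_k)\,Q_j(w)$. If each $g_j$ can be written as $\mathcal{F}(\phi_j)$ for some $\phi_j \in C^\infty_c(\R^n)$ with $\supp \phi_j \subset A$, then identity \eqref{equation:FforMAValuations} gives
\begin{align*}
	\F\left(\sum_{j=1}^{N_{n,k}}\Psi_j[\phi_j]\right)[w] = \sum_{j=1}^{N_{n,k}} Q_j(w)\mathcal{F}(\phi_j)[w_k] = \sum_{j=1}^{N_{n,k}} g_j(w_k)Q_j(w) = F(w),
\end{align*}
so $\mu(f) := \sum_{j=1}^{N_{n,k}}\int_{\R^n}\phi_j\,d\Psi_j(f)$ would be the desired valuation.

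The technical heart is verifying that each $g_j$ satisfies the classical Paley--Wiener--Schwartz estimate $|g_j(z)| \le C'_N(1+|z|)^{-N}e^{h_A(\Im z)}$ for every $N \in \mathbb{N}$, so that Theorem~\ref{theorem:PaleyWienerSchwartz} supplies the $\phi_j$. I would first note that every element of $\mathcal{O}_{\C^n}\M^2_k$ is homogeneous of degree $2$ in each of $w_1,\dots,w_{k-1}$ (inherited from $\M^2_k$, since the $\mathcal{O}_{\C^n}$-action only involves $w_k$); rescaling in these variables then upgrades the hypothesis to the everywhere bound
\begin{align*}
	|F(w)| \le C_N \delta_N^{-2(k-1)} \prod_{j<k}|w_j|^2 (1+|w_k|)^{-N} e^{h_A(\Im w_k)}.
\end{align*}
To convert this pointwise bound into decay of $g_j$, I would return to the Cauchy integrals that underlie Lemma~\ref{lemma:monomialDivisionEstimate} and the iterative division of Theorem~\ref{theorem:RepresentationFMu}, and split each of them by residues in the $\xi_k$-variable: the residue at $\xi_k = z$ contributes essentially a Taylor coefficient of $F$ in the $w_1,\dots,w_{k-1}$-directions evaluated at $w_k = z$, which inherits the decay of $F$ through a Cauchy bound on a polydisk of small radius in the first $k-1$ arguments, while the residue at $\xi_k = 0$ is independent of $z$ and contributes only a bounded correction. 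Together with the fact that $g_j$ is entire and hence bounded on compacta, this yields the required Paley--Wiener--Schwartz estimate for every $N$.

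Defining $\mu := \sum_j \Psi_j[\phi_j]$, the identity $\F(\mu) = F$ follows as in the first paragraph. Uniqueness of $\mu$ is a consequence of Theorem~\ref{theorem:GWDistributions}, as $\mu$ is determined by $\GW(\mu)$ and hence by its Fourier--Laplace transform. Smoothness of $\mu$ follows by combining Theorem~\ref{theorem:ClassificationMAVal}(3), which represents each $\Psi_j$ via integration against the differential cycle, with Lemma~\ref{lemma:SmoothValTranslationSmooth}. Finally, $\supp \mu \subset A$ follows from Proposition~\ref{proposition:characterizationSupport}, since each $\Psi_j$ is locally determined and $\supp \phi_j \subset A$.

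The principal obstacle lies in the decay estimate for $g_j$: the polydisk $D_\delta(0,\dots,0,z)$ used in the bound of Theorem~\ref{theorem:RepresentationFMu} contains $\zeta_k$ of arbitrarily small norm, so a direct substitution of the hypothesis only produces polynomial \emph{growth} in $|z|$ rather than decay. The residue analysis sketched above, which exploits both the homogeneity of $F$ in the first $k-1$ arguments and the explicit structure of the Cauchy formula underlying the division algorithm, is the technical device that forces genuine Paley--Wiener--Schwartz decay.
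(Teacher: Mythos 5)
Your overall architecture is the same as the paper's: decompose $F=\sum_{j}g_j(w_k)Q(\Psi_j)[w]$ via the division result for the module $\O_{\C^n}\M^2_k$ (this should be quoted as Theorem~\ref{theorem:GroebnerAlgModuleMinor}, which applies to arbitrary elements of $\O_{\C^n}\M^2_k$, rather than Theorem~\ref{theorem:RepresentationFMu}, which is stated for $\F(\mu)$; this is cosmetic), establish classical Paley--Wiener--Schwartz bounds for the $g_j$, invoke Theorem~\ref{theorem:PaleyWienerSchwartz} to get $\phi_j$ with $\supp\phi_j\subset A$, set $\mu=\sum_j\Psi_j[\phi_j]$, and deduce $\F(\mu)=F$ from \eqref{equation:FforMAValuations}, smoothness from Lemma~\ref{lemma:SmoothValTranslationSmooth}, the support inclusion from local determination, and uniqueness from the injectivity of $\mu\mapsto\GW(\mu)$. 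These parts, as well as your homogeneity upgrade of hypothesis \eqref{equation:theoremPWSValuations}, are fine, and you correctly isolate the delicate point: plugging the hypothesis into the bound $|g_j(z)|\le C_\delta(1+|z|)^{(2+n)N_{n,k}}\sup_{D_\delta(0,\dots,0,z)}|F|$ gives growth, not decay, because the polydisk contains points whose last column is small.

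The gap is that your proposed repair does not close this. The residue of the Cauchy integral at $\xi_k=0$ is neither independent of $z$ nor a harmless bounded correction: in the one-variable model of dividing by $\xi^a$ with kernel $(\xi-z)^{-1}$ it equals $-\sum_{l<a}\frac{F^{(l)}(0)}{l!}z^{l-a}$, i.e.\ the division subtracts from $F$ its Taylor data where the relevant coordinates of the last column vanish. In the actual scheme the initial terms have degree $2$ in $w_k$, so already the first step produces contributions to $g_j$ of the shape $\bigl[A(z)\big|_{z_i=0}+z_i\,\partial_iA(z)\big|_{z_i=0}\bigr]/z_i^{2}$, where $A(z)$ is a Taylor coefficient of $F$ in the first $k-1$ columns evaluated at last column $z$. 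Membership in the module only forces finitely many Taylor coefficients at the origin to vanish, not the functions $A(\cdot)|_{z_i=0}$, so along directions in which only $z_i$ grows these terms behave like $c/|z_i|$ with $c\neq0$ in general; this is incompatible with a bound $C_N(1+|z|)^{-N}e^{h_A(\Im z)}$ for $N\ge 2$, and fails outright in directions where $h_A(\Im z)\to-\infty$ (i.e.\ when $0\notin A$), since the correction carries no such exponential factor. In particular, "residue at $z$ decays, residue at $0$ is bounded, and $g_j$ is bounded on compacta" cannot yield decay of arbitrary order at infinity: a sum of a decaying term and a term decaying only to a fixed finite order does not decay to order $N$. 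Thus the decisive estimate for the coefficients $g_j$ --- which is the real content of the theorem, and which requires showing that such remainder contributions do not occur (or choosing a decomposition for which they do not) --- is not established by your argument.
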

	\begin{proof}
		Assume that $F\in\O_{\C^n}\M^2_k$ satisfies these estimates and let $\Psi_j$, $1\le j\le N_{n,k}$,  be a basis of $\MAVal_k(\R^n)$. Theorem~\ref{theorem:GroebnerAlgModuleMinor} allows us to find $g_j\in \mathcal{O}_{\C^n}$ such that
		\begin{align*}
			F(w_1,\dots,w_k)=\sum_{j=1}^{N_{n,k}}g_j(w_k)Q(\Psi_j)[w],
		\end{align*}
		where $g_j\in \mathcal{O}_{\C^n}$ satisfies for $\delta>0$
		\begin{align*}
			|g_j(z)|\le C_{\delta}(1+|z|)^{(2+n)N_{n,k}}\sup_{D_\delta(0,\dots,0,z)}|F(\zeta)|
		\end{align*}
		for some constant $C_{\delta}$ depending on $\delta>0$ and the chosen basis only. Since $F$ satisfies \eqref{equation:theoremPWSValuations}, we obtain for every $N\in\mathbb{N}$ a constant $\tilde{C}_N>0$ such that
		\begin{align*}
			|g_j(z)|\le \tilde{C}_N(1+|z|)^{-N}e^{h_A(\Im(z))}.
		\end{align*}
		In particular, the functions $g_j\in \mathcal{O}_{\C^n}$ satisfy the condition of the Paley--Wiener--Schwartz Theorem~\ref{theorem:PaleyWienerSchwartz}, so there exist $\phi_j\in C^\infty_c(\R^n)$ with $\supp\phi_j\subset A$ and $g_j=\mathcal{F}(\phi_j)$. Consider the valuation $\mu\in\VConv_k(\R^n)$ given by
		\begin{align*}
			\mu(f)=\sum_{j=1}^{N_{n,k}}\int_{\R^n}\phi_jd\Psi_j(f)\quad\text{for}~f\in\Conv(\R^n,\R).
		\end{align*}
		Then $\mu$ is a smooth valuation by Lemma~\ref{lemma:SmoothValTranslationSmooth}, and it is easy to see that $\supp\mu\subset A$. Moreover, from Theorem~\ref{theorem:FourierMA} (and \eqref{equation:FforMAValuations}) and the definition of $\F(\mu)$ we deduce that
		\begin{align*}
			\F(\mu)[w]=&\sum_{j=1}^{N_{n,k}}\mathcal{F}(\phi_j)[w_k]Q(\Psi_j)[w]=F(w).
		\end{align*}
		The claim follows.
	\end{proof}

\begin{proof}[Proof of Theorem~\ref{maintheorem:PWSSmoothValuations}]
	Let us first show that smooth valuations satisfy the desired inequality. If $\mu\in\VConv_k(\R^n)$, then $\F(\mu)\in \O_{\C^n}\M^2_k$, and thus for $\delta>0$ 
	\begin{align*}
		|\F(\mu)[w]\le C_\delta \left(\prod_{j=1}^{k}|w_j|^2\right)\left(1+|w_k|\right)^{(2+n)N_{n,k}}\sup_{\zeta\in D_{\delta(0,\dots,0,w_k)}}|\F(\mu)[\zeta]|
	\end{align*}
by Corollary 
\ref{corollary:EstimateFourierDiagonalCoordinates}, where $C_\delta>0$ is a constant depending on $\delta>0$ only. If $\mu$ is in addition a smooth valuation, we can combine this estimate with the estimate in Lemma~\ref{lemma:PWS_SatisfiedBySmoothValuations} to obtain for any $N\in\mathbb{N}$, $\delta>0$ a constant $C_{N,\delta}>0$  such that 
\begin{align*}
	|\F(\mu)[w_1,\dots,w_k]\le C_{N,\delta}\left(\prod_{j=1}^{k-1}|w_j|^2\right) (1+|w_k|)^{-N}e^{h_A(\Im(w_k))}.
\end{align*} 
Since we can recover $\mathcal{F}(\GW(\mu))$ from $\F(\mu)$ using the relation
\begin{align*}
	&\mathcal{F}(\GW(\mu))[w]=\frac{(-1)^k}{k!k^{2k-2}}\F(\mu)\left[kw_1-\sum_{j=1}^kw_j,\dots,kw_{k-1}-\sum_{j=1}^kw_j,\sum_{j=1}^kw_j\right],
\end{align*}
this implies the desired inequality.\\
Now assume that $F\in\widehat{\M^2_k}$ satisfies the given estimates for every $N\in\mathbb{N}$. Then $\tilde{F}\in\mathcal{O}_{\Mat_{n,k}(\C)}$ given by
\begin{align*}
	\tilde{F}(w_1,\dots,w_k)=F\left(\frac{w_1+w_k}{k},\dots,\frac{w_{k-1}+w_k}{k},\frac{w_k}{k}-\sum_{j=1}^{k-1}\frac{w_j}{k}\right)
\end{align*}
belongs to $\O_{\C^n}\M^2_k$, and for every $N\in\mathbb{N}$ we obtain a constant $\tilde{C}_N>0$ such that
\begin{align*}
	|\tilde{F}(w_1,\dots,w_k)|\le &\tilde{C}_N\prod_{j=1}^{k-1}(1+|w_j|)^2(1+|w_k|)^{-N}e^{h_A(\Im(w_k))}.
\end{align*}
In particular, $\tilde{F}$ satisfies the estimate in Theorem~\ref{theorem:PWSValuations}, and we find a smooth valuation $\mu\in\VConv_k(\R^n)$ with $\supp\mu\subset A$ and $\tilde{F}=\F(\mu)$. Unraveling the definitions, this shows that $F=\mathcal{F}(\GW(\mu))$, which completes the proof.
\end{proof}

Theorem~\ref{maintheorem:DescriptionsSmoothValuations} follows from the following slightly refined result.
\begin{theorem}
	\label{theorem:DescriptionSmoothValuations_support}
	Let $A\subset \R^n$ be compact and convex. The following are equivalent for $\mu\in\VConv_k(\R^n)$: 
	\begin{enumerate}
		\item $\mu$ is a smooth valuation and $\supp\mu\subset A$.
		\item For a basis $\Psi_j$ of $\MAVal_k(\R^n)$ there exists functions $\phi_j\in C^\infty_c(\R^n)$ such that $\supp\phi_j\subset A$ and
		\begin{align*}
			\mu(f)=\sum_{j=1}^{N_{n,k}}\int_{\R^n}\phi_jd\Psi_j(f).
		\end{align*}
		\item There exists a differential form $\omega\in \Omega^{n-k}_c(\R^n)\otimes \Lambda^k((\R^n)^*)^*$ with $\supp\omega\subset \pi^{-1}(A)$ such that
		\begin{align*}
			\mu(f)=D(f)[\omega].
		\end{align*}
	\end{enumerate}
\end{theorem}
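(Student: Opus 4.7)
The plan is to establish the cycle of implications $(1)\Rightarrow(2)\Rightarrow(3)\Rightarrow(1)$. Essentially all of the analytic work has been done in the preceding sections, so the role of this theorem is to package those results; the only substantial step is $(1)\Rightarrow(2)$, which invokes the full force of Theorem~\ref{theorem:PWSValuations}.

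For $(1)\Rightarrow(2)$, suppose $\mu\in\VConv_k(\R^n)$ is smooth with $\supp\mu\subset A$. The function $\F(\mu)$ lies in $\O_{\C^n}\M^2_k$ by Theorem~\ref{theorem:RepresentationFMu}, and Lemma~\ref{lemma:PWS_SatisfiedBySmoothValuations} provides, for every $N\in\mathbb{N}$, an estimate
\begin{align*}
|\F(\mu)[w]|\le C_N\prod_{j=1}^{k-1}(1+|w_j|)^6(1+|w_k|)^{-N}e^{h_A(\Im w_k)+\frac{1}{k}\sum_{j<k}(h_A(\Im w_j)+h_A(-\Im w_j))}.
\end{align*}
Restricting to $|w_j|\le\delta_N$ for $j<k$ absorbs the factors involving $w_1,\dots,w_{k-1}$ into a constant, so $\F(\mu)$ satisfies hypothesis \eqref{equation:theoremPWSValuations} of Theorem~\ref{theorem:PWSValuations}. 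That theorem produces a smooth valuation $\tilde\mu\in\VConv_k(\R^n)$ with $\supp\tilde\mu\subset A$ and a representation $\tilde\mu(f)=\sum_{j=1}^{N_{n,k}}\int_{\R^n}\phi_jd\Psi_j(f)$ with $\phi_j\in C^\infty_c(\R^n)$ supported in $A$, satisfying $\F(\tilde\mu)=\F(\mu)$. Since $\GW$ is injective by Theorem~\ref{theorem:GWDistributions} and the Fourier--Laplace transform is injective on compactly supported distributions, the change of coordinates relating $\F$ and $\mathcal{F}\circ\GW$ shows $\mu=\tilde\mu$, which yields the representation in~(2).

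For $(2)\Rightarrow(3)$, Theorem~\ref{theorem:ClassificationMAVal} represents each basis element as $\Psi_j(f)[B]=D(f)[1_{p_1^{-1}(B)}\tau_j]$ for a translation-invariant differential form $\tau_j$. Consequently $\int_{\R^n}\phi_j d\Psi_j(f)=D(f)[(p_1^*\phi_j)\tau_j]$ whenever $\phi_j\in C^\infty_c(\R^n)$, and the form $\omega:=\sum_{j=1}^{N_{n,k}}(p_1^*\phi_j)\tau_j$ lies in $\Omega^{n-k}_c(\R^n)\otimes\Lambda^k((\R^n)^*)^*$ and satisfies $\supp\omega\subset p_1^{-1}(A)$ together with $\mu(f)=D(f)[\omega]$.

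For $(3)\Rightarrow(1)$, smoothness follows directly from Lemma~\ref{lemma:SmoothValTranslationSmooth}. For the support statement, suppose $f,h\in\Conv(\R^n,\R)$ coincide on an open neighborhood $U$ of $A$. Then their subdifferentials agree over $U$, and the differential cycles $D(f)$ and $D(h)$ agree as currents on $p_1^{-1}(U)$. Since $\supp\omega\subset p_1^{-1}(A)\subset p_1^{-1}(U)$, this implies $\mu(f)=D(f)[\omega]=D(h)[\omega]=\mu(h)$, and Proposition~\ref{proposition:characterizationSupport} gives $\supp\mu\subset A$. The main obstacle in this proof is hidden inside Theorem~\ref{theorem:PWSValuations}, which relies on the Gr\"obner-basis division algorithm of Section~\ref{section:DivisionAlg} combined with the classical Paley--Wiener--Schwartz theorem; here that machinery is invoked as a black box.
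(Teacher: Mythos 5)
Your proposal is correct and follows essentially the same route as the paper: (1)$\Rightarrow$(2) by combining Theorem~\ref{theorem:RepresentationFMu} and Lemma~\ref{lemma:PWS_SatisfiedBySmoothValuations} to verify the hypotheses of Theorem~\ref{theorem:PWSValuations}, (2)$\Rightarrow$(3) via Theorem~\ref{theorem:ClassificationMAVal}, and (3)$\Rightarrow$(1) via Lemma~\ref{lemma:SmoothValTranslationSmooth}. You merely spell out details the paper leaves implicit (absorbing the $w_1,\dots,w_{k-1}$ factors into the constant, identifying $\mu$ with the valuation produced by Theorem~\ref{theorem:PWSValuations} via injectivity of $\F$, and the locality argument for the support in (3)$\Rightarrow$(1)), all of which are sound.
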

\begin{proof}
	(2)$\Rightarrow$ (3) is true by Theorem~\ref{theorem:ClassificationMAVal}, while (3) $\Rightarrow$ (1) follows from Lemma~\ref{lemma:SmoothValTranslationSmooth}. Let us show the implication (1) $\Rightarrow$ (2). If (1) holds, then Theorem~\ref{theorem:RepresentationFMu} and Lemma~\ref{lemma:PWS_SatisfiedBySmoothValuations} show that $\F(\mu)$ satisfies the assumptions in Theorem~\ref{theorem:PWSValuations}, which implies the claim.
\end{proof}

\section{Affine invariant subspaces}
	\label{section:AffInariantSubspaces}
	\subsection{Sequentially dense subspaces of $\VConv_k(\R^n)$}
	\label{section:denseSubspaces}
	In this section we prove the following stronger version of Theorem~\ref{maintheorem:WeakIrreducibility} (3). The first two parts are proved in Section \ref{section:affineInvSubspaces}.
	\begin{theorem}
		\label{theorem:weakIrred}
		Let $W\subset \VConv_k(\R^n)$ be an affine invariant subspace such that there exists a positive semi-definite quadratic form $q$ on $\R^n$ and $\mu\in W$ with $\mu(q)\ne 0$. Then $W\subset\VConv_k(\R^n)$ is sequentially dense.
	\end{theorem}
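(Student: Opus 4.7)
The plan is to deduce the theorem from parts (1) and (2) of Theorem~\ref{maintheorem:WeakIrreducibility} combined with the classification of closed affine invariant subspaces from Section~\ref{section:affineInvSubspaces}, by identifying
\[
W_0 := \{\nu \in \VConv_k(\R^n) : \nu(q') = 0 \text{ for every PSD quadratic form } q' \text{ on } \R^n\}
\]
as the maximal proper closed affine invariant subspace of $\VConv_k(\R^n)$.

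First I would verify that $W_0$ is closed and affine invariant. Closedness is automatic, since $W_0$ is the intersection of kernels of the continuous evaluation functionals $\nu \mapsto \nu(q')$. For $g(x) = Ax + b \in \Aff(n,\R)$ and a PSD quadratic form $q'$, the function $q' \circ g$ differs from $q' \circ A$ (which has matrix $A^T M_{q'} A$ and is again PSD) by an affine summand; dual epi-translation invariance of $\nu$ then gives $(\pi(g)\nu)(q') = \nu(q' \circ A)$, and the bijectivity of $q' \mapsto q' \circ A$ on the cone of PSD forms implies $\pi(g)\nu \in W_0$ whenever $\nu \in W_0$. The hypothesis $\mu(q) \ne 0$ for some $\mu \in W \subseteq \overline{W}$ gives $\overline{W} \not\subseteq W_0$, and Theorem~\ref{maintheorem:WeakIrreducibility}(2) therefore forces $W_0 \subsetneq \overline{W}$.

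The main step, and the main expected obstacle, is to show that no proper closed affine invariant subspace strictly contains $W_0$, which together with $W_0 \subsetneq \overline{W}$ forces $\overline{W} = \VConv_k(\R^n)$. For this I would invoke the classification from Section~\ref{section:affineInvSubspaces}, which represents each closed affine invariant subspace as the closure of a space of valuations built, via the Monge--Amp\`ere construction underlying Theorem~\ref{maintheorem:DescriptionsSmoothValuations}, from $\GL(n,\R)$-invariant data on $\MAVal_k(\R^n)$. A direct mixed discriminant computation yields
\[
\Psi[\phi](q') = \int \phi \cdot \det(M_{q'}[k], Q_1, \dots, Q_{n-k})
\]
for $\Psi = \MA(\cdot[k], Q_1, \dots, Q_{n-k}) \in \MAVal_k(\R^n)$ and $\phi \in C^\infty_c(\R^n)$, which translates the defining condition of $W_0$ into an annihilation condition on the scalar averages $\int \phi_j$. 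Using the $\GL(n,\R)$-equivariant bijection $\MAVal_k(\R^n) \cong \M_k^2$ from Lemma~\ref{lemma:QBijectiveMA} and the uniqueness of $\Delta_k^2$ as a highest weight vector in $\M_k^2$ (obtained from Corollary~\ref{corollary:lowestWeightvectorsquaredMinors} by observing that only $d = 0$ yields an element of $\M_k^2$), one deduces that $\MAVal_k(\R^n)$ is irreducible as a $\GL(n,\R)$-representation; combined with the classification this forces $W_0$ to be the maximal proper closed affine invariant subspace. It follows that $\overline{W} = \VConv_k(\R^n)$, and since $\VConv_k(\R^n)$ is metrizable, the topological density of $W$ coincides with its sequential density.
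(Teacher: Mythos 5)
Your argument, up to its last sentence, is essentially the paper's own ``short route'': the closure $\overline{W}$ is a closed affine invariant subspace, the classification of Section~\ref{section:affineInvSubspaces} (Theorem~\ref{theorem:ClassificationAffineInvariantSubspaces} together with Lemma~\ref{lemma:description_Wd}) identifies it with some $W_d$, and the existence of $\mu\in W$ with $\mu(q)\ne 0$ forces $d=0$, i.e.\ $\overline{W}=\VConv_k(\R^n)$; your space of valuations annihilating all positive semi-definite forms is exactly $W_1$, and there is no circularity in invoking Theorem~\ref{maintheorem:WeakIrreducibility}(1),(2), since Section~\ref{section:affineInvSubspaces} does not use Theorem~\ref{theorem:weakIrred}. (Your detour through re-deriving the irreducibility of $\MAVal_k(\R^n)$ is unnecessary but harmless; it is quoted as Theorem~\ref{theorem:MAVal_Irreducible}.) However, this only yields \emph{topological} density, which is the content of Theorem~\ref{maintheorem:WeakIrreducibility}(3).

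The genuine gap is the final step: ``since $\VConv_k(\R^n)$ is metrizable, topological density coincides with sequential density.'' This is unjustified and in fact false. The topology is that of uniform convergence on compact subsets of $\Conv(\R^n,\R)$, and by Proposition~\ref{proposition:compactnessConv} the compact sets are those bounded on every ball by some function $C(m)$; a diagonal argument (e.g.\ with the functions $D(j)\max(|x|-(j-1),0)$) shows that no countable family of compacta is cofinal, so no countable family of seminorms $\|\cdot\|_K$ generates the topology and the space is not first countable --- only the subspaces $\VConv_A(\R^n)$ of valuations with support in a fixed compact set carry norms (Proposition~\ref{proposition:NormsSubspaces}). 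The paper flags precisely this point in the remark preceding its proof: the density statement of Theorem~\ref{maintheorem:WeakIrreducibility}(3) admits the short argument you give, while sequential density ``requires a slightly more careful treatment.'' The paper's proof therefore never leaves the Banach spaces $\VConv_{k,A}(\R^n)$: it mollifies and $\SO(n)$-averages an element of $W$ to produce a smooth rotation invariant valuation $\int_{\R^n}\phi_0(|x|)\,d\Hess_k$ with nonzero total mass inside $\overline{W_{B_1(0)}}$, uses dilations and translations (an approximate identity argument) to show $\int_{\R^n}\psi\, d\Hess_k\in\overline{W_A}$ for all $\psi\in C_A(\R^n)$, upgrades $\Hess_k$ to all of $\MAVal_k(\R^n)$ via Theorem~\ref{theorem:MAVal_Irreducible}, and concludes $\overline{W\cap\VConv_A(\R^n)}=\VConv_{k,A}(\R^n)$ for every compact convex $A$ with nonempty interior using Theorem~\ref{theorem:DescriptionSmoothValuations_support} and Theorem~\ref{theorem:approxBySmoothInConvexSupport}; sequential density then follows because every valuation has compact support and closures in the Banach spaces $\VConv_{k,A}(\R^n)$ are sequential. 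Note also that you cannot simply replace ``closure'' by ``sequential closure'' in your argument, since the sequential closure of $W$ need not be closed, so the classification of \emph{closed} affine invariant subspaces does not apply to it. To repair the proposal you must localize supports as the paper does (or otherwise prove a sequentiality property of $\VConv_k(\R^n)$, which is not available).
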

	Let us remark that the version in Theorem~\ref{maintheorem:WeakIrreducibility} (3) admits a rather short proof based on the classification of all closed affine invariant subspaces in Section \ref{section:affineInvSubspaces}, however, the version stated above requires a slightly more careful treatment. The proof presented here relies on the following representation theoretic description of the space $\MAVal_k(\R^n)$.\\
	We define a representation of $\GL(n,\R)$ on $\MAVal_k(\R^n)$ in the following way: For $g\in \GL(n,\R)$ and $\Psi\in\MAVal_k(\R^n)$, $\tilde{\pi}(g)\Psi\in \MAVal_k(\R^n)$ is defined by
	\begin{align*}
		\tilde{\pi}(g)\Psi (f)[B]=\Psi(f\circ g)[g^{-1}B]
	\end{align*}
	for $f\in\Conv(\R^n,\R)$, $B\subset\R^n$ bounded Borel set. Equivalently, for $\phi\in C_c(\R^n)$,
	\begin{align*}
		[\tilde{\pi}(g)\Psi] (f)[\phi]=\Psi(f\circ g)[\phi\circ g].
	\end{align*}
	If we equip $C_c(\R^n)$ with the operation of $\GL(n,\R)$ defined by $g\cdot \phi:=\phi\circ g^{-1}$, this implies that the map
	\begin{align*}
		C_c(\R^n)\otimes \MAVal_k(\R^n)\rightarrow\VConv_k(\R^n)
	\end{align*}
	induced by $(\phi,\Psi)\mapsto \Psi[\phi]$ is $\GL(n,\R)$-equivariant.	\begin{theorem}[\cite{KnoerrMongeAmpereoperators2024} Theorem 1.3]
		\label{theorem:MAVal_Irreducible}
		$\MAVal_k(\R^n)$ is an irreducible representation of $\GL(n,\R)$.
	\end{theorem}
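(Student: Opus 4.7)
The plan is to transfer the problem through the isomorphism of Lemma~\ref{lemma:QBijectiveMA}, which identifies $\MAVal_k(\R^n)$ with the explicit polynomial space $\M^2_k \subset \P(\Mat_{n,k}(\C))$, and then to verify irreducibility on the polynomial side using the highest weight machinery developed in Section~\ref{section:Prelim_Minors}. First I would check that the bijection $Q\colon \MAVal_k(\R^n)\to\M^2_k$ is $\GL(n,\R)$-equivariant, where $\M^2_k$ carries the natural polynomial action $(g\cdot P)[w_1,\dots,w_k] = P(g^T w_1,\dots,g^T w_k)$ studied in Section~\ref{section:Prelim_Minors}. The cleanest route is to exploit the $\GL(n,\R)$-equivariant map $C_c(\R^n)\otimes\MAVal_k(\R^n)\to\VConv_k(\R^n)$, $(\phi,\Psi)\mapsto\Psi[\phi]$, together with the Fourier identity
\begin{align*}
\mathcal{F}(\GW(\Psi[\phi]))[w]=\frac{(-1)^k}{k!}\,Q(\Psi)[w]\,\mathcal{F}(\phi)\!\left[\sum_{j=1}^{k}w_j\right]
\end{align*}
of Theorem~\ref{theorem:FourierMA}. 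Pulling back by $g$ replaces the arguments $f$ and $\phi$ by $f\circ g$ and $\phi\circ g$; passing this through the Goodey--Weil construction and the Fourier transform yields a column-wise linear change of variables $w_j\mapsto g^{T}w_j$ on $\Mat_{n,k}(\C)$, exactly matching the action on $\M^2_k$. This bookkeeping (tracking transposes, inverses, Jacobian factors between the $\phi$ side and the $\Psi$ side) is the step most prone to sign/convention errors.

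Granted the equivariance of $Q$, it suffices to prove that $\M^2_k$ is irreducible as a complex representation of $\GL(n,\R)$. Since $\GL(n,\R)$ is Zariski dense in $\GL(n,\C)$, every complex $\GL(n,\R)$-invariant subspace is automatically $\GL(n,\C)$-invariant, so this is equivalent to irreducibility as a $\GL(n,\C)$-representation. As $\GL(n,\C)$ is reductive and $\M^2_k$ is finite dimensional, it splits into a direct sum of irreducible polynomial representations, and the multiplicity of each irreducible summand equals the dimension of the corresponding space of highest weight vectors in $\M^2_k$. Therefore irreducibility is equivalent to the space of $\GL(n,\C)$-highest weight vectors in $\M^2_k$ being one-dimensional.

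The final step uses Corollary~\ref{corollary:lowestWeightvectorsquaredMinors} to pin down these highest weight vectors. Every element of $\M^2_k$ is homogeneous of multidegree $(2,2,\dots,2)$ in the $k$ columns of $w$, because each $k$-minor is multilinear in the columns, so a product of two of them is of bi-degree two in each column. The corollary classifies the highest weight vectors of the larger module $\P(\C^n)\M^2_k$ as the functions
\begin{align*}
c\,\Delta_k^2(w)\,w_{1,k}^d,\qquad c\in\C\setminus\{0\},\;d\in\mathbb{N},
\end{align*}
whose column multidegree is $(2,\dots,2,2+d)$. Intersecting with $\M^2_k$ forces $d=0$, leaving the one-dimensional space $\C\cdot\Delta_k^2$. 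Hence $\M^2_k$ is $\GL(n,\C)$-irreducible, and the theorem follows. The main obstacle is Step~1, namely verifying the equivariance of $Q$; once that is in place, the remainder is a direct application of the polynomial results of Section~\ref{section:Prelim_Minors}.
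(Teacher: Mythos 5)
The paper itself does not prove this statement: it is quoted verbatim from \cite{KnoerrMongeAmpereoperators2024} (Theorem 1.3 there), so there is no internal proof to compare against; your proposal is a derivation from the other imported ingredients (Theorem~\ref{theorem:FourierMA}, Lemma~\ref{lemma:QBijectiveMA}) combined with Corollary~\ref{corollary:lowestWeightvectorsquaredMinors}, and it is essentially correct. Two remarks. First, the equivariance in your Step~1 holds only up to a character: pushing the action through the Goodey--Weil construction gives $\mathcal{F}(\GW(\pi(g)\mu))[w]=\mathcal{F}(\GW(\mu))[g^Tw_1,\dots,g^Tw_k]$, while $\mathcal{F}(\phi\circ g)\left[g^T\xi\right]=|\det g|^{-1}\mathcal{F}(\phi)[\xi]$, so from Theorem~\ref{theorem:FourierMA} one obtains $Q(\tilde{\pi}(g)\Psi)[w]=|\det g|^{-1}\,Q(\Psi)[g^Tw]$ rather than strict equivariance; the claim ``exactly matching the action'' should be corrected, but the Jacobian twist is harmless, since twisting by a one-dimensional character does not change the lattice of invariant subspaces, and the transfer of irreducibility through the bijection of Lemma~\ref{lemma:QBijectiveMA} goes through. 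Second, your highest-weight count is sound once you record that (i) $\M^2_k$ is $\GL(n,\C)$-invariant (columnwise multiplication by $g^T$ sends $k$-minors to linear combinations of $k$-minors), (ii) every element of $\M^2_k$ has column multidegree $(2,\dots,2)$, and (iii) a highest weight vector of the subrepresentation $\M^2_k$ is in particular one of $\P(\C^n)\M^2_k$, so Corollary~\ref{corollary:lowestWeightvectorsquaredMinors} forces $d=0$; complete reducibility of the rational representation $\P(\Mat_{n,k}(\C))$ then yields a single irreducible summand, and Zariski density of $\GL(n,\R)$ in $\GL(n,\C)$ passes irreducibility to the real group, exactly as the paper argues in Lemma~\ref{lemma:propertiesAssociatedIdeal} and Theorem~\ref{theorem:ClassificationAffineInvariantSubspaces}. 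The one caveat is external: Theorem~\ref{theorem:FourierMA} and Lemma~\ref{lemma:QBijectiveMA} are taken from the same source as the theorem itself, so whether your argument avoids circularity relative to the original proof cannot be judged from this paper; within the present paper's logical framework, however, it is a valid deduction and a reasonable alternative to the bare citation.
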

	In the following proof, $\Hess_k\in\MAVal_k(\R^n)$ denotes the $k$th Hessian measure, which is given by $d\Hess_k(f)=[D^2f(x)]_kd\vol_n$ for $f\in \Conv(\R^n,\R)\cap C^2(\R^n)$, where $[D^2f(x)]_k$ denotes the $k$th elementary symmetric function of the eigenvalues of the Hessian of $f$.

	\begin{proof}[Proof of Theorem \ref{theorem:weakIrred}]
		Let us first introduce some notation. For $A\subset\R^n$ compact, set $W_A:=W\cap \VConv_A(\R^n)$, i.e. $W_A$ consists of the valuations in $W$ supported on $A$. Since $\VConv_{k,A}(\R^n)$ is a Banach space, the closure $\overline{W_A}\subset\VConv_{k,A}(\R^n)$ coincides with its sequential closure, so $\overline{W_A}$ is contained in the sequential closure of $W$ for every $A\subset\R^n$ compact. Since every valuation is compactly supported, it is now sufficient to show that $\overline{W_A}=\VConv_{k,A}(\R^n)$ for every compact and convex subset with nonempty interior.\\
		
		Before we continue with the proof, observe that if $g\in\Aff(n,\R)$ and $A,B\subset\R^n$ are compact such that $gA\subset B$, then
		\begin{align*}
			\pi(g):\overline{W_A}\rightarrow\overline{W_B}
		\end{align*}
		is well defined since $W$ is affine invariant and $\Aff(n,\R)$ acts continuously on $\VConv_k(\R^n)$, compare Lemma \ref{lemma:continuityActionAff_VConv}. We will use this observation throughout the proof without explicitly mentioning it. Let us now turn to the proof.\\
		If $\mu\in W$ is a valuation with $\mu(q)\ne 0$, then there exists $\epsilon>0$ such that $\mu(q+\epsilon|\cdot|^2)\ne 0$ by continuity. Since $W$ is invariant under $\GL(n,\R)$, this implies that there exists a valuation $\mu\in W$ with $\mu(|\cdot|^2)\ne0$. Applying a dilation to $\mu$, we may assume that $\supp\mu\subset B_{1/2}(0)$.\\	
		Let $\phi\in C^\infty_c(\R^n)$ be a nonnegative function with $\int_{\R^n}\phi(x)dx=1$ and $\supp\phi\subset B_{1/2}(0)$. Then as in the proof of Lemma \ref{lemma:MollifiedValuations}, the Lebesgue--Bochner intergral
		\begin{align*}
			\mu_\epsilon:=\epsilon^{-n}\int_{\R^n}\phi\left(\frac{y}{\epsilon}\right)\pi(-y)\mu dy
		\end{align*}
		belongs to $\overline{W_{B_1(0)}}$ for $0<\epsilon\le 1$ and converges to $\mu$ for $\epsilon\rightarrow0$, compare Corollary \ref{corollary:ApproxTranslationSmoothValuations}. In particular, there exists $\epsilon\in (0,1)$ such that $\mu_\epsilon(|\cdot|)\ne 0$. By Lemma \ref{lemma:MollifiedValuations}, $\mu_\epsilon$ is a smooth valuation. For this choice of $\epsilon$, we average $\mu_\epsilon$ with respect to the Haar probability measure on $\SO(n)$,
		\begin{align*}
			\mu_0:=\int_{\SO(n)}\pi(g)\mu_\epsilon dg,
		\end{align*}
		to obtain a smooth and $\SO(n)$-invariant valuation $\mu_0$, which satisfies $\mu_0(|\cdot|)=\mu_\epsilon(|\cdot|)\ne 0$. By construction, the integrand belongs to $\overline{W_{B_1(0)}}$ for every $g\in \SO(n)$. Thus $\mu_0\in\overline{W_{B_1(0)}}$ as well.\\
		For $\delta>0$ consider the valuation
		\begin{align*}
			\mu^\delta:=\delta^{-2k}\pi(d_\delta)\mu_0,
		\end{align*}
		where $d_\delta\in\GL(n,\R)$ is the dilation $d_\delta(x)=\delta x$. Then $\mu^\delta\in \overline{W_{B_{\delta}(0)}}$ for every $\delta>0$.\\
		
		By \cite[Theorem 1.3]{KnoerrSingularvaluationsHadwiger2022} there exists $\phi_0\in C_c([0,\infty))$ such that $\mu_0(f)=\int_{\R^n}\phi_0(|x|)d\Hess_k(f)$ for all $f\in\Conv(\R^n,\R)$. Moreover, $\supp\phi_0\subset B_1(0)$ by \cite[Theorem 1.4]{KnoerrSingularvaluationsHadwiger2022}. We obtain 
		\begin{align*}
			0\ne \mu_0(|\cdot|^2)= \int_{\R^n}\phi_0(|x|)d\Hess_k(|\cdot|^2)=c_{n,k}\int_{\R^n}\phi_0(|x|)d\vol_n(x),
		\end{align*}
		so we may assume that $\int_{\R^n}\phi_0(|x|)d\vol_n(x)=1$. For $f\in\Conv(\R^n,\R)\cap C^2(\R^n)$, this implies
		\begin{align*}
			\mu^\delta(f)=&\delta^{-2k}\int_{\R^n}\phi_0(x)d\Hess_k(f\circ d_\delta)=\int_{\R^n}\phi_0(x)[D^2f(\delta x)]_kdx\\
			=&\int_{\R^n}\delta^{-n}\phi_0\left(\frac{x}{\delta}\right)[D^2f(x)]_kdx=\int_{\R^n}\phi_\delta(x)d\Hess_k(f)
		\end{align*}
		for $\phi_\delta(x):=\delta^{-n}\phi_0\left(\frac{x}{\delta}\right)$. By continuity, this holds for every $f\in\Conv(\R^n,\R)$. Note that this implies that $\phi_\delta$ is a smooth approximation of the $\delta$-distribution.
		
		For $A\subset \R^n$ compact and convex with nonempty interior, let $C_A(\R^n)\subset C_c(\R^n)$ denote the subspace of all functions with support contained in $A$. Consider the space
		\begin{align*}
			F_A=\left\{\psi\in C_A(\R^n):\int_{\R^n}\psi d\Hess_k(\cdot)\in \overline{W_A}\right\}.
		\end{align*}
		We claim that $F_A= C_A(\R^n)$. Due to Corollary \ref{corollary:trivialBoundMAOperators}, $F_A\subset C_A(\R^n)$ is closed. Since $A$ is convex with nonempty interior, it is thus sufficient to show that every $\psi\in C_A(\R^n)$ with support contained in the interior of $A$ belongs to $F_A$. Let $\psi$ be such a function and choose $\delta_0>0$ such that $\supp\phi+B_{\delta_0}(0)\subset A$. Then the valuation
		\begin{align*}
			\mu_{\psi,\delta}:=\int_{\R^n} \psi(y)\pi(y)\mu^\delta dy
		\end{align*}
		belongs to $\overline{W_A}$ for every $0<\delta<\delta_0$, since the integrand belongs to $\overline{W_A}$. Using the representation of $\mu^\delta$, and the equivariance of $\Hess_k$ with respect to translations, we have
		\begin{align*}
			\mu_{\psi,\delta}(f)=&\int_{\R^n}\psi(y)\int_{\R^n}\phi_\delta(x)d\Hess_k(f(\cdot+y))dy\\
			=&\int_{\R^n}\left[\int_{\R^n}\psi(y)\phi_\delta(x-y)dy\right]d\Hess_k(f).
		\end{align*}
		By construction, the function $x\mapsto \int_{\R^n}\psi(y)\phi_\delta(x-y)dy$ is supported on $\supp\psi+B_{\delta}(0)\subset A$ for $0<\delta<\delta_0$. In particular, this function belongs to $F_A$ for every $0<\delta<\delta_0$. Since $\phi_\delta$ is an approximation of the $\delta$-distribution, this function converges uniformly to $\psi$ for $\delta\rightarrow0$. As $F_A$ is closed, this implies $\psi\in F_A$.\\
		
		We thus obtain that $F_A=C_A(\R^n)$ for every compact and convex set $A$ with nonempty interior. Now consider the subspace of $\MAVal_k(\R^n)$ given by
		\begin{align*}
			E=\bigg\{\Psi\in\MAVal_k(\R^n):& \int_{\R^n}\psi d\Psi\in \overline{W_A}\quad\text{for every}~\psi\in C_A(\R^n),\\
			&A\subset\R^n~\text{compact, convex with nonempty interior}\bigg\}.
		\end{align*}
		Then $\Hess_k\in E$ by the previous discussion. We claim that $E$ is $\GL(n,\R)$-invariant. Thus let $g\in\GL(n,\R)$ be given. Fix a compact and convex set $A$ with nonempty interior. For $\Psi\in\MAVal_k(\R^n)$, we will use the notation $\Psi[\phi]$ for $\phi\in C_c(\R^n)$ for the valuation
		\begin{align*}
			\Psi[\phi](f)=\int_{\R^n}\phi d\Psi(f).
		\end{align*}
		From the definition of the action, we obtain
		\begin{align*}
			(\tilde{\pi}(g)\Psi)[\phi]=\pi(g)[\Psi[\phi\circ g]].
		\end{align*}
		If $\phi\in C_A(\R^n)$, then $\phi\circ g\in C_{g^{-1}A}(\R^n)$. Since $\Psi\in E$ by assumption, $\Psi[\phi\circ g]\in \overline{W_{g^{-1}A}}$ for every $\phi\in C_A(\R^n)$. But then
		\begin{align*}
			(\tilde{\pi}(g)\Psi)[\phi]=\pi(g)[\Psi[\phi\circ g]]\in \pi(g)(\overline{W_{g^{-1}A}})\subset \overline{W_A}
		\end{align*}
		for every $\phi\in C_A(\R^n)$, so $\tilde{\pi}(g)\Psi\in E$.\\
		We thus see that $E$ is a nontrivial and $\GL(n,\R)$-invariant subspace of $\MAVal_k(\R^n)$, so Theorem \ref{theorem:MAVal_Irreducible} implies $E=\MAVal_k(\R^n)$. This implies in particular, that for any compact and convex set $A\subset\R^n$, the valuations $\Psi[\phi]$ for $\Psi\in\MAVal_k(\R^n)$ and $\phi\in C_A(\R^n)$ are all contained in $\overline{W_A}$. However, since $A$ is convex, Theorem \ref{theorem:DescriptionSmoothValuations_support} shows that every smooth valuation in $\VConv_{k,A}(\R^n)$ is a sum of valuations of this form. Since smooth valuations are dense in $\VConv_{k,A}(\R^n)$ by Theorem \ref{theorem:approxBySmoothInConvexSupport}, $\overline{W_A}$ is a dense subspace of $\VConv_{k,A}(\R^n)$. Since it is closed, this shows $\overline{W_A}=\VConv_{k,A}(\R^n)$.\\
		As discussed in the beginning, the spaces $\overline{W_A}$ are all contained in the sequential closure of $W$. Since their union is $\VConv_k(\R^n)$, we obtain that $W$ is sequentially dense in $\VConv_k(\R^n)$.
	\end{proof}
	\begin{remark}
		The proof actually shows a stronger result: For every compact and convex set $A\subset\R^n$ with nonempty interior, $W\cap \VConv_{k,A}(\R^n)$ is (sequentially) dense in $\VConv_{k,A}(\R^n)$.
	\end{remark}

	Theorem~\ref{theorem:weakIrred} can be used to obtain a variety of density results for different types of valuations. We present two simple examples.
	\begin{corollary}
		\label{corollary:densityMixedMA}
		Let $\mathcal{F}\subset \Conv(\R^n,\R)$ and $\mathcal{C}\subset C_c(\R^n)$ be affine invariant subsets. If there is $f_0\in\mathcal{F}$ and $\phi_0\in \mathcal{C}$ such that 
		\begin{align*}
			\int_{\R^n}\phi_0(x)d\Hess_{n-k}(f_0)\ne 0,
		\end{align*}
		then valuations of the form
		\begin{align*}
			f\mapsto \int_{\R^n}\phi(x)d\MA(f[k],f_1,\dots,f_{n-k})
		\end{align*}
		for $f_1,\dots,f_{n-k}\in\mathcal{F}$, $\phi\in\mathcal{C}$, span a sequentially dense subspace of $\VConv_k(\R^n)$.
	\end{corollary}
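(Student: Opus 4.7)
The plan is to apply Theorem~\ref{theorem:weakIrred} to the linear span $W$ of the valuations described in the statement. Two conditions must be verified: $W$ is affine invariant, and $W$ contains some $\mu$ with $\mu(q)\ne 0$ for a positive semi-definite quadratic form $q$. Membership in $\VConv_k(\R^n)$ of the generators is immediate since $\MA(f[k],f_1,\dots,f_{n-k})$ is $k$-homogeneous in $f$, and adding an affine function to $f$ leaves $D^2 f$ unchanged, giving dual epi-translation invariance.

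For affine invariance, let $g\in\Aff(n,\R)$ have linear part $A\in \GL(n,\R)$, and let $\mu(f)=\int_{\R^n}\phi(x)\,d\MA(f[k],f_1,\dots,f_{n-k})(x)$ with $\phi\in\mathcal{C}$ and $f_i\in\mathcal{F}$. The chain rule gives $D^2(h\circ g)(x)=A^TD^2h(g(x))A$ for smooth $h$, and the transformation behavior of the mixed discriminant under conjugation yields
\begin{align*}
    \MA(h_1\circ g,\dots,h_n\circ g)(B)=|\det A|\cdot\MA(h_1,\dots,h_n)(g(B)),
\end{align*}
which extends by continuity to all of $\Conv(\R^n,\R)$. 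Writing $f_i=(f_i\circ g^{-1})\circ g$, applying this identity with $h_1=\dots=h_k=f$ and $h_{k+i}=f_i\circ g^{-1}$, and performing the substitution $y=g(x)$ in the outer integral, one obtains
\begin{align*}
    (\pi(g)\mu)(f)=|\det A|\int_{\R^n}(\phi\circ g^{-1})(y)\,d\MA(f[k],f_1\circ g^{-1},\dots,f_{n-k}\circ g^{-1})(y).
\end{align*}
Since $\mathcal{F}$ and $\mathcal{C}$ are affine invariant, each $f_i\circ g^{-1}$ lies in $\mathcal{F}$ and $\phi\circ g^{-1}$ lies in $\mathcal{C}$, so $\pi(g)\mu\in W$.

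For the non-vanishing condition, I would take $\mu_0\in W$ corresponding to the choices $\phi=\phi_0$ and $f_1=\dots=f_{n-k}=f_0$, and evaluate on the positive semi-definite quadratic form $q(x)=\frac{1}{2}|x|^2$, which has $D^2 q=I$. Expanding $\det(t_1 I+t_2 B)=\prod_i(t_1+t_2\lambda_i)$ and polarizing yields the mixed discriminant identity
\begin{align*}
    D(I[k],B[n-k])=\binom{n}{k}^{-1}\sigma_{n-k}(B),
\end{align*}
where $\sigma_{n-k}$ is the $(n-k)$th elementary symmetric function of the eigenvalues. Applied to $B=D^2 f_0$ and integrated against $\phi_0$, this gives
\begin{align*}
    \mu_0(q)=\binom{n}{k}^{-1}\int_{\R^n}\phi_0\,d\Hess_{n-k}(f_0)
\end{align*}
on smooth $f_0$, and hence on all $f_0\in\Conv(\R^n,\R)$ by continuity of both sides. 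By the hypothesis of the corollary this quantity is nonzero, so Theorem~\ref{theorem:weakIrred} applies and yields that $W$ is sequentially dense in $\VConv_k(\R^n)$. The argument is essentially formal given the previous results; the only items requiring some care are the transformation rule for the mixed Monge--Amp\`ere measure and the identification of $D(I[k],B[n-k])$ with $\sigma_{n-k}(B)$, but neither poses a real obstacle.
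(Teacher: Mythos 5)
Your proposal is correct and follows essentially the same route as the paper: both verify that the span is affine invariant, identify $\int_{\R^n}\phi_0\,d\Hess_{n-k}(f_0)$ (up to a nonzero combinatorial constant) with the value of the valuation $\int_{\R^n}\phi_0\,d\MA(\cdot[k],f_0[n-k])$ at a quadratic form, and then invoke Theorem~\ref{theorem:weakIrred}. You merely spell out the transformation rule under $\Aff(n,\R)$ and the mixed discriminant identity that the paper leaves implicit.
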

	\begin{proof}
		Obviously the space spanned by these valuations is affine invariant. Note that
		\begin{align*}
			\int_{\R^n}\phi_0(x)d\Hess_{n-k}(f_0)=c_{n,k}\int_{\R^n}\phi_0(x)d\MA(|\cdot|^2[k],f_0[n-k])
		\end{align*}
		for some combinatorial constant $c_{n,k}$. Thus the valuation \begin{align*}
			\mu:=\int_{\R^n}\phi_0(x)d\MA(\cdot[k],f_0[n-k])
		\end{align*} is contained in the space spanned by these valuations and  satisfies $\mu(|\cdot|^2)\ne0$, so the claim follows from Theorem~\ref{theorem:weakIrred}.
	\end{proof}
	Note that Corollary~\ref{corollary:densityMixedMA} applies in particular to the following sets of functions:
	\begin{itemize}
		\item $\{h_\Delta(\cdot-y):\Delta~(n-k)\text{-dimensional simplex},~y\in\R^n\}$,
		\item $\{h_\mathcal{E}(\cdot-y):\mathcal{E}~(n-k)\text{-dimensional ellipsoid},~y\in\R^n\}$,
		\item $\{h_\mathcal{P}(\cdot-y):\mathcal{P}~(n-k)\text{-dimensional parallelotope},~y\in\R^n\}$,
		\item $\{\sum_{j=1}^{n-k}\lambda_j\exp(\langle y_j,\cdot\rangle):\lambda_j\ge 0,~y_j\in\R^n\}$,
		\item $\{q: q~ \text{convex polynomial of degree less or equal}~l\}$, $l\ge 2$.
	\end{itemize}
	It also applies to the space spanned by valuations of the form \eqref{equation:ValuationsMAOperators}.
	\begin{corollary}
		Every valuation in $\VConv_1(\R^n)$ can be approximated uniformly on compact subsets of $\Conv(\R^n,\R)$ by a sequence of valuations of the form
		\begin{align*}
			\mu(f)=\sum_{j=1}^{N}c_j\left(f(x_j)+f(y_j)-2f\left(\frac{x_j+y_j}{2}\right)\right),
		\end{align*}
		where $x_j,y_j\in\R^n$, $c_j\in\C$.
	\end{corollary}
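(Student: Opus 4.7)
The plan is to apply Theorem~\ref{theorem:weakIrred} to the subspace $W\subset\VConv_1(\R^n)$ spanned by valuations of the given form. To do so, I first need to verify that each such functional belongs to $\VConv_1(\R^n)$, that $W$ is affine invariant, and that it contains a valuation that does not annihilate some positive semi-definite quadratic form.

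First, set $\nu_{x,y}(f):=f(x)+f(y)-2f\left(\frac{x+y}{2}\right)$. Continuity on $\Conv(\R^n,\R)$ is immediate from continuity of pointwise evaluation. The valuation property follows because point evaluations are valuations (pointwise maximum and minimum preserve the sum $f(z)+g(z)$), and so does any finite linear combination. The map is $1$-homogeneous, and for any affine $\ell$ we have $\ell(x)+\ell(y)-2\ell\left(\frac{x+y}{2}\right)=0$, so $\nu_{x,y}$ is dually epi-translation invariant. Hence $\nu_{x,y}\in\VConv_1(\R^n)$, and so is any finite linear combination. For affine invariance of $W$: if $g\in\Aff(n,\R)$ and $g(z)=Az+b$, then $g\left(\tfrac{x+y}{2}\right)=\tfrac{gx+gy}{2}$, so
\begin{align*}
[\pi(g)\nu_{x,y}](f)=\nu_{x,y}(f\circ g)=f(gx)+f(gy)-2f\left(\tfrac{gx+gy}{2}\right)=\nu_{gx,gy}(f),
\end{align*}
showing $\pi(g)W\subset W$.

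Next, I would exhibit a valuation in $W$ with $\mu(q)\neq 0$ for some positive semi-definite quadratic form $q$. Take $q(z)=|z|^2$; a direct computation gives
\begin{align*}
\nu_{x,y}(q)=|x|^2+|y|^2-2\left|\tfrac{x+y}{2}\right|^2=\tfrac{1}{2}|x-y|^2,
\end{align*}
which is nonzero whenever $x\ne y$. Thus $\nu_{0,e_1}\in W$ satisfies $\nu_{0,e_1}(q)=\tfrac{1}{2}\ne 0$.

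With these verifications in place, Theorem~\ref{theorem:weakIrred} applies and yields that $W$ is sequentially dense in $\VConv_1(\R^n)$. Since the topology on $\VConv_1(\R^n)$ is the topology of uniform convergence on compact subsets of $\Conv(\R^n,\R)$, sequential density is exactly the claimed approximation statement. There are no substantive obstacles: the work is entirely in checking that the elementary functionals $\nu_{x,y}$ fit the hypotheses of Theorem~\ref{theorem:weakIrred}, after which the conclusion is automatic.
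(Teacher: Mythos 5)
Your proposal is correct and follows essentially the same route as the paper: identify the span of the functionals $f\mapsto f(x)+f(y)-2f\bigl(\frac{x+y}{2}\bigr)$ as an affine invariant subspace of $\VConv_1(\R^n)$, exhibit an element not vanishing on $|\cdot|^2$ (the paper uses $f(x)+f(-x)-2f(0)$ with value $2|x|^2$, matching your computation $\nu_{x,y}(|\cdot|^2)=\frac{1}{2}|x-y|^2$), and invoke Theorem~\ref{theorem:weakIrred}. The only difference is that you spell out the routine verifications (valuation property, $1$-homogeneity, dual epi-translation invariance, affine invariance) that the paper leaves implicit.
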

	\begin{proof}
		These valuations form an affine invariant subspace and include the valuations $\tilde{\mu}(f):=f(x)+f(-x)-2f(0)$, $x\ne 0$, which satisfy $\tilde{\mu}(|\cdot|^2)=2|x|^2\ne0$. The claim follows from Theorem~\ref{theorem:weakIrred}.
	\end{proof}

\subsection{Proof of Theorem~\ref{maintheorem:WeakIrreducibility}}
\label{section:affineInvSubspaces}
In this section we complete the proof of Theorem~\ref{maintheorem:WeakIrreducibility}. We start by associating to any closed and translation invariant subspace of $\VConv_k(\R^n)$ a submodule of the $\P(\C^n)$-module $\P(\C^n)\M^2_k$, where $\P(\C^n)$ operates on $\P(\Mat_{n,k}(\C))$ as in Section \ref{section:Prelim_Minors}, i.e. $p\in \P(\C^n)$ acts on $P\in \P(\Mat_{n,k}(\C))$ by
\begin{align*}
	(p\cdot P)[w]=p(w_k)P(w)\quad\text{for}~w=(w_1,\dots,w_k)\in\Mat_{n,k}(\C).
\end{align*}
We call a $\P(\C^n)$-submodule of $\P(\C^n)\M^2_k$ a homogeneous submodule if it is generated by homogeneous elements, where $P\in\P(\Mat_{n,k}(\C))$ is called homogeneous of degree $h\in \mathbb{N}^k$ if $P(t_1w_1,\dots,t_kw_k)=t_1^{h_1}\dots t_k^{h_k}P(w_1,\dots,w_k)$ for $t_1,\dots, t_k\in \C$. For a subspace $W\subset \VConv_k(\R^n)$, let $\I(W)$ denote the subspace of $\P(\C^n)\M^2_k$ generated by the homogeneous terms in the power series expansion of $\F(\mu)$ for $\mu\in W$.

\begin{lemma}
	\label{lemma:propertiesAssociatedIdeal}
	Let $W$ be a closed and translation invariant subspace
	\begin{enumerate}
		\item $\mathcal{I}(W)$ is spanned by the lowest order terms of smooth valuations in $W$.
		\item $\mathcal{I}(W)$ is a homogeneous submodule of $\P(\C^n)\M^2_k$.
		\item If $W$ is $\GL(n,\R)$-invariant, then $\mathcal{I}(W)$ is $\GL(n,\C)$-invariant with respect to the action on $\P(\Mat_{n,k}(\C))$ given by
		\begin{align*}
			(g\cdot P)[w]=P(g^T\cdot w)\quad \text{for}~g\in \GL(n,\C),~P\in\P(\Mat_{n,k}(\C)).
		\end{align*}
	\end{enumerate}
\end{lemma}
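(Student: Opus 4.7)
The proof naturally splits by statement. I would prove (2) and (3) first, since they follow from direct computations with $\F$, and use them in proving (1).

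For (2), the key observation is that if $\mu\in W$ is a smooth valuation, then for every $\alpha\in\mathbb{N}^n$ the translation derivative
\begin{align*}
    \mu^{(\alpha)}:=(-i)^{|\alpha|}\partial^\alpha_x\pi(x)\mu\big|_{x=0}
\end{align*}
belongs to $W$. Indeed, each difference quotient $t^{-1}(\pi(tv)\mu-\mu)$ lies in $W$ by translation invariance and linearity, and its limit lies in $W$ because $W$ is closed (and $\VConv_{k,A}$ is a Banach space for any compact $A\supset\supp\mu$). A direct calculation from the formula $\mathcal{F}(\GW(\pi(x)\mu))[w]=e^{i\langle\sum_j w_j,x\rangle}\mathcal{F}(\GW(\mu))[w]$ combined with the change of coordinates defining $\F$ gives $\F(\mu^{(\alpha)})[w]=w_k^\alpha\F(\mu)[w]$; hence the degree-$(d+|\alpha|)$ homogeneous term of $\F(\mu^{(\alpha)})$ is $w_k^\alpha P_d(\mu)$, showing the $\P(\C^n)$-module property on smooth valuations. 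For general $\mu\in W$, approximate by smooth $\mu_n\to\mu$ via convolution mollifiers (Corollary \ref{corollary:ApproxTranslationSmoothValuations}); continuity of $\F$ (Proposition \ref{proposition:ContinuityFourier}) gives $P_d(\mu_n)\to P_d(\mu)$, and finite-dimensionality of $\P_d(\Mat_{n,k}(\C))$ guarantees that $w_k^\alpha P_d(\mu)\in\I(W)$ as a limit.

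For (3), the calculation $\mu(f\circ g)=\GW(\mu)[(f\circ g)^{\otimes k}]=\GW(\mu)[f^{\otimes k}\circ g^{\otimes k}]$ yields $\mathcal{F}(\GW(\pi(g)\mu))[w]=\mathcal{F}(\GW(\mu))[g^Tw_1,\dots,g^Tw_k]$, and because the linear change of coordinates defining $\F$ commutes with the diagonal action of $g^T$, we conclude $\F(\pi(g)\mu)=g\cdot\F(\mu)$ with the $\GL(n,\R)$-action from Corollary \ref{corollary:lowestWeightvectorsquaredMinors}. Comparing homogeneous terms, $g\cdot P_d(\mu)=P_d(\pi(g)\mu)\in\I(W)$, so $\I(W)$ is $\GL(n,\R)$-invariant. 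Since $\I(W)\cap\P_d$ is a finite-dimensional subspace of polynomials, the $\GL(n,\C)$-orbit maps of its elements are algebraic and send the Zariski-dense subgroup $\GL(n,\R)$ into $\I(W)\cap\P_d$; hence they remain in this subspace on all of $\GL(n,\C)$, which gives the $\GL(n,\C)$-invariance.

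For (1), the inclusion of the span $L$ of lowest-order terms of smooth valuations into $\I(W)$ is immediate. For the reverse, I would use the mollification identity $\F(\mu_\phi)[w]=\mathcal{F}(\phi)[w_k]\F(\mu)[w]$ with $\phi\in C^\infty_c(\R^n)$ satisfying $\int\phi=1$. By Lemma \ref{lemma:MollifiedValuations}, $\mu_\phi\in W$ is smooth, and since $\mathcal{F}(\phi)[0]=1$, the lowest-order term of $\F(\mu_\phi)$ equals $P_{d_0(\mu)}(\mu)$; hence lowest-order terms of arbitrary $\mu\in W$ lie in $L$. To reach higher homogeneous terms, I would argue by induction on degree: if $P_{d'}(\nu)\in L$ for every smooth $\nu\in W$ and every $d'<d$, then given a smooth $\mu\in W$ with $d_0(\mu)<d$, iteratively build a smooth $\nu\in W$ matching $\F(\mu)$ in Taylor coefficients up to degree $d-1$ by successively adding smooth corrections whose lowest-order terms realize the residual Taylor coefficient at each intermediate degree (these corrections exist by the inductive hypothesis together with (2), which ensures $L$ is itself a $\P(\C^n)$-submodule via $\mu\mapsto\mu^{(\alpha)}$). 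Then $\mu-\nu\in W$ is smooth with $d_0\geq d$, and its lowest-order term $P_d(\mu)-P_d(\nu)$ lies in $L$; since $P_d(\nu)$ is a sum of Taylor coefficients of degree $d$ of smooth valuations in $W$ with strictly smaller $d_0$, it too belongs to $L$ by the inductive hypothesis, and hence $P_d(\mu)\in L$. The main obstacle is closing this induction cleanly: each correction step introduces contributions at all higher degrees, so one must arrange the strong form of the inductive hypothesis (covering all Taylor coefficients, not just lowest-order ones, of smooth valuations) and carefully track how the corrections propagate. The density of smooth valuations in $W$ (Theorem \ref{theorem:approxBySmoothInConvexSupport}) together with the finite-dimensionality of $\P_d\cap\P(\C^n)\M^2_k$ then transfers the conclusion from smooth to general $\mu\in W$.
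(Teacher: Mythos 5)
Parts (2) and (3) of your proposal are correct and essentially coincide with the paper's argument: the paper obtains the module property from mollified valuations (the valuations built from $\partial_j\phi_\epsilon$ as in Corollary~\ref{corollary:LowestOrderTermsModuleOverPolynomials}, using Lemma~\ref{lemma:MollifiedValuations} to stay inside $W$) rather than from your translation derivatives $\mu^{(\alpha)}$, and it proves (3) by exactly your equivariance-plus-Zariski-density argument; your exponential factor for $\mathcal{F}(\GW(\pi(x)\mu))$ has the wrong sign, but this is harmless.

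The genuine gap is in part (1), and it is the one you half-acknowledge: the induction does not close, and the operations you allow yourself cannot close it. Your inductive hypothesis controls the Taylor coefficients $P_{d'}(\nu)$ of degree $d'<d$ of smooth valuations $\nu\in W$. After constructing the corrections $\rho_1,\dots,\rho_m\in W$ and setting $\nu=\sum_i\rho_i$, you have $P_d(\mu)=P_d(\mu-\nu)+\sum_iP_d(\rho_i)$, where $P_d(\mu-\nu)$ is a lowest-order term and hence lies in $L$; but each $P_d(\rho_i)$ is a \emph{degree-$d$} coefficient of a valuation whose lowest order is $<d$, which is precisely the kind of quantity you are trying to control and is not covered by the hypothesis. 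Strengthening the hypothesis as you suggest only reproduces the same difficulty one step lower: the hypothesis hands you \emph{some} smooth valuation in $W$ realizing a lower-degree residual as its lowest-order term, but gives no control whatsoever over its degree-$d$ coefficient, and translation invariance alone offers no way to renormalize it. Indeed, every valuation you can manufacture from a given $\mu\in W$ by translations, mollifications and translation derivatives has $\F$ of the form $g(w_k)\F(\mu)$ with $g$ entire, so its degree-$d$ coefficient is a $\P(\C^n)$-combination of $P_{d_0}(\mu),\dots,P_d(\mu)$ in which $P_d(\mu)$ enters only with a constant factor; in particular no element of lowest order exactly $d$ produced this way ever has $P_d(\mu)$ in its lowest-order term, so these manipulations cannot place $P_d(\mu)$ in $L$ modulo the module generated by the lower coefficients.

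Note also that the paper does not attempt your degree-by-degree scheme: its proof of (1) is the mollification argument of Lemma~\ref{lemma:SubmoduleGeneratedBySmoothValuations} carried out inside $W$ (lowest-order terms of arbitrary elements of $W$ agree with lowest-order terms of their smooth mollifications, which lie in $W$), and in every application of the lemma (Proposition~\ref{proposition:valuationFromHighestWeightVector}, Theorem~\ref{theorem:ClassificationAffineInvariantSubspaces}) the subspace is in addition affine invariant. When dilations are available the step you are missing becomes easy: the valuations $f\mapsto\mu(f(t\cdot))$ lie in $W$ and satisfy $\F=\sum_{d'}t^{d'}P_{d'}(\mu)$, so a Vandermonde linear combination over finitely many distinct $t>0$ annihilates the finitely many terms below degree $d$ and exhibits $P_d(\mu)$ directly as a lowest-order term of an element of $W$, which one then mollifies to make smooth. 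Your translation-only induction has no substitute for this input, so part (1) remains unproved as written.
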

\begin{proof}
	Note that for $\phi\in C^\infty_c(\R^n)$ and $\mu\in W$, the valuation
	\begin{align*}
		\mu_\phi(f)=\int_{\R^n}\phi(x)\mu(f(\cdot+x))dx
	\end{align*}
	belongs to $W$ by Lemma \ref{lemma:MollifiedValuations}. Thus (1) and (2) follow with the same argument as the proofs of Lemma \ref{lemma:SubmoduleGeneratedBySmoothValuations} and Corollary \ref{corollary:LowestOrderTermsModuleOverPolynomials}. In order to see that (3) holds, observe that the equivariance properties of the Goodey--Weil distributions and the Fourier--Laplace transform imply that $\mathcal{I}(W)$ is $\GL(n,\R)$-invariant with respect to the given action. Since $\P(\Mat_{n,k}(\C))$ is a rational representation of $\GL(n,\C)$ and $\GL(n,\R)\subset\GL(n,\C)$ is Zariski dense, this implies that $\mathcal{I}(W)$ is $\GL(n,\C)$-invariant.
\end{proof}

The next result is the main technical tool in the proof of Theorem \ref{maintheorem:WeakIrreducibility}. It relies on the behavior of elements in $\MAVal_k(\R^n)$ with respect to the dilations $d_t\in\GL(n,\R)$ given by $d_t(x)=tx$, $x\in\R^n$, $t>0$: Using the characterization of $\MAVal_k(\R^n)$ in Theorem \ref{theorem:ClassificationMAVal}, it is easy to see that 
	\begin{align*}
		\tilde{\pi}(d_t)\Psi=t^{2k-n}\Psi\quad\text{for}~\Psi\in\MAVal_k(\R^n).
	\end{align*}
	Here, $\tilde{\pi}$ denotes the action of $\GL(n,\R)$ on $\MAVal_k(\R^n)$ defined in the previous section.
\begin{proposition}
	\label{proposition:valuationFromHighestWeightVector}
	Let $W\subset \VConv_k(\R^n)$ be a closed and affine invariant subspace. If $Q=w_{1,k}^d\Delta^2_k\in \mathcal{I}(W)$, then $W$ contains all smooth valuations $\mu\in \VConv_k(\R^n)$ with $\F(\mu)\in \mathcal{O}_{\C^n}Q$.
\end{proposition}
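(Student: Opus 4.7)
My plan is to first identify the target valuations explicitly and then to construct them inside $W$ by an iterative matching scheme driven by the hypothesis $Q\in\mathcal{I}(W)$.

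\emph{Reduction step.} For a smooth $\mu\in\VConv_k(\R^n)$ with $\F(\mu)\in\mathcal{O}_{\C^n}Q$, write $\F(\mu)[w]=g(w_k)\,w_{1,k}^d\,\Delta_k^2(w)$ for some $g\in\mathcal{O}_{\C^n}$. The Gr\"obner division estimate of Theorem~\ref{theorem:GroebnerAlgModuleMinor}, combined with the Paley--Wiener estimate for smooth valuations in Lemma~\ref{lemma:PWS_SatisfiedBySmoothValuations}, shows that $g(z)\,z_1^d$ satisfies the hypotheses of the classical Paley--Wiener--Schwartz Theorem~\ref{theorem:PaleyWienerSchwartz}. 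Hence $g(z)\,z_1^d=\mathcal{F}(\phi)(z)$ for a unique $\phi\in C_c^\infty(\R^n)$ with $\supp\phi\subset\supp\mu$; the divisibility of $\mathcal{F}(\phi)$ by $z_1^d$ is equivalent to $\phi=i^{-d}\partial_1^d\psi$ for some $\psi\in C_c^\infty(\R^n)$. Letting $\Psi_{\Delta_k^2}\in\MAVal_k(\R^n)$ be determined by $Q(\Psi_{\Delta_k^2})=\Delta_k^2$ via Lemma~\ref{lemma:QBijectiveMA}, Theorem~\ref{theorem:FourierMA} together with the uniqueness in Theorem~\ref{theorem:DescriptionSmoothValuations_support} yields $\mu=\Psi_{\Delta_k^2}[i^{-d}\partial_1^d\psi]$. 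Thus it suffices to prove that $\Psi_{\Delta_k^2}[i^{-d}\partial_1^d\psi]\in W$ for every $\psi\in C_c^\infty(\R^n)$.

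\emph{Construction inside $W$.} By Lemma~\ref{lemma:propertiesAssociatedIdeal}(1), the hypothesis $Q\in\mathcal{I}(W)$ yields a smooth $\mu_0\in W$ whose $\F(\mu_0)$ has $Q$ as its lowest-order homogeneous term; write $\F(\mu_0)=Q+R$ with $R$ of strictly higher order. For $\psi\in C_c^\infty(\R^n)$, translation invariance of $W$ and Lemma~\ref{lemma:MollifiedValuations} give $\mu_0\ast\psi:=\int\psi(y)\pi(y)\mu_0\,dy\in W$, and a direct computation yields $\F(\mu_0\ast\psi)[w]=\mathcal{F}(\psi)(w_k)\,Q(w)+\mathcal{F}(\psi)(w_k)\,R(w)=\F(\Psi_{\Delta_k^2}[i^{-d}\partial_1^d\psi])[w]+\mathcal{F}(\psi)(w_k)\,R(w)$. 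The ``error'' $\epsilon_0:=\mu_0\ast\psi-\Psi_{\Delta_k^2}[i^{-d}\partial_1^d\psi]$ is therefore a smooth valuation whose Fourier--Laplace transform has lowest-order term in $\mathcal{I}(W)$, as it is a scalar multiple of a homogeneous component of $\F(\mu_0)$. By Lemma~\ref{lemma:propertiesAssociatedIdeal}(1)--(2) and the $\GL(n,\C)$-invariance of $\mathcal{I}(W)$ granted by Lemma~\ref{lemma:propertiesAssociatedIdeal}(3), this lowest-order term is the leading homogeneous term of $\F(\nu_1)$ for some smooth $\nu_1\in W$. By combining the $\GL(n,\R)$-action on $\mu_0$ with convolutions and polynomial differential operators on test functions (whose net effect on $\F$ amounts to polynomial multiplication in $w_k$), $\nu_1$ can be chosen with support inside a fixed compact convex set $A$ containing $\supp\mu_0+\supp\psi$. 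Iterating $\sigma_{m+1}:=\sigma_m-\nu_{m+1}\in W$ starting from $\sigma_0:=\mu_0\ast\psi$, with each $\nu_{m+1}$ matching the lowest-order term of $\F(\sigma_m-\Psi_{\Delta_k^2}[i^{-d}\partial_1^d\psi])$, produces a sequence $\sigma_m\in W$ whose Fourier--Laplace transforms match $\F(\Psi_{\Delta_k^2}[i^{-d}\partial_1^d\psi])$ to ever higher order.

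\emph{Main obstacle.} The principal difficulty is establishing convergence of $\sigma_m$ to $\Psi_{\Delta_k^2}[i^{-d}\partial_1^d\psi]$ in $\VConv_{k,A}(\R^n)$ under the norm of Proposition~\ref{proposition:NormsSubspaces}. This requires quantitative control on $\|\nu_m\|_{A,1}$ as $m\to\infty$, which I expect to derive by applying the estimate in Theorem~\ref{theorem:GroebnerAlgModuleMinor} to each residual Fourier--Laplace transform, combined with the rapid decay of the homogeneous terms of $\F(\Psi_{\Delta_k^2}[i^{-d}\partial_1^d\psi])$ guaranteed by the smoothness of this target (Lemma~\ref{lemma:PWS_SatisfiedBySmoothValuations}) and the continuity of $\mathcal{F}\circ\GW$ stated in Proposition~\ref{proposition:ContinuityFourier}. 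Once this summability is verified, the support bound on the $\nu_m$ places the partial sums in the Banach space $\VConv_{k,A}(\R^n)$; closedness of $W$ then delivers $\Psi_{\Delta_k^2}[i^{-d}\partial_1^d\psi]\in W$, completing the proof.
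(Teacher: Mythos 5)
Your reduction step is sound and matches the way the statement is ultimately used: the targets are exactly the valuations $\Psi_{\Delta_k^2}[(-i\partial_1)^d\psi]$, $\psi\in C_c^\infty(\R^n)$ (only a minor imprecision: Paley--Wiener--Schwartz gives $\supp\phi$ inside a compact convex set containing $\supp\mu$, not inside $\supp\mu$ itself). The problem is the second half. Your correction scheme is formally consistent -- at every stage the lowest-order term of the residual does lie in $\mathcal{I}(W)$, since $\mathcal{I}(W)$ is a $\P(\C^n)$-module containing all homogeneous components of $\F(\nu)$ for $\nu\in W$ -- but the step you defer, the summability of $\|\nu_m\|_{A,1}$, is the entire difficulty and none of the results you invoke can supply it. Lemma~\ref{lemma:propertiesAssociatedIdeal}(1) is purely qualitative: it produces \emph{some} smooth $\nu\in W$ with a prescribed lowest-order term, with no control on its norm, its support, or its higher-order terms, and the choice is highly non-unique. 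Theorem~\ref{theorem:GroebnerAlgModuleMinor}, Lemma~\ref{lemma:PWS_SatisfiedBySmoothValuations} and Proposition~\ref{proposition:ContinuityFourier} all estimate $\F(\mu)$ (or its division coefficients) in terms of the valuation $\mu$; what your iteration needs is the reverse -- a valuation in $W$, supported in a fixed $A$, realizing a given leading term with norm controlled by the size of that term -- i.e.\ a quantitative right inverse of the leading-term map restricted to $W$, which is neither stated nor obviously true. Without it, the uncontrolled higher-order terms of each $\nu_{m+1}$ can make the next residual's leading term larger, and matching Taylor coefficients of $\F$ to ever higher finite order says nothing about convergence in $\VConv_{k,A}(\R^n)$. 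So the argument as proposed has a genuine gap at its crux.

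The paper avoids any infinite correction scheme by exploiting the dilation part of the affine invariance, which your proposal never really uses. From a single smooth $\mu_0\in W$ with lowest-order term $w_{1,k}^d\Delta_k^2$, the division algorithm (Theorem~\ref{theorem:DivisionAlg} with the Gr\"obner basis of Lemma~\ref{lemma:productsMinorsGroebnerBasis}) together with Lemma~\ref{lemma:PWS_SatisfiedBySmoothValuations} and Theorem~\ref{theorem:PaleyWienerSchwartz} gives $\mu_0=\sum_j\Psi_j[\phi_j]$ where the lowest-order term of $\mathcal{F}(\phi_1)$ is $z_1^d$ and those of $\mathcal{F}(\phi_j)$, $j\ge2$, have order at least $d+1$; these are moment conditions on the $\phi_j$. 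One then forms the parabolically rescaled averages $\mu_{\psi,\epsilon}(f)=\epsilon^{-d-2k}\int_{\R^n}\psi(y)\,\mu_0(f(\epsilon\,\cdot+y))\,dy$, which lie in $W$ by affine invariance and closedness, and a Taylor expansion of $\psi$ combined with the moment conditions and the uniform bound of Corollary~\ref{corollary:trivialBoundMAOperators} shows $\mu_{\psi,\epsilon}\to\Psi_1[(-i\partial_1)^d\psi]$ as $\epsilon\to0$; closedness of $W$ then finishes the proof. If you want to salvage your approach you would have to prove the missing quantitative existence statement for the corrections, which is a substantial (and to my knowledge unavailable) strengthening of Lemma~\ref{lemma:propertiesAssociatedIdeal}; the scaling argument is the intended route.
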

\begin{proof}
	Lemma \ref{lemma:propertiesAssociatedIdeal} implies that there exists a smooth valuation $\mu\in W$ such that the lowest order term of $\F(\mu)$ is given by $w_{1,k}^d\Delta^2_k$. We denote the ordered Gröbner basis of $\M^2_k$ from Lemma \ref{lemma:productsMinorsGroebnerBasis} by $P_1,\dots,P_{N_{n,k}}$, so in particular $P_1=\Delta_k^2$. Combining Theorem \ref{theorem:DivisionAlg} Lemma \ref{lemma:productsMinorsGroebnerBasis} we obtain unique functions $g_j\in\mathcal{O}_{\C^n}$, $1\le j\le N_{n,k}$, such that
	$\F(\mu)=\sum_{j=1}^N g_j(w_k)P_j(w)$ and such that no monomial in the power series expansion of
	\begin{align}
		\label{equation:decomposition_Mu_lowestorder}	
		\F(\mu)-\sum_{j=1}^lg_j(w_k)P_j(w)=\sum_{j=l+1}^{N_{n,k}} g_j(w_k)P_j(w)
	\end{align}
	is divisible by the initial term of $P_1,\dots,P_l$ for any $1\le l\le N$. Moreover,
	\begin{align*}
		|g_j(z)|\le C (1+|z|)^{m_0} \sup_{\zeta\in D_1(0,\dots,0,z)}|\F(\mu)[\zeta]|
	\end{align*}
	for some $m_0>0$, $C>0$. Since $\mu$ is a smooth valuation, Lemma~\ref{lemma:PWS_SatisfiedBySmoothValuations} implies that there exist constants $C_m>0$ such that
	\begin{align*}
		\sup_{\zeta\in D_1(0,\dots,0,z)}|\F(\mu)[\zeta]|\le C_m(1+|z|)^{-m}e^{h_A(\Im(z))}
	\end{align*}
	for every $m\in\mathbb{N}$, where $A\subset \R^n$ is a compact convex set containing $\supp\mu$. Thus $g_j$ satisfies the assumptions of the Paley--Wiener--Schwartz Theorem~\ref{theorem:PaleyWienerSchwartz}. In particular, $g_j=\mathcal{F}(\phi_j)$ for some $\phi_j\in C_c^\infty(\R^n)$, which implies
	\begin{align}
		\label{eq:formulaMuHighestWeightTerm}
		\mu=\sum_{j=1}^{N_{n,k}}\int_{\R^n}\phi_j d\Psi_j(\cdot),
	\end{align} 
	where $\Psi_j\in \MAVal_k(\R^n)$ is the unique element with $Q[\Psi_j]=P_j$, compare Lemma \ref{lemma:QBijectiveMA}. For $l=1$, \eqref{equation:decomposition_Mu_lowestorder} implies that no monomial in the power series expansion of the function 
	\begin{align*}
			\F(\mu)-g_1(w_k)\Delta_k^2(w)=\sum_{j=2}^{N_{n,k}} g_j(w_k)P_j(w)
	\end{align*}
	is divisible by the initial term of $\Delta_k^2$. Since the lowest order term on $\F(\mu)$ is $w_{1,k}^d\Delta_k^2$ by assumption, this is only possible if the lowest order term of $g_1(w_k)\Delta_k^2(w)$ is given by the same term, i.e. the lowest order term of $\mathcal{F}(\phi_1)[z]=g_1(z)$ is given by $z_1 ^d$. This implies that the lowest order term of the function on the right hand side is of order at least $2k+d+1$. Since no monomial in the power series expansion of
	\begin{align*}
			\F(\mu)-\sum_{j=1}^lg_j(w_k)P_j(w)=\sum_{j=l+1}^{N_{n,k}} g_j(w_k)P_j(w)
	\end{align*}
	is divisible by the initial term of $P_1,\dots,P_l$ for any $1\le l\le N$, it now easily follows by induction that the lowest order terms of $g_j=\mathcal{F}(\phi_j)$ are of order at least $d+1$ for $2\le j\le N_{n,k}$.\\
	
	Let $\psi\in C^\infty_c(\R^n)$ and $\epsilon>0$ be given. Then the valuations $\mu_{\psi,\epsilon}$ defined by
	\begin{align}
		\label{equation:definition_muJ}
		\mu_{\psi,\epsilon}(f):=&\epsilon^{-d-2k}\int_{\R^n}\psi(y)\mu(f(\epsilon(\cdot)+y))dy
	\end{align}
	 belong to $W$ since $W$ is translation invariant and closed. Moreover, using the translation equivariance of $\Psi_j$, the behavior under dilations, as well as Fubini's Theorem, \eqref{eq:formulaMuHighestWeightTerm} implies that
	 \begin{align}
	 	\notag
	 	\mu_{\psi,\epsilon}(f)=&\sum_{j=1}^{N_{n,k}}\epsilon^{-d-2k}\int_{\R^n}\psi(y)\int_{\R^n}\phi_j(x) d\Psi_j(f(\epsilon(\cdot)+y;x))dy\\
	 	\notag
	 	=&\sum_{j=1}^{N_{n,k}}\epsilon^{-d-2k}\int_{\R^n}\psi(y)\int_{\R^n}\phi_j\left(\frac{x-y}{\epsilon}\right) \epsilon^{2k-n}d\Psi_j(f;x)dy\\
	 	\label{eq:defMuRefined}
	 	=&\sum_{j=1}^{N_{n,k}}\epsilon^{-d}\int_{\R^n}\left[\int_{\R^n}\psi\left(x-\epsilon y\right)\phi_j(y)dy\right] d\Psi_j(f;x).
	 \end{align}
 	We claim that these valuations converge for $\epsilon\rightarrow0$ to 
 	\begin{align*}
 		\mu_\psi(f):=\int_{\R^n}(-i\partial_1)^d\psi\left(x\right) d\Psi_1(f;x).
 	\end{align*}
 	If this holds, $\mu_\psi\in W$ for every $\psi\in C^\infty_c(\R^n)$ since $W$ is closed. As \begin{align*}
 		\F(\mu_\psi)[w]=&\mathcal{F}((-i\partial_1)^d\psi)[w_k] \Delta_k^2(w)=w_{1,k}^d\mathcal{F}(\psi)[w_k] \Delta_k^2(w)
 	\end{align*}
 	by Theorem \ref{theorem:FourierMA}, this implies the claim.\\
 	
 	Let us therefore show that the valuations defined by \eqref{equation:definition_muJ} converge to the stated limit. As discussed before, the lowest order term of $\mathcal{F}(\phi_j)$ is of order at least $d+1$ for $2\le j\le N_{n,k}$. By \eqref{eq:defMuRefined}, we have 
 	\begin{align*}
 		\mu_{\psi,\epsilon}(f)=\sum_{j=1}^{N_{n,k}}\int_{\R^n}\psi_{j,\epsilon}(x)d\Psi_j(f;x)
 	\end{align*}
 	for the functions $\psi_{j,\epsilon}\in C^\infty_c(\R^n)$ given by
 	\begin{align*}
 		\psi_{j,\epsilon}(x):=\epsilon^{-d}\int_{\R^n}\psi\left(x-\epsilon y\right)\phi_j(y)dy.
 	\end{align*}
 	We claim that the support of $\psi_{j,\epsilon}$ is uniformly bounded for $\epsilon\in(0,1]$ and that it converges uniformly to $(-i\partial_1)^d\psi$ for $\epsilon\rightarrow0$ if $j=1$ and $0$ otherwise. Once this is established, the convergence follows directly from Corollary~\ref{corollary:trivialBoundMAOperators}.\\
 	First note that the support of $\psi_{j,\epsilon}$ is contained in $\supp\psi -\epsilon\supp\phi_j$, which is uniformly bounded for $\epsilon\in(0,1]$. Next, we consider the Taylor expansion of $\psi$ up to order $d$. We have
 	\begin{align*}
 		\psi(x-\epsilon y)=\sum_{|\alpha|\le d} (-1)^{|\alpha|}\frac{\partial^\alpha \psi(x)}{ \alpha!}\epsilon^{|\alpha|}y^\alpha+ R_{d}(x,y,\epsilon),
 	\end{align*}
 	where $|R_{d}(x,y,\epsilon)|\le C\epsilon^{d+1}|y|^{d+1}$ for a constant $C>0$ independent of $x,y\in\R^n$, $\epsilon>0$, since $\psi$ has compact support. In particular,
 	\begin{align*}
 		\psi_{j,\epsilon}(x)=&\epsilon^{-d}\sum_{|\alpha|\le d} (-1)^{|\alpha|}\frac{\partial^\alpha \psi(x)}{ \alpha!}\epsilon^{|\alpha|}\int_{\R^n}y^\alpha\phi_j(y)dy\\
 		&+ \epsilon^{-d}\int_{\R^n}R_{d}(x,y,\epsilon) \phi_j(y)dy.
 	\end{align*}
 	By assumption, the lowest order term of $\mathcal{F}(\phi_1)[z]$ is $ z_1^d$, whereas the lowest order terms of $\mathcal{F}(\phi_j)$ are of order at least $d+1$ for $2\le j\le N_{n,k}$. As in the proof of Proposition~\ref{proposition:prescribedFourier}, this implies for $|\alpha|\le d$ 
 	\begin{align*}
 		\int_{\R^n}y^\alpha\phi_j(y)dy=\begin{cases}
 			\frac{d!}{(-i)^d}&\text{for}~ j=1, \alpha=(d,0,\dots,0),\\
 			0 &\text{else}.
 		\end{cases}
 	\end{align*}
 	For $j=1$, this implies
 	\begin{align*}
 		\psi_{1,\epsilon}(x)=&\frac{(-1)^{d}}{(-i)^d}\partial_1^d\psi(x)+ \epsilon^{-d}\int_{\R^n}R_{d}(x,y,\epsilon) \phi_1(y)dy,
 	\end{align*}
 	and we obtain
 	\begin{align*}
 		|\psi_{1,\epsilon}(x)-(-i\partial_1)^d\psi(x)|\le C\epsilon \int_{\R^n}|y|^{d+1}|\phi_j(y)|dy.
 	\end{align*}
 	Since $C$ is independent of $x\in\R^n$ and $\epsilon>0$, this shows that $\psi_{1,\epsilon}$ converges uniformly to the desired limit. For $2\le j\le N_{n,k}$, we similarly obtain
 	\begin{align*}
 		\psi_{j,\epsilon}(x)=& \epsilon^{-d}\int_{\R^n}R_{d}(x,y,\epsilon) \phi_j(y)dy,
 	\end{align*}
 	and the same estimate shows that this function converges uniformly to $0$.
\end{proof}

Consider the closed subspaces of $\VConv_k(\R^n)$ defined for $d\in\mathbb{N}$ by
\begin{align*}
	W_d=&\overline{\mathrm{span}\left\{\int_{\R^n}\partial^\alpha\phi d\Psi: \alpha\in\mathbb{N}^n,~|\alpha|=d,~\phi\in C_c^\infty(\R^n),\Psi\in\MAVal_k(\R^n)\right\}}.
\end{align*}
\begin{lemma}
	\label{lemma:description_Wd}
	The following are equivalent for $\mu\in\VConv_k(\R^n)$:
	\begin{enumerate}
		\item $\mu\in W_d$.
		\item The power series expansion of $\F(\mu)$ vanishes up to order $2k+d-1$.
		\item For all real polynomials $P_1,\dots,P_k$ with $\sum_{j=1}^k\deg P_j\le 2k+d-1$:
		\begin{align*}
			\frac{\partial^k}{\partial\lambda_1\dots\partial\lambda_k}\Big|_0\mu\left(|x|^2+|x|^{2(k+d)}+\sum_{j=1}^k\lambda_j P_j(x)\right)=0.
		\end{align*}
		\item  For all real polynomials $P_1,\dots,P_k$ with $\sum_{j=1}^k\deg P_j\le 2k+d-1$:
			\begin{align*}
				\GW(\mu)[P_1\otimes\dots\otimes P_k]=0.
			\end{align*}
	\end{enumerate}
\end{lemma}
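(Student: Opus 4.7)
The plan is to establish the chain of implications by grouping the two easy equivalences and reserving the main work for $(2)\Rightarrow(1)$. For $(2)\Leftrightarrow(4)$: the coordinate change relating $\F(\mu)$ and $\mathcal{F}(\GW(\mu))$ is linear, so the total order of vanishing of the power series at the origin is the same on both sides, and the coefficient of $w_1^{\alpha_1}\cdots w_k^{\alpha_k}$ in $\mathcal{F}(\GW(\mu))$ is, up to a factorial and sign, the moment $\GW(\mu)[x^{\alpha_1}\otimes\cdots\otimes x^{\alpha_k}]$. By multilinearity, vanishing to total order $2k+d-1$ corresponds precisely to (4). For $(3)\Leftrightarrow(4)$, I would observe that the base function $f_0 = |x|^2+|x|^{2(k+d)}$ has its degree-$2(k+d)$ term dominating every perturbation by a polynomial of degree $\leq 2k+d-1$, so $f_0+\sum_j\lambda_j P_j$ remains convex for $\lambda$ near the origin. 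The $k$-homogeneity of $\mu$ yields a polynomial expansion in $\lambda$, and differentiating once in each $\lambda_j$ isolates the mixed term in which each $P_j$ appears exactly once, whose coefficient equals $k!\,\bar{\mu}(P_1,\ldots,P_k)$; extending $\bar{\mu}$ from convex arguments to polynomials via \eqref{equation:formulaGWElementaryTensor} identifies this with $k!\,\GW(\mu)[P_1\otimes\cdots\otimes P_k]$.

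For $(1)\Rightarrow(2)$, I would compute $\F$ directly on a generator $\mu=\int_{\R^n}\partial^\alpha\phi\,d\Psi$ with $|\alpha|=d$: Theorem~\ref{theorem:FourierMA} and equation~\eqref{equation:FforMAValuations} give $\F(\mu)[w] = (iw_k)^\alpha\mathcal{F}(\phi)[w_k]\,Q(\Psi)[w]$, whose lowest-order term has total degree at least $d+2k$. Continuity of $\F\circ\GW$ into the topology of uniform convergence on compact subsets (Proposition~\ref{proposition:ContinuityFourier}), combined with the fact that uniform convergence of holomorphic functions preserves the order of vanishing of the power series, ensures that (2) is inherited by every element of the closure $W_d$.

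The core of the proof is $(2)\Rightarrow(1)$. A mollification argument first reduces to the smooth case: the valuation $\mu_\epsilon(f)=\int_{\R^n}\phi_\epsilon(y)\mu(f(\cdot-y))\,dy$ from Lemma~\ref{lemma:MollifiedValuations} is smooth, converges to $\mu$ in $\VConv_k(\R^n)$, and satisfies $\F(\mu_\epsilon) = \mathcal{F}(\phi_\epsilon)[w_k]\,\F(\mu)$, so it inherits (2). Since $W_d$ is closed, it suffices to show $\mu_\epsilon\in W_d$. For smooth $\mu$ satisfying (2), Theorem~\ref{theorem:RepresentationFMu} applied to the Gr\"obner basis $P_1,\ldots,P_{N_{n,k}}$ from Lemma~\ref{lemma:productsMinorsGroebnerBasis} yields a decomposition $\F(\mu)=\sum_j g_j(w_k)P_j(w)$; the PWS estimate in Lemma~\ref{lemma:PWS_SatisfiedBySmoothValuations} combined with Theorem~\ref{theorem:PWSValuations} identifies each $g_j$ with the Fourier transform of some $\phi_j\in C^\infty_c(\R^n)$. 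The first step of the division algorithm absorbs every monomial of $\F(\mu)$ divisible by $\mathrm{in}(P_1)$ into $g_1\cdot\mathrm{in}(P_1)$, and since such monomials have total degree $\geq 2k+d$ while $\mathrm{in}(P_1)$ has degree $2k$, the power series of $g_1$ must vanish to order $d-1$. The extra property in Lemma~\ref{lemma:productsMinorsGroebnerBasis} ensures that $\F(\mu)-g_1P_1$ still vanishes to order $2k+d-1$, and iterating with $P_2,\ldots,P_{N_{n,k}}$ gives the analogous conclusion for each $g_j$. Equivalently, $\int_{\R^n}x^\beta\phi_j(x)\,dx=0$ for every $|\beta|<d$.

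To conclude, each $\phi_j$ must be expressed as $\sum_{|\alpha|=d}\partial^\alpha\psi_{j,\alpha}$ with $\psi_{j,\alpha}\in C^\infty_c(\R^n)$. I would apply Taylor's integral remainder formula to the entire function $\mathcal{F}(\phi_j)$, which vanishes to order $d-1$ at the origin:
\begin{align*}
	\mathcal{F}(\phi_j)(z) = \sum_{|\alpha|=d}\frac{d\,z^\alpha}{\alpha!}\int_0^1(1-t)^{d-1}\partial^\alpha\mathcal{F}(\phi_j)(tz)\,dt.
\end{align*}
The main obstacle is verifying that each summand is of the form $\mathcal{F}((-i\partial)^\alpha\psi_{j,\alpha})$ for a compactly supported smooth $\psi_{j,\alpha}$, which amounts to checking Paley--Wiener--Schwartz estimates for $G_\alpha(z):=\int_0^1(1-t)^{d-1}\partial^\alpha\mathcal{F}(\phi_j)(tz)\,dt$. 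Since $\partial^\alpha\mathcal{F}(\phi_j)$ is the Fourier transform of $(-ix)^\alpha\phi_j(x)\in C^\infty_c(\R^n)$, a rescaling argument shows that $\partial^\alpha\mathcal{F}(\phi_j)(tz)$ corresponds to a compactly supported function with support in $t\cdot\supp\phi_j$; integrating over $t\in[0,1]$ produces $G_\alpha$ in the PWS class with support contained in the convex hull of $\supp\phi_j\cup\{0\}$, and the rapid decay on real arguments is inherited from the smoothness of $\phi_j$. With this in hand, $\mu = \sum_j\sum_{|\alpha|=d}\int_{\R^n}\partial^\alpha\psi_{j,\alpha}\,d\Psi_j(\cdot)\in W_d$, completing the argument.
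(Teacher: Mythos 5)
Your handling of the equivalences $(1)\Leftrightarrow(2)\Leftrightarrow(3)\Leftrightarrow(4)$ and the bulk of $(2)\Rightarrow(1)$ — the mollification to reduce to a smooth valuation, the division of $\F(\mu)$ along the Gr\"obner basis of $\M^2_k$, the identification $g_j=\mathcal{F}(\phi_j)$ with $\phi_j\in C_c^\infty(\R^n)$, and the induction showing that each $g_j$ vanishes to order $d-1$ — runs parallel to the paper's proof and is fine. The gap is in the very last step, where you replace the paper's second division (Theorem~\ref{theorem:DivisionAlg} applied to the ideal generated by the degree-$d$ monomials, whose quotients come with the Cauchy-integral bounds of Lemma~\ref{lemma:monomialDivisionEstimate}) by the Taylor remainder decomposition and assert that each $G_\alpha(z)=\int_0^1(1-t)^{d-1}\partial^\alpha\mathcal{F}(\phi_j)(tz)\,dt$ satisfies the Paley--Wiener--Schwartz estimates for \emph{smooth} compactly supported functions. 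The exponential bound, and hence compact support of $\psi_{j,\alpha}$ as a distribution, is correct; the rapid decay on $\R^n$ is not. The integral sees the whole segment from $0$ to $z$, and on the portion $t\lesssim 1/|z|$ the integrand $\partial^\alpha\mathcal{F}(\phi_j)(tz)$ is of order one, so an individual $G_\alpha$ generically decays only polynomially; only the full combination $\sum_{|\alpha|=d}\tfrac{d}{\alpha!}z^\alpha G_\alpha=\mathcal{F}(\phi_j)$ decays rapidly.

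Concretely, take $d=1$, $n=2$, $\phi=\partial_2\psi$ for a radial bump $0\le\psi\in C_c^\infty(\R^2)$, so that $\int\phi=0$ and $\mathcal{F}(\phi)(z)=iz_2\mathcal{F}(\psi)(z)$ vanishes at the origin. For $\xi=(R,a)$ with $a\neq0$ fixed, substituting $u=tR$ and integrating by parts,
\begin{align*}
	R^2\,G_{e_1}(R,a)=ia\int_0^R u\,\partial_1\mathcal{F}(\psi)\Bigl(u,\tfrac{ua}{R}\Bigr)\,du\;\longrightarrow\;-ia\int_0^\infty\mathcal{F}(\psi)(u,0)\,du\neq 0
	\quad (R\to\infty),
\end{align*}
so $G_{e_1}$ decays only like $|\xi|^{-2}$ and is \emph{not} the Fourier transform of a function in $C_c^\infty(\R^2)$; the slowly decaying parts of $G_{e_1}$ and $G_{e_2}$ cancel only in the sum. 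Consequently your $\psi_{j,\alpha}$ are in general merely compactly supported distributions: the expression $\int_{\R^n}\partial^\alpha\psi_{j,\alpha}\,d\Psi_j(f)$ is then not even defined (the $\Psi_j(f)$ are only Radon measures), and in any case it would not exhibit $\mu$ as an element of $W_d$, which by definition is the closed span of such valuations with $C_c^\infty$ densities. To close the argument you need quotients that come with quantitative control in terms of $g_j$ — this is exactly what the paper's monomial division supplies — or some other device, for instance an approximation argument exploiting that $W_d$ is closed; the bare Taylor remainder does not do it.
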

\begin{proof}
	First, the defining property of $\GW(\mu)$ implies that (3) and (4) are equivalent. Moreover, (2) and (4) are equivalent since the Goodey--Weil distributions have compact support.\\
	It remains to see that (1) and (2) are equivalent. Let us show that (1) implies (2).
	Note that the maps $L_\alpha:\mathcal{O}_{\Mat_{n,k}(\C)}\rightarrow\C$, $F\mapsto \frac{\partial ^\alpha F(0)}{\alpha!}$ are continuous for $\alpha\in \mathbb{N}^{n\times k}$ and that $\F(\mu)$ vanishes up to order $2k+d-1$ if and only if it is in the joint kernel of these maps for $|\alpha|\le 2k+d-1$. For valuations of the form $\int_{\R^n}\partial^\alpha\phi d\Psi$ with $\alpha\in\mathbb{N}^n$, $|\alpha|=d$, $\phi\in C_c^\infty(\R^n)$, $\Psi\in\MAVal_k(\R^n)$, the implication (1) $\Rightarrow$ (2) follows from Theorem \ref{theorem:FourierMA}. Since $\F:\VConv_k(\R^n)\rightarrow\mathcal{O}_{\Mat_{n,k}(\C)}$ is continuous by Proposition \ref{proposition:ContinuityFourier}, and valuations of this form are a dense subspace of $W_d$ by construction, this shows that (1) implies (2) for every valuation.\\
	In order to see that (2) implies (1), note that the space $W$ of all valuations satisfying (2) is closed in $\VConv_k(\R^n)$ due to Proposition \ref{proposition:ContinuityFourier} as well as affine invariant. In order to show that $W\subset W_d$, it is thus sufficient to show that every smooth valuation in $W$ belongs to $W_d$, since smooth valuations are dense in $W$ by Theorem \ref{theorem:approxBySmoothInConvexSupport} and $W_d$ is closed. Thus let $\mu$ be a smooth valuation satisfying (2). If we take the Gröbner basis $P_1,\dots,P_{N_{n,k}}$ of $\M^2_k$ from Lemma \ref{lemma:productsMinorsGroebnerBasis}, we can apply Theorem \ref{theorem:DivisionAlg} to obtain unique functions $g_j$ such that no term of the power series expansion of 
	\begin{align*}
		\F(\mu)[w]-\sum_{j=1}^{l} g_j(w_k)P_j(w)=\sum_{j=l+1}^{N_{n,k}} g_j(w_k)P_j(w)
	\end{align*}
	contains a monomial divisible by the initial term of $P_j$, $1\le j\le l$, for each $1\le l\le N_{n,k}$. As in the proof of Lemma \ref{proposition:valuationFromHighestWeightVector}, a simple induction argument shows that the power series expansion of $g_j$ vanishes up to order $d-1$. Now consider the ideal of $\P(\C^n)$ generated by the homogeneous polynomials of degree $d$. The monomials of degree $d$ form a Gröbner basis for this module, so since every homogeneous term of the power series expansion of $g_j$ belongs to this module, we may in particular apply Theorem \ref{theorem:DivisionAlg} to this module with this Gröbner basis to obtain functions $h_{j\alpha}$, $\alpha\in \mathbb{N}^n$ with $|\alpha|\le d$, such that $g_j=\sum_{|\alpha|=d} h_{j\alpha}z^\alpha$, where
	\begin{align*}
		|h_{j\alpha}(z)|\le C(1+|z|)^m \sup_{\zeta\in D_1(z)}|g_j(z)|
	\end{align*}
	for some $m\in\mathbb{N}$ and a constant $C>0$ independent of $z\in\C^n$. Since the functions $g_j\in\mathcal{O}_{\C^n}$ satisfy the estimates in Theorem \ref{theorem:DivisionAlg} and $\F(\mu)$ satisfies the estimates in Lemma \ref{lemma:PWS_SatisfiedBySmoothValuations}, it is easy to see that $h_{j\alpha}$ satisfies the conditions of the Paley--Wiener--Schwartz Theorem \ref{theorem:PaleyWienerSchwartz}, so $h_{j\alpha}=\F(\phi_{j\alpha})$ for some $\phi_{j\alpha}\in C^\infty_c(\R^n)$. Then
	\begin{align*}
		\mu(f)=\sum_{j=1}^{N_{n,k}}\sum_{|\alpha|=d}\int_{\R^n} (-i\partial)^\alpha\phi_{j\alpha}d\Psi_j(f)
	\end{align*}
	since both sides map to the same function under the injective map $\F$. Thus $\mu\in W_d$.\\
	This completes the proof that (2) implies (1).
\end{proof}

\begin{corollary}
	$W_d$ has finite codimension in $\VConv_k(\R^n)$.
\end{corollary}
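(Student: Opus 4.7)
The plan is to realize $W_d$ as the kernel of a linear map into a finite-dimensional target and then apply the first isomorphism theorem.

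First, I would invoke the equivalence (1) $\Leftrightarrow$ (2) of Lemma~\ref{lemma:description_Wd}, which says that $\mu\in W_d$ if and only if the power series expansion of $\F(\mu)$ at the origin vanishes up to order $2k+d-1$. Equivalently, $W_d$ is precisely the kernel of the jet map
\[
J\colon \VConv_k(\R^n)\longrightarrow \P_{\le 2k+d-1}(\Mat_{n,k}(\C)),\qquad \mu\longmapsto \sum_{|\alpha|\le 2k+d-1}\frac{\partial^\alpha \F(\mu)(0)}{\alpha!}\,w^{\alpha},
\]
where $\P_{\le N}(\Mat_{n,k}(\C))$ denotes the space of polynomials on $\Mat_{n,k}(\C)$ of total degree at most $N$. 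Linearity of $J$ is immediate from the linearity of $\GW$ and of the Fourier--Laplace transform.

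Next, since the target $\P_{\le 2k+d-1}(\Mat_{n,k}(\C))$ is finite-dimensional, the image of $J$ is finite-dimensional as well. The first isomorphism theorem then yields
\[
\dim\bigl(\VConv_k(\R^n)/W_d\bigr)=\dim\operatorname{im}(J)\le \dim \P_{\le 2k+d-1}(\Mat_{n,k}(\C))<\infty,
\]
which gives the desired finite codimension.

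A sharper bound could be obtained by observing that Lemma~\ref{lemma:HomogeneousComponentsPowerSeriesSquaresMinors} forces $\operatorname{im}(J)$ to lie in the substantially smaller space of polynomials of degree at most $2k+d-1$ inside the $\P(\C^n)$-module generated by $\M^2_k$, but the crude estimate above already suffices for the stated corollary. There is effectively no obstacle here: all of the substantive analytic work has been carried out in the preceding sections, and the result reduces to a direct application of the Taylor-vanishing characterization of $W_d$ in Lemma~\ref{lemma:description_Wd}.
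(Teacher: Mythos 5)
Your proof is correct and follows essentially the same route as the paper: both characterize $W_d$ via Lemma~\ref{lemma:description_Wd}~(2) as the common kernel of the finitely many Taylor-coefficient functionals $L_\alpha\circ\F$ with $|\alpha|\le 2k+d-1$, and conclude finite codimension from the finite-dimensionality of the target. Packaging these functionals into a single jet map and invoking the first isomorphism theorem is only a cosmetic difference.
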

\begin{proof}
	Consider the linear maps $L_\alpha:\mathcal{O}_{\Mat_{n,k}(\C)}\rightarrow\C$, $F\mapsto \frac{\partial ^\alpha F(0)}{\alpha!}$ for $\alpha\in \mathbb{N}^{n\times k}$. By Lemma~\ref{lemma:description_Wd} (2),
	\begin{align*}
		W_d=\bigcap_{|\alpha|\le 2k+d-1}\ker\left(L_\alpha\circ \F\right).
	\end{align*}
	Thus $W_d$ has finite codimension.
\end{proof}

The following provides the proof of the first two parts of Theorem~\ref{maintheorem:WeakIrreducibility}. Note that Theorem~\ref{maintheorem:WeakIrreducibility} (3) was already proved in Theorem~\ref{theorem:weakIrred}, however, we present a short alternative proof after the next result. 
\begin{theorem}
	\label{theorem:ClassificationAffineInvariantSubspaces}
	Let $W\subset\VConv_k(\R^n)$ be a closed and affine invariant subspace. Then there exists $d\in\mathbb{N}$ such that $W$ is equal to $W_d$. In particular, $W$ has finite codimension.
\end{theorem}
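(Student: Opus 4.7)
The plan is to analyse $W$ via its associated submodule $\I(W)\subseteq\P(\C^n)\M^2_k$. By Lemma \ref{lemma:propertiesAssociatedIdeal}, $\I(W)$ is a $\GL(n,\C)$-invariant $\P(\C^n)$-submodule. Corollary \ref{corollary:lowestWeightvectorsquaredMinors} identifies the $\GL(n,\C)$-highest weight vectors of $\P(\C^n)\M^2_k$ as the nonzero scalar multiples of $w_{1,k}^d\Delta_k^2$, $d\ge 0$, and each of them generates an irreducible subrepresentation $V_d$ consisting of homogeneous polynomials of total degree $2k+d$. Since the weights are pairwise distinct, complete reducibility of rational $\GL(n,\C)$-representations yields the multiplicity-free decomposition $\P(\C^n)\M^2_k=\bigoplus_{d\ge 0}V_d$.

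Consequently $\I(W)=\bigoplus_{d\in S}V_d$ for some $S\subseteq\mathbb{N}_0$. Because $w_{1,k}\cdot(w_{1,k}^d\Delta_k^2)=w_{1,k}^{d+1}\Delta_k^2$ is again a highest weight vector, the $\P(\C^n)$-module property forces $S$ to be upward closed, so $S=\{d:d\ge d_0\}$ for some $d_0\in\mathbb{N}_0$ whenever $W\ne\{0\}$. For any $\mu\in W$, every homogeneous term of the power series expansion of $\F(\mu)$ lies in $\I(W)$ and therefore has total degree at least $2k+d_0$, so Lemma \ref{lemma:description_Wd} gives $W\subseteq W_{d_0}$.

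For the reverse inclusion, Theorem \ref{theorem:approxBySmoothInConvexSupport} reduces the problem to showing that every smooth $\mu\in W_{d_0}$ belongs to $W$. The argument in the proof of Lemma \ref{lemma:description_Wd} writes such $\mu$ as a finite sum $\mu=\sum_l\int(-i\partial)^{\alpha_l}\phi_l\,d\Psi_l$ with $|\alpha_l|=d_0$, $\phi_l\in C^\infty_c(\R^n)$, and $\Psi_l$ in a chosen basis of $\MAVal_k(\R^n)$. Theorem \ref{theorem:FourierMA} yields $\F(\nu)(w)=\mathcal{F}(\phi_l)(w_k)\cdot v(w)$ for each summand $\nu$, where $v:=w_k^{\alpha_l}Q(\Psi_l)$ is an element of $V_{d_0}$. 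It thus suffices to show that $W$ contains every smooth valuation whose $\F$ is of the form $\mathcal{F}(\phi)(w_k)\cdot v(w)$ with $\phi\in C^\infty_c(\R^n)$ and $v\in V_{d_0}$.

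This last step is the main obstacle. Proposition \ref{proposition:valuationFromHighestWeightVector} handles the case $v=w_{1,k}^{d_0}\Delta_k^2$. Using $\F(\pi(g)\mu')(w)=\F(\mu')(g^Tw)$, the fact that $p\mapsto p\circ g^T$ preserves the space $\mathcal{F}(C^\infty_c(\R^n))$ of entire functions satisfying the classical Paley--Wiener estimates, and the $\GL(n,\R)$-invariance of $W$, the statement extends to every $v$ in the $\GL(n,\R)$-orbit of $w_{1,k}^{d_0}\Delta_k^2$. Since $V_{d_0}$ is $\GL(n,\C)$-irreducible and $\GL(n,\R)$ is Zariski dense in $\GL(n,\C)$, the complex span of this orbit equals $V_{d_0}$; writing $v=\sum_ic_ig_i\cdot(w_{1,k}^{d_0}\Delta_k^2)$, each entire function $\mathcal{F}(\phi)(w_k)\cdot g_i\cdot(w_{1,k}^{d_0}\Delta_k^2)(w)$ satisfies the hypotheses of Theorem \ref{theorem:PWSValuations} (polynomial growth from the $V_{d_0}$-factor on the subspaces $|w_j|\le\delta$, $j<k$, and classical Paley--Wiener decay in $w_k$ from $\mathcal{F}(\phi)$), hence equals $\F(\mu_i)$ for a smooth $\mu_i\in W$. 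Injectivity of $\F$ yields $\nu=\sum_ic_i\mu_i\in W$; summing over $l$ gives $\mu\in W$, so $W=W_{d_0}$. Finite codimension of $W$ is then immediate from the finite codimension of $W_{d_0}$ noted after Lemma \ref{lemma:description_Wd}.
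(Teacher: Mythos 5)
Your proof is correct and follows the paper's argument in all essentials: the inclusion $W\subseteq W_{d_0}$ via $\I(W)$, Corollary~\ref{corollary:lowestWeightvectorsquaredMinors} and Lemma~\ref{lemma:description_Wd}, and the reverse inclusion by feeding the highest weight vector $w_{1,k}^{d_0}\Delta_k^2$ into Proposition~\ref{proposition:valuationFromHighestWeightVector} and then spreading over the whole degree-$d_0$ piece using the $\GL(n,\R)$-invariance of $W$ together with irreducibility obtained from Zariski density. The only cosmetic difference is that you realize this spreading step by spanning $V_{d_0}=\P_{d_0}(\C^n)\M^2_k$ with the $\GL(n,\R)$-orbit of the highest weight vector and identifying the resulting valuations through the injectivity of $\F$ (which makes the extra appeal to Theorem~\ref{theorem:PWSValuations} redundant), whereas the paper packages the same argument as the invariant subspace $E$ of the irreducible module $M\cong\P_{d_0}(\C^n)\M^2_k$.
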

\begin{proof}
	If $W\subset\VConv_k(\R^n)$ is a closed and affine invariant subspace, then $\I(W)\subset\P(\C^n)\M^2_k$ is a $\GL(n,\C)$-invariant submodule by Lemma \ref{lemma:propertiesAssociatedIdeal} and thus a representation of $\GL(n,\C)$. If $Q\in \I(W)$ is a highest weight vector, then $Q(w)=\Delta_k^2(w)w_{1,k}^d$ for some $d\in\mathbb{N}$ by Corollary~\ref{corollary:lowestWeightvectorsquaredMinors}. Since $\I(W)$ is a $\P(\C^n)$-submodule, it thus contains the elements $\Delta_k^2(w)w_{1,k}^{d'}$ for all $d'\ge d$, which implies that $\mathcal{I}(W)$ is the direct sum of the irreducible $\GL(n,\C)$-subrepresentations of $\P(\C^n)\M^2_k$ corresponding to these highest weight vectors.\\
	Let $d\in\mathbb{N}$ be the smallest number such that $w_{1,k}^d\Delta_k^2\in \mathcal{I}(W)$. Then $\mathcal{I}(W)$ does not contain any homogeneous polynomials of order less than $2k+d$, so the power series expansion of any valuation in $W$ vanishes up to order $2k+d-1$. In particular, $W\subset W_d$ by Lemma \ref{lemma:RestrictionLowestOrderTermsSubspaces}.\\
	For the converse inclusion, we will show that $W$ contains the valuations $\int_{\R^n}\partial^\alpha\phi d\Psi$ for $\phi\in C^\infty_c(\R^n)$, $\Psi\in \MAVal_k(\R^n)$ and $\alpha\in \mathbb{N}^n$, $|\alpha|=d$. Let $\P_d(\C^n)$ denote the space of homogeneous polynomials on $\C^n$ of degree $d$ and consider the linear map
	\begin{align*}
		\Phi:\P_d(\C^n)\otimes \MAVal_k(\R^n)&\rightarrow \P_d(\C^n)\M_k^2\\
		P\otimes \Psi&\mapsto P(w_k)Q(\Psi),
	\end{align*}
	where $Q:\MAVal_k(\R^n)\rightarrow\M_k^2$ is the map in Theorem \ref{theorem:FourierMA}. We similarly have a bilinear map
	\begin{align*}
		\tilde{\Phi}:\P_d(\C^n)\otimes \MAVal_k(\R^n)\times C^\infty_c(\R^n)&\rightarrow W_d\subset\VConv_k(\R^n)\\
		(P\otimes \Psi,\phi)&\mapsto \int_{\R^n}P(-i\partial)\phi(x)d\Psi.
	\end{align*}
	It follows directly from the construction that
	\begin{align*}
		\F(\tilde{\Phi}(P\otimes \Psi,\phi))[w]=\Phi(P\otimes\Psi)[w]\cdot \mathcal{F}(\phi)[w_k].
	\end{align*}
	In particular, if we set $M:=\P_d(\C^n)\otimes \MAVal_k(\R^n)/\ker\Phi$, then $\tilde{\Phi}$ induces a well defined bilinear map
	\begin{align*}
		\overline{\Phi}:M\times C^\infty_c(\R^n)&\rightarrow W_d.
	\end{align*}
	If we let $\GL(n,\C)$ act on $\P_d(\C^n)$ by $(g\cdot P)[z]=P(g^{T}z)$, then $\Phi$ and $\tilde{\Phi}$ become $\GL(n,\R)$-equivariant with respect to the natural operation of $\GL(n,\R)$ on these spaces. In particular, $M$ is isomorphic to $\P_d(\C^n)\M^2_k$, which is generated by a single $\GL(n,\C)$-highest weight vector by Corollary \ref{corollary:lowestWeightvectorsquaredMinors} and thus an irreducible representation of $\GL(n,\C)$. Since $\GL(n,\R)\subset \GL(n,\C)$ is Zariski dense and since this is a rational representation, this implies that $M$ is an irreducible representation of $\GL(n,\R)$. Consider the subspace of $M$ given by
	\begin{align*}
		E=\{Q\in M: \overline{\Phi}(Q, \phi)\in W~\text{for all}~\phi\in C^\infty_c(\R^n)\}.
	\end{align*}
	Since $w_{1,k}^d\Delta_k^2\in \mathcal{I}(W)$ by assumption, Proposition \ref{proposition:valuationFromHighestWeightVector} shows that $w_{1,k}^d\Delta_k^2\in E$, so $E$ is a nontrivial subspace. We claim that $E$ is a $\GL(n,\R)$-invariant subspace of $M$. Since $\overline{\Phi}$ is $\GL(n,\R)$-equivariant, we have for $Q\in M$, $\phi\in C^\infty_c(\R^n)$, and $g\in\GL(n,\R)$,
	\begin{align*}
		\overline{\Phi}(g\cdot Q, \phi)=&\overline{\Phi}(g\cdot Q,g\cdot g^{-1}\cdot \phi)\\
		=&\pi(g)\overline{\Phi}(Q, g^{-1}\cdot\phi),
	\end{align*}
	where $\pi(g)$ denotes the action of $g\in \GL(n,\R)$ on $\VConv_k(\R^n)$. Since $Q\in E$, $\Phi(Q, g^{-1}\cdot\phi)\in W$ for every $\phi\in C^\infty_c(\R^n)$. As $W$ is $\GL(n,\R)$-invariant, this implies that 
	\begin{align*}
		\overline{\Phi}(g\cdot Q, \phi)\in W\quad \text{for every}~\phi\in C^\infty_c(\R^n),
	\end{align*}
	i.e. $g\cdot Q\in E$. Since $M$ is a $\GL(n,\R)$-irreducible representation, we deduce that $E=M$. But this implies that $W$ contains every valuation of the form 
	\begin{align*}
		\int_{\R^n}\partial^\alpha \phi(x)d\Psi
	\end{align*}
	for $\alpha\in\mathbb{N}^k$, $|\alpha|=d$, $\phi\in C^\infty_c(\R^n)$, and $\Psi\in\MAVal_k(\R^n)$. Since $W$ is closed, we obtain $W_d\subset W$ from the definition of $W_d$, which completes the proof.
\end{proof}
Let us briefly discuss how this result implies Theorem~\ref{maintheorem:WeakIrreducibility} (3).
\begin{proof}[Alternative proof of Theorem~\ref{maintheorem:WeakIrreducibility} (3)]
	If $W\subset\VConv_k(\R^n)$ is a closed affine invariant subspace, then $W=W_d$ for some $d\ge0$ by Theorem~\ref{theorem:ClassificationAffineInvariantSubspaces}. If there is a positive semi-definite quadratic form $q$ on $\R^n$ and a valuation $\mu\in  W$ with $\mu(q)\ne 0$, then $\mu$ cannot be an element of $W_{d'}$ for $d'>0$ due to Lemma~\ref{lemma:description_Wd} (3). Thus $d=0$, i.e. $W=W_0=\VConv_k(\R^n)$. 
\end{proof}

We note the following two consequences of Theorem~\ref{theorem:ClassificationAffineInvariantSubspaces}.
\begin{corollary}
	Let $1\le k\le n$. The set of $\Aff(n,\R)$-invariant closed subspaces of $\VConv_k(\R^n)$ is countable.
\end{corollary}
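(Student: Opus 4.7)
The plan is essentially to invoke Theorem~\ref{theorem:ClassificationAffineInvariantSubspaces} as a black box. That theorem asserts that every closed $\Aff(n,\R)$-invariant subspace $W\subset\VConv_k(\R^n)$ coincides with $W_d$ for some $d\in\mathbb{N}$. Hence the map sending such a $W$ to (any) $d\in\mathbb{N}$ with $W=W_d$ realizes the collection of these subspaces as a subset of the countable family $\{W_d:d\in\mathbb{N}\}$, which immediately yields countability.

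In more detail: let $\mathcal{S}_k$ denote the set of closed $\Aff(n,\R)$-invariant subspaces of $\VConv_k(\R^n)$. Theorem~\ref{theorem:ClassificationAffineInvariantSubspaces} gives a surjection $\mathbb{N}\to\mathcal{S}_k$, $d\mapsto W_d$. Since $\mathbb{N}$ is countable, so is $\mathcal{S}_k$, proving the corollary. Nothing more is needed.

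If one wishes, one can strengthen the statement by observing that this map is actually a bijection: the characterization in Lemma~\ref{lemma:description_Wd}(2) shows that $W_d$ consists exactly of those valuations whose Fourier--Laplace transform $\F(\mu)$ vanishes to order at least $2k+d-1$, so the chain $W_0\supsetneq W_1\supsetneq W_2\supsetneq\cdots$ is strictly decreasing (a valuation with $\F(\mu)$ having lowest-order term of exact order $2k+d-1$, obtained e.g. via Lemma~\ref{lemma:PrescribedFourierTransformFinite}, lies in $W_d\setminus W_{d+1}$). Hence $\mathcal{S}_k$ is in fact in bijection with $\mathbb{N}$ and countably infinite. But as noted, this refinement is not needed; the surjection alone suffices.

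I do not anticipate any genuine obstacle here, since the corollary is a formal consequence of the preceding classification theorem; the only mild care lies in noting that it is the existence of the indexing, not its injectivity, that is being used.
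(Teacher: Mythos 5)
Your proof is correct and is exactly the argument the paper intends: the corollary is an immediate consequence of Theorem~\ref{theorem:ClassificationAffineInvariantSubspaces}, since $d\mapsto W_d$ gives a surjection from $\mathbb{N}$ onto the set of closed affine invariant subspaces. (In your optional aside on injectivity there is a harmless off-by-one: by Lemma~\ref{lemma:description_Wd}(2) an element of $W_d\setminus W_{d+1}$ has lowest-order term of exact order $2k+d$, not $2k+d-1$; but as you note, this refinement is not needed.)
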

\begin{corollary}
	The representation of $\Aff(n,\R)$ on $\VConv_k(\R^n)$ is Noetherian: Every increasing sequence $V_0\subset V_1\subset \dots$ of closed $\Aff(n,\R)$-invariant subspaces becomes stationary.
\end{corollary}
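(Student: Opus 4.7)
The plan is to derive this Noetherian property as an immediate consequence of the full classification in Theorem~\ref{theorem:ClassificationAffineInvariantSubspaces}. That theorem says that every closed affine invariant subspace of $\VConv_k(\R^n)$ coincides with one of the subspaces $W_d$ for some $d\in\mathbb{N}$, so the entire lattice of closed affine invariant subspaces is indexed by nonnegative integers. The strategy is to show that this indexing is order-reversing and then apply the well-ordering of $\mathbb{N}$.

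First, I would record the monotonicity of the family $\{W_d\}_{d\in\mathbb{N}}$: if $d\le d'$, then $W_{d'}\subset W_d$. This follows immediately from the characterization in Lemma~\ref{lemma:description_Wd}(2), since a power series vanishing up to order $2k+d'-1$ in particular vanishes up to order $2k+d-1$. Thus $\{W_d\}_{d\in\mathbb{N}}$ forms a decreasing chain of closed subspaces, and the map $d\mapsto W_d$ is order-reversing from $(\mathbb{N},\le)$ into the poset of closed affine invariant subspaces ordered by inclusion.

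Now given an increasing sequence $V_0\subset V_1\subset\cdots$ of closed $\Aff(n,\R)$-invariant subspaces, Theorem~\ref{theorem:ClassificationAffineInvariantSubspaces} yields indices $d_i\in\mathbb{N}$ with $V_i=W_{d_i}$ (where, in case of ambiguity, we may take $d_i:=\min\{d:V_i=W_d\}$). The inclusions $W_{d_i}\subset W_{d_{i+1}}$ combined with the monotonicity above force $d_i\ge d_{i+1}$, so $(d_i)_i$ is a non-increasing sequence in $\mathbb{N}$. Well-ordering of $\mathbb{N}$ implies that $(d_i)_i$ is eventually constant, and hence so is $(V_i)_i$. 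There is really no obstacle here—all the substance has been absorbed into the classification theorem; the present statement is just the order-theoretic translation.
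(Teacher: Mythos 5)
Your argument is correct and is exactly the intended one: the paper states this corollary as an immediate consequence of Theorem~\ref{theorem:ClassificationAffineInvariantSubspaces}, with the decreasing chain $W_{d'}\subset W_d$ for $d\le d'$ (clear from Lemma~\ref{lemma:description_Wd}) turning an increasing sequence of closed invariant subspaces into a non-increasing sequence in $\mathbb{N}$, which must stabilize. No gaps; your use of the minimal index $d_i$ cleanly handles any ambiguity in the labeling.
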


\bibliographystyle{plain}
\bibliography{../../library/library.bib}

\Addresses
\end{document}